\def\loc{\operatorname{loc}}
\definecolor{citation}{rgb}{0.11,0.67,0.84}
\definecolor{formula}{rgb}{0.1,0.2,0.6}
\definecolor{url}{rgb}{0.11,0.67,0.84}
\newcommand{\reqnomode}{\tagsleft@false}
\def\dx{\,{\rm d}x}
\def\ds{\,{\rm d}s}
\def\dt{\,{\rm d}t}
\def\dy{\,{\rm d}y}
\def \d{\,{\rm d}}
\def\dist{\,{\rm dist}}
\def\supp{\,{\rm supp}}
\DeclareRobustCommand*{\bfseries}{%
  \not@math@alphabet\bfseries\mathbf
  \fontseries\bfdefault\selectfont
  \boldmath
}
\newlength{\defbaselineskip}
\newcommand{\setlinespacing}[1]
           {\setlength{\baselineskip}{#1 \defbaselineskip}}
\newcommand{\mint}{\mathop{\int\hskip -1,05em -\, \!\!\!}\nolimits}
\newtheorem{theorem}{Theorem}
\newtheorem{corollary}{Corollary}[section]
\newtheorem{definition}{Definition}
\newtheorem{remark}{Remark}[section]
\newtheorem{lemma}{Lemma}[section]
\newtheorem{proposition}{Proposition}[section]
\numberwithin{equation}{section}
\newcommand{\kk}{\kappa}
\def\er{\mathbb R}
\newcommand{\ti}[1]{\tilde{#1}}
\newcommand{\mf}[1]{\mathfrak{#1}}
\newcommand\eps\varepsilon
\def\eqn#1$$#2$${\begin{equation}\label#1#2\end{equation}}
\newcommand{\be}{\begin{equation}}
\newcommand{\ee}{\end{equation}}
\newcommand{\rr}{\varrho}
\newcommand{\const}{\operatorname{const}}
\newcommand{\snr}[1]{\lvert #1\rvert}
\newcommand{\nr}[1]{\lVert #1 \rVert}
\newcommand{\uu}{\mathfrak{u}}
\newcommand{\RN}{\mathbb{R}^{N}}
\newcommand{\N}{\mathbb{N}}
\def\name[#1, #2]{#1 #2}
\newcommand{\rif}[1]{(\ref{#1})}
\title[Quasiconvexity and nonlinear potentials]{Quasiconvexity and partial regularity\\via nonlinear potentials}
\author[De Filippis]{Cristiana De Filippis}  \address{Cristiana De Filippis\\Dipartimento SMFI, Universit\'a di Parma\\ Parco Area delle Scienze 53/A, 43124 Parma, Italy} \email{\url{cristiana.defilippis@unipr.it}}
\begin{document}

\subjclass[2020]{35B65, 31C45 \vspace{1mm}} 

\keywords{Quasiconvexity, $(p,q)$-growth, Nonlinear potential theory, Degenerate variational integrals.\vspace{1mm}}

\thanks{{\it Acknowledgements.}\ This work is supported by the University of Turin via the project "Regolarit\'a e propriet\'a qualitative delle soluzioni di equazioni alle derivate parziali". The author thanks the referees for their sharp comments that eventually improved some aspects of the paper.
\vspace{1mm}}


\maketitle

\begin{abstract}
We show how to infer sharp partial regularity results for relaxed minimizers of degenerate, nonuniformly elliptic quasiconvex functionals, using tools from Nonlinear Potential Theory. In particular, in the setting of functionals with $(p,q)$-growth - according to the terminology of Marcellini \cite{ma2} - we derive optimal local regularity criteria under minimal assumptions on the data. 
 \end{abstract}

\renewcommand{\abstractname}{R\'esum\'e}

\begin{abstract}
    Nous \'etudions des conditions optimales pour la r\'egularit\'e partielle des minimiseurs d'une fonctionelle relax\'ee. Il s'agit d'une fonctionelle d\'egener\'ee, quasiconvexe et non-uniformément elliptique. En particulier, pour les fonctionelles avec une croissance de type $(p,q)$ (suivant la terminologie de Marcellini \cite{ma2}) nous prouvons la r\'egularit\'e locale optimale avec des hypoth\`eses minimales sur les données.

\end{abstract}
\vspace{3mm}

\setlinespacing{1.00}
\section{Introduction}\label{si}
In this paper we connect the classical partial regularity theory of quasiconvex functionals with nonlinear potential theory for degenerate elliptic equations. Specifically, we establish sharp $\varepsilon$-regularity criteria for relaxed minimizers of nonhomogeneous, degenerate, quasiconvex functionals with $(p,q)$-growth of the form
\begin{eqnarray}\label{fun}
W^{1,p}(\Omega,\mathbb{R}^{N})\ni w\mapsto \mathcal{F}(w;\Omega):= \int_{\Omega}\left[F(Dw)-f\cdot w\right] \dx,
\end{eqnarray}
i.e. minimizers of the Lebesgue-Serrin-Marcellini extension of $\mathcal{F}(\cdot)$:
\begin{flalign}\label{exfun}
\bar{\mathcal{F}}(w;\Omega):=\inf\left\{\liminf_{j\to \infty}\mathcal{F}(w_{j};\Omega)\colon \{w_{j}\}_{j\in \N}\subset W^{1,q}_{\loc}(\Omega,\mathbb{R}^{N})\colon w_{j}\rightharpoonup w \ \mbox{in} \ W^{1,p}(\Omega,\mathbb{R}^{N})\right\},
\end{flalign}
guaranteeing gradient (partial) continuity of minima under optimal assumptions on $f$. Here $\Omega$ denotes a bounded open domain of $\er^n$, $n\geq 2$ with Lipschitz boundary. The problem of determining the best conditions on $f$ implying continuity of $Du$ is classical. When $F(z)\equiv |z|^2/2$, we are back to the Poisson equation $-\Delta u=f$. In this case optimal conditions on $f$ are formulated via the borderline Lorentz space $L(n,1)$, that is
\eqn{lorenzo}
$$
\| f\|_{L(n,1)(\Omega)}:=  \int_0^\infty |\{x \in \Omega\,  \colon  |f(x)|> \lambda\}|^{1/n} \, \d\lambda <\infty\,.
$$
This is a consequence of a classical result of Stein \cite{stein}, and condition \rif{lorenzo} is sharp \cite{CiGA}. Surprisingly enough, the same conclusion continues to hold in the case of the $p$-Laplacian system, that is when $F(z)=|z|^p/p$, as shown by Kuusi \& Mingione \cite{kumi1, kumi2}; see also the global Lipschtz regularity results of Cianchi \& Maz'ya \cite{cm, cm1}. Finally, again \rif{lorenzo} appears in the setting of nonuniformly elliptic problems, as found in \cite{bm, demi1}. In particular, the techniques of \cite{kumi2} rely on the possibility of controlling, locally, the gradient of solutions via the (truncated) Riesz potential of $f$, that is 
\eqn{rizzo}
$$\mathbf{I}^{f}_{1}(x,\rr)= \int_{0}^{\rr}\left(\mint_{B_{\sigma}(x)}\snr{f} \ \dy\right) \ \d\sigma \lesssim \int_{\er^n} \frac{\snr{f(y)}}{|x-y|^{n-1}}\dy \,,
$$
exactly as in the linear case, see \cite{kumig} for a review of gradient potential estimates in nonlinear problems. 
The main result in \cite{kumi2} asserts that $Du$ is continuous in $\Omega$ provided 
$$
\lim_{\sigma\to 0}\mathbf{I}^{f}_{1}(x,\sigma)=0\qquad \mbox{locally uniformly with respect to} \ \ x\in \Omega.
$$
This condition is automatically implied by the $L(n,1)$-condition in \rif{lorenzo} and therefore yields the optimal continuity result for the gradient. We remark that while the use of linear potentials is standard for the Poisson equation, it is actually an additional surprising feature in the nonlinear case. 
Instead, in \cite{bm, demi1} a nonlinear potential of Wolff type is used to control the gradient of minimizers, that is, 
\eqn{nonline}
$$
\mathbf{I}^{f}_{1,m}(x,\rr):=\int_{0}^{\rr}\left(\sigma^{m}\mint_{B_{\sigma}(x)}\snr{f}^{m} \dx\right)^{1/m} \, \frac{\d\sigma}{\sigma},\qquad \quad m>1\,.
$$
Note that H\"older's inequality gives $\mathbf{I}^{f}_{1}(x,\rr)\leq \mathbf{I}^{f}_{1,m}(x,\rr)$ and that the two potentials share the same homogeneity and, therefore, mapping properties on function spaces, see \cite{demi1} for more details. In fact, $\mathbf{I}^{f}_{1,m}(\cdot)$ yields the same sharp condition on $f$ of those implied by $\mathbf{I}^{f}_{1}(\cdot)$. Nonlinear potentials as in \rif{nonline} were originally introduced by Havin \& Maz'ya \cite{hm1} and their use lies at the core of Nonlinear Potential Theory, see \cite{kumig} for more details and references. The results in \cite{bm, demi1, kumi1, kumi2} hold for general equations and systems and, in fact, minimizers of \rif{fun} with $F(\cdot)$ convex are treated via the use of their Euler-Lagrange system. In this case there is no difference between minimizers and critical points. This is typical when dealing with convex integrands $F(\cdot)$. The more challenging case of quasiconvex integrands we are interested in is different. We recall that $F(\cdot)$ is said to be quasiconvex when
\begin{flalign}\label{qc}
\mint_{B_{1}(0)}F(z+D\varphi) \ \dx \ge F(z)\quad \mbox{holds for all} \ \ z\in \mathbb{R}^{N\times n}, \ \ \varphi\in C^{\infty}_{\rm c}(B_{1}(0),\mathbb{R}^{N}).
\end{flalign}
This definition was first introduced by Morrey \cite{mo}. Under polynomial growth conditions, quasiconvexity guarantees lower semicontinuity with respect to natural weak topology of Sobolev spaces. Quasiconvexity is also necessary for lower semicontinuity and therefore it is a natural condition in the multidimensional Calculus of Variations. A distintive feature of quasiconvex integrals is that critical points and minimizers might behave quite differently. Specifically, a classical result of Evans \cite{ev} asserts that minimizers of quasiconvex integrals are partially regular, i.e., $Du$ is locally H\"older continuous outside a negligible closed subset. On the other hand, another result by M\"uller \& \v{S}ver\'ak \cite{musv} claims that stationary points, i.e., solution to the Euler-Lagrange systems, might develop singularities on a dense subset of $\Omega$. Therefore regularity of minimizers must be obtained making use of minimality rather than stationarity. We note that, already in the case of standard elliptic systems, classical counterexamples \cite{svya} show that singularities might occur and one cannot go beyond partial regularity. We refer to \cite{wild} for a review of results and counterexamples. 

In this paper we use nonlinear potential theoretic methods to describe partial regularity results for relaxed minimizers of quasiconvex functionals as in \rif{fun}. As far as we know, this is the first paper where such an approach is taken in the quasiconvex case. Partial regularity results of this kind for elliptic systems have been derived by Kuusi \& Mingione in \cite{kumi} and Byun \& Youn in \cite{by}. Although our results are already completely new in the case of quasiconvex functionals with $p$-polynomial growth, i.e., $F(z)\approx |z|^p$, we shall treat the most general case of nonuniformly elliptic functionals with different polynomial growth conditions, that is
\eqn{roughass}
$$
F(z)\lesssim \snr{z}^{q}+1 \quad \mbox{and} \quad  \snr{z}^{p-2}\snr{\xi}^{2}\snr{\zeta}^{2} \lesssim  \partial^{2}F(z)\langle\xi\otimes \zeta,\xi\otimes \zeta\rangle, \qquad 1 < p \leq q \,.
$$
The standard case is given by $p=q$. Such nonuniform ellipticity can be measured by introducing a proper notion of uniform/nonuniform Legendre-Hadamard ellipticity, measured by the ratio
\begin{eqnarray}\label{ellr}
\mathcal{R}_{\rm{LH}}(z):=\frac{\sup_{|\xi|=| \zeta|=1}\partial^{2}F(z)\langle\xi\otimes \zeta,\xi\otimes \zeta\rangle}{\inf_{|\xi|=| \zeta|=1} \partial^{2}F(z)\langle\xi\otimes \zeta,\xi\otimes \zeta\rangle} \lesssim 1+ |z|^{q-p} \,, 
\end{eqnarray}
where  $\xi \in \er^N$,  $\zeta \in \er^n$ and $z\in \er^{N\times n}$. This replaces, in our setting, the traditional notion of uniform ellipticity  formulated in terms of ratio between maximal and minimal eigenvalues of the Hessian $\partial^2F(\cdot)$, which is classical in the theory of elliptic equations, i.e.,  
\eqn{rapporto}
$$
\mathcal{R}(z):=\frac{\mbox{highest eigenvalue of} \ \partial^{2}F(z)}{\mbox{lowest eigenvalue of} \ \partial^{2}F(z)}\,.
$$
A brief review of results on quasiconvex functional, especially in the context of nonuniformly ellipticity, is placed in Section \ref{pqq}. The precise assumptions we are going to adopt, together with the notation we employ, are described in Section \ref{pre}. The standard notion of relaxed local minimizer \cite{ts1} is the following:
\begin{definition}\label{d1}
Let $p\in (1,\infty)$. A function $u\in W^{1,p}(\Omega,\mathbb{R}^{N})$ is a local minimizer of \eqref{exfun} on $\Omega$ with $f\in W^{1,p}(\Omega,\mathbb{R}^{N})^{*}$ if and only if every $x_{0}\in \Omega$ admits a neighborhood $B\Subset \Omega$ so that $\bar{\mathcal{F}}(u;B)<\infty$ and $\bar{\mathcal{F}}(u;B)\le \bar{\mathcal{F}}(w;B)$ for all $w\in W^{1,p}(B,\mathbb{R}^{N})$ so that $\supp(u-w)\Subset B$. 
\end{definition}
Analogous definition holds for local minimizers of functional \eqref{fun}. When considering \rif{fun}-\eqref{exfun}, without loss of generality we shall always assume that $f$ is defined on the whole $\er^n$, eventually letting $f \equiv 0$ outside $\Omega$. For this reason, when indicating that $f$ belongs to a certain function space, we will often omit to specify the underlying domain. 
In this setting, our main result reads as
\begin{theorem}\label{t3}
Under assumptions \eqref{assf}-\eqref{p0} and \eqref{f}, let $u\in W^{1,p}(\Omega,\mathbb{R}^{N})$ be a local minimizer of \eqref{exfun} and assume that
\begin{eqnarray}\label{con.5}
\lim_{\sigma\to 0}\mathbf{I}^{f}_{1,m}(x,\sigma)=0\qquad \mbox{locally uniformly with respect to} \ \ x\in \Omega.
\end{eqnarray}
Then there exists an open set $\Omega_{u}\subset \Omega$ such that
\begin{eqnarray}\label{pe.35}
\snr{\Omega\setminus \Omega_{u}}=0\quad \mbox{and}\quad   \mbox{$Du$ is continuous in} \ \Omega_{u}
\end{eqnarray}
which can be characterized as
\begin{eqnarray}\label{ou}
\Omega_{u}&:=&\left\{\frac{}{}x_{0}\in \Omega\colon \exists \ M\equiv M(x_{0})\in (0,\infty),\ \ti{\varepsilon}\equiv \ti{\varepsilon}(\textnormal{\texttt{data}}_{\textnormal{c}},M^{q-p}),\ \ti{\rr}\equiv \ti{\rr}(\textnormal{\texttt{data}}_{\textnormal{c}},M^{q-p},f(\cdot))<d_{x_{0}}\right.\nonumber \\
&&\qquad   \left.\frac{}{}\mbox{such that} \ 
\snr{(Du)_{B_{\rr}(x_{0})}}<M \ \mbox{and} \ \mf{F}(u;B_{\rr}(x_{0}))<\ti{\varepsilon} \ \mbox{for some} \ \rr\in (0,\ti{\rr}]\frac{}{}\right\}.
\end{eqnarray}
\end{theorem}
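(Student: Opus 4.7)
My strategy is to reduce Theorem \ref{t3} to a pointwise $\varepsilon$-regularity criterion of the following form: at a point $x_{0}$ where, for some scale $\rho$, $\snr{(Du)_{B_{\rho}(x_{0})}}<M$, $\mf{F}(u;B_{\rho}(x_{0}))<\ti{\varepsilon}$, and the truncated Riesz potential $\mathbf{I}^{f}_{1,m}(x_{0},\rho)$ is sufficiently small, the excess $\mf{F}$ decays geometrically along the concentric dyadic balls and $Du$ admits a continuous representative on a neighbourhood of $x_{0}$. Given such a criterion, the set $\Omega_{u}$ defined in \eqref{ou} is by construction the collection of points at which the assumptions can be satisfied at some positive scale, which is open thanks to the continuous dependence of ball averages on the centre combined with the uniform-on-compacts smallness built in \eqref{con.5}; moreover, Lebesgue differentiation applied to $Du\in L^{p}_{\textnormal{loc}}$ gives $\snr{\Omega\setminus\Omega_{u}}=0$, since at a.e.\ $x_{0}$ one has both $(Du)_{B_{\rho}(x_{0})}\to Du(x_{0})$ and $\mf{F}(u;B_{\rho}(x_{0}))\to 0$ for $\rho\to 0$, while \eqref{con.5} furnishes the third condition.

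\textbf{Excess decay via $\mathcal{A}$-harmonic approximation.} The pointwise criterion is proved by linearisation: on a ball $B_{\rho}\Subset\Omega$ with $\snr{(Du)_{B_{\rho}}}\sim M$, freeze the constant bilinear form $A:=\partial^{2}F((Du)_{B_{\rho}})$; assumptions \eqref{roughass}--\eqref{ellr} guarantee that $A$ is Legendre--Hadamard elliptic with ellipticity constants depending only on $M^{q-p}$ and $\textnormal{\texttt{data}}_{\textnormal{c}}$. Exploiting quasiconvexity \eqref{qc} through minimality of $u$ against competitors of the form $u-\eta(u-h)$, where $h$ is the $A$-harmonic map on $B_{\rho/2}$ sharing the trace of $u$, one controls the $\mf{F}$-distance $\mf{F}(u-h;B_{\rho/2})$ by the inhomogeneity. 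Combining this comparison with the constant-coefficient Legendre--Hadamard decay $\mf{F}(h;B_{\tau\rho})\le C\tau^{2}\mf{F}(h;B_{\rho})$ yields, for every $\tau\in(0,1/4)$, an estimate of the form
\begin{equation*}
\mf{F}(u;B_{\tau\rho})\le C\tau^{2}\mf{F}(u;B_{\rho})+C(\tau)\bigl[\rho^{m}\mint_{B_{\rho}}\snr{f}^{m}\dx\bigr]^{\gamma/m}
\end{equation*}
for a structural exponent $\gamma>0$, provided the bootstrap inequalities on $\snr{(Du)_{B_{\rho}}}$ and $\mf{F}(u;B_{\rho})$ are preserved by the rescaling $\rho\mapsto\tau\rho$.

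\textbf{Iteration and conclusion.} Fix $\tau\in(0,1/4)$ small enough so that $C\tau^{2}\le\tau/2$, then calibrate $\ti{\varepsilon}$ and the required smallness in \eqref{con.5} so that, along the dyadic scales $\rho_{k}:=\tau^{k}\rho$, the bootstrap hypotheses on $\snr{(Du)_{B_{\rho_{k}}(x_{0})}}$ and $\mf{F}(u;B_{\rho_{k}}(x_{0}))$ persist and the cumulative inhomogeneity contribution is majorised by $\mathbf{I}^{f}_{1,m}(x_{0},\rho)$. Then $\{(Du)_{B_{\rho_{k}}(x_{0})}\}_{k\in\N}$ is Cauchy, and its limit represents $Du$ at $x_{0}$ and depends continuously on $x_{0}$ thanks to the locally uniform smallness of the potential in \eqref{con.5}. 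This proves \eqref{pe.35} on the open set $\Omega_{u}$.

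\textbf{Main obstacle.} The principal difficulty is the interplay between the relaxation \eqref{exfun} and the $(p,q)$-gap: the minimiser $u$ lies a priori only in $W^{1,p}$, whereas the $\mathcal{A}$-harmonic comparison needs $W^{1,q}$-competitors. The required higher Sobolev integrability of $u$ and the identification $\bar{\mathcal{F}}(u;\cdot)=\mathcal{F}(u;\cdot)$ on balls where the gradient average is controlled must be extracted from structural results on $\bar{\mathcal{F}}$ outlined in Section \ref{pqq}, and will be the bridge allowing the quasiconvexity inequality \eqref{qc} to be used with the same effect as an Euler--Lagrange argument in the convex setting of \cite{kumi2,demi1}. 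A second delicate point is the design of $\mf{F}$: it must blend a $p$-piece catching the degeneracy as $\snr{Du}\to 0$ with a $q$-piece matching the growth of $\partial^{2}F$ near $\snr{(Du)_{B_{\rho}}}\sim M$, so that the frozen operator $A$ is uniformly Legendre--Hadamard elliptic on $B_{\rho}$ and the inhomogeneity is measured with the correct homogeneity to fit $\mathbf{I}^{f}_{1,m}$. With these ingredients in place, the potential-theoretic iteration developed in \cite{kumi,by} can be imported to the quasiconvex setting.
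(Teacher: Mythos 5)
Your plan gets the broad direction right (potential-theoretic methods, excess decay, convergence of gradient averages), but it is missing the structural difficulties that the paper is designed to overcome, and the iteration you propose would in fact fail.

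First, the iteration. You calibrate $\tau$ so that $C\tau^{2}\le\tau/2$ and then iterate along $\rho_k=\tau^k\rho$ assuming the "bootstrap hypotheses" on $\snr{(Du)_{B_{\rho_k}}}$ and $\mf{F}(u;B_{\rho_k})$ persist. This is exactly the naive scheme that breaks down here. The nondegenerate excess decay of Proposition \ref{p1} is conditional on $\mf{F}(u;B_{\rho})<\varepsilon_{0}\snr{(Du)_{B_{\rho}}}$, and when $f\not\equiv 0$ this smallness is \emph{not} transported across scales: after one iteration the gradient average may collapse while the $f$-contribution persists, so the ratio reverses and the linearization loses its ellipticity constant. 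The paper deals with this through the "blocks and chains" scheme of Section \ref{ex} — maximal iteration chains on the index set $\mathcal{J}_{0}$, the composite excess $\mf{C}(\cdot)$, the nonhomogeneous excess $\mf{N}(\cdot)$, and a splitting of $(0,\rr]$ into disjoint blocks $\texttt{B}_d$ — precisely because the stability of the nondegenerate regime fails. Your proposal does not acknowledge the problem and thus has no mechanism to resolve it.

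Second, you treat only the nondegenerate case. When $\snr{(Du)_{B_{\rho}}}\lesssim\mf{F}(u;B_{\rho})$ (condition \eqref{32}$_1$), the frozen form $\partial^{2}F((Du)_{B_{\rho}})$ has no uniform lower ellipticity and the $\mathcal{A}$-harmonic comparison is unavailable. The paper handles this by $p$-harmonic approximation (Lemma \ref{phar}, Proposition \ref{p3}), using the degeneracy condition \eqref{p0} to linearize to the $p$-Laplacian, and, crucially, the passage between the two regimes involves a change of scale ($\tau$ vs.\ $\theta$) that must be matched by an exit-time argument. Without the degenerate branch the decay estimate cannot be closed.

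Third, your $\mathcal{A}$-harmonic comparison with an "$A$-harmonic map sharing the trace of $u$" is not the mechanism the paper uses and would need substantial work to make rigorous in the relaxed setting. The paper instead derives the Euler–Lagrange identity \eqref{3} for relaxed minimizers, linearizes it (Lemma \ref{linlem}) using $\eqref{assf}_4$, and applies the harmonic approximation lemma (Lemma \ref{ahar}); the comparison map comes out of the approximation lemma with a controlled Sobolev bound, not from a Dirichlet boundary-value problem. Moreover the Caccioppoli step (Lemma \ref{ndegc} for the nondegenerate case, Lemma \ref{cdeg} for the degenerate one) uses the measure representation, additivity \eqref{ls.6}, and perturbation identity \eqref{ls.3} of the Lebesgue–Serrin–Marcellini extension, together with the smoothing operator $\mf{T}_{\tau_1,\tau_2}$ of Lemma \ref{exlem}; the paper never needs to identify $\bar{\mathcal{F}}(u;\cdot)$ with $\mathcal{F}(u;\cdot)$ (which in general is false), so that step of your "main obstacle" paragraph is aimed at the wrong target.

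Finally, the uniform control on $\snr{(Du)_{B_{\rho_k}(x_0)}}$ — which you mention only as a "bootstrap hypothesis" — is itself a delicate inductive argument relying on the finiteness of $\mathbf{I}^{f}_{1,m}$ (Lemma \ref{til}/\ref{til.1}); with $(p,q)$-growth all constants depend on $M^{q-p}$, so losing control of the averages means losing the constants. In short, your proposal describes the destination and some signposts, but omits the key ideas (degenerate/nondegenerate dichotomy, change of scale, failure of stability, blocks-and-chains matching, inductive average control) without which the argument does not close.
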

We note that Theorem \ref{t3} is the natural partial regularity version of the results in \cite{bm,demi1,kumi1, kumi2}, in that it matches the smallness excess criteria, that are typical in partial regularity theory \cite{af,bddms,dlsv,dumi, ev, fomi, gk,kt} and uses the potential displayed in \rif{nonline}. This choice yields optimal conditions on $f$ in terms of Lorentz spaces, as indicated in the following:
\begin{theorem}\label{t4}
Assume \eqref{assf}-\eqref{p0} and \eqref{f} and let $u\in W^{1,p}(\Omega,\mathbb{R}^{N})$ be a local minimizer of \eqref{exfun}. There exists a full measure open set $\Omega_{u}\subset \Omega$, characterized as in \eqref{ou} such that:
\begin{itemize}
    \item[(\emph{i})] $f\in L(n,1) \  \Longrightarrow \ Du$ \mbox{is continuous in} $\Omega_{u}$;
    \item[(\emph{ii})]$f\in L^{d}$ for some $d>n$ $\ \Longrightarrow \ Du\in C^{0,\ti{\alpha}}_{\loc}(\Omega_{u},\mathbb{R}^{N\times n})$ with $\ti{\alpha}\equiv \ti{\alpha}(n,N,p,d)$.
\end{itemize}
\end{theorem}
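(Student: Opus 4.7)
The plan is to derive Theorem \ref{t4} as a direct corollary of Theorem \ref{t3}, by checking that each integrability assumption on $f$ implies the potential-theoretic vanishing condition \eqref{con.5}, and, in case (ii), by quantifying the rate of decay so as to upgrade continuity to H\"older continuity.

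\emph{Step 1 (part (i)).} Rewriting the Havin-Maz'ya-Wolff potential as
$$
\mathbf{I}^{f}_{1,m}(x,\rr)=\int_{0}^{\rr}\left(\mint_{B_{\sigma}(x)}\snr{f}^{m}\dy\right)^{1/m}\d\sigma,
$$
I would apply the Hardy-Littlewood rearrangement inequality to the inner average, combined with a change of variables $t=c_{n}\sigma^{n}$, to obtain the $x$-independent bound
$$
\mathbf{I}^{f}_{1,m}(x,\rr)\le c(n,m)\int_{0}^{c_{n}\rr^{n}}f^{*}(t)\,t^{1/n-1}\d t,
$$
valid whenever $1<m<n$. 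The right-hand side vanishes as $\rr\to 0$ by absolute continuity of the Lebesgue integral against the $L(n,1)$-norm, whose integral expression is exactly $\int_{0}^{\infty}f^{*}(t)\,t^{1/n-1}\d t<\infty$. Hence \eqref{con.5} holds locally uniformly in $x$ and Theorem \ref{t3} yields continuity of $Du$ on $\Omega_{u}$.

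\emph{Step 2 (part (ii)).} For $f\in L^{d}$ with $d>n$, assuming as we may that $m\le d$, H\"older's inequality gives
$$
\left(\mint_{B_{\sigma}(x)}\snr{f}^{m}\dy\right)^{1/m}\le\left(\mint_{B_{\sigma}(x)}\snr{f}^{d}\dy\right)^{1/d}\le c\,\sigma^{-n/d}\nr{f}_{L^{d}(\Omega)},
$$
and integration over $(0,\rr)$ delivers the quantitative decay
$$
\mathbf{I}^{f}_{1,m}(x,\rr)\le c(n,d)\,\nr{f}_{L^{d}(\Omega)}\,\rr^{1-n/d},
$$
since $1-n/d>0$. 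In particular \eqref{con.5} holds. To upgrade continuity to H\"older continuity I would feed this power-type decay into the excess-decay iteration underlying Theorem \ref{t3}: at each dyadic scale the smallness of both $\mathbf{I}^{f}_{1,m}(x_{0},2^{-k}\rr)$ and the excess $\mf{F}(u;B_{2^{-k}\rr}(x_{0}))$ is reproduced geometrically, and a standard Campanato-Morrey-type argument extracts a H\"older exponent $\ti{\alpha}\equiv \ti{\alpha}(n,N,p,d)$ on $\Omega_{u}$.

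\emph{Step 3.} Full measure of $\Omega_{u}$ is inherited from the characterization \eqref{ou}: the two conditions defining it, boundedness of $\snr{(Du)_{B_{\rr}(x_{0})}}$ and smallness of $\mf{F}(u;B_{\rr}(x_{0}))$, are verified at every Lebesgue point of $Du$ outside a negligible set, as in the classical partial regularity framework. The main obstacle is \emph{Step 1}: the sharp passage from the $L(n,1)$-integrability of $f$, natural for the linear Riesz potential $\mathbf{I}^{f}_{1}$, to the strictly larger nonlinear Wolff-type potential $\mathbf{I}^{f}_{1,m}$ with $m>1$. The argument goes through only because the structural assumptions \eqref{assf}-\eqref{p0} permit choosing $m$ sufficiently close to $1$, keeping us in a range where the Hardy-Littlewood rearrangement inequality absorbs the gap between the two potentials; Step 2 is then a routine iteration.
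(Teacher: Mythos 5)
Your overall strategy matches the paper's: both implications are deduced from the pointwise criterion of Theorem \ref{t3} via mapping properties of the potential $\mathbf{I}^{f}_{1,m}(\cdot)$, with the H\"older upgrade in (ii) obtained by iterating the excess decay. The only real deviation is in (i): the paper simply invokes the known implication $f\in L(n,1)\Rightarrow \mathbf{I}^{f}_{1,m}(x,\sigma)\to 0$ uniformly, citing \cite[Section 2.3]{kumi1}, whereas you attempt a self-contained re-derivation via Hardy–Littlewood rearrangement. That route is viable, but the inequality you write,
\[
\mathbf{I}^{f}_{1,m}(x,\rr)\le c(n,m)\int_{0}^{c_{n}\rr^{n}}f^{*}(t)\,t^{1/n-1}\,\d t,
\]
is not quite a one-line consequence of the rearrangement estimate and the substitution $t=c_n\sigma^n$; after the change of variables you are left with the weighted Hardy-type bound
\[
\int_0^{R}\Big(\tfrac{1}{s}\int_0^s (f^*)^m\Big)^{1/m}s^{1/n-1}\,\d s\ \lesssim\ \int_0^{R} f^*(s)\,s^{1/n-1}\,\d s,
\]
whose constant depends delicately on the gap $1-m/n$. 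This is true under $m<n$ (and indeed the standing assumption \eqref{f} gives $m<2\le n$), but it needs a genuine Hardy inequality argument; as written your step is a sketch, not a proof. Also, a small inaccuracy in your closing remark: $m$ is not "chosen close to $1$" via \eqref{assf}--\eqref{p0} — it is fixed by assumption \eqref{f}; what saves the argument is simply $m<n$, which is automatic from $m<2\le n$.

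For (ii), your H\"older bound on the potential is the same as the paper's \eqref{pe.37}--\eqref{pe.40}, and your "feed the power decay into the excess iteration" is exactly what the paper does via Proposition \ref{ret} and the Campanato iteration Lemma \ref{iter+}, producing \eqref{e.4}; so this part is faithful, just compressed. The full-measure statement in Step 3 is inherited from the discussion surrounding \eqref{ru.0} and \eqref{ou}, as you say.
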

Theorems \ref{t3}-\ref{t4} are a consequence of a finer criterion, allowing to detect Lebesgue points of $Du$ in terms of the poitwise behaviour of the potential $\mathbf{I}^{f}_{1,m}(\cdot)$.
\begin{theorem}\label{t2}
Under assumptions \eqref{assf}-\eqref{p0} and \eqref{f}, let $u\in W^{1,p}(\Omega,\mathbb{R}^{N})$ be a local minimizer of \eqref{exfun}, $x_{0}\in \Omega$ be a point such that
\begin{eqnarray}\label{con.1.1}
\mathbf{I}^{f}_{1,m}(x_{0},1)<\infty
\end{eqnarray}
and $M\equiv M(x_{0})$ be a positive, finite constant. Then there are $\breve{\varepsilon}\equiv \breve{\varepsilon}(\textnormal{\texttt{data}}_{\textnormal{c}},M^{q-p})\in (0,1)$ and a threshold radius $\breve{\rr}\equiv \breve{\rr}(\textnormal{\texttt{data}}_{\textnormal{c}},M^{q-p},f(\cdot))\in (0,d_{x_{0}})$ such that if
\begin{eqnarray}\label{con.1}
\left\{
\begin{array}{c}
\displaystyle 
\ \snr{(Du)_{B_{\rr}(x_{0})}}<M \\[17pt]\displaystyle
\ \mf{F}(u;B_{\rr}(x_{0}))+\left(\mathbf{I}^{f}_{1,m}(x_{0},\rr)\right)^{\frac{1}{p-1}}+\left(\mathbf{I}^{f}_{1,m}(x_{0},\rr)\right)^{\frac{q}{p(p-1)}}<\breve{\varepsilon} ,
\end{array}
\right.
\end{eqnarray}
is satisfied for some $\rr\in (0,\breve{\rr}]$, then
\begin{eqnarray}\label{lp}
\lim_{\sigma\to 0}(Du)_{B_{\sigma}(x_{0})}=Du(x_{0}) 
\end{eqnarray}
and
\begin{flalign}\label{lp.1}
\snr{Du-(Du)_{B_{\varsigma}(x_{0})}}\le c\mf{F}(u;B_{\varsigma}(x_{0}))+c\left[\mathbf{I}^{f}_{1,m}(x_{0},\varsigma)+\left(\mathbf{I}^{f}_{1,m}(x_{0},\varsigma)\right)^{q/p}\right]^{\frac{1}{p-1}}
\end{flalign}
hold for all $\varsigma\in (0,\rr]$, with $c\equiv c(\textnormal{\texttt{data}}_{\textnormal{c}},M^{q-p})$. In particular, the set of all points satisfying \eqref{con.1.1} and \eqref{con.1} coincides with the set of Lebesgue points of $Du$.
\end{theorem}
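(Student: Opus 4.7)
The plan is to establish \eqref{lp}--\eqref{lp.1} by iterating a hybrid excess decay at $x_{0}$ along the dyadic scales $\varrho_{k}:=\tau^{k}\varrho$, for a geometric parameter $\tau\in(0,1/8)$ to be fixed. The bound $\snr{(Du)_{B_{\varrho}(x_{0})}}<M$ together with the smallness in \eqref{con.1} will be propagated inductively to every $\varrho_{k}$, which places us permanently in the \emph{non-degenerate regime}: on each $B_{\varrho_{k}}(x_{0})$ the Hessian $\partial^{2}F((Du)_{B_{\varrho_{k}}})$ is uniformly Legendre--Hadamard elliptic, with ratio $\mathcal{R}_{\mathrm{LH}}$ controlled in terms of $M^{q-p}$ through \eqref{ellr}. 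This is precisely the regime in which the constants depending on $\texttt{data}_{\textnormal{c}}$ and $M^{q-p}$ in the statement are calibrated.

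\medskip

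\noindent The core analytic step is a one-step excess decay of the form
\begin{equation*}
\mathfrak{F}(u;B_{\tau\varrho_{k}}(x_{0}))\,\le\,c_{1}\tau^{2\alpha}\,\mathfrak{F}(u;B_{\varrho_{k}}(x_{0}))+c_{2}\Psi_{k}^{\frac{1}{p-1}}+c_{2}\Psi_{k}^{\frac{q}{p(p-1)}},
\end{equation*}
where $\Psi_{k}:=\varrho_{k}\bigl(\mint_{B_{\varrho_{k}}(x_{0})}|f|^{m}\dx\bigr)^{1/m}$ is exactly the building block of the Wolff-type potential in \eqref{nonline}, and $\alpha\equiv\alpha(\texttt{data}_{\textnormal{c}},M^{q-p})\in(0,1)$. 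I would obtain this inequality in two moves: (\emph{a}) a \emph{comparison} step, matching $u$ on $B_{\varrho_{k}}(x_{0})$ with the relaxed minimizer $v$ of $\bar{\mathcal{F}}(\cdot\,;B_{\varrho_{k}})$ \emph{without} the source term $-f\cdot w$, via Caccioppoli-type inequalities for $\bar{\mathcal{F}}$, converting the $f$-defect into the single-scale quantity $\Psi_{k}$ by duality against $W^{1,p}_{0}$ test fields; (\emph{b}) an $\mathcal{A}$-harmonic approximation of $v$ by a solution $h$ of the constant-coefficient Legendre--Hadamard system with symbol $\partial^{2}F((Dv)_{B_{\varrho_{k}}})$, whose excess decays as $\tau^{2\alpha}$ by classical linear theory. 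The smallness of $\mathfrak{F}(u;B_{\varrho_{k}})$ is what keeps $Dv$ close to the constant $(Dv)_{B_{\varrho_{k}}}$ and unlocks the Evans-type quasiconvex partial regularity blueprint.

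\medskip

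\noindent Choosing $\tau$ so that $c_{1}\tau^{2\alpha}\le 1/4$ and iterating, a standard telescoping argument aggregates the error terms $\Psi_{k}^{1/(p-1)}+\Psi_{k}^{q/(p(p-1))}$ into precisely $\mathbf{I}^{f}_{1,m}(x_{0},\varrho)^{1/(p-1)}+\mathbf{I}^{f}_{1,m}(x_{0},\varrho)^{q/(p(p-1))}$; assumption \eqref{con.1.1} makes this finite, the excesses $\mathfrak{F}(u;B_{\varrho_{k}}(x_{0}))$ are summable, and the averages $\{(Du)_{B_{\varrho_{k}}(x_{0})}\}$ form a Cauchy sequence whose limit is the Lebesgue representative $Du(x_{0})$ appearing in \eqref{lp}. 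A routine interpolation across scales $\varsigma\in(0,\varrho]$ upgrades the discrete estimate to the continuous bound \eqref{lp.1}. The identification with the set of Lebesgue points of $Du$ then follows because \eqref{con.1} is automatically satisfied at some small scale at any Lebesgue point of the gradient. The main obstacle I anticipate is the comparison step (\emph{a}): extracting a clean almost-minimality/Caccioppoli inequality from the Lebesgue--Serrin--Marcellini envelope $\bar{\mathcal{F}}$ under $(p,q)$-growth, while controlling the source defect \emph{via the Wolff potential} $\mathbf{I}^{f}_{1,m}$ (and not the stronger Riesz potential $\mathbf{I}^{f}_{1}$), requires delicate bookkeeping of the exponents $p,q,m$ and careful use of the non-degenerate threshold $M$ in order to absorb all $M^{q-p}$-factors into the class of constants $\texttt{data}_{\textnormal{c}}$.
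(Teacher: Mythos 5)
Your plan founders on the single most important technical point of the paper: you assert that the smallness in \eqref{con.1} ``will be propagated inductively to every $\varrho_{k}$, which places us \emph{permanently} in the non-degenerate regime.'' This is precisely what the paper states is false in the nonhomogeneous setting — see Section \ref{pqq}: ``The stability of the nondegenerate regime drastically fails when the problem is nonhomogeneous, regardless the degree of smoothness of the right-hand side term $f$.'' The one-step decay you write down in the form
$\mathfrak{F}(u;B_{\tau\varrho_{k}})\le c_{1}\tau^{2\alpha}\mathfrak{F}(u;B_{\varrho_{k}})+c_{2}\Psi_{k}^{1/(p-1)}+c_{2}\Psi_{k}^{q/(p(p-1))}$
is only available (Proposition \ref{p1}) when \emph{both} $\mathfrak{F}(u;B_{\varrho_{k}})<\varepsilon_{0}\snr{(Du)_{B_{\varrho_{k}}}}$ \emph{and} the $f$-term is subordinate to $\snr{(Du)_{B_{\varrho_{k}}}}^{(p-2)/2}\mathfrak{F}(u;B_{\varrho_{k}})^{p/2}$ (Proposition \ref{p2} handles the complement). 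As soon as $\snr{(Du)_{B_{\varrho_{k}}}}$ drops below a multiple of $\mathfrak{F}(u;B_{\varrho_{k}})$ — which the forcing term can cause even when $f$ is smooth — the Legendre--Hadamard linearization around $(Du)_{B_{\varrho_{k}}}$ degenerates: $\snr{z_{0}}^{2-p}\partial^{2}F(z_{0})$ is no longer uniformly elliptic, and neither the Caccioppoli inequality \eqref{cacc} nor the linearization Lemma \ref{linlem} (both of which divide by $\snr{z_{0}}^{p-2}$) are usable. You then need the $p$-harmonic approximation of Proposition \ref{p3}, which operates at a \emph{different} geometric scale $\theta\ne\tau$, so a single dyadic ladder $\varrho_{k}=\tau^{k}\varrho$ does not mesh the two regimes.

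A second, related gap is the claim that $\snr{(Du)_{B_{\varrho_{k}}(x_{0})}}<M$ propagates inductively. This requires $\sum_{k}\mathfrak{F}(u;B_{\varrho_{k}}(x_{0}))^{p/2}$ to be summable \emph{before} one has established the decay, and the only handle on this is the finiteness of $\mathbf{I}^{f}_{1,m}$ — an observation the paper encodes in the technical Lemma \ref{til} and the notion of composite excess $\mathfrak{C}(\cdot)$. Your telescoping step implicitly assumes the excess decays geometrically at every scale, which fails on the intervals where the $f$-term dominates. This is why the paper builds the maximal iteration chains and blocks \eqref{ls.150}: the reference interval $(0,\varrho]$ is split according to whether the composite excess beats or is beaten by a nonlinear function of $\mathbf{I}^{f}_{1,m}(x_{0},\cdot)$, and a separate (essentially tautological) decay estimate is proved on the dominated sub-intervals. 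An exit-time argument matches the resulting estimates at the transitions, and only after this can one aggregate $\sum_{k}\Psi_{k}^{1/(p-1)}$ into the Wolff potential with constants that do not blow up. Your proposal is the correct strategy for the \emph{homogeneous} problem, or in the case of strictly convex $F$ where one may linearize globally, but for the degenerate quasiconvex nonhomogeneous functional it is missing the alternation mechanism that Theorem \ref{t.ex} and Lemmas \ref{t.lem}--\ref{l.pe.1} supply.
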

The next theorem is a technical reformulation of Theorem \ref{t2}. The results about the gradient are presented in terms of the nonlinear vector field $V_p(z)= |z|^{(p-2)/4}z$, which is typical in the regularity theory of degenerate problems of $p$-Laplacian type. 
\begin{theorem}\label{t5}
Under assumptions \eqref{assf}-\eqref{p0} and \eqref{f}, let $u\in W^{1,p}(\Omega,\mathbb{R}^{N})$ be a local minimizer of \eqref{exfun}, $x_{0}\in \Omega$ be a point such that \eqref{con.1.1} is verified and $M\equiv M(x_{0})$ be a positive, finite constant. Then there  are a positive number $\breve{\varepsilon}\equiv \breve{\varepsilon}(\textnormal{\texttt{data}}_{\textnormal{c}},M^{q-p})<1 $ and a threshold radius $\breve{\rr}\equiv \breve{\rr}(\textnormal{\texttt{data}}_{\textnormal{c}},M^{q-p},f(\cdot))\in (0,d_{x_{0}})$ such that if
\begin{eqnarray}\label{con.6}
\left\{
\begin{array}{c}
\displaystyle 
\ \snr{(Du)_{B_{\rr}(x_{0})}}<M\\[17pt]\displaystyle
\ \ti{\mf{F}}(u;B_{\rr}(x_{0}))+\left(\mathbf{I}^{f}_{1,m}(x_{0},\rr)\right)^{\frac{p}{2(p-1)}}+\left(\mathbf{I}^{f}_{1,m}(x_{0},\rr)\right)^{\frac{q}{2(p-1)}}<\breve{\varepsilon} ,
\end{array}
\right.
\end{eqnarray}
is verified for some $\rr\in (0,\breve{\rr}]$, then
\begin{eqnarray}\label{lv}
\lim_{\sigma\to 0}(V_{p}(Du))_{B_{\sigma}(x_{0})}=V_{p}(Du(x_{0}))
\end{eqnarray}
and
\begin{flalign}\label{lv.1}
\snr{V_{p}(Du)-(V_{p}(Du))_{B_{\varsigma}(x_{0})}}\le c\ti{\mf{F}}(u;B_{\varsigma}(x_{0}))+c\left[\mathbf{I}^{f}_{1,m}(x_{0},\varsigma)+\left(\mathbf{I}^{f}_{1,m}(x_{0},\varsigma)\right)^{q/p}\right]^{\frac{p}{2(p-1)}}
\end{flalign}
hold for all $\varsigma\in (0,\rr]$ with $c\equiv c(\textnormal{\texttt{data}}_{\textnormal{c}},M^{q-p})$. In particular, the set of all points satisfying \eqref{con.1.1} and \eqref{con.6} coincides with the set of Lebesgue points of $V_{p}(Du)$.
\end{theorem}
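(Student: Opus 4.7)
The plan is to view Theorem \ref{t5} as the natural reformulation of Theorem \ref{t2} after a change of variables to the nonlinear vector field $V_p$, and to obtain it either by running the same proof while tracking the $V_p$-excess instead of the $Du$-excess, or by deducing it directly from Theorem \ref{t2} via the classical pointwise inequalities linking $|Du-w|$ and $|V_p(Du)-V_p(w)|$. The guiding observation is that the exponents match: $p/(2(p-1)) = (p/2)\cdot 1/(p-1)$ and $q/(2(p-1)) = (p/2)\cdot q/(p(p-1))$, so raising the $Du$-estimate \eqref{lp.1} to the power $p/2$ produces precisely the shape of \eqref{lv.1}; analogously the smallness condition \eqref{con.6} is \eqref{con.1} raised to the power $p/2$, once one accepts the equivalence $\ti{\mf F}\sim \mf F^{p/2}$ at the level of oscillation measures.

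The first step is to record the standard algebraic facts on $V_p$: for all $z,w\in \mathbb{R}^{N\times n}$,
\[
|V_p(z)-V_p(w)|^{2}\;\approx\;(|z|^{2}+|w|^{2})^{(p-2)/2}|z-w|^{2}
\]
and consequently $|V_p(z)-V_p(w)|\lesssim |z-w|(|z|+|w|)^{(p-2)/2}$ when $p\ge 2$, with the symmetric bound when $1<p<2$. A Jensen-type argument then yields the telescoping control $|V_p((Du)_{B_\varsigma})-(V_p(Du))_{B_\varsigma}|\lesssim \ti{\mf F}(u;B_\varsigma)$, so that \eqref{lv} and \eqref{lv.1} will follow as soon as $|V_p(Du)-V_p((Du)_{B_\varsigma})|$ is controlled pointwise by the right-hand side of \eqref{lv.1}.

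The second step is the actual iteration. Starting from the hypotheses \eqref{con.1.1}--\eqref{con.6}, one shows by a dyadic induction on balls $B_{\sigma_j}(x_0)$ with $\sigma_j\to 0$ that both the mean value $|(Du)_{B_{\sigma_j}}|$ remains comparable to $M$ and the $V_p$-excess $\ti{\mf F}(u;B_{\sigma_j})$ satisfies an iterated excess-decay inequality of the form
\[
\ti{\mf F}(u;B_{\sigma_{j+1}})\;\le\;\theta\,\ti{\mf F}(u;B_{\sigma_j})+c\Big(\sigma_j^{m}\mint_{B_{\sigma_j}}|f|^{m}\Big)^{\frac{p}{2m(p-1)}},
\]
where $\theta<1$ is the decay factor produced by the quasiconvex $\mathcal A$-harmonic comparison lemma available in the smooth regime $|(Du)_{B_{\sigma_j}}|\sim M$. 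The summation of the Riesz-type contributions across scales reproduces the potential $\mathbf I^{f}_{1,m}(x_0,\varrho)^{p/(2(p-1))}$, plus its $q/p$-counterpart coming from the $(p,q)$-mismatch in \eqref{roughass}; this is the $V_p$-version of the iteration already run for Theorem \ref{t2}. Summing the resulting telescoping inequality gives simultaneously the existence of the limit in \eqref{lv} and the bound \eqref{lv.1}, and the Lebesgue-point characterisation comes from the fact that the hypotheses \eqref{con.1.1}--\eqref{con.6} are open in $x_0$ along the relevant sequence of radii.

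The main obstacle is preserving the nondegenerate regime throughout the iteration. For $p\neq 2$ the map $V_p$ is not bi-Lipschitz near the origin, so the equivalence between smallness of $\mf F$ and of $\ti{\mf F}$ degenerates exactly where the $p$-Laplacian itself does; this is why the upper bound $M$ on $|(Du)_{B_\varrho}|$ enters and why the threshold $\breve\varepsilon$ is forced to depend on $M^{q-p}$. Keeping $|(Du)_{B_{\sigma_j}}|$ strictly bounded away from a catastrophic collapse requires that the excess plus potential contributions, measured in the $V_p$-scale, decay geometrically; this in turn dictates the precise exponents $p/(2(p-1))$ and $q/(2(p-1))$ appearing in \eqref{con.6} and \eqref{lv.1}. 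Verifying this compatibility at every dyadic stage, and matching it against the linearisation error controlled by the quasiconvex comparison estimate, is the delicate point where the argument must be executed with care.
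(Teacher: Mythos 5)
Your central observation --- that \eqref{con.1} and \eqref{con.6} are equivalent through the identity $\mf{F}^{p/2}\approx\ti{\mf{F}}$ of \eqref{equiv}, and that the potential terms match after raising to the $p/2$-power --- is correct and is precisely how the paper relates the hypotheses of Theorems \ref{t2} and \ref{t5}. Your Plan A (run the excess-decay iteration directly at the level of $V_{p}$) is the right approach and is essentially what the paper does: in Lemmas \ref{l.pe} and \ref{l.pe.1} the $Du$-estimate \eqref{lp.1} and the $V_p(Du)$-estimate \eqref{lv.1} are derived simultaneously by two parallel telescoping arguments. The $V_p$-telescope $\sum_j\snr{(V_p(Du))_{B_{j+1}}-(V_p(Du))_{B_j}}\lesssim\sum_j\ti{\mf{F}}_j\approx\sum_j\mf{F}_j^{p/2}$ is controlled by the nonhomogeneous excess functional $\mf{N}(x_0,\cdot)$, which is built from $\mf{F}^{p/2}$ and $(\mathbf{I}^{f}_{1,m})^{p/(2(p-1))}$ from the outset, so the exponents in \eqref{lv.1} come out automatically.

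However, your Plan B --- deducing \eqref{lv.1} from \eqref{lp.1} by a pointwise inequality between $\snr{z-w}$ and $\snr{V_p(z)-V_p(w)}$ --- does not work for $p>2$. Raising \eqref{lp.1} to the $p/2$-power controls $\snr{Du(x_0)-(Du)_{B_\varsigma}}^{p/2}$, but this is not comparable to $\snr{V_p(Du(x_0))-V_p((Du)_{B_\varsigma})}$ in the nondegenerate regime: by \eqref{Vm} the latter is comparable to $(\snr{Du(x_0)}^2+\snr{(Du)_{B_\varsigma}}^2)^{(p-2)/4}\snr{Du(x_0)-(Du)_{B_\varsigma}}$, and when $\snr{(Du)_{B_\varsigma}}\approx M$ while the oscillation is small this prefactor is $\approx M^{(p-2)/2}$, far larger than the oscillation raised to the power $(p-2)/2$. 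The inequality $\snr{V_p(z)-V_p(w)}\lesssim\snr{z-w}^{p/2}$ holds only in the degenerate regime $\snr{z}+\snr{w}\lesssim\snr{z-w}$; using instead the $M$-bound on the averages, the best the conversion yields is $M^{(p-2)/2}\mf{F}(u;B_\varsigma)$ on the right-hand side, which dominates $\ti{\mf{F}}\approx\mf{F}^{p/2}$ as soon as $\mf{F}$ is small. The exponent match you noticed is a feature of running the iteration at the $V_p$-level, not something that can be read off the finished estimate for $Du$.

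Finally, the one-step geometric decay inequality with fixed $\theta<1$ that you write down does not suffice on its own: the nondegenerate regime is not automatically stable when $f\not\equiv 0$, which is the whole point of the exit-time and maximal-iteration-chain machinery (the ``blocks and chains'' scheme of Section \ref{ex}, and Lemmas \ref{til} and \ref{til.1}). You correctly flag that preserving the nondegenerate regime is the crux, but the claim that $\snr{(Du)_{B_{\sigma_j}}}$ ``remains comparable to $M$'' is too strong; what one obtains is only the upper bound $\snr{(Du)_{B_{\sigma_j}}}\le 2^{6}(M+1)$ as in \eqref{68.1}, while the averages may collapse toward zero, forcing a switch from the $\mathcal{A}$-harmonic comparison (Proposition \ref{p1}) to the $p$-harmonic comparison (Proposition \ref{p3}) and back, patched together along the iteration chains.
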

Let us point out that if the functional $\mathcal{F}(\cdot)$ coincides with its Lebesgue-Serrin-Marcellini extension $\bar{\mathcal{F}}(\cdot)$ or $\mathcal{F}(\cdot)$ is strictly $W^{1,p}$-quasiconvex in the sense of Remark \ref{rpsqc} below, then the results in Theorems \ref{t3}-\ref{t5} hold for local minimizers of $\mathcal{F}(\cdot)$.

\subsection{Nonuniform ellipticity, quasiconvexity, and nonlinear potentials} \label{pqq} Nonuniform ellipticity is a classical topic in regularity theory. In the case of elliptic and parabolic equations a by now large literature is available; see for instance \cite{laur, iv, serrin} for a review of some classical results. In the standard terminology, nonuniform ellipticity of an integrand $F(\cdot)$ occurs when the ratio \rif{rapporto} is not bounded. When $\partial^2F(\cdot)$ is elliptic only on rank-one tensors, a definition like the one in \rif{ellr} appears to be more suitable to qualify (the rate of) nonuniform ellipticity. For variational integrals, nonuniform ellipticity has been framed by Marcellini in the setting of functionals with so-called $(p,q)$-growth conditions \cite{ma4}. This terminology refers to the case of functionals as in \rif{fun}, satisfying growth conditions of the type 
$
|z|^p \lesssim F(z) \lesssim |z|^q+1.
$
This is precisely our setting. Marcellini's papers deal with convex integrands, both in the scalar and in the vectorial case \cite{ma4, maexp}. We also note that Marcellini has proved the first lower semicontinuity results in quasiconvex case \cite{ma3, ma2} under $(p,q)$-growth conditions, see also the recent surveys \cite{masu1, masu2}. In this respect, further results can be found in \cite{foma, k1, kna}. The common underlying feature of all such papers is the assumption that the gap $q/p$ is close to one, depending on the dimension $n$, i.e., 
\eqn{generalbound}
$$
\frac qp < 1 + \texttt{o}(n)\,, \qquad  \texttt{o}(n) \approx \frac 1n\,.
$$
Such a bound serves to slow down the possible blow up of the ellipticity ratio in \rif{rapporto} and it is unavoidable in this setting, as shown by counterexamples \cite{ma3}. Recent work on finding better bounds in the autonomous case includes for instance \cite{BS, hs,s}. Conditions of the type in \rif{generalbound} also intervene in proving lower semicontinuity results for quasiconvex multiple integrals, with respect to suitable weak convergences \cite{ma2, foma, k1}. In the case of nonautonomous integrands $F(x,z)$, the relevant bounds, still of the type in \rif{generalbound}, also incorporate informations concerning the regularity of the integrand $F(\cdot)$ with respect to the $x$-variable \cite{demi1, demi2, demi3, ELM}. We refer to the recent survey \cite{dark2} for a review of such results and various versions of \rif{generalbound} in connection with the structural assumptions on $F(\cdot)$. Not surprisingly, conditions as \rif{generalbound} also appear in the vectorial setting, when  partial regularity comes into play. Partial regularity in vectorial problems is a classical topic that has been started by Evans \cite{ev} as far as quasiconvexity is concerned, see \cite{wild} for references and historical notes. Subsequently, Evans's result has been extended in various directions; for instance, local minimizers are treated in \cite{kt} by Kristensen \& Taheri, and an estimate on the size of the singular set of solutions can be found in \cite{km}; degenerate cases are considered in \cite{dumi}, partial regularity of solutions, rather then the gradients, is proved in \cite{fomi}. As far as $(p,q)$-growth conditions are concerned, partial regularity results have been obtained in the convex case, see for instance \cite{BF,s}. Better results can be achieved when considering special structures, including estimates for the singular set, as for instance done by Tachikawa \cite{tac}; see also \cite{cdk,px, demi2, rata}. The first partial regularity results for quasiconvex integrals have been obtained by Schmidt \cite{ts, ts1, ts2}, who treated bot the case of $W^{1,p}$-quasiconvexity and the one of relaxed minimizers, that is the same we deal with here. In all such cases, the forcing term $f$ in \rif{fun} does not appear. Our approach to partial regularity builds on new ingredients coming from recent developments in Nonlinear Potential Theory. For the scalar case we refer to \cite{kumig}, where a survey of results and samples of proofs and techniques can be found. As far as the nonstandard growth cases and measurable coefficients are concerned, results can be found in \cite{ba2, bk, by1, cyz}. For the vectorial case, we refer to \cite{by, kumi, kumi2}, and in fact we shall also build on the techniques introduced there.  \subsection{Technical novelties} The major tool to prove gradient partial regularity for minimizers of multiple integrals is a Morrey-type decay estimate for the excess functional. In case of nondegenerate, homogeneous problems, excess decay essentially comes by combining the approximate "$\mathcal{A}$-harmonic" character of minimizers together with Caccioppoli inequality, \cite{ev,gk,ts}. When dealing with homogeneous, degenerate problems, the situation becomes slightly more involved as it is necessary to analyze two possible situations: the nondegenerate regime, in which the gradient average is not very small compared to the excess: in this case one can quantify the distance (in average) between a minimizer and a suitable harmonic function; and the degenerate one, characterized by the smallness of the gradient average with respect to the excess functional - solutions now will be close to a $p$-harmonic map, \cite{af,dumi,ts2}. At this stage it is important to notice that the passage from nondegenerate to degenerate regime may feature a change of scale, and the estimates obtained within each of them hold at different scales, \cite{dumi, ts2}. A delicate exit time argument then rules the succession of degenerate or nondegenerate phases and matches the corresponding decay estimates. The crucial aspect of this iterative procedure is the stability of the nondegenerate regime, in the sense that if it holds at one scale, it can be automatically transferred at all successive scales. The stability of the nondegenerate regime drastically fails when the problem is nonhomogeneous, regardless the degree of smoothness of the right-hand side term $f$. To cope with this issue, we introduce a new iterative "blocks and chains" scheme based on potential theoretic techniques \cite{kumig,kumi,kumi2}, which works as sketched in the following bullets.
\begin{itemize}
    \item A (possibly countably infinite) sequence of maximal iteration chains is introduced to compare the size of the composite excess functional (i.e. gradient average plus excess functional) to that of a certain nonlinear function accounting for the contribution of $f$. The reference interval (domain of the excess functional, \eqref{excess} below) is then split into disjoint subintervals determined according to the behavior of the composite excess functional with respect to the $f$-depending term. 
    \item Whenever the composite excess functional is smaller than the $f$-depending term, the decay estimate follows almost tautologically; while, if the opposite scenario occurs, it is necessary to analyze the degenerate/nondegenerate behavior of the integrand via exit time arguments of the type considered in \cite{dumi}, that will not only merge the decay estimates obtained at different scales, but will also point out that in correspondence of large (compared to the $f$-component) composite excess functional, the nondegenerate regime is stable and cannot end as long as the composite excess remains larger than the $f$-term. 
    \item The analysis made at the previous point provides a series of decay estimates whose validity depends on each subinterval determined by the iteration chains. This at a first sight might seem quite endless, however, exploiting the connections established by the iteration chains between the composite excess functional and the $f$-depending term it is possible to assemble the various subintervals into disjoint blocks, whose union coincides with the whole reference interval and, for any radius belonging to each block, a complete decay estimate for the excess functional is produced. This approach seems to be new already for smooth data $f$.
\end{itemize}
Since we are working within the Legendre-Hadamard nonuniformly elliptic setting \eqref{ellr}, an important aspect of the whole procedure described so far consists in assuring uniform control on the size of gradient averages during iterations. This is quite a standard tool in the literature, cf. \cite{gk,ts,ts2,ts1}. However, the only information available on $f$, i.e. the mere finiteness of the potential $\textbf{I}^{f}_{1,m}(\cdot)$, prevents us from following the classical approach that would unavoidably lead to a violation of the borderline regularity condition imposed on $f$. Rather, we exploit a delicate inductive scheme from \cite{kumi,kumi2} relying on the finiteness of $\textbf{I}^{f}_{1,m}(\cdot)$, that allows transferring the controlled boundedness of the gradient average at all scales. We refer to Section \ref{ex} for more details on this matter.
\section{Preliminaries, notation, assumptions}\label{pre}
In this section we shall record our notation, describe the basic structural assumptions governing the integrand $F(\cdot)$ and the forcing term $f$ displayed in \eqref{fun} and collect some auxiliary results that will be helpful at various stages of the paper.
\subsection{Notation}\label{notsec}
In the following, $\Omega\subset \er^n$ denotes an open, bounded domain with Lipschitz boundary, and $n \geq 2$. We denote by $c$ a general constant larger than one. Different occurrences of constant $c$ from line to line will be still denoted by $c$. Special occurrences will be denoted by $c_*,  \tilde c$ or likewise. Relevant dependencies on parameters will be as usual emphasized by putting them in parentheses. We denote by $ B_r(x_0):= \{x \in \er^n  : |x-x_0|< r\}$ the open ball with center $x_0$ and radius $r>0$; we omit denoting the center when it is not necessary, i.e., $B \equiv B_r \equiv B_r(x_0)$; this especially happens when various balls in the same context share the same center. For $x_{0}\in \Omega$, it is $d_{x_{0}}:=\min\left\{1,\dist(x_{0},\partial \Omega)\right\}$. 
With $\mathcal B \subset \er^{n}$ being a measurable set with bounded positive measure $0<|\mathcal B|<\infty$, and with $a \colon \mathcal B \to \er^{k}$, $k\geq 1$, being a measurable map, we denote $$
   (a)_{\mathcal B} \equiv \mint_{\mathcal B}  a(x) \dx  :=  \frac{1}{|\mathcal B|}\int_{\mathcal B}  a(x) \dx\;.
$$
We shall often use the elementary "minimality" property of the average, i.e.:
\begin{eqnarray}\label{minav}
\left(\mint_{B}\snr{a-(a)_{B}}^{t} \ \dx\right)^{1/t}\le 2\left(\mint_{B}\snr{a-z}^{t} \dx\right)^{1/t}
\end{eqnarray}
for all $z\in \mathbb{R}^{N\times n}$ and any $t\ge 1$. Whenever $t\ge 1$, $s\ge 0$, $q\ge p> 1$, we abbreviate:
\begin{eqnarray}\label{ik}
\mf{I}_{t}(a;\mathcal{B}):=\left(\mint_{\mathcal{B}}\snr{a(x)}^{t} \dx\right)^{\frac{1}{t}},\qquad \qquad \mf{K}(s):=s+s^{q/p}
\end{eqnarray}
and define
\begin{eqnarray}\label{ik.1}
\mathds{1}_{\{q>p\}}:=\begin{cases}
\ 1\quad &\mbox{if} \ \ q>p\\
\ 0\quad &\mbox{if} \ \ q=p, 
\end{cases}\qquad\qquad \mathds{1}_{\{p>2\}}:=\begin{cases}
\ 1\quad &\mbox{if} \ \ p>2\\
\ 0\quad &\mbox{if} \ \ p=2.
\end{cases}
\end{eqnarray}
Finally, if $t>1$ we will denote its conjugate by $t':=t/(t-1)$ and its Sobolev esponent as $t^{*}:=nt/(n-t)$ if $t<n$ or any number larger than one for $t\ge n$. To simplify the notation, we shall collect the main parameters governing the problem under investigation in the shorthands
$$
\textnormal{\texttt{data}}:=\left(n,N,\lambda,\Lambda,p,q,m\right)\qquad\mbox{and}\qquad  \textnormal{\texttt{data}}_{\textnormal{c}}:=\left(\textnormal{\texttt{data}},\mu(\cdot),\omega(\cdot)\right),
$$
we refer to Section \ref{assec} for more details on the various quantities appearing above.
\subsection{Structural assumptions}\label{assec} We assume that the integrand $F\colon \mathbb{R}^{N\times n}\to \mathbb{R}$ satisfies:
\begin{flalign}\label{assf}
\begin{cases}
\ F\in C^{2}_{\loc}(\mathbb{R}^{N\times n})\\
\ \Lambda^{-1}\snr{z}^{p}\le F(z)\le \Lambda \left[\snr{z}^{p}+\snr{z}^{q}\right]\\
\ \snr{\partial^{2}F(z)}\le \Lambda \left[\snr{z}^{p-2}+\snr{z}^{q-2}\right]\\
\ \snr{\partial^{2} F(z_{1})-\partial^{2}F(z_{2})}\le\mu\left(\frac{\snr{z_{2}-z_{1}}}{\snr{z_{2}}+\snr{z_{1}}}\right)\left[ \left(\snr{z_{1}}^{2}+\snr{z_{2}}^{2}\right)^{\frac{p-2}{2}}+\left(\snr{z_{1}}^{2}+\snr{z_{2}}^{2}\right)^{\frac{q-2}{2}}\right]
\end{cases}
\end{flalign}
for all $z,z_{1},z_{2}\in \mathbb{R}^{N\times n}$. In \eqref{assf}, $\Lambda\ge 1$ is an absolute constant, exponents $(p,q)$ verify condition
\begin{eqnarray}\label{pq}
q\ge p\ge 2,\qquad  \qquad q<p+\frac{1}{n}
\end{eqnarray}
and $\mu\colon [0,\infty)\to [0,1]$ is a modulus of continuity, i.e. a bounded, concave and non-decreasing function. The crucial assumption is that $F(\cdot)$ is strictly degenerate quasiconvex, in the sense that whenever $B\Subset \Omega$ is a ball it holds that 
\begin{flalign}\label{sqc}
\int_{B}\left[F(z+D\varphi)-F(z)\right] \dx\ge \lambda\int_{B}(\snr{z}^{2}+\snr{D\varphi}^{2})^{\frac{p-2}{2}}\snr{D\varphi}^{2} \dx\qquad \mbox{for all} \ \ z\in \mathbb{R}^{N\times n}, \ \ \varphi\in C^{\infty}_{c}(B,\mathbb{R}^{N}),
\end{flalign}
where $\lambda$ is a positive, absolute constant. Moreover, we assume that $F(\cdot)$ features degeneracy of $p$-Laplacean type at the origin, i.e., 
\begin{eqnarray}\label{p0}
\left| \ \frac{\partial F(z)-\partial F(0)-\snr{z}^{p-2}z}{\snr{z}^{p-1}} \ \right|\to_{\snr{z}\to 0}0\,,
\end{eqnarray}
which means that we can find a function $\omega\colon (0,\infty)\to (0,\infty)$ such that
\begin{eqnarray}\label{p0.1}
\snr{z}\le \omega(s) \ \Longrightarrow \ \snr{\partial F(z)-\partial F(0)-\snr{z}^{p-2}z}\le s\snr{z}^{p-1},
\end{eqnarray}
for every $z\in \mathbb{R}^{N\times n}$ and all $s\in (0,\infty)$.
 The linear ingredient $f\colon \Omega\to \mathbb{R}^{N}$ appearing in \eqref{fun} is such that
\begin{eqnarray}\label{f}
f\in L^{m}(\Omega,\mathbb{R}^{N})\quad \mbox{with} \ \ 2>m>\begin{cases}
\ 2n/(n+2)\quad &\mbox{if} \ \ n>2\\
\ 3/2\quad &\mbox{if} \ \ n=2
\end{cases}
\end{eqnarray}
holds. Let us briefly discuss some consequences of our assumptions. The validity of \eqref{f}, immediately implies that
\begin{eqnarray}\label{f.0}
f\in W^{1,p}(\Omega,\mathbb{R}^{N})^{*}\qquad \mbox{and}\qquad m'<2^{*}\stackrel{\eqref{pq}_{1}}{\le}p^{*}.
\end{eqnarray}
Of course, with $\Lambda$ being the constant in $\eqref{assf}_{2}$, we may always assume that $0<\lambda\le \Lambda$. From $\eqref{assf}_{2}$ and \eqref{qc} we deduce that
\begin{eqnarray}\label{df}
\snr{\partial F(z)}\le c\left[\snr{z}^{p-1}+\snr{z}^{q-1}\right],
\end{eqnarray}
with $c\equiv c(n,N, \Lambda,p,q)$, cf. \cite[proof of Theorem 2.1]{ma3}. Finally, \eqref{sqc} yields that for all $z\in \mathbb{R}^{N\times n}$, $\xi\in \mathbb{R}^{N}$, $\zeta\in \mathbb{R}^{n}$ it holds that
\begin{eqnarray}\label{sqc.1}
\partial^{2}F(z)\langle\xi\otimes \zeta,\xi\otimes \zeta\rangle\ge 2\lambda\snr{z}^{p-2}\snr{\xi}^{2}\snr{\zeta}^{2},
\end{eqnarray}
see \cite[Chapter 5]{giu} and \cite[Lemma 7.14]{ts1}.
\begin{remark}\label{rpsqc}
\emph{A stronger notion of quasiconvexity than \eqref{qc} prescribes that}
\begin{flalign}\label{qcp}
\mint_{B_{1}(0)}F(z+D\varphi) \ \dx\ge F(z)\qquad \mbox{holds for all} \ \ z\in \mathbb{R}^{N\times n}, \ \ \varphi\in W^{1,p}_{0}(B_{1}(0),\mathbb{R}^{N}),
\end{flalign}
\emph{which can be further strengthened into}
\begin{flalign}\label{sqcp}
\int_{B}\left[F(z+D\varphi)-F(z)\right] \dx\ge \lambda\int_{B}(\snr{z}^{2}+\snr{D\varphi}^{2})^{\frac{p-2}{2}}\snr{D\varphi}^{2} \dx\qquad \mbox{for all} \ \ z\in \mathbb{R}^{N\times n}, \ \ \varphi\in W^{1,p}_{0}(B,\mathbb{R}^{N}),
\end{flalign}
\emph{Condition \eqref{qcp} is known in the literature as $W^{1,p}$-quasiconvexity \cite{bamu}, and it is necessary for the lower semicontinuity of functional. Condition \eqref{sqcp}, the strict counterpart of \eqref{qcp}, plays a crucial role in partial regularity for degenerate elliptic problems \cite{dumi,ts2}. By basic density arguments, \eqref{qcp}-\eqref{sqcp} are equivalent to \eqref{qc}-\eqref{sqc} if the integrand $F(\cdot)$ has standard $p$-growth. On the other hand, for integrands with $(p,q)$-growth, it seems that the only way to relate \eqref{qc}-\eqref{sqc} to \eqref{qcp}-\eqref{sqcp} involves the Lebesgue-Serrin-Marcellini extension of functional $w\mapsto \int F(Dw) \ \dx$, but it is unclear whether these two objects may be related to each other as in the convex setting or not.}
\end{remark}
\begin{remark}\label{assumptions}
\emph{Conditions $\eqref{assf}_{2,3}$ are assumed in their full strength in order to keep at a reasonable level the considerable amount of technicalities involved in this paper. In fact, instead of $\eqref{assf}_{2}$ we can assume
\begin{eqnarray}\label{assf.1.1}
\Lambda^{-1}\snr{z}^{p}\le F(z)\le \Lambda(1+\snr{z}^{q}),
\end{eqnarray}
while $\eqref{assf}_{3}$ can be relaxed by imposing power growth on second derivatives only for small values of their argument: there is $\delta_{1}\equiv \delta_{1}(F(\cdot))>0$ such that
\begin{eqnarray}\label{mm.1}
\snr{z}\le \delta_{1} \ \Longrightarrow \ \snr{\partial^{2}F(z)}<c\snr{z}^{p-2},
\end{eqnarray}
holds true for some $c\equiv c(F(\cdot))$. In turn, $\eqref{assf}_{1}$ and \eqref{mm.1} imply that whenever $L>0$ is a positive constant, we have
\begin{eqnarray}\label{mm.1.1}
\snr{z}\le L+1 \ \Longrightarrow \ \snr{\partial^{2}F(z)}\le c\snr{z}^{p-2},
\end{eqnarray}
with $c\equiv c(F(\cdot),p,L)$, see \cite[Section II]{af} and \cite[Remark 4.2]{ts2}. Combining \eqref{assf.1.1} with \eqref{qc}, we obtain $\snr{\partial F(z)}\le c(1+\snr{z}^{q-1})$ for $c\equiv c(n,N,\Lambda,p,q)$ and, using also \eqref{mm.1.1} we get that whenever $z_{0}\in \mathbb{R}^{N\times n}$ verifies $\snr{z_{0}}\le L+1$, it is
\begin{eqnarray*}
\left\{
\begin{array}{c}
\displaystyle 
\ \snr{F(z_{0}+z)-F(z_{0})-\langle\partial F(z_{0}),z\rangle}\le c\left(\snr{V_{\snr{z_{0}},p}(z)}^{2}+\snr{V_{\snr{z_{0}},q}(z)}^{2}\right) \\[8pt]\displaystyle
\ \snr{\partial F(z_{0}+z)-\partial F(z_{0})}\le c\snr{z}^{-1}\left(\snr{V_{\snr{z_{0}},p}(z)}^{2}+\snr{V_{\snr{z_{0}},q}(z)}^{2}\right) ,
\end{array}
\right.
\end{eqnarray*}
for all $z\in \mathbb{R}^{N\times n}$, with $c\equiv c(n,N,\Lambda,p,q,F(\cdot),L)$, cf. \cite[Lemma 4.3]{ts2}. Moreover, $\eqref{assf}_{4}$ can be weakened by assuming its validity only in a neighborhood of the origin (of course in this case the term raised to the $(p-2)$-power dominates the one at the $(q-2)$-power). Precisely, we shall ask that there is $\delta_{2}\equiv \delta_{2}(F(\cdot))>0$ and a modulus of continuity $\mu(\cdot)$ such that for $\snr{z_{1}},\snr{z_{2}}\le \delta_{2}$ we have
\begin{flalign}\label{mm.2}
\snr{\partial^{2}F(z_{1})-\partial^{2}F(z_{2})}\le \mu\left(\frac{\snr{z_{2}-z_{1}}}{\snr{z_{2}}+\snr{z_{1}}}\right) \left(\snr{z_{1}}^{2}+\snr{z_{2}}^{2}\right)^{\frac{p-2}{2}}
\end{flalign}
and, via \eqref{mm.2} and $\eqref{assf}_{1}$ we deduce that, whenever $L>0$ is a constant, $\partial^{2}F(\cdot)$ is uniformly continuous on the strip $\left\{z\in \mathbb{R}^{N\times n}\colon \delta_{2}/2\le \snr{z}\le 2L\right\}$, which means that \eqref{mm.2} actually holds for all $z_{1},z_{2}\in \mathbb{R}^{N\times n}$ with $0<\snr{z_{1}}\le L$ and $0\le \snr{z_{2}}\le 2L$, with a modulus of continuity $\mu_{L}(\cdot)$ depending on $L$, see \cite[Remark 5.4]{ts2}. Clearly, there is no loss of generality in assuming that \eqref{mm.1} and \eqref{mm.2} admit the same smallness thresholds, i.e.: $\delta_{1}=\delta_{2}$. This means that all our results are true also with \eqref{assf.1.1}, \eqref{mm.1} and \eqref{mm.2} replacing $\eqref{assf}_{2}$, $\eqref{assf}_{3}$ and $\eqref{assf}_{4}$ respectively - the price to pay is that in this new framework all the bounding constants depend on $L$, which will be always chosen as a function of $M$ (the limiting constant controlling averages appearing in Sections \ref{ndeg.1}-\ref{pgc}) in an unquantified way.
}
\end{remark}

\subsection{The Lebesgue-Serrin-Marcellini extension}\label{lex}
Let $\mathbb{B}_{\Omega}$ be the family of all open subsets of $\Omega$ and $B\in \mathbb{B}_{\Omega}$. For $1<p\le q<\infty$, a continuous integrand $F(\cdot)$, and functions $f\in W^{1,p}(\Omega,\mathbb{R}^{N})^{*}$, $w\in W^{1,p}(\Omega,\mathbb{R}^{N})$, the Lebesgue-Serrin-Marcellini extension of functional \eqref{fun} is defined as
\begin{eqnarray*}
\bar{\mathcal{F}}(w;B):=\inf_{\{w_{j}\}_{j\in \N}\in \mathcal{C}(w;B)}\liminf_{j\to \infty}\mathcal{F}(w_{j};B),
\end{eqnarray*}
with
\begin{eqnarray*}
\mathcal{C}(w;B):=\left\{\{w_{j}\}_{j\in \N}\subset W^{1,q}_{\loc}(B,\mathbb{R}^{N})\cap W^{1,p}(B,\mathbb{R}^{N})\colon w_{j}\rightharpoonup w \ \mbox{weakly in} \ W^{1,p}(B,\mathbb{R}^{N})\right\}.
\end{eqnarray*}

Let us remark that $\mathcal{C}(w;B)\not =\{\emptyset\}$ as smooth functions are $W^{1,p}$-dense. Moreover, since the datum $f$ belongs to the dual of $W^{1,p}(B,\mathbb{R}^{N})$ cf. \eqref{f}-\eqref{f.0}, and so $w\mapsto \int_{B}f\cdot w \ \dx$ is a linear, continuous functional, it is possible to simplify the above definition by noticing that it yields
\begin{eqnarray}\label{ls.7}
\bar{\mathcal{F}}(w;B)=\bar{\mathcal{F}}_{0}(w;B)-\int_{B}f\cdot w \ \dx,
\end{eqnarray}
where $\bar{\mathcal{F}}_{0}(\cdot)$ is the Lebesgue-Serrin-Marcellini extension of $\mathcal{F}_{0}(w;B):=\int_{B}F(Dw) \ \dx$. 
Before proceeding further, let us recall that the quasiconvex envelope of a continuous integrand $F(\cdot)$ is defined as:
\begin{eqnarray*}
\mathbb{R}^{N\times n}\ni z\mapsto QF(z):=\inf_{\varphi\in C^{\infty}_{c}(B_{1}(0),\mathbb{R}^{N})}\int_{B_{1}(0)}F(z+D\varphi) \ \dx.
\end{eqnarray*}
It is well-known that if $F(\cdot)$ satisfies \eqref{qc}, then
\eqn{ls.8}
$$
QF(z)=F(z)\qquad \mbox{for all} \ \ z\in \mathbb{R}^{N\times n}
$$
and, if in addition $F(z)\le \Lambda(1+\snr{z}^{q})$ and $1<p\le q<np/(n-1)$, it is 
\eqn{ls.16}
$$
\bar{\mathcal{F}}_{0}(w;B)=\int_{B}QF(Dw) \ \dx=\mathcal{F}_{0}(w;B)\qquad \mbox{for all} \ \ B\in \mathbb{B}_{\Omega}, \ \ w\in W^{1,q}(B,\mathbb{R}^{N}),
$$
see \cite{foma,ts2,ts1}, thus $\bar{\mathcal{F}}_{0}(\cdot)$ can be seen as a way to extend by semicontinuity $\mathcal{F}_{0}(\cdot)$ from $W^{1,q}_{\loc}$ to $W^{1,p}$, \cite{ma2}. We stress that already in the quasiconvex setting $\mathcal{F}_{0}(\cdot)$ does not necessarily coincide with its Lebesgue-Serrin-Marcellini extension on $W^{1,p}$, \cite{foma}. Following \cite{bfm,foma,ts2,ts1}, we list some relevant properties of $\bar{\mathcal{F}}_{0}(\cdot)$. We first report the measure representation result from \cite{bfm,foma}.
\begin{proposition}\label{ls.p1}
Assume that $1<p\le q<np/(n-1)$ and $F\colon \mathbb{R}^{N\times n}\to \mathbb{R}$ is a continuous integrand so that $F(z)\le \Lambda(1+\snr{z}^{q})$. Then, for every $w\in W^{1,p}(\Omega,\mathbb{R}^{N})$ with $\bar{\mathcal{F}}_{0}(w;\Omega)<\infty$ there exists a uniquely determined finite outer Radon measure $\mu_{w}$ on $\Omega$ so that $\bar{\mathcal{F}}_{0}(w;\cdot)=\left.\mu_{w}\right|_{\mathbb{B}_{\Omega}}$ whose absolutely continuous part has density equal to $QF(Dw)$ with respect to the Lebesgue measure $\mathcal{L}^{n}$ on $\Omega$, that is $$\frac{\d\mu_{w}}{\d\mathcal{L}^{n}}=QF(Dw)\quad \mbox{holds almost everywhere on} \ \ \Omega.$$
\end{proposition}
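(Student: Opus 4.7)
The plan is to follow the by now classical Fonseca--Mal\'y/Braides--Fonseca--Mal\'y measure representation strategy, suitably adapted to the $(p,q)$-growth framework, which is why the gap condition $q<np/(n-1)$ plays the decisive role. Throughout, $w\in W^{1,p}(\Omega,\mathbb{R}^{N})$ is fixed with $\bar{\mathcal{F}}_{0}(w;\Omega)<\infty$, and the set function under investigation is $\mathbb{B}_{\Omega}\ni B\mapsto \bar{\mathcal{F}}_{0}(w;B)$.

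The first step is to show that this set function extends to a finite Borel (hence, by standard inner approximation, Radon) measure on $\Omega$. By the De Giorgi--Letta criterion it suffices to verify monotonicity, inner regularity, subadditivity, and additivity on disjoint open sets. Monotonicity and additivity on disjoint sets are immediate from the definition, and inner regularity follows by an exhaustion argument together with the uniform $q$-growth upper bound $F(z)\le \Lambda(1+\snr{z}^{q})$, which produces recovery sequences with controlled energy in thin shells. The genuine difficulty is subadditivity: given $A_{1},A_{2}\in\mathbb{B}_{\Omega}$ and recovery sequences $\{w_{j}^{(i)}\}\subset W^{1,q}_{\loc}(A_{i},\mathbb{R}^{N})$ with $w_{j}^{(i)}\rightharpoonup w$ in $W^{1,p}$, one must glue them on an intermediate annular region so as to land back in a single admissible class $\mathcal{C}(w;A_{1}\cup A_{2})$. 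I would perform the standard De Giorgi slicing via a partition-of-unity cutoff on a thin shell $A_{1}\cap A_{2}\cap\{\rho_{k}<\dist(\cdot)<\rho_{k+1}\}$ and invoke the pigeonhole principle to select a good level where the cross-derivative term is small. The key technical input needed here is an Acerbi--Fusco-type Lipschitz truncation of the sequences $w_{j}^{(i)}-w$; this is exactly where the bound $q<np/(n-1)$ enters, because it guarantees that the interpolation between the $W^{1,p}$-weak convergence and the $q$-integrability of the gluing cutoff can be carried out with control on the $L^{q}$-norm of the truncated gradients, so that the limit inferior of $\mathcal{F}_{0}$ along the glued sequence is bounded by the sum $\bar{\mathcal{F}}_{0}(w;A_{1})+\bar{\mathcal{F}}_{0}(w;A_{2})+o(1)$.

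Once the measure $\mu_{w}$ is produced and seen to be finite (since $\mu_{w}(\Omega)=\bar{\mathcal{F}}_{0}(w;\Omega)<\infty$), the second step is to compute its Radon--Nikodym density with respect to $\mathcal{L}^{n}$ at a.e.\ point $x_{0}\in\Omega$ and identify it with $QF(Dw(x_{0}))$. The upper bound is obtained by constructing, at any Lebesgue point $x_{0}$ of $Dw$, a competitor as follows: take the affine map $\ell(x):=w(x_{0})+Dw(x_{0})(x-x_{0})$, use the definition of $QF$ to pick $\varphi_{\eta}\in C^{\infty}_{c}(B_{r}(x_{0}),\mathbb{R}^{N})$ realizing $QF(Dw(x_{0}))$ up to $\eta$, glue $\ell+\varphi_{\eta}$ to $w$ on a thin shell via the same slicing trick, and rescale; controlling the error terms through the $q$-growth and the Lebesgue-point property yields $\frac{d\mu_{w}}{d\mathcal{L}^{n}}(x_{0})\le QF(Dw(x_{0}))$. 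The lower bound is the standard blow-up: rescale $w$ around $x_{0}$ via $w_{r}(y):=r^{-1}(w(x_{0}+ry)-w(x_{0}))$, observe that $w_{r}\to \ell_{0}(y):=Dw(x_{0})y$ strongly in $W^{1,p}$ at a.e.\ $x_{0}$, and use the lower semicontinuity of $\bar{\mathcal{F}}_{0}$ together with \eqref{ls.8}--\eqref{ls.16} applied to the affine competitor $\ell_{0}$, which gives $QF(Dw(x_{0}))=F(Dw(x_{0}))$ as the limiting density.

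Uniqueness of $\mu_{w}$ is then routine since any two Radon measures agreeing on $\mathbb{B}_{\Omega}$ must coincide, and the non-negativity of the singular part is automatic because $\mu_{w}$ is non-negative while its absolutely continuous part has density $QF(Dw)\ge 0$. The main obstacle in the whole proof is unquestionably the subadditivity step, precisely because the natural gluing yields gradients with $q$-integrability that is not directly available along recovery sequences that merely converge weakly in $W^{1,p}$; the truncation machinery plus the bound $q<np/(n-1)$ is exactly the device that resolves this tension, and this is also the reason why the gap assumption appears as a hypothesis in the statement.
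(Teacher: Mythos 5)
The paper does not prove Proposition~\ref{ls.p1}: it is stated and cited directly from \cite{bfm,foma}, so there is no internal argument to compare against, and your sketch is essentially a reprise of the Braides--Fonseca--Mal\'y blueprint behind those references. Two points, however, need correcting.

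First, your closing line of the lower-bound step, namely that one obtains ``$QF(Dw(x_0))=F(Dw(x_0))$ as the limiting density,'' is not available in this generality. Equations \eqref{ls.8}--\eqref{ls.16} assert $QF=F$ and the coincidence of $\bar{\mathcal{F}}_{0}$ with $\mathcal{F}_{0}$ on $W^{1,q}$ under the hypothesis that $F$ satisfies the quasiconvexity condition \eqref{qc}; that hypothesis is absent from Proposition~\ref{ls.p1}, which only asks $F$ continuous with $q$-growth from above. The density in question is $QF(Dw)$, not $F(Dw)$, and the blow-up argument should simply deliver the inequality $\mathrm{d}\mu_{w}/\mathrm{d}\mathcal{L}^{n}(x_0)\ge QF(Dw(x_0))$ without any identification of $QF$ with $F$ --- this is exactly why the proposition is phrased in terms of $QF$ rather than $F$.

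Second, the mechanism through which the gap $q<np/(n-1)$ enters the subadditivity step is not an Acerbi--Fusco-type Lipschitz truncation. In \cite{foma,bfm} (and in the paper's own Lemma~\ref{exlem}, imported from \cite{foma,ts1}) the operative device is the smoothing-with-variable-radius operator $\mf{T}_{\tau_1,\tau_2}$, whose boundedness from $L^{p}$ to $L^{d}$ on thin annuli, with the correct scaling in the shell thickness, holds precisely in the range $p\le d<np/(n-1)$; see \eqref{ex.1}. It is this operator, combined with the pigeonhole/slicing selection of a good annulus, that manufactures competitors lying in $W^{1,q}_{\loc}$ at controlled cost. Lipschitz truncation is a different tool (it produces $W^{1,\infty}$ replacements on large sets but does not give the quantitative thin-annulus estimates needed here) and would not naturally yield the threshold $np/(n-1)$. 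The remainder of your plan --- De Giorgi--Letta criterion, upper bound at Lebesgue points via a $QF$-near-optimal test function on a rescaled ball, blow-up plus lower semicontinuity for the lower bound, and the trivial uniqueness step --- is consistent with the cited sources.
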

Next, we recall from \cite{foma,ts2,ts1} the semicontinuity features of $\bar{\mathcal{F}}_{0}(\cdot)$.
\begin{lemma}\label{ls.l1}
Let $\Omega\subset \mathbb{R}^{n}$ be an open, bounded domain with Lipschitz boundary and $F\colon \mathbb{R}^{N\times n}\to \mathbb{R}$ be a continuous integrand so that $F(z)\ge \Lambda^{-1}\snr{z}^{p}$. Then, $\bar{\mathcal{F}}_{0}(\cdot;\Omega)$ is sequentially weakly lower semicontinuous on $W^{1,p}(\Omega,\mathbb{R}^{N})$.
\end{lemma}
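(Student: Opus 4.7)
The statement is the standard fact that a relaxed functional is lower semicontinuous with respect to the convergence used in the relaxation. The plan is to carry out a diagonal argument that turns a weakly convergent sequence $w_k \rightharpoonup w$ in $W^{1,p}(\Omega,\mathbb{R}^N)$ into a single sequence in $\mathcal{C}(w;\Omega)$ whose energy controls $\liminf_k \bar{\mathcal{F}}_0(w_k;\Omega)$.

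First I would reduce to the nontrivial case $\Lambda_0 := \liminf_k \bar{\mathcal{F}}_0(w_k;\Omega) < \infty$ and pass, without loss of generality, to a subsequence (not relabelled) realizing the liminf as a genuine limit. Then, for each $k$, by definition of $\bar{\mathcal{F}}_0$ I would pick $\{v_{k,j}\}_{j\in\N}\in\mathcal{C}(w_k;\Omega)$ with
\begin{equation*}
\liminf_{j\to\infty}\mathcal{F}_0(v_{k,j};\Omega)\le \bar{\mathcal{F}}_0(w_k;\Omega)+\frac{1}{k}
\end{equation*}
and for each fixed $k$ choose an index $j=j(k)$ so that $\mathcal{F}_0(v_{k,j(k)};\Omega)\le \bar{\mathcal{F}}_0(w_k;\Omega)+2/k$.

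The crux is to make the choice of $j(k)$ so that simultaneously $v_{k,j(k)}\rightharpoonup w$ in $W^{1,p}(\Omega,\mathbb{R}^N)$. The coercivity assumption $F(z)\ge\Lambda^{-1}|z|^{p}$ gives $\|Dv_{k,j(k)}\|_{L^p}^{p}\le\Lambda\bigl(\Lambda_0+3\bigr)$ for $k$ large; combined with uniform boundedness of the means or of $\|w_k\|_{L^p}$ this yields uniform $W^{1,p}$-bounds, hence we may restrict the argument to a fixed $W^{1,p}$-ball $\mathcal{K}$. On $\mathcal{K}$ the weak topology is metrizable by some metric $\d_{\rm w}$. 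Since $v_{k,j}\rightharpoonup w_k$ as $j\to\infty$ for every fixed $k$, I would choose $j(k)$ so large that $\d_{\rm w}(v_{k,j(k)},w_k)<1/k$; together with $w_k\rightharpoonup w$, a triangle-inequality argument in $\d_{\rm w}$ gives $v_{k,j(k)}\rightharpoonup w$ in $W^{1,p}(\Omega,\mathbb{R}^N)$. Consequently $\{v_{k,j(k)}\}_{k\in\N}\in\mathcal{C}(w;\Omega)$.

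Finally, using this admissible diagonal sequence inside the defining infimum for $\bar{\mathcal{F}}_0(w;\Omega)$ yields
\begin{equation*}
\bar{\mathcal{F}}_0(w;\Omega)\le \liminf_{k\to\infty}\mathcal{F}_0(v_{k,j(k)};\Omega)\le \liminf_{k\to\infty}\bar{\mathcal{F}}_0(w_k;\Omega),
\end{equation*}
which is the claim. The only real obstacle is making the diagonal extraction rigorous; the boundedness provided by $F(z)\gtrsim|z|^{p}$ makes the weak topology metrizable on the relevant set and reduces this to a routine $\varepsilon/k$ argument. Everything else is bookkeeping.
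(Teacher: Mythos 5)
Your proof is correct. The paper itself does not supply a proof of this lemma but refers to Fonseca--Mal\'y and Schmidt for it; the diagonal extraction you outline is precisely the standard argument for lower semicontinuity of a Lebesgue--Serrin(--Marcellini) relaxation, and it is what those references use in substance.

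Two small points worth tightening when you write it out. First, when you bound $\|v_{k,j(k)}\|_{L^p}$, the cleanest route is Rellich--Kondrachov: for each fixed $k$ you have $v_{k,j}\rightharpoonup w_k$ in $W^{1,p}$, hence $v_{k,j}\to w_k$ strongly in $L^p$ since $\Omega$ is bounded with Lipschitz boundary; together with the uniform bound on $\|w_k\|_{L^p}$ coming from $w_k\rightharpoonup w$, this gives the fixed $W^{1,p}$-ball $\mathcal{K}$ you need. Second, the order of operations matters slightly: for each $k$ you should first discard the indices $j$ not satisfying $\mathcal{F}_0(v_{k,j};\Omega)\le \bar{\mathcal{F}}_0(w_k;\Omega)+2/k$ and $\|v_{k,j}-w_k\|_{L^p}\le 1$ (still infinitely many survive, and the remaining subsequence still converges weakly to $w_k$), so that the relevant family lies in a ball $\mathcal{K}$ \emph{before} invoking metrizability of the weak topology there; only then pick $j(k)$ to make $\mathrm{d}_{\rm w}(v_{k,j(k)},w_k)<1/k$. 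With these minor repairs the argument closes exactly as you describe, and the final inequality is the desired lower semicontinuity.
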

We finally collect from \cite{ts2,ts1} several properties of $\bar{\mathcal{F}}_{0}(\cdot)$ that will be crucial for deriving a Caccioppoli type inequality for minimizers of \eqref{exfun}. 
\begin{lemma}\label{ls.l2}
Let $B\in \mathbb{B}_{\Omega}$, $F\in C^{1}_{\loc}(\mathbb{R}^{N\times n})$ be an integrand verifying $\eqref{assf}_{2}$ and \eqref{df} with $1<p\le q<\min\{np/(n-1),p+1\}$. Then, if $w\in W^{1,p}(\Omega,\mathbb{R}^{N})$ is so that $\bar{\mathcal{F}}_{0}(w;\Omega)<\infty$, the following holds.
\begin{itemize}
    \item For all $\varphi\in W^{1,\frac{p}{p+1-q}}(\Omega,\mathbb{R}^{N})$, $B\in \mathbb{B}_{\Omega}$ it is
    \begin{eqnarray}\label{ls.3}
    \bar{\mathcal{F}}_{0}(w+\varphi;B)-\bar{\mathcal{F}}_{0}(w;B)=\int_{B}\left[QF(Dw+D\varphi)-QF(Dw)\right] \ \dx.
    \end{eqnarray}
    \item For all $B\in \mathbb{B}_{\Omega}$, $z_{0}\in\mathbb{R}^{N\times n}$, $v_{0}\in \mathbb{R}^{N}$ and $\varphi\in W^{1,p}(B,\mathbb{R}^{N})$ with $\supp(\varphi)\Subset B$ we have
    \begin{eqnarray}\label{ls.4}
    \bar{\mathcal{F}}_{0}(\ell+\varphi;B)\ge \bar{\mathcal{F}}_{0}(\ell;B),
    \end{eqnarray}
    where $\ell(x):=v_{0}+\langle z_{0},x-x_{0}\rangle$.
    \item If $B_{\rr}(x_{0})\Subset \Omega$ is a ball and $w$ satisfies also the boundary regularity condition
    \begin{flalign}\label{ls.5}
     \limsup_{\sigma\to 0}\frac{1}{\sigma^{\delta}}\int_{B_{\rr+\sigma}(x_{0})\setminus B_{\rr-\sigma}(x_{0})}\snr{Dw}^{p} \ \dx<\infty,
    \end{flalign}
    for some $\delta\in \left(n(q-p)/q,1\right]$, then
    \begin{flalign}\label{ls.6}
     \bar{\mathcal{F}}_{0}(w;\Omega)=\bar{\mathcal{F}}_{0}(w;B_{\rr}(x_{0}))+\bar{\mathcal{F}}_{0}(w;\Omega\setminus \bar{B}_{\rr}(x_{0})).
    \end{flalign}
\end{itemize}
\end{lemma}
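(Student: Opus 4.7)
The three assertions follow from the measure representation of Proposition \ref{ls.p1} combined with density and gluing arguments, in the Fonseca--Mal\'y scheme refined by Schmidt for the $(p,q)$-setting; let me sketch each part in turn.

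For part (i), I fix a recovery sequence $\{w_j\} \in \mathcal{C}(w;B)$ with $\mathcal{F}_0(w_j;B) \to \bar{\mathcal{F}}_0(w;B)$, which is automatically uniformly bounded in $W^{1,p}(B,\mathbb{R}^N)$ thanks to the lower bound in $\eqref{assf}_2$. Since $q<p+1$ and $q \ge p$, one has $p/(p+1-q) \ge q$, so on the bounded domain $B$, $\varphi \in W^{1,q}(B,\mathbb{R}^N)$; consequently $\{w_j + \varphi\} \in \mathcal{C}(w+\varphi;B)$. Expanding
$$\mathcal{F}_0(w_j + \varphi;B) - \mathcal{F}_0(w_j;B) = \int_B \int_0^1 \langle \partial F(Dw_j + tD\varphi), D\varphi\rangle \,\dt\,\dx,$$
and using the growth bound \eqref{df}, the right-hand side splits into two pieces estimated by H\"older's inequality with the dual pairs $(p/(p-1),p)$ and $(p/(q-1),p/(p-q+1))$; the exponent $p/(p+1-q)$ on $D\varphi$ is precisely the one needed to pair with $\snr{Dw_j}^{q-1}$. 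This yields an equi-integrable bound, uniformly in $j$. Passing to the limit in $j$ via the Young measure generated by $\{Dw_j\}$ (whose barycentre is $Dw$ almost everywhere), and identifying the limits on the two sides through Proposition \ref{ls.p1} together with $QF\equiv F$ from \eqref{ls.8}, produces \eqref{ls.3}.

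For part (ii), if $\varphi \in C_c^\infty(B,\mathbb{R}^N)$, one applies part (i) directly to obtain
$$\bar{\mathcal{F}}_0(\ell + \varphi;B) - \bar{\mathcal{F}}_0(\ell;B) = \int_B [F(z_0 + D\varphi) - F(z_0)] \,\dx \ge 0$$
by the quasiconvexity \eqref{qc} of $F$; the left endpoint $\bar{\mathcal{F}}_0(\ell;B) = \snr{B}F(z_0)$ comes from \eqref{ls.16} because $\ell \in W^{1,q}_{\loc}$. For a general $\varphi \in W^{1,p}(B,\mathbb{R}^N)$ with $\supp\varphi \Subset B$, I approximate $\varphi$ by $\varphi_k \in C_c^\infty(B)$ strongly in $W^{1,p}$ and, given any recovery sequence $\{u_j\}$ for $\bar{\mathcal{F}}_0(\ell+\varphi;B)$, apply a multi-annulus cut-off between $\supp\varphi_k$ and $\partial B$ (with an index chosen by a pigeonhole argument to make the cut-off energy tend to zero, as in \cite[Lemma 7.11]{ts1}) to construct a sequence weakly converging to $\ell$ whose energy is asymptotically no larger than $\mathcal{F}_0(u_j;B)$; comparison with $\bar{\mathcal{F}}_0(\ell;B)$ yields \eqref{ls.4}.

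For part (iii), one needs to show that the measure $\mu_w$ provided by Proposition \ref{ls.p1} puts no mass on $\partial B_\rr(x_0)$. I construct two recovery sequences, $\{w_j^{\mathrm{in}}\} \in \mathcal{C}(w;B_\rr(x_0))$ and $\{w_j^{\mathrm{out}}\} \in \mathcal{C}(w;\Omega \setminus \bar B_\rr(x_0))$, and glue them via a radial cut-off $\eta_\sigma$ supported in the annulus $A_\sigma := B_{\rr+\sigma}(x_0)\setminus B_{\rr-\sigma}(x_0)$. The upper bound in $\eqref{assf}_2$ produces an annular energy error of order
$$\sigma^{-q}\int_{A_\sigma}\snr{w_j^{\mathrm{in}}-w_j^{\mathrm{out}}}^q\,\dx + \int_{A_\sigma}\left(\snr{Dw_j^{\mathrm{in}}}^q + \snr{Dw_j^{\mathrm{out}}}^q\right)\,\dx;$$
a Sobolev--Poincar\'e interpolation between $L^p$ and $L^q$ on $A_\sigma$, combined with $\snr{A_\sigma}\lesssim \sigma$ and the boundary bound \eqref{ls.5}, converts the first contribution into a power of $\sigma$ with positive exponent precisely under the gap condition $\delta > n(q-p)/q$. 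Diagonalising in $j$ and sending $\sigma \to 0$ then yields \eqref{ls.6}. The main technical obstacle concentrates here: the $(p,q)$-gap must be matched exactly against $\delta$, and the quadratic-in-cut-off $q$-energy cost from the upper bound in $\eqref{assf}_2$ is absorbed only thanks to the boundary-layer decay \eqref{ls.5} at the sharp rate prescribed by $\delta > n(q-p)/q$.
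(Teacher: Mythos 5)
The paper does not actually prove Lemma~\ref{ls.l2}: it records it as a citation from Schmidt's work \cite{ts1,ts2} (``We finally collect from \cite{ts2,ts1} several properties of $\bar{\mathcal{F}}_{0}(\cdot)\dots$''). So the relevant comparison is against Schmidt's proofs, and your sketch correctly identifies the overall framework (measure representation via Proposition~\ref{ls.p1}, translation/perturbation of recovery sequences, gluing across $\partial B_\rr$). The exponent bookkeeping in parts (i) and (iii) is also correct.

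The genuine gap is in part (iii). You bound the annular cost by
\begin{equation*}
\sigma^{-q}\int_{A_\sigma}\snr{w_j^{\mathrm{in}}-w_j^{\mathrm{out}}}^q\,\dx \ + \ \int_{A_\sigma}\left(\snr{Dw_j^{\mathrm{in}}}^q + \snr{Dw_j^{\mathrm{out}}}^q\right)\,\dx,
\end{equation*}
but the recovery sequences only converge weakly in $W^{1,p}$ and are merely \emph{locally} in $W^{1,q}$, so neither term is controlled on $A_\sigma$ as $j$ varies. This is exactly what the Fonseca--Mal\'y smoothing operator $\mf{T}_{\ti\tau_1,\ti\tau_2}$ (Lemma~\ref{exlem}) is for: estimate \eqref{ex.1}$_2$ upgrades $L^p$-control of $Dw$ on the annulus to $L^q$-control at the price of $\sigma^{-n(1/p-1/q)}$, and it is the balance of this loss against the boundary-layer decay \eqref{ls.5} that produces the positive power of $\sigma$ and hence the constraint $\delta>n(q-p)/q$. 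Your ``Sobolev--Poincar\'e interpolation'' phrase gestures at this but the actual device must be built into the gluing (or one must select a good shell by a Fubini/pigeonhole argument and then smooth). Without it the argument breaks. Secondary remarks: in part (i), the Young-measure step is not sound as written — the barycentre property identifies $\lim_j\int_B F(Dw_j)$ with $\int_B QF(Dw)$, but not $\lim_j\int_B F(Dw_j+D\varphi)$ with $\int_B QF(Dw+D\varphi)$; the two-sided inequality must instead come from running the competitor comparison first with $+\varphi$ and then with $-\varphi$ around $w+\varphi$, together with an equi-integrability argument for the mean-value difference. Finally, the lemma does not assume $F$ is quasiconvex, so the appeals to \eqref{ls.8} and \eqref{qc} should be replaced throughout by $QF$ and the automatic quasiconvexity of the envelope; the conclusion is unaffected but the hypotheses must match.
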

We stress that some of the results appearing in Lemma \ref{ls.l2} hold under slightly more general assumptions than those recorded above, but this is not an issue as such results will be applied in the setting described in Section \ref{assec}.
\subsection{A significant model example}
A reference model in nonlinear elasticity \cite{bb,bamu,ma2,ma5} is the variational integral
\begin{eqnarray*}
W^{1,p}(\Omega,\mathbb{R}^{n})\ni w\mapsto \mathcal{F}(w;\Omega):=\int_{\Omega}\left[\snr{Dw}^{p}+H(\det(Dw))-f\cdot w\right] \ \dx,
\end{eqnarray*}
where $f\in L(n,1)$ is any map, $H(t):=(1+t^{2})^{\frac{q}{2n}}$ for all $t\in \mathbb{R}$ and
\begin{eqnarray}\label{mm.6}
n\ge 3,\qquad q=n,\qquad n-\frac{1}{n}<p\le n.
\end{eqnarray}
Notice that $H\in C^{2}(\mathbb{R})$, it is convex and has linear growth at infinity - such aspect makes functional $\mathcal{F}(\cdot)$ physically interesting, \cite[Section 1]{ma5}. Setting $ F(z):=\snr{z}^{p}+H(\det(z))$ for all $z\in \mathbb{R}^{n\times n}$, we are going to check the validity of the conditions listed in Remark \ref{assumptions}. It is immediate to see that $F(\cdot)$ satisfies $\eqref{assf}_{1}$ and \eqref{assf.1.1}. It is also well-known that, with $H(\cdot)$ convex, the composition $H( \det(\cdot))$ is polyconvex, therefore quasiconvex in the sense of \eqref{qc}, see \cite[Chapter 5]{giu}. This, together with the strict convexity of $z\mapsto \snr{z}^{p}$ yields that $F(\cdot)$ verifies \eqref{sqc} for some positive $\lambda$ depending on $(n,p)$, cf. \eqref{ls.2} below. Next, a straightforward computation shows that
\begin{flalign}\label{mm.4}
\snr{\partial F(z)}\lesssim \left(\snr{z}^{p-1}+\snr{z}^{n-1}\right)\qquad \mbox{and}\qquad \snr{\partial^{2}F(z)}\lesssim\left(\snr{z}^{p-2}+\snr{z}^{n-2}+\snr{z}^{2n-2}\right),
\end{flalign}
with constants implicit in "$\lesssim$" depending on $(n,p)$. At this stage, it is worth observing that \eqref{mm.1}, \eqref{mm.2} and \eqref{p0} are implied by
\begin{eqnarray}\label{mm.10}
\lim_{\snr{z}\to 0}\frac{\snr{\partial^{2}F(z)-\partial^{2}(\snr{z}^{p})}}{\snr{z}^{p-2}}=0,
\end{eqnarray}
cf. \cite[Section 2]{ts2}, therefore we only need to check the validity of \eqref{mm.10} for the model integrand $F(\cdot)$. Since
\begin{eqnarray*}
\frac{\snr{\partial^{2}F(z)-\partial ^{2}(\snr{z}^{p})}}{\snr{z}^{p-2}}\stackrel{\eqref{mm.4}_{2}}{\lesssim}\snr{z}^{2n-p}+\snr{z}^{n-p}\stackrel{\eqref{mm.6}}{\to}_{\snr{z}\to 0}0,
\end{eqnarray*}
we have that \eqref{mm.10} holds true, so our sharp partial regularity results cover the Lebesgue-Serrin-Marcellini extension of the model functional $\mathcal{F}(\cdot)$. The example described above in particular suggests that the right approach to a rigorous analysis of functional $\mathcal{F}(\cdot)$ goes through its extension by relaxation \cite{ma2,ma5}. In fact in \eqref{mm.6} $p$ is allowed to be smaller than the space dimension $n$, which means that minimizers and competitors possibly admit discontinuities, the phenomenon of cavitation\footnote{Existence of equilibrium solutions with cavities, i.e. minima of $\mathcal{F}(\cdot)$ that are discontinuous at one point where a cavity forms, \cite{bb,ma2,ma5}.} may occur and the $W^{1,p}$-quasiconvexity of $\mathcal{F}_{0}(w,\Omega):=\int_{\Omega}F(Dw) \ \dx$ fails by \cite[Theorem 4.1]{bamu}; while its relaxation $\bar{\mathcal{F}}_{0}(\cdot)$ preserves such property under the only restriction $p>n-1$, see \cite[Lemma 7.6]{ts1}. We can then prove that relaxed minimizers of $\mathcal{F}(\cdot)$ are almost everywhere regular also in presence of cavitation. We refer to \cite{ma2,ts,ts2,ts1} for further discussions and more general examples.
\subsection{Tools for $p$-Laplacean problems} When dealing with $p$-Laplacean type problems, we shall often use the auxiliary vector field $V_{s,p}\colon \er^{N\times n} \to  \er^{N\times n}$, defined by
\begin{flalign*}
V_{s,p}(z):= (s^{2}+|z|^{2})^{(p-2)/4}z, \qquad p\in (1,\infty), \ \ s\ge 0, \ \ z\in \mathbb{R}^{N\times n}.
\end{flalign*}
If $s=0$ we simply write $V_{s,p}(\cdot)\equiv V_{p}(\cdot)$. A couple of useful related inequalities are
\begin{flalign}\label{Vm}
\snr{V_{s,p}(z_{1})-V_{s,p}(z_{2})}\approx (s^{2}+\snr{z_{1}}^{2}+\snr{z_{2}}^{2})^{(p-2)/4}\snr{z_{1}-z_{2}},
\end{flalign}
and
\begin{eqnarray}\label{equiv.1}
\snr{V_{s,p}(z)}^{2}\approx s^{p-2}\snr{z}^{2}+\snr{z}^{p}\qquad \mbox{with} \ \ p\ge 2,
\end{eqnarray}
where the equivalence holds up to constants depending only on $n,N,p$. An important property which is usually related to such field is recorded in the following lemma.
\begin{lemma}\label{l6}
Let $t>-1$, $s\in [0,1]$ and $z_{1},z_{2}\in \mathbb{R}^{N\times n}$ be such that $s+\snr{z_{1}}+\snr{z_{2}}>0$. Then
\begin{flalign*}
\int_{0}^{1}\left[s^2+\snr{z_{1}+y(z_{2}-z_{1})}^{2}\right]^{\frac{t}{2}} \ \dy\approx (s^2+\snr{z_{1}}^{2}+\snr{z_{2}}^{2})^{\frac{t}{2}},
\end{flalign*}
with constants implicit in "$\approx$" depending only on $n,N,t$. 
\end{lemma}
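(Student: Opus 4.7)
The plan is to prove the equivalence by a case analysis based on the relative sizes of $s$, $|z_1|$, $|z_2|$, and $|z_2-z_1|$. Set $w(y):=z_1+y(z_2-z_1)$, $B(y):=s^2+|w(y)|^2$, and $A:=s^2+|z_1|^2+|z_2|^2$. The triangle inequality immediately yields $|w(y)|\le |z_1|+|z_2-z_1|\le 2(|z_1|+|z_2|)$, hence $B(y)\le c\,A$ uniformly in $y\in[0,1]$. Integrating gives at once one direction of the claimed equivalence in each regime of the sign of $t$: when $t\ge 0$ it produces the upper bound $I\le c\,A^{t/2}$ for the integral $I$, while when $t\le 0$ it produces the lower bound $I\ge c\,A^{t/2}$. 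Therefore only the opposite direction requires work.

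For the remaining direction I would show that $B(y)\approx A$ on a uniformly non-trivial portion of $[0,1]$, except for one configuration that needs a different argument. First, if $s^2\ge A/3$ then $B(y)\ge s^2\approx A$ holds throughout $[0,1]$, and we are done. Otherwise $|z_1|^2+|z_2|^2\gtrsim A$; by exchanging $y\leftrightarrow 1-y$ we may assume $|z_2|\ge|z_1|$, which gives $|z_2|^2\gtrsim A$. I would further split according to whether $|z_2-z_1|^2\le A/12$ or $|z_2-z_1|^2>A/12$. In the first subcase, for every $y\in[0,1]$ one has
\[
|w(y)|\ge |z_2|-(1-y)|z_2-z_1|\ge |z_2|-|z_2-z_1|\gtrsim \sqrt{A},
\]
so $B(y)\approx A$ uniformly on $[0,1]$ and the remaining inequality follows trivially for any $t>-1$.

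In the second subcase, $|z_2-z_1|^2>A/12$, the lower bound when $t\ge 0$ would be obtained by restricting the integration to $y\in[3/4,1]$, where
\[
|w(y)|\ge |z_2|-(1-y)|z_2-z_1|\ge |z_2|-\tfrac{1}{4}\cdot 2|z_2|=\tfrac{1}{2}|z_2|\gtrsim \sqrt{A},
\]
using $|z_2-z_1|\le|z_1|+|z_2|\le 2|z_2|$; this gives $I\ge \tfrac{1}{4}\bigl(cA\bigr)^{t/2}$. The genuinely delicate point is the \emph{upper} bound when $-1<t<0$ in this second subcase, since $B(y)$ may become very small on a sub-interval. Here I would parameterize by $u:=(y-y_{*})|z_2-z_1|$, where $y_{*}\in\mathbb{R}$ is the minimizer of the quadratic $y\mapsto |w(y)|^2$; writing $c_{0}^{2}:=s^{2}+|w(y_{*})|^{2}$ gives $B(y)=c_{0}^{2}+u^{2}$ and $dy=du/|z_2-z_1|$, so
\[
\int_{0}^{1}B(y)^{t/2}\,dy\le \frac{1}{|z_2-z_1|}\int_{-|z_2-z_1|}^{|z_2-z_1|}(c_{0}^{2}+u^{2})^{t/2}\,du\le \frac{1}{|z_2-z_1|}\int_{-|z_2-z_1|}^{|z_2-z_1|}|u|^{t}\,du=\frac{2}{t+1}|z_2-z_1|^{t}.
\]
Since $|z_2-z_1|^2>A/12$ and $t<0$, this is $\le c(t)\,A^{t/2}$, finishing the case.

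The main obstacle is precisely this last step: the pointwise bound $B(y)\approx A$ fails when $|z_2-z_1|$ is large and the segment crosses near the origin, so one really must integrate and exploit the local integrability of $|u|^{t}$ on bounded intervals. This is where the assumption $t>-1$ is used in an essential way; everything else in the proof is a clean case distinction based on geometric comparisons of $s$, $|z_1|$, $|z_2|$ and $|z_2-z_1|$, all with constants depending only on $n,N,t$.
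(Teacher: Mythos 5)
The paper states this lemma without proof, treating it as a classical fact (it goes back to Acerbi--Fusco and Giaquinta--Modica); so there is no in-paper proof to compare against. Your case decomposition and the idea of handling the delicate subcase via the substitution $u=(y-y_{*})|z_2-z_1|$ and the local integrability of $|u|^{t}$ for $t>-1$ are exactly the standard route, and the overall structure is sound.

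There is one small gap worth closing. In the last display you pass from $\int_{0}^{1}B(y)^{t/2}\,dy$ to $\frac{1}{L}\int_{-L}^{L}(c_{0}^{2}+u^{2})^{t/2}\,du$ (with $L:=|z_2-z_1|$) as if the image of $[0,1]$ under $y\mapsto (y-y_{*})L$ were contained in $[-L,L]$. That containment holds only when $y_{*}\in[0,1]$; you have explicitly allowed $y_{*}\in\mathbb{R}$, and for $y_{*}<0$ or $y_{*}>1$ the image is an interval of length $L$ that can stick out of $[-L,L]$ on one side. The inequality you wrote is nonetheless true, but it requires the extra (easy) observation that for the even, radially decreasing function $g(u)=(c_{0}^{2}+u^{2})^{t/2}$, $t<0$, and any interval $J$ of length $L$ one has $\int_{J}g\le\int_{-L/2}^{L/2}g\le\int_{-L}^{L}g$. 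Alternatively, avoid the change of variables altogether: since $t<0$ and $B(y)\ge (y-y_{*})^{2}L^{2}$, one gets $\int_{0}^{1}B(y)^{t/2}\,dy\le L^{t}\int_{0}^{1}|y-y_{*}|^{t}\,dy\le \frac{2}{t+1}L^{t}$ for every $y_{*}\in\mathbb{R}$, which feeds into the final estimate unchanged. Either fix is one sentence; with it, your proof is complete. (The constants you produce depend only on $t$; the dependence on $n,N$ in the statement is vacuous here and enters only through the ambient norm, so that is fine.)
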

With $B\in \mathbb{B}_{\Omega}$ and $p\in (1,\infty)$, Lemma \ref{l6} directly implies that for all $z_{0}\in \mathbb{R}^{N\times n}$ and any $\varphi\in W^{1,p}_{0}(\Omega,\mathbb{R}^{N})$ it holds that
\begin{eqnarray}\label{ls.2}
\tilde{c}^{-1}\int_{B}(\snr{z_{0}}^{2}+\snr{D\varphi}^{2})^{\frac{p-2}{2}}\snr{D\varphi}^{2} \ \dx &\le& \int_{B}\left[\snr{z_{0}+D\varphi}^{p}-\snr{z_{0}}^{p}\right] \ \dx\nonumber \\
&\le& \tilde{c}\int_{B}(\snr{z_{0}}^{2}+\snr{D\varphi}^{2})^{\frac{p-2}{2}}\snr{D\varphi}^{2} \ \dx,
\end{eqnarray}
with $\tilde{c}\equiv \tilde{c}(n,N,p)$. Throughout the paper, for $B_{\rr}(x_{0})\Subset \Omega$, $w\in W^{1,p}(B_{\rr}(x_{0}),\mathbb{R}^{N})$ and $z_{0}\in \mathbb{R}^{N\times n}$, we shall several time use the excess type functional
\begin{eqnarray}\label{excess}
\mf{F}(w,z_{0};B_{\rr}(x_{0})):=\left(\mint_{B_{\rr}(x_{0})}\left[\snr{z_{0}}^{p-2}\snr{Dw-z_{0}}^{2}+\snr{Dw-z_{0}}^{p}\right] \dx\right)^{\frac{1}{p}}
\end{eqnarray}
and its "quadratic" version
\begin{eqnarray*}
\ti{\mf{F}}(w,z_{0};B_{\rr}(x_{0})):=\left(\mint_{B_{\rr}(x_{0})}\snr{V_{p}(Dw)-z_{0}}^{2} \dx\right)^{1/2}.
\end{eqnarray*}
When $z_{0}=(Dw)_{B_{\rr}(x_{0})}$ in $\mf{F}(\cdot)$ or $z_{0}=(V_{p}(Dw))_{B_{\rr}(x_{0})}$ in $\ti{\mf{F}}(\cdot)$ we shall simply write 
\begin{flalign*}
\mf{F}(w,(Dw)_{B_{\rr}(x_{0})};B_{\rr}(x_{0}))\equiv \mf{F}(w;B_{\rr}(x_{0})),\quad \quad \ti{\mf{F}}(w,(V_{p}(Dw))_{B_{\rr}(x_{0})};B_{\rr}(x_{0}))\equiv \ti{\mf{F}}(w;B_{\rr}(x_{0}))
\end{flalign*}
respectively. As a direct consequence of \eqref{minav} and of \cite[(2.6)]{gm} it holds that
\eqn{equiv.11}
$$
\tilde{\mathfrak{F}}(w;B_{\rr}(x_{0}))\approx \tilde{\mathfrak{F}}(w,V_{p}((Dw)_{B_{\rr}(x_{0})});B_{\rr}(x_{0}))
$$
so by \eqref{Vm} we have
\begin{eqnarray}\label{equiv}
\mf{F}(w;B_{\rr}(x_{0}))^{p/2}\approx \ti{\mf{F}}(w;B_{\rr}(x_{0})).
\end{eqnarray}
In \eqref{equiv.11}-\eqref{equiv}, the constants implicit in "$\approx$" depend on $(n,N,p)$. We further recall a straightforward variation of \cite[Lemmas 3.1 and 6.2]{kumi}.
\begin{lemma}
Let $p\ge 2$ be a number, $B_{\rr}(x_{0})\subset \mathbb{R}^{n}$ be a ball, and $w\in W^{1,p}(B_{\rr}(x_{0}),\mathbb{R}^{N})$ be any function. With $\nu\in (0,1)$ it holds that
\begin{flalign}\label{tri.1}
\mf{F}(w;B_{\nu\rr}(x_{0}))^{p/2}\le \frac{2^{3p}}{\nu^{n/2}}\mf{F}(w;B_{\rr}(x_{0}))^{p/2}
\end{flalign}
and
\begin{eqnarray}\label{tri.1.1}
\left|\snr{(Dw)_{B_{\nu\rr}(x_{0})}}^{p/2}-\snr{(Dw)_{B_{\rr}(x_{0})}}^{p/2}\right|\le \frac{2^{3p}\mf{F}(w;B_{\rr}(x_{0}))^{p/2}}{\nu^{n/2}}.
\end{eqnarray}
Moreover, if for $\sigma\le \rr$ there is $\kk\in \N\cup \{0\}$ satisfying $\nu^{\kk+1}\rr<\sigma\le \nu^{\kk}\rr$, then
\begin{eqnarray}\label{ls.42.1}
\mf{F}(w;B_{\nu^{\kk+1}\rr}(x_{0}))^{p/2}\le \frac{2^{3p}}{\nu^{n/2}}\mf{F}(w;B_{\sigma}(x_{0}))^{p/2}\le \frac{2^{6p}}{\nu^{n}}\mf{F}(w;B_{\nu^{\kk}\rr}(x_{0}))^{p/2}
\end{eqnarray}
and
\begin{eqnarray}\label{tri.1.2}
\left\{
\begin{array}{c}
\displaystyle 
\ \snr{(Dw)_{B_{\nu^{\kk+1}\rr}(x_{0})}}^{p/2}\le \frac{2^{3p}}{\nu^{n/2}}\mf{F}(w;B_{\sigma}(x_{0}))^{p/2}+\snr{(Dw)_{B_{\sigma}(x_{0})}}^{p/2} \\[8pt]\displaystyle
\ \snr{(Dw)_{B_{\sigma}(x_{0})}}^{p/2}\le \frac{2^{3p}}{\nu^{n/2}}\mf{F}(w;B_{\nu^{\kk}\rr}(x_{0}))^{p/2}+\snr{(Dw)_{B_{\nu^{\kk}\rr}(x_{0})}}^{p/2}.
\end{array}
\right.
\end{eqnarray}
\end{lemma}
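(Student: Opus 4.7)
The plan is to reduce everything to the quadratic excess $\ti{\mf{F}}$, which is an $L^{2}$-oscillation of $V_{p}(Dw)$ and therefore behaves well under restriction of balls and change of reference vector; the equivalences \eqref{equiv.11}-\eqref{equiv} will then convert the resulting bounds into statements about $\mf{F}^{p/2}$, at the cost of $(n,N,p)$-constants that for $p\ge 2$ can be absorbed into the universal factor $2^{3p}$.

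For \eqref{tri.1} I would start from $\ti{\mf{F}}(w;B_{\nu\rr}(x_{0}))^{2}=\mint_{B_{\nu\rr}(x_{0})}\snr{V_{p}(Dw)-(V_{p}(Dw))_{B_{\nu\rr}(x_{0})}}^{2}\dx$, apply \eqref{minav} with $t=2$ and $z=(V_{p}(Dw))_{B_{\rr}(x_{0})}$ to replace the inner center by the outer one (paying a factor $4$), and then enlarge the domain from $B_{\nu\rr}(x_{0})$ to $B_{\rr}(x_{0})$ picking up the volume ratio $\nu^{-n}$. Extracting the square root and invoking \eqref{equiv} on both sides yields \eqref{tri.1}. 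For \eqref{tri.1.1} the starting observation is $\snr{z}^{p/2}=\snr{V_{p}(z)}$, which gives
\[
\bigl|\snr{(Dw)_{B_{\nu\rr}(x_{0})}}^{p/2}-\snr{(Dw)_{B_{\rr}(x_{0})}}^{p/2}\bigr|\le \snr{V_{p}((Dw)_{B_{\nu\rr}(x_{0})})-V_{p}((Dw)_{B_{\rr}(x_{0})})}.
\]
Inserting $\pm(V_{p}(Dw))_{B_{\nu\rr}(x_{0})}$ and $\pm(V_{p}(Dw))_{B_{\rr}(x_{0})}$ on the right and using the triangle inequality splits the bound into three pieces: the two ``boundary'' pieces compare $V_{p}$ of the average with the average of $V_{p}$ on a single ball and are controlled respectively by $\ti{\mf{F}}(w;B_{\nu\rr}(x_{0}))$ and $\ti{\mf{F}}(w;B_{\rr}(x_{0}))$ via \eqref{equiv.11}; the middle piece is a difference of averages of $V_{p}(Dw)$ over two concentric balls, bounded by $\nu^{-n/2}\ti{\mf{F}}(w;B_{\rr}(x_{0}))$ through Cauchy--Schwarz and the usual volume ratio. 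The previously proven \eqref{tri.1} absorbs the first piece into the last, and \eqref{equiv} finishes the argument.

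Estimates \eqref{ls.42.1} and \eqref{tri.1.2} follow from exactly the same arguments, repeated with the pair $(B_{\nu^{\kk+1}\rr}(x_{0}),B_{\sigma}(x_{0}))$ in place of $(B_{\nu\rr}(x_{0}),B_{\rr}(x_{0}))$ for the first of each, and with $(B_{\sigma}(x_{0}),B_{\nu^{\kk}\rr}(x_{0}))$ for the second. The hypothesis $\nu^{\kk+1}\rr<\sigma\le \nu^{\kk}\rr$ is used only to guarantee that in each of these two consecutive pairs the inner radius exceeds $\nu$ times the outer one, so that the volume ratio remains bounded by $\nu^{-n}$ and the same numerical constants apply without modification. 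The whole argument is morally ``oscillation $+$ change of center $+$ volume ratio''; I expect no genuine obstacle here, and the only bookkeeping will be to check that the $(n,N,p)$-factors coming from \eqref{Vm}, \eqref{equiv.11} and \cite[(2.6)]{gm} fit comfortably inside the stated constant $2^{3p}$ for $p\ge 2$.
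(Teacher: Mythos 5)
Your proof is correct and follows the same route the paper relies on (it simply cites the corresponding lemmas of Kuusi--Mingione without proof): pass to the quadratic excess $\ti{\mf{F}}$, use \eqref{minav} to change the reference vector, pick up the volume ratio when enlarging the ball, and translate back via \eqref{equiv.11}--\eqref{equiv}, with the full triangle-inequality decomposition through $V_{p}$ of the averages for \eqref{tri.1.1} and \eqref{tri.1.2}. The constant bookkeeping you flag is the only delicate point, but the pointwise equivalences \eqref{Vm}, \eqref{equiv.1} and the Giaquinta--Modica inequality behind \eqref{equiv.11} are dimension-free for $p\ge 2$ (only $p$-exponential), so they do sit inside $2^{3p}$.
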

Now, let $B_{\rr}(x_{0})\subset \mathbb{R}^{n}$ be a ball, $w\in L^{2}(B_{\rr}(x_{0}),\mathbb{R}^{N})$ be any function and $\ell_{\rr;x_{0}}$ be the unique affine function realizing the distance of $w$ from the space of affine functions, i.e.:
\begin{eqnarray*}
\ell_{\rr;x_{0}}\mapsto \min_{\ell \ \textnormal{affine}}\mint_{B_{\rr}(x_{0})}\snr{w-\ell}^{2} \dx.
\end{eqnarray*}
It is $\ell_{\rr;x_{0}}:=(w)_{B_{\rr}(x_{0})}+\langle D\ell_{\rr;x_{0}},x-x_{0}\rangle$ with
\begin{eqnarray*}
D\ell_{\rr;x_{0}}:=\frac{n+2}{\rr^{2}}\mint_{B_{\rr}(x_{0})}w(x)\otimes (x-x_{0}) \dx.
\end{eqnarray*}
Let us recall some well-known properties of $\ell_{\rr;x_{0}}$, see \cite[Section 2]{kumi}.
\begin{lemma}
Let $p\ge 2$ and $B_{\rr}(x_{0})\subset \mathbb{R}^{n}$ be a ball. For all $w\in L^{p}(B_{\rr}(x_{0}),\mathbb{R}^{N})$ the following inequalities hold true:
\begin{flalign}\label{ll.0}
\snr{D\ell_{\rr;x_{0}}-D\ell_{\nu\rr;x_{0}}}\le \frac{c}{(\nu\rr)^{p}}\mint_{B_{\nu\rr}(x_{0})}\snr{w-\ell_{\rr;x_{0}}}^{p} \dx\qquad \mbox{for any} \ \ \nu\in (0,1)
\end{flalign} 
and 
\begin{flalign}\label{ll.0.1}
\mint_{B_{\rr}(x_{0})}\snr{w-\ell_{\rr;x_{0}}}^{p}\le c\mint_{B_{\rr}(x_{0})}\snr{w-\ell}^{p} \dx\qquad \mbox{for any affine} \ \ \ell,
\end{flalign}
with $c\equiv c(n,N,p)$. Moreover, if $w\in W^{1,p}(B_{\rr}(x_{0}),\mathbb{R}^{N})$
\begin{flalign}\label{ll.1}
\snr{D\ell_{\rr;x_{0}}-(Dw)_{B_{\rr}(x_{0})}}^{p}\le c\mint_{B_{\rr}(x_{0})}\snr{Dw-(Dw)_{B_{\rr}(x_{0})}}^{p} \dx,
\end{flalign} 
for $c\equiv c(n,p)$.
\end{lemma}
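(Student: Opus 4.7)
All three estimates will follow from elementary manipulations of the explicit formula
\[
D\ell_{\rr;x_{0}}=\frac{n+2}{\rr^{2}}\mint_{B_{\rr}(x_{0})}w(x)\otimes(x-x_{0})\dx,
\]
together with the two identities
\[
\mint_{B_{\rr}(x_{0})}(x-x_{0})\dx=0,\qquad \mint_{B_{\rr}(x_{0})}(x-x_{0})\otimes(x-x_{0})\dx=\frac{\rr^{2}}{n+2}I,
\]
and the equivalence (with scale-invariant constants) of all $L^{t}$-norms, $t\ge1$, on the finite-dimensional space of affine maps on any ball concentric to $B_{\rr}(x_{0})$.

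For \eqref{ll.0}, the plan is to apply the defining formula for $D\ell_{\nu\rr;x_{0}}$ on the smaller ball $B_{\nu\rr}(x_{0})$ to both $w$ and to the affine function $\ell_{\rr;x_{0}}$ itself: the constant part of $\ell_{\rr;x_{0}}$ is killed by the first identity above, while the linear part is recovered via the second, so the same formula applied to $\ell_{\rr;x_{0}}$ gives back $D\ell_{\rr;x_{0}}$. Subtracting produces
\[
D\ell_{\rr;x_{0}}-D\ell_{\nu\rr;x_{0}}=\frac{n+2}{(\nu\rr)^{2}}\mint_{B_{\nu\rr}(x_{0})}\bigl(\ell_{\rr;x_{0}}(x)-w(x)\bigr)\otimes(x-x_{0})\dx.
\]
Estimating $\snr{x-x_{0}}\le\nu\rr$ inside the integral and Jensen's inequality then yield the announced $p$-th power bound.

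For \eqref{ll.0.1}, I would write $w-\ell_{\rr;x_{0}}=(w-\ell)+(\ell-\ell_{\rr;x_{0}})$ and apply the triangle inequality in $L^{p}$. The first term is already the claimed right-hand side. For the second, since $\ell-\ell_{\rr;x_{0}}$ is affine, by the norm-equivalence mentioned above
\[
\mint_{B_{\rr}(x_{0})}\snr{\ell-\ell_{\rr;x_{0}}}^{p}\dx\le c\Bigl(\mint_{B_{\rr}(x_{0})}\snr{\ell-\ell_{\rr;x_{0}}}^{2}\dx\Bigr)^{p/2}.
\]
The $L^{2}$-minimality of $\ell_{\rr;x_{0}}$ combined with the triangle inequality gives $\mint\snr{\ell-\ell_{\rr;x_{0}}}^{2}\le 4\mint\snr{w-\ell}^{2}$, and Jensen's inequality (here $p\ge 2$ is used) converts the resulting $L^{2}$ estimate into an $L^{p}$ one.

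Finally, \eqref{ll.1} will follow by setting $\tilde\ell(x):=(w)_{B_{\rr}(x_{0})}+(Dw)_{B_{\rr}(x_{0})}(x-x_{0})$. Since both $\ell_{\rr;x_{0}}$ and $\tilde\ell$ take the value $(w)_{B_{\rr}(x_{0})}$ at $x_{0}$, their difference is purely linear in $x-x_{0}$, so by the same norm equivalence its $L^{p}$-norm on $B_{\rr}(x_{0})$ is equivalent to $\rr^{p}\snr{D\ell_{\rr;x_{0}}-(Dw)_{B_{\rr}(x_{0})}}^{p}$. Thus
\[
\rr^{p}\snr{D\ell_{\rr;x_{0}}-(Dw)_{B_{\rr}(x_{0})}}^{p}\le c\mint_{B_{\rr}(x_{0})}\snr{w-\ell_{\rr;x_{0}}}^{p}\dx+c\mint_{B_{\rr}(x_{0})}\snr{w-\tilde\ell}^{p}\dx.
\]
The first term is controlled by $\mint\snr{w-\tilde\ell}^{p}$ via \eqref{ll.0.1}. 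The function $w-\tilde\ell$ has both zero average and zero-average gradient on $B_{\rr}(x_{0})$, so the Poincaré inequality gives $\mint\snr{w-\tilde\ell}^{p}\le c\rr^{p}\mint\snr{Dw-(Dw)_{B_{\rr}(x_{0})}}^{p}$; dividing by $\rr^{p}$ produces \eqref{ll.1}. The only mildly delicate point of the proof is to carefully track the powers of $\rr$ in each comparison and to make sure that the norm-equivalence constants on the space of affine functions are scale-invariant (which they are, after rescaling to $B_{1}(0)$); all remaining steps are routine.
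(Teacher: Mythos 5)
Your proof is correct; the paper does not itself prove this lemma but defers to \cite[Section 2]{kumi}, and your argument — expressing $D\ell_{\nu\rr;x_0}$ by the explicit averaging formula, observing that the formula reproduces affine maps (so that subtraction isolates the error term $\mint_{B_{\nu\rr}}(\ell_{\rr;x_0}-w)\otimes(x-x_0)\dx$), and then invoking Jensen's inequality, the equivalence of $L^t$ norms on the finite-dimensional space of affine maps with scale-invariant constants, and the Poincar\'e inequality — is the standard route and matches the reference. One small remark: your computation yields $\snr{D\ell_{\rr;x_{0}}-D\ell_{\nu\rr;x_{0}}}^{p}$ on the left of \eqref{ll.0}, whereas the statement as printed has no exponent; the $p$-th power version is the dimensionally consistent one and is precisely what is used later in the paper (e.g., in the estimate of $\mbox{(II)}+\mbox{(III)}$ preceding \eqref{29}), so the printed form is a misprint and your derivation matches the intended inequality.
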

Finally, a couple of classical iteration lemmas, cf. \cite[Lemmas 8.18 and 5.13]{giamar} respectively.
\begin{lemma}\label{l5}
Let $h\colon [\rr_{0},\rr_{1}]\to \mathbb{R}$ be a non-negative and bounded function, and let $\theta \in (0,1)$, $A,B,\gamma_{1},\gamma_{2}\ge 0$ be numbers. Assume that $h(t)\le \theta h(s)+A(s-t)^{-\gamma_{1}}+B(s-t)^{-\gamma_{2}}$ holds for all $\rr_{0}\le t<s\le \rr_{1}$. Then the following inequality holds $h(\rr_{0})\le c(\theta,\gamma_{1},\gamma_{2})[A(\rr_{1}-\rr_{0})^{-\gamma_{1}}+B(\rr_{1}-\rr_{0})^{-\gamma_{2}}].$
\end{lemma}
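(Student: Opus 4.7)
The strategy is the standard hole-filling iteration: iterate the hypothesis along a geometric sequence of radii inside $[\rr_0,\rr_1]$ whose ratio is chosen small enough to defeat the factor $\theta$, then let the iteration run to infinity.

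First I would fix an auxiliary parameter $\tau\in(0,1)$ depending on $\theta,\gamma_1,\gamma_2$ such that $\theta\tau^{-\gamma_1}<1$ and $\theta\tau^{-\gamma_2}<1$; this is possible since $\theta<1$, e.g.\ by choosing $\tau$ close enough to $1$. Then I would define the increasing sequence
\begin{equation*}
t_{0}:=\rr_{0},\qquad t_{i+1}-t_{i}:=(1-\tau)\tau^{i}(\rr_{1}-\rr_{0}),\qquad i\in \N\cup\{0\},
\end{equation*}
so that $t_{i}\nearrow \rr_{1}$ as $i\to\infty$ and, crucially, $t_{i+1}-t_{i}=(1-\tau)\tau^{i}(\rr_{1}-\rr_{0})$ is a clean geometric factor.

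Applying the assumed inequality with $t=t_{i}$ and $s=t_{i+1}$ yields
\begin{equation*}
h(t_{i})\le \theta\, h(t_{i+1})+\frac{A}{(1-\tau)^{\gamma_{1}}\tau^{i\gamma_{1}}(\rr_{1}-\rr_{0})^{\gamma_{1}}}+\frac{B}{(1-\tau)^{\gamma_{2}}\tau^{i\gamma_{2}}(\rr_{1}-\rr_{0})^{\gamma_{2}}}.
\end{equation*}
Iterating this relation $k$ times (multiplying at step $i$ by $\theta^{i}$ and telescoping) gives
\begin{equation*}
h(\rr_{0})\le \theta^{k}h(t_{k})+\frac{A}{(1-\tau)^{\gamma_{1}}(\rr_{1}-\rr_{0})^{\gamma_{1}}}\sum_{i=0}^{k-1}(\theta\tau^{-\gamma_{1}})^{i}+\frac{B}{(1-\tau)^{\gamma_{2}}(\rr_{1}-\rr_{0})^{\gamma_{2}}}\sum_{i=0}^{k-1}(\theta\tau^{-\gamma_{2}})^{i}.
\end{equation*}

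Finally I would send $k\to\infty$. Since $h$ is bounded and $\theta<1$, the term $\theta^{k}h(t_{k})$ vanishes; the two geometric series converge because $\theta\tau^{-\gamma_{j}}<1$ for $j=1,2$ by the choice of $\tau$. Collecting the resulting sums into a single constant $c=c(\theta,\gamma_{1},\gamma_{2})$ yields precisely the claimed bound. There is no substantial obstacle here: the only delicate step is to verify that a single $\tau\in(0,1)$ can be chosen so that both $\theta\tau^{-\gamma_{1}}$ and $\theta\tau^{-\gamma_{2}}$ are strictly below $1$, which follows by taking $\tau>\theta^{1/\max\{\gamma_{1},\gamma_{2},1\}}$ (with the convention that if some $\gamma_{j}=0$ the corresponding condition is trivial).
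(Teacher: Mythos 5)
Your proof is correct and is the standard hole-filling iteration argument. The paper does not supply a proof of this lemma but simply cites \cite[Lemma 8.18]{giamar}, and the argument you give — geometric radii $t_{i+1}-t_i=(1-\tau)\tau^i(\rr_1-\rr_0)$, choosing $\tau$ so that $\theta\tau^{-\gamma_j}<1$, telescoping, and summing the two geometric series — is exactly the argument used in that reference (for one term) extended verbatim to two terms.
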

\begin{lemma}\label{iter+}
Let $h$ be a nonnegative function satisfying $h(\sigma_{1})\le c_{1}[(\sigma_{1}/\sigma_{2})^{\alpha}+\varepsilon]h(\sigma_{2}) +c_{2}\sigma_{2}^{\beta}$, for some positive, absolute constants $c_{1},c_{2},\alpha,\beta,\varepsilon$, with $\alpha>\beta$ and for all $0<\sigma_{1}\le \sigma_{2}\le \rr$, with $\rr$ being a given number. Assume further that, if $\sigma_{1}$ satisfies $\nu^{j+1}\sigma_{2}<\sigma_{1}\le \nu^{j}\sigma_{2}$ for some $\nu\in (0,1)$, $j\in \N\cup \{0\}$, then $h(\sigma_{1})\le c_{3}h(\sigma_{2}),$ for some positive constant $c_{3}$ depending on $\nu$ and on the structure of $h$. There exist constants $\varepsilon_{0}\equiv \varepsilon_{0}(c_{1},\alpha,\beta)$, $c\equiv c(c_{1},\alpha,\beta,c_{3})$ such that if $\varepsilon\le \varepsilon_{0}$ it is $h(\sigma_{1})\le c\sigma_{1}^{\beta}(\sigma_{2}^{-\beta}h(\sigma_{2})+c_{2}),$ for all $0<\sigma_{1}\le \sigma_{2}\le \rr$.
\end{lemma}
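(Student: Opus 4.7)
The plan is to proceed by the classical Campanato-type iteration scheme. First I will fix an intermediate exponent $\gamma$ with $\beta<\gamma<\alpha$ and choose $\nu\in(0,1/2)$ so small that $c_{1}\nu^{\alpha}\le\nu^{\gamma}/2$; then setting $\varepsilon_{0}:=\nu^{\gamma}/(2c_{1})$ ensures that for $\varepsilon\le\varepsilon_{0}$ the prefactor appearing in the assumed inequality satisfies $c_{1}[\nu^{\alpha}+\varepsilon]\le\nu^{\gamma}$. Both $\nu$ and $\varepsilon_{0}$ depend only on $c_{1},\alpha,\beta$. Inserting $\sigma_{1}=\nu\sigma_{2}$ in the assumed inequality then yields the one-step contraction $h(\nu\sigma_{2})\le\nu^{\gamma}h(\sigma_{2})+c_{2}\sigma_{2}^{\beta}$ for every admissible $\sigma_{2}$.

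Next I will iterate along the dyadic sequence. Fixing $\sigma_{2}\in(0,\rr]$ and writing $a_{k}:=h(\nu^{k}\sigma_{2})$, applying the contraction with $\sigma_{2}$ replaced by $\nu^{k}\sigma_{2}$ gives $a_{k+1}\le\nu^{\gamma}a_{k}+c_{2}\nu^{k\beta}\sigma_{2}^{\beta}$, and a straightforward induction delivers
\begin{eqnarray*}
a_{k}\le\nu^{k\gamma}a_{0}+c_{2}\sigma_{2}^{\beta}\nu^{(k-1)\gamma}\sum_{j=0}^{k-1}\nu^{j(\beta-\gamma)}.
\end{eqnarray*}
Since $\beta<\gamma$, the geometric sum is bounded by $c(\nu,\beta,\gamma)\nu^{k(\beta-\gamma)}$, whence $a_{k}\le c\nu^{k\beta}\bigl(h(\sigma_{2})+c_{2}\sigma_{2}^{\beta}\bigr)$ with $c\equiv c(c_{1},\alpha,\beta)$.

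To conclude, for arbitrary $\sigma_{1}\in(0,\sigma_{2}]$ I pick $k\in\N\cup\{0\}$ with $\nu^{k+1}\sigma_{2}<\sigma_{1}\le\nu^{k}\sigma_{2}$ and invoke the additional comparability hypothesis in the form $h(\sigma_{1})\le c_{3}\,a_{k}$, which is the natural reading of the assumption and is the form produced in applications by estimates of the type \eqref{ls.42.1}--\eqref{tri.1.2}. Since $\sigma_{1}\le\nu^{k}\sigma_{2}$ together with $\nu^{k+1}\sigma_{2}<\sigma_{1}$ give $\nu^{k\beta}\le\nu^{-\beta}(\sigma_{1}/\sigma_{2})^{\beta}$, combining the two bounds yields
\begin{eqnarray*}
h(\sigma_{1})\le c_{3}\,c\,\nu^{-\beta}\sigma_{1}^{\beta}\bigl(\sigma_{2}^{-\beta}h(\sigma_{2})+c_{2}\bigr),
\end{eqnarray*}
which is the claimed inequality after relabeling the constant as $c\equiv c(c_{1},\alpha,\beta,c_{3})$.

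The only step demanding real care is the insertion of the intermediate exponent $\gamma$ strictly between $\beta$ and $\alpha$. Using $\gamma=\beta$ directly in the contraction would make the geometric sum collapse into a factor $k$, destroying the desired $\sigma_{1}^{\beta}$-scaling; the role of the smallness threshold $\varepsilon_{0}$ is precisely to absorb the $\varepsilon$-term into the improved contraction rate $\nu^{\gamma}$ once $\nu$ has been fixed. Everything else is routine.
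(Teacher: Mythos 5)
Your proof is correct and is the standard Campanato-type iteration argument found in the cited reference \cite[Lemma 5.13]{giamar}, to which the paper defers without supplying its own proof. The only place requiring a remark is your reading of the comparability hypothesis as $h(\sigma_{1})\le c_{3}\,h(\nu^{j}\sigma_{2})$ rather than the literal $h(\sigma_{1})\le c_{3}\,h(\sigma_{2})$; you correctly identify this as the intended meaning (the literal reading would be vacuously strong, since some such $j$ always exists, while the version you use is exactly what \eqref{ls.42.1}--\eqref{tri.1.2} supply for the excess functional, playing the role of the monotonicity assumed in the textbook version). With that reading, your chain of dependencies is consistent: $\gamma\in(\beta,\alpha)$ and $\nu$ are fixed from $c_{1},\alpha,\beta$; $\varepsilon_{0}=\nu^{\gamma}/(2c_{1})$ depends only on $c_{1},\alpha,\beta$; $c_{3}$ is whatever the hypothesis produces for that $\nu$; and the final $c=c_{3}c\,\nu^{-\beta}$ depends on $c_{1},\alpha,\beta,c_{3}$, matching the statement.
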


\subsection{Harmonic approximation lemmas}\label{harsec}
In this section we shall collect some basic properties of $\mathcal{A}$-harmonic maps and of $p$-harmonic maps. Let $\mathcal{A}$ be a constant bilinear form on $\mathbb{R}^{N\times n}$ verifying
\begin{eqnarray}\label{h.0}
\snr{\mathcal{A}}\le L\qquad \mbox{and}\qquad \mathcal{A}\langle \xi\otimes\zeta,\xi\otimes\zeta \rangle \ge L^{-1}\snr{\xi}^{2}\snr{\zeta}^{2}, 
\end{eqnarray}
where $L\ge 1$ is an absolute constant and $\xi\in \mathbb{R}^{N}$, $\zeta\in \mathbb{R}^{n}$ are vectors. By $\mathcal{A}$-harmonic map on an open set $\Omega\subset \mathbb{R}^{n}$ we mean a function $h\in W^{1,2}(\Omega,\mathbb{R}^{N})$ such that
\begin{eqnarray*}
\int_{\Omega}\mathcal{A}\langle Dh ,D\varphi\rangle \dx=0\qquad \mbox{for all} \ \ \varphi\in C^{\infty}_{\rm c}(\Omega,\mathbb{R}^{N}).
\end{eqnarray*}
As one could expect, $\mathcal{A}$-harmonic maps enjoy good regularity properties. In fact, for $B_{\rr}(x_{0})\Subset \Omega$ with $\rr\in (0,1]$ and all $d>1$ it is
\begin{eqnarray}\label{h.1}
\rr^{d}\sup_{B_{\rr/2}(x_{0})}\snr{D^{2}h}^{d}\le c\mint_{B_{3\rr/4}(x_{0})}\snr{Dh}^{d} \dx,
\end{eqnarray}
with $c\equiv c(n,N,L,d)$, cf. \cite[Chapter 10]{giu}. Let us recall from \cite[Lemma 2.4]{kumi} the following version of $\mathcal{A}$-harmonic approximation lemma.
\begin{lemma}\label{ahar}
Let $B_{\rr}(x_{0})\Subset \Omega$ be a ball, $p\ge2$ be any number and $\mathcal{A}$ a bilinear form on $\mathbb{R}^{N\times n}$ satisfying \eqref{h.0}. Then for any $\varepsilon>0$, $\sigma\in (0,1]$, there exists a number $\delta\equiv \delta(n,N,L,p,\varepsilon)$ with the following property: if $v\in W^{1,2}(B_{\rr}(x_{0}),\mathbb{R}^{N})$ verifies
\begin{eqnarray*}
\mint_{B_{\rr}(x_{0})}\snr{Dv}^{2} \dx+\sigma^{p-2}\mint_{B_{\rr}(x_{0})}\snr{Dv}^{p} \dx\le 1
\end{eqnarray*}
and it is approximately $\mathcal{A}$-harmonic in the sense that
\begin{eqnarray}\label{ahar.0}
\left| \ \mint_{B_{\rr}(x_{0})}\mathcal{A}\langle Dv,D\varphi\rangle \dx \ \right|\le \delta\nr{D\varphi}_{L^{\infty}(B_{\rr}(x_{0}))}
\end{eqnarray}
holds for all $\varphi\in C^{\infty}_{\rm c}(B_{\rr}(x_{0}),\mathbb{R}^{N})$, then there exists a $\mathcal{A}$-harmonic map $h\in W^{1,2}(B_{\rr}(x_{0}),\mathbb{R}^{N})$ such that
\begin{eqnarray*}
\mint_{B_{3\rr/4}(x_{0})}\snr{Dh}^{2} \dx+\sigma^{p-2}\mint_{B_{3\rr/4}(x_{0})}\snr{Dh}^{p} \dx\le 8^{2np}
\end{eqnarray*}
and
\begin{eqnarray*}
\mint_{B_{3\rr/4}(x_{0})}\left|\frac{v-h}{\rr} \right|^{2}+\sigma^{p-2}\left|\frac{v-h}{\rr}\right|^{p} \dx\le \varepsilon.
\end{eqnarray*}
\end{lemma}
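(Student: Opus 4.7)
The natural approach is a contradiction--compactness argument adapted to the double-norm setting, in the spirit of the $\mathcal{A}$-harmonic approximation lemmas of Duzaar--Mingione, see also the version used in \cite{kumi}.

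\textbf{Step 1 (contradiction setup).} Suppose the claim fails for some $\varepsilon_{0}>0$. Then there exist sequences $\delta_{k}\to 0$, $\sigma_{k}\in (0,1]$, balls $B_{\rr_{k}}(x_{k})$, and approximately $\mathcal{A}$-harmonic maps $v_{k}$ on $B_{\rr_{k}}(x_{k})$ satisfying both integral hypotheses with $\delta=\delta_{k}$, yet such that no $\mathcal{A}$-harmonic $h$ on $B_{3\rr_{k}/4}(x_{k})$ achieves the asserted closeness. After rescaling to $B_{1}(0)$ and subtracting means, we may assume $\rr_{k}=1$, $x_{k}=0$ and $(v_{k})_{B_{1}}=0$.

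\textbf{Step 2 (weak compactness and the limiting $\mathcal{A}$-harmonic map).} The $L^{2}$-bound on $Dv_{k}$ combined with Poincar\'e gives $\|v_{k}\|_{W^{1,2}(B_{1})}\lesssim 1$. Extract a (not relabelled) subsequence with $v_{k}\rightharpoonup v_{\infty}$ weakly in $W^{1,2}(B_{1})$ and strongly in $L^{2}(B_{1})$. Letting $k\to\infty$ in \eqref{ahar.0} and using density of smooth compactly supported test fields shows that $v_{\infty}$ is $\mathcal{A}$-harmonic on $B_{1}$; weak lower semicontinuity preserves the $L^{2}$-gradient bound. The interior estimate \eqref{h.1} then forces $v_{\infty}\in C^{\infty}(B_{3/4},\mathbb{R}^{N})$ with $\sup_{B_{3/4}}|Dv_{\infty}|\lesssim 1$, so both energy bounds required in the conclusion hold for the candidate $h:=v_{\infty}$ (the constant $8^{2np}$ is comfortably generous).

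\textbf{Step 3 (two-norm closeness, the main obstacle).} It remains to prove, on $B_{3/4}$, that the scaled $L^{2}$ and weighted $L^{p}$ distances of $v_{k}$ from $h=v_{\infty}$ tend to zero; the failure of the conclusion for all large $k$ would then give the desired contradiction. The $L^{2}$ part follows at once from strong $L^{2}$-convergence. The weighted $L^{p}$ part is the delicate step, because $\{Dv_{k}\}$ is not uniformly bounded in $L^{p}$ when $\sigma_{k}\to 0$. I would argue along a further subsequence: if $\liminf_{k}\sigma_{k}>0$, the gradient hypothesis yields a uniform $W^{1,p}$-bound on $v_{k}$, so Rellich--Kondrachov gives $v_{k}\to v_{\infty}$ in $L^{p}(B_{3/4})$ and the weight is harmless. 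If instead $\sigma_{k}\to 0$, setting $w_{k}:=v_{k}-v_{\infty}$ and interpolating $L^{p}$ between $L^{2}$ and an appropriate Sobolev exponent fed by the weighted gradient bound $\sigma_{k}^{(p-2)/p}\|Dw_{k}\|_{L^{p}}\lesssim 1$, one obtains
\[
\sigma_{k}^{p-2}\mint_{B_{3/4}}|w_{k}|^{p}\dx \le C\,\|w_{k}\|_{L^{2}(B_{1})}^{p(1-\theta)}\,\sigma_{k}^{(p-2)(1-\theta)}
\]
for some $\theta\in (0,1)$, and the right-hand side vanishes as $k\to\infty$ thanks to $w_{k}\to 0$ strongly in $L^{2}$ and $p\ge 2$. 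The main obstacle is precisely the interaction of the two norms when the weight degenerates, which forces the bookkeeping of Sobolev exponents and interpolation constants just sketched.
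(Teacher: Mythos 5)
Your proof is correct and follows the same contradiction--compactness route that underlies the reference the paper cites; the paper's own ``proof'' consists of a pointer to \cite[Lemma 3.2]{dumist} together with a remark explaining why the argument there, originally phrased under strict ellipticity, still runs under Legendre--Hadamard ellipticity \eqref{h.0}$_{2}$, via G{\aa}rding's inequality applied to test functions with zero boundary values. Your self-contained version sidesteps that subtlety: by taking the fixed weak limit $v_{\infty}$ as the comparison map for every $k$, you never solve a Dirichlet problem on the ball, so the only place ellipticity of $\mathcal{A}$ enters is through the interior estimate \eqref{h.1}, which is already stated as a fact for constant-coefficient Legendre--Hadamard elliptic systems. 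The one genuinely delicate step is the weighted $L^{p}$-closeness when $\sigma_{k}\to 0$, and your Gagliardo--Nirenberg interpolation handles it correctly: with $\sigma_{k}^{(p-2)/p}\lVert Dw_{k}\rVert_{L^{p}(B_{3/4})}\lesssim 1$ (using that $Dv_{\infty}\in L^{\infty}(B_{3/4})$ and $\sigma_{k}\le 1$) and $\lVert w_{k}\rVert_{L^{2}}\to 0$, the bound $\sigma_{k}^{p-2}\lVert w_{k}\rVert_{L^{p}}^{p}\le C\lVert w_{k}\rVert_{L^{2}}^{p(1-\theta)}\sigma_{k}^{(p-2)(1-\theta)}$ plus the lower-order GN remainder both vanish. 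A minor bookkeeping point worth making explicit in a final write-up: the energy bound $\mint_{B_{3\rr/4}}\snr{Dh}^{2}+\sigma^{p-2}\mint_{B_{3\rr/4}}\snr{Dh}^{p}\le 8^{2np}$ should be traced back to the precise constant in the $L^{\infty}$--$L^{2}$ interior estimate for $\mathcal{A}$-harmonic maps on the pair $(B_{3/4},B_{1})$, since $\sigma^{p-2}\le 1$ only gives $\sigma^{p-2}\mint_{B_{3/4}}\snr{Dv_{\infty}}^{p}\le (\sup_{B_{3/4}}\snr{Dv_{\infty}})^{p}$, and the stated numerical constant presupposes a specific bound there.
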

\begin{proof}
The proof is essentially contained in \cite[Lemma 3.2]{dumist}, where the authors consider a bilinear form satisfying the usual strict ellipticity condition i.e. $\mathcal{A}\langle\xi,\xi\rangle\gtrsim \snr{\xi}^{2}$, which is stronger than the Legendre-Hadamard one in \eqref{h.0}$_{2}$. However, the ellipticity of $\mathcal{A}$ is used only on the gradient of functions with zero boundary value, and, since $\mathcal{A}$ is constant, $\eqref{h.0}_{2}$ is equivalent to the usual $W^{1,2}$-coercivity by means of G{\aa}rding inequality \cite[Theorem 10.1]{giu}, see also \cite[Lemma 3.3]{dust}.
\end{proof}
Let us recall the definition of $p$-harmonic map, i.e. a function $h\in W^{1,p}(\Omega,\mathbb{R}^{N})$ such that
\begin{eqnarray*}
\int_{\Omega}\langle \snr{Dh}^{p-2}Dh,D\varphi\rangle \dx=0\qquad \mbox{for all} \ \ \varphi\in C^{\infty}_{\rm c}(\Omega,\mathbb{R}^{N}).
\end{eqnarray*}
Also $p$-harmonic maps have good regularity properties, \cite{uh,ur}: whenever $B_{\rr}(x_{0})\Subset B_{r}(x_{0})\Subset \Omega$ are concentric balls, it holds that
\begin{flalign}\label{h.2}
\sup_{x\in B_{\rr/2}(x_{0})}\snr{Dh}^{p}\le c\mint_{B_{\rr}(x_{0})}\snr{Dh}^{p} \dx \quad \mbox{and}\quad \mf{F}(h;B_{\rr}(x_{0}))\le c\left(\frac{\rr}{r}\right)^{\alpha}\mf{F}(h;B_{r}(x_{0})),
\end{flalign}
for some $\alpha\equiv \alpha(n,N,p)\in (0,1)$ with $c\equiv c(n,N,p)$. We conclude this section by recalling that almost $p$-harmonic maps can be approximated by genuine $p$-harmonic maps, \cite{dsv}.
\begin{lemma}\label{phar}
Let $p\ge 2$ be any number. For any $\varepsilon>0$ there exists a positive constant $\delta\equiv \delta(n,N,p,\varepsilon)\in (0,1]$ such that if $v\in W^{1,p}(B_{\rr}(x_{0}),\mathbb{R}^{N})$ with $\mf{I}_{p}(Dv;B_{\rr}(x_{0}))\le 1$ is approximately $p$-harmonic in the sense that
\begin{eqnarray*}
\left|\ \mint_{B_{\rr}(x_{0})}\langle \snr{Dv}^{p-2}Dv,D\varphi\rangle \dx \ \right|\le \delta\nr{D\varphi}_{L^{\infty}(B_{\rr}(x_{0}))}
\end{eqnarray*}
holds for all $\varphi\in C^{\infty}_{\rm c}(B_{\rr}(x_{0}),\mathbb{R}^{N})$, then there exists a $p$-harmonic map $h\in v+W^{1,p}_{0}(B_{\rr}(x_{0}),\mathbb{R}^{N})$ such that 
\begin{eqnarray*}
\mf{I}_{p}(Dh;B_{\rr}(x_{0}))\le c,\qquad \left(\mint_{B_{\rr}(x_{0})}\snr{Dh-Dv}^{\ti{p}} \dx\right)^{\frac{p}{\ti{p}}}\le c\varepsilon,\qquad  \mint_{B_{\rr}(x_{0})}\left|\frac{v-h}{\rr}\right|^{p} \dx\le c\varepsilon,
\end{eqnarray*}
where $\ti{p}:=\frac{2n}{n+2}$ if $p=2$, $\ti{p}:=\max\left\{2,\frac{np}{n+p}\right\}$ when $p>2$ and $c\equiv c(n,N,p)$.
\end{lemma}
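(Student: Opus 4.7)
This is the $p$-harmonic approximation lemma from \cite{dsv}; the natural approach is a contradiction scheme coupled with a Lipschitz truncation. By translation and scaling we may assume $x_{0}=0$ and $\rr=1$. If the statement fails, there exist $\varepsilon_{0}>0$ and a sequence $\{v_{n}\}\subset W^{1,p}(B_{1}(0),\mathbb{R}^{N})$ with $\mf{I}_{p}(Dv_{n};B_{1}(0))\le 1$ that are approximately $p$-harmonic in the sense of the hypothesis with $\delta=1/n$, yet for which no $p$-harmonic map $h\in v_{n}+W^{1,p}_{0}(B_{1}(0),\mathbb{R}^{N})$ realizes all three conclusions with constant $\varepsilon_{0}$.

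The natural candidate is $h_{n}$, the unique minimizer of $w\mapsto \int_{B_{1}(0)}\snr{Dw}^{p}\dx$ on the Dirichlet class $v_{n}+W^{1,p}_{0}(B_{1}(0),\mathbb{R}^{N})$; in particular, $h_{n}$ is $p$-harmonic. Minimality yields $\mf{I}_{p}(Dh_{n};B_{1}(0))\le \mf{I}_{p}(Dv_{n};B_{1}(0))\le 1$, supplying the first conclusion and ensuring that $\{v_{n}\},\{h_{n}\}$ are uniformly bounded in $W^{1,p}(B_{1}(0),\mathbb{R}^{N})$. Setting $w_{n}:=v_{n}-h_{n}\in W^{1,p}_{0}(B_{1}(0),\mathbb{R}^{N})$, classical $p$-Laplacian monotonicity yields
\[
\int_{B_{1}(0)}\snr{V_{p}(Dv_{n})-V_{p}(Dh_{n})}^{2}\dx\le c\int_{B_{1}(0)}\langle \snr{Dv_{n}}^{p-2}Dv_{n}-\snr{Dh_{n}}^{p-2}Dh_{n},Dw_{n}\rangle \dx.
\]
Were $w_{n}$ itself Lipschitz, subtracting the Euler-Lagrange identity for $h_{n}$ from the approximate $p$-harmonicity of $v_{n}$ tested against $w_{n}$ would close the argument at once. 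Since $w_{n}$ lies only in $W^{1,p}_{0}$, one invokes a Lipschitz truncation $w_{n}^{\lambda}$ of $w_{n}$ (extended by zero to $\mathbb{R}^{n}$): $w_{n}^{\lambda}\in W^{1,\infty}_{0}(B_{1}(0),\mathbb{R}^{N})$ with $\nr{Dw_{n}^{\lambda}}_{L^{\infty}}\le c\lambda$, coinciding with $w_{n}$ off a bad set $E_{n}^{\lambda}$ whose measure satisfies $\snr{E_{n}^{\lambda}}\le c\lambda^{-p}\nr{Dw_{n}}_{L^{p}}^{p}$, and with $\nr{Dw_{n}^{\lambda}}_{L^{p}}\le c\nr{Dw_{n}}_{L^{p}}$.

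Testing both identities against $w_{n}^{\lambda}$ and splitting the right-hand side of the monotonicity inequality over $B_{1}(0)\setminus E_{n}^{\lambda}$ (where $Dw_{n}^{\lambda}=Dw_{n}$) and over $E_{n}^{\lambda}$, the good-set contribution is bounded by $c\lambda/n$ via the almost $p$-harmonicity, whereas the bad-set piece is controlled through H\"older's inequality, the measure estimate on $E_{n}^{\lambda}$ and the uniform $W^{1,p}$-bounds on $v_{n},h_{n}$. Choosing $\lambda=\lambda_{n}\to\infty$ with $\lambda_{n}/n\to 0$ drives both errors to zero, forcing $\nr{V_{p}(Dv_{n})-V_{p}(Dh_{n})}_{L^{2}(B_{1}(0))}\to 0$. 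Standard pointwise inequalities of the type $\snr{z_{1}-z_{2}}^{\ti{p}}\le c\snr{V_{p}(z_{1})-V_{p}(z_{2})}^{2\ti{p}/p}(\snr{z_{1}}+\snr{z_{2}})^{(p-2)\ti{p}/p}$, combined with H\"older against the uniform $L^{p}$-bound on $\snr{Dv_{n}}+\snr{Dh_{n}}$, convert this $L^{2}$-smallness of the $V_{p}$-differences into the $L^{\ti{p}}$-smallness of $Dv_{n}-Dh_{n}$ demanded by the second conclusion; the exponent $\ti{p}$ is tailored so that the H\"older dual exactly integrates the $L^{p}$-bounded factor. Finally, the Sobolev-Poincar\'e inequality applied to $w_{n}\in W^{1,\ti{p}}_{0}(B_{1}(0),\mathbb{R}^{N})$, noting $\ti{p}^{*}\ge p$ by the choice of $\ti{p}$, produces the third conclusion, contradicting the standing assumption.

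The principal obstacle is the Lipschitz truncation step: calibrating the Lipschitz parameter $\lambda_{n}$ against the almost-harmonicity parameter $\delta_{n}=1/n$ while carefully tracking the bad-set errors in the $V_{p}$-quasi-norm, so as to close the monotonicity estimate. The reduced exponent $\ti{p}$ in the statement reflects precisely this obstruction, forced by the degeneracy of the $p$-Laplacian at the origin.
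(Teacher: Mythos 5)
The paper itself does not prove this lemma; it recalls it from \cite{dsv}, so the comparison is against the standard Lipschitz-truncation proof there. Your proposal correctly identifies the right ingredients -- comparison with the $p$-harmonic extension $h_n$, the $V_p$-monotonicity identity, and Lipschitz truncation of the non-Lipschitz test function $w_n=v_n-h_n$ -- but there is a genuine gap in the estimate of the bad-set contribution. Write $A(z):=\snr{z}^{p-2}z$ and
\[
\int\langle A(Dv_n)-A(Dh_n),Dw_n\rangle=\underbrace{\int\langle A(Dv_n)-A(Dh_n),Dw_n^{\lambda}\rangle}_{T_1}+\underbrace{\int_{E_n^{\lambda}}\langle A(Dv_n)-A(Dh_n),Dw_n-Dw_n^{\lambda}\rangle}_{T_2}.
\]
Your good-set bound $\snr{T_1}\lesssim \delta_n\lambda$ is fine. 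But you claim $T_2$ is controlled by ``H\"older's inequality, the measure estimate on $E_n^{\lambda}$ and the uniform $W^{1,p}$-bounds,'' and that taking $\lambda_n\to\infty$ with $\lambda_n/n\to 0$ drives it to zero. That is not so: H\"older gives
\[
\snr{T_2}\le \nr{A(Dv_n)-A(Dh_n)}_{L^{p'}}\left(\nr{Dw_n}_{L^{p}(E_n^{\lambda})}+c\lambda\,\snr{E_n^{\lambda}}^{1/p}\right),
\]
and since $\snr{E_n^{\lambda}}\le c\lambda^{-p}\nr{Dw_n}_{L^{p}}^{p}$, the term $\lambda\snr{E_n^{\lambda}}^{1/p}\lesssim\nr{Dw_n}_{L^{p}}$ is only $O(1)$ -- the $\lambda^{-p}$ in the measure bound exactly cancels the factor $\lambda$, no matter how $\lambda_n$ is chosen. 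Sending $\lambda_n\to\infty$ makes $T_1$ worse and $T_2$ no better. What is actually needed, and what \cite{dsv} uses, is the refined Lipschitz truncation in the spirit of Diening--M\'alek--Steinhauer: by a dyadic pigeonhole, for each $j$ one can pick a level $\lambda_{n,j}\in[2^{2^{j}},2^{2^{j+1}}]$ with $\lambda_{n,j}^{p}\,\snr{E_n^{\lambda_{n,j}}}\le c\,2^{-j}\nr{Dw_n}_{L^{p}}^{p}$, so that $\snr{T_2}\lesssim 2^{-j/p}$; one then chooses first $j$ large to beat the prescribed $\varepsilon$, and only afterwards takes $\delta$ so small that $\delta\,2^{2^{j+1}}$ is also beaten. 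That level-selection step is the heart of the argument, and your sketch omits it entirely.

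Two secondary points. First, the contradiction framing is vestigial: you posit a sequence contradicting the conclusion, then construct the comparison maps $h_n$ explicitly and try to derive quantitative smallness -- at which point the supposed contradiction parameter $\varepsilon_0$ never appears. Once the bad-set estimate is repaired as above, the proof is direct, with $\delta$ chosen explicitly in terms of $\varepsilon$; the contradiction shell should simply be removed. If, instead, your intent was to use weak compactness of $\{w_n\}$ in $W^{1,p}_0$ to salvage the naive bad-set bound, note that this would require $Dw_n\rightharpoonup 0$, which is not immediate from the set-up (the weak limit of $w_n$ need not vanish) and would call for a further argument that $v$ and $h$ have the same weak limit. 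Second, the passage from the $L^{2}$-smallness of $V_p(Dv_n)-V_p(Dh_n)$ to the $L^{\tilde p}$-smallness of $Dw_n$ is simpler than the pointwise inequality you quote: one has $\snr{z_1-z_2}^{p}\lesssim\snr{V_p(z_1)-V_p(z_2)}^{2}$ directly for $p\ge 2$, and H\"older in the measure then gives the stated inequality for any $\tilde p\le p$; the specific value of $\tilde p$ is forced not by that conversion but by the way the bad set intervenes and by the requirement $\tilde p^{*}\ge p$ for the Sobolev--Poincar\'e step.
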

\subsection{Existence of minima and the Euler-Lagrange system}
Since we assumed that $\partial \Omega$ is Lipschitz-regular, once assigned a boundary datum $u_{0}\in W^{1,p}( \Omega,\mathbb{R}^{N})$ so that $\bar{\mathcal{F}}_{0}(u_{0};\Omega)<\infty$ - keep \eqref{ls.7} in mind - the existence of a map $u\in u_{0}+W^{1,p}_{0}(\Omega,\mathbb{R}^{N})$ minimizing \eqref{exfun} follows by Lemma \ref{ls.l1} and direct methods as long as $f\in W^{1,p}(\Omega,\mathbb{R}^{N})^{*}$, the dual space of $W^{1,p}(\Omega,\mathbb{R}^{N})$ cf. \eqref{f.0}, and provided that $F(\cdot)$ is a continuous integrand satisfying $F(z)\ge \Lambda^{-1}\snr{z}^{p}$ for all $z\in \mathbb{R}^{N\times n}$, see \eqref{assf}$_{1,2}$. 
Now, if $u\in W^{1,p}(\Omega,\mathbb{R}^{N})$ is a local minimizer of \eqref{exfun}, by minimality it is natural to expect that $u$ weakly solves some sort of integral identity. This is indeed the case provided that the integrand $F(\cdot)$ is locally $C^{1}$-regular and \eqref{qc}, $\eqref{assf}_{2}$, \eqref{f} are verified with exponents $(p,q)$ so that $1<p\le q<\min\{np/(n-1),p+1\}$. In fact it is
\begin{eqnarray}\label{3}
\int_{\Omega}\left[\langle\partial F(Du), D\varphi\rangle-f\cdot\varphi \right] \dx=0\qquad \mbox{for all} \ \ \varphi\in C^{\infty}_{\rm c}(\Omega,\mathbb{R}^{N}).
\end{eqnarray}
Recalling \eqref{ls.7}, the proof of \eqref{3} goes as in \cite[Lemma 7.3]{ts1}.
\subsection{An extension lemma}
The following is a variant of the smoothing (with variable radius) result from \cite{foma}, obtained in \cite[Lemmas 4.4 and 4.6]{ts1}, which will be fundamental in order to construct suitable comparison maps for minima of \eqref{exfun}.
\begin{lemma}\label{exlem}
Let $0<\tau_{1}<\tau_{2}$ be two numbers and $B_{\tau_{2}}\Subset \Omega$ be a ball. There exists a bounded, linear smoothing operator $\mf{T}_{\tau_{1},\tau_{2}}\colon W^{1,1}(\Omega,\mathbb{R}^{N})\to W^{1,1}(\Omega,\mathbb{R}^{N})$ such that
\begin{eqnarray*}
W^{1,1}(\Omega,\mathbb{R}^{N})\ni w\mapsto \mf{T}_{\tau_{1},\tau_{2}}[w](x):=\mint_{B_{1}(0)}w(x+\vartheta(x)y) \ \dy,
\end{eqnarray*}
where it is $\vartheta(x):=\frac{1}{2}\max\left\{\min\left\{\snr{x}-\tau_{1},\tau_{2}-\snr{x}\right\},0\right\}$. If $w\in W^{1,p}(\Omega,\RN)$ for some $p\ge 1$, the map $\mf{T}_{\tau_{1},\tau_{2}}[w]$ has the following features:
\begin{itemize}
    \item[(\emph{i.})] $\mf{T}_{\tau_{1},\tau_{2}}[w]\in W^{1,p}(\Omega,\mathbb{R}^{N})$;
    \item[(\emph{ii.})] $w=\mf{T}_{\tau_{1},\tau_{2}}[w]$ almost everywhere on $(\Omega\setminus B_{\tau_{2}})\cup B_{\tau_{1}}$;
    \item[(\emph{iii.})] $\mf{T}_{\tau_{1},\tau_{2}}[w]\in w+W^{1,p}_{0}(B_{\tau_{2}}\setminus \bar{B}_{\tau_{1}},\mathbb{R}^{N})$;
    \item[(\emph{iv.})] $\snr{D\mf{T}_{\tau_{1},\tau_{2}}[w]}\le c(n)\mf{T}_{\tau_{1},\tau_{2}}[\snr{Dw}]$ almost everywhere in $\Omega$.
\end{itemize}
Moreover,
\begin{flalign}\label{ex.0}
\left\{
\begin{array}{c}
\displaystyle 
\ \nr{\mf{T}_{\tau_{1},\tau_{2}}[w]}_{L^{p}(B_{\tau_{2}}\setminus B_{\tau_{1}})}\le c\nr{w}_{L^{p}(B_{\tau_{2}}\setminus B_{\tau_{1}})} \\[8pt]\displaystyle
\ \nr{D\mf{T}_{\tau_{1},\tau_{2}}[w]}_{L^{p}(B_{\tau_{2}}\setminus B_{\tau_{1}})}\le c\nr{Dw}_{L^{p}(B_{\tau_{2}}\setminus B_{\tau_{1}})}\\[8pt]\displaystyle
\nr{D\mf{T}_{\tau_{1},\tau_{2}}[w]}_{L^{p}(B_{\varsigma}\setminus B_{\tau_{1}})}\le c\nr{Dw}_{L^{p}(B_{2\varsigma-\tau_{1}}\setminus B_{\tau_{1}})}\quad \mbox{for} \ \ \tau_{1}\le \varsigma\le (\tau_{1}+\tau_{2})/2\\[8pt]\displaystyle
\nr{D\mf{T}_{\tau_{1},\tau_{2}}[w]}_{L^{p}(B_{\tau_{2}}\setminus B_{\varsigma})}\le c\nr{Dw}_{L^{p}(B_{\tau_{2}}\setminus B_{2\varsigma-\tau_{2}})}\quad \mbox{for} \ \ (\tau_{1}+\tau_{2})/2\le \varsigma\le \tau_{2},
\end{array}
\right.
\end{flalign}
for $c\equiv c(n,p)$. Finally, let $\mathcal{N}\subset \mathbb{R}$ be a set with zero Lebesgue measure. There are 
\begin{flalign}\label{ex.2}
\ti{\tau}_{1}\in \left(\tau_{1},\frac{2\tau_{1}+\tau_{2}}{3}\right)\setminus \mathcal{N},\ \ti{\tau}_{2}\in \left(\frac{\tau_{1}+2\tau_{2}}{3},\tau_{2}\right)\setminus \mathcal{N} \quad \mbox{verifying} \ \ (\tau_{2}-\tau_{1})\approx (\ti{\tau}_{2}-\ti{\tau}_{1})
\end{flalign}
up to absolute constants, such that for all $p\le d<\frac{np}{n-1}$ it is
\begin{eqnarray}\label{ex.1}
\left\{
\begin{array}{c}
\displaystyle 
\ \nr{\mf{T}_{\ti{\tau}_{1},\ti{\tau}_{2}}[w]}_{L^{d}(B_{\ti{\tau}_{2}}\setminus B_{\ti{\tau}_{1}})}\le \frac{c}{(\tau_{2}-\tau_{1})^{n\left(\frac{1}{p}-\frac{1}{d}\right)}}\nr{w}_{L^{p}(B_{\tau_{2}}\setminus B_{\tau_{1}})} \\[17pt]\displaystyle
\ \nr{D\mf{T}_{\ti{\tau}_{1},\ti{\tau}_{2}}[w]}_{L^{d}(B_{\ti{\tau}_{2}}\setminus B_{\ti{\tau}_{1}})}\le \frac{c}{(\tau_{2}-\tau_{1})^{n\left(\frac{1}{p}-\frac{1}{d}\right)}}\nr{Dw}_{L^{p}(B_{\tau_{2}}\setminus B_{\tau_{1}})},
\end{array}
\right.
\end{eqnarray}
with $c\equiv c(n,p,d)$. Of course, operator $\mf{T}_{\ti{\tau}_{1},\ti{\tau}_{2}}$ satisfies properties \emph{(}i.\emph{)}-\emph{(}iv.\emph{)} and \eqref{ex.0} with $\ti{\tau}_{1}$, $\ti{\tau}_{2}$ replacing $\tau_{1}$, $\tau_{2}$.
\end{lemma}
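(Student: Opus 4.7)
The key observation is that after the change of variables $z=x+\vartheta(x)y$ (valid whenever $\vartheta(x)>0$), the operator takes the form
\[
\mf{T}_{\tau_{1},\tau_{2}}[w](x) = \mint_{B_{\vartheta(x)}(x)} w(z)\dz,
\]
i.e.\ a local average at scale $\vartheta(x)$, reducing to $w(x)$ wherever $\vartheta(x)=0$. A direct inspection of $\vartheta$ shows that it is Lipschitz with $|D\vartheta|\le 1/2$ and vanishes identically on $(\Omega\setminus B_{\tau_{2}})\cup B_{\tau_{1}}$. This immediately gives (ii), while (iii) follows from (i) and (ii).

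For (iv), I would first take $w$ smooth and differentiate the defining integral to obtain
\[
D\mf{T}_{\tau_{1},\tau_{2}}[w](x)=\mint_{B_{1}(0)} Dw(x+\vartheta(x)y)\bigl(I+y\otimes D\vartheta(x)\bigr)\dy;
\]
exploiting $|y|\le 1$ and $|D\vartheta|\le 1/2$ produces the pointwise bound $|D\mf{T}_{\tau_{1},\tau_{2}}[w](x)|\le c(n)\mf{T}_{\tau_{1},\tau_{2}}[|Dw|](x)$, which is (iv). The $L^{p}$-boundedness of $\mf{T}_{\tau_{1},\tau_{2}}$ coming from Jensen's inequality, combined with a mollification/density argument, then yields (i) and the first two bounds in \eqref{ex.0}. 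For the last two bounds, I would use the explicit form of $\vartheta$: if $|x|\le \varsigma\le(\tau_{1}+\tau_{2})/2$ then $B_{\vartheta(x)}(x)\subset B_{2\varsigma-\tau_{1}}\setminus B_{\tau_{1}}$, so only values of $w$ (or $Dw$) in this shell enter the expression for $\mf{T}_{\tau_{1},\tau_{2}}[w]$ on $B_{\varsigma}\setminus B_{\tau_{1}}$; the case $(\tau_{1}+\tau_{2})/2\le\varsigma\le\tau_{2}$ is symmetric.

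For \eqref{ex.2}--\eqref{ex.1}, choose $\ti\tau_{1}\in(\tau_{1},(2\tau_{1}+\tau_{2})/3)\setminus\mathcal{N}$ and $\ti\tau_{2}\in((\tau_{1}+2\tau_{2})/3,\tau_{2})\setminus\mathcal{N}$; such points exist since $\mathcal{N}$ has measure zero, and by construction $\ti\tau_{2}-\ti\tau_{1}\approx\tau_{2}-\tau_{1}$. The sharper integrability estimate $\eqref{ex.1}_{1}$ is then obtained by combining the pointwise representation $\mf{T}_{\ti\tau_{1},\ti\tau_{2}}[w](x)=\mint_{B_{\vartheta(x)}(x)}w\dz$ with Hölder's inequality, which yields the bound $|\mf{T}_{\ti\tau_{1},\ti\tau_{2}}[w](x)|\le c\,\vartheta(x)^{-n/p}\nr{w}_{L^{p}(B_{\tau_{2}}\setminus B_{\tau_{1}})}$, and interpolating the resulting $L^{\infty}$ control against the $L^{p}$ estimate $\eqref{ex.0}_{1}$ on the shell $B_{\ti\tau_{2}}\setminus B_{\ti\tau_{1}}$ via a Sobolev-type argument; the factor $(\tau_{2}-\tau_{1})^{-n(1/p-1/d)}$ is exactly what such an interpolation produces. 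The identical scheme applied to $|Dw|$ in place of $w$, combined with (iv), delivers $\eqref{ex.1}_{2}$.

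The only genuine technical difficulty is the rigorous justification of the differentiation identity underlying (iv) for $w\in W^{1,p}$, which I would handle by mollifying $w$ and passing to the limit using the continuity of $\vartheta$ together with dominated convergence; everything else is bookkeeping based on the Lipschitz estimate $|D\vartheta|\le 1/2$ and the explicit change-of-variables formula.
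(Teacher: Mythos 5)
The paper does not actually prove this lemma; it cites it to Fonseca--Mal\'y \cite{foma} and Schmidt \cite[Lemmas 4.4, 4.6]{ts1}. So the question is whether your reconstruction would actually work, and for the bulk of the statement it would: the change of variables $z=x+\vartheta(x)y$, the observations $|D\vartheta|\le 1/2$ and $\vartheta\equiv 0$ on $(\Omega\setminus B_{\tau_2})\cup B_{\tau_1}$, the chain rule formula for $D\mf{T}_{\tau_1,\tau_2}[w]$, and the Fubini/Jensen argument for the $L^p$ estimates in \eqref{ex.0} (noting that $|z-x|<\vartheta(x)$ together with the Lipschitz-$\tfrac12$ property forces $\vartheta(z)\approx\vartheta(x)$) are all correct and essentially the standard route to $(i)$--$(iv)$ and \eqref{ex.0}. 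The inclusion $B_{\vartheta(x)}(x)\subset B_{2\varsigma-\tau_1}\setminus B_{\tau_1}$ for $\tau_1<|x|\le\varsigma\le(\tau_1+\tau_2)/2$ also checks out.

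The gap is in your treatment of \eqref{ex.1}. You treat $\ti\tau_1,\ti\tau_2$ as if they could be chosen arbitrarily in the prescribed intervals subject only to avoiding $\mathcal{N}$; that is not what the lemma asserts, and it cannot be the case. The operator $\mf{T}_{\ti\tau_1,\ti\tau_2}$ is only bounded from $L^p$ to $L^p$ for generic radii (it is dominated by the maximal operator), so a gain of integrability to $L^d$ with $d>p$ cannot come for free. Indeed, if $w$ concentrates near the sphere $\{|x|=\ti\tau_1\}$, then for $x$ close to that sphere the ball $B_{\vartheta(x)}(x)$ is tiny and sees only the concentration, and $\mf{T}[w](x)$ blows up faster than any fixed $L^d$ integrability permits. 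Concretely, the pointwise bound $|\mf{T}[w](x)|\le c\,\vartheta(x)^{-n/p}\nr{w}_{L^p}$ is not an $L^\infty$ bound (the right-hand side is unbounded as $\vartheta\to 0$), so "interpolating $L^\infty$ against $L^p$" is not a step one can take; and if one instead writes $|\mf{T}[w]|^d\le|\mf{T}[w]|^p\cdot(c\vartheta^{-n/p}\nr{w}_{L^p})^{d-p}$ and runs the Fubini argument again, one is left with a weighted integral $\int \vartheta(z)^{-n(d-p)/p}|w(z)|^p\,\dz$, which for $d>p$ is not controlled by $\nr{w}_{L^p}^p$ for a general $w$. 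This is exactly why the Fonseca--Mal\'y/Schmidt proof contains a radius-selection step: a Chebyshev/maximal-function averaging argument in the radial variable (for which $\snr{\Omega\setminus\mathcal{N}}$ being full is used so that one may select radii simultaneously outside a bad set and outside $\mathcal{N}$) identifies $\ti\tau_1,\ti\tau_2$ so that $|Dw|^p$ does not pile up on thin annuli near $\{|x|=\ti\tau_1\}$ and $\{|x|=\ti\tau_2\}$, and only then does the dyadic decomposition of the annulus combined with the pointwise bound produce \eqref{ex.1} in the stated range $p\le d< np/(n-1)$. You should not present \eqref{ex.2} as the statement that arbitrary radii outside $\mathcal{N}$ will do, and you need to replace the interpolation heuristic by the genuine selection argument.
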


\section{The nondegenerate scenario}\label{ndeg.1}
In this section we treat the nondegenerate scenario, i.e. the large gradient case. The first step for proving our partial regularity results is showing a Caccioppoli type inequality, see \cite[Lemma 7.13]{ts1} for the homogeneous case $f\equiv 0$.
\begin{lemma}\label{ndegc}
Assume \eqref{assf}$_{1,2,3}$, \eqref{pq}, \eqref{sqc} and \eqref{f}, let $B_{\rr}(x_{0})\Subset \Omega$ be any ball with radius $\rr\in (0,1]$, $u\in W^{1,p}(\Omega,\mathbb{R}^{N})$ be a local minimizer of \eqref{exfun} and $\ell(x):=v_{0}+\langle z_{0},x-x_{0}\rangle$ be any affine function with $v_{0}\in \mathbb{R}^{N}$ and $z_{0}\in \left(\mathbb{R}^{N\times n}\setminus \{0\}\right)\cap\left\{\snr{z}\le 80000(M+1)\right\}$, for some constant $M\ge 0$. Then it holds that
\begin{eqnarray}\label{cacc}
\mf{F}(u,z_{0};B_{\rr/2}(x_{0}))^{p}&\le &c\mathfrak{K}\left(\mint_{B_{\rr}(x_{0})}\snr{z_{0}}^{p-2}\left|\frac{u-\ell}{\rr}\right|^{2} +\left|\frac{u-\ell}{\rr}\right|^{p}\dx\right)\nonumber \\
&&+\frac{c}{\snr{z_{0}}^{p-2}}\left(\rr^{m}\mint_{B_{\rr}(x_{0})}\snr{f}^{m} \dx\right)^{\frac{2}{m}}+c\mathds{1}_{\{q>p\}}\mathfrak{F}(u,z_{0};B_{\rr}(x_{0}))^{q},
\end{eqnarray}
where $c\equiv c(\textnormal{\texttt{data}},M^{q-p})$ and $\mathfrak{K}(\cdot)$, $\mathds{1}_{\{q>p\}}$ are defined in \eqref{ik}-\eqref{ik.1} respectively.
\end{lemma}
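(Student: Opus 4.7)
The plan is to follow the cutoff-comparison strategy of \cite[Lemma 7.13]{ts1}, augmented with a bound for the forcing term via Sobolev--Poincar\'e and Young's inequality weighted by $|z_0|^{(p-2)/2}$. Fix $\rr/2\le s<t\le \rr$. Apply Lemma \ref{exlem} to $u-\ell$ on $B_{t}\setminus\bar B_{s}$ and use Fubini to select good radii $\tilde\tau_1\in(s,(2s+t)/3)$, $\tilde\tau_2\in((s+2t)/3,t)$ satisfying \eqref{ex.2} along which both the slicing/trace condition \eqref{ls.5} and the higher-integrability gain \eqref{ex.1} (with $d=q$, admissible since \eqref{pq}$_2$ forces $q<np/(n-1)$) hold for the smoothing $\mathfrak T:=\mathfrak T_{\tilde\tau_1,\tilde\tau_2}[u-\ell]$. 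Construct a competitor $v\in W^{1,p}(\Omega,\mathbb R^{N})$ by setting $v:=u$ outside $\tilde A:=B_{\tilde\tau_2}\setminus\bar B_{\tilde\tau_1}$ and $v:=\ell+\mathfrak T$ on $\tilde A$; by properties (ii)-(iii) of Lemma \ref{exlem}, $\supp(u-v)\Subset B_{t}$, $v\in W^{1,q}_{\loc}(\tilde A,\mathbb R^{N})$, and $v\equiv\ell$ on $B_{\tilde\tau_1}$.

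Minimality combined with \eqref{ls.7} gives $\bar{\mathcal F}_0(u;B_t)-\bar{\mathcal F}_0(v;B_t)\le \int_{B_t}f\cdot(u-v)\dx$; splitting both sides through \eqref{ls.6} across $\partial B_{\tilde\tau_1}$ and observing that $\bar{\mathcal F}_0(v;B_{\tilde\tau_1})=\bar{\mathcal F}_0(\ell;B_{\tilde\tau_1})$ (since $v\equiv\ell$ there) reduces the estimate to
$$\bar{\mathcal F}_0(u;B_{\tilde\tau_1})-\bar{\mathcal F}_0(\ell;B_{\tilde\tau_1})\le\bar{\mathcal F}_0(\ell+\mathfrak T;\tilde A)-\bar{\mathcal F}_0(u;\tilde A)+\int_{B_t}f\cdot(u-v)\dx.$$
The left-hand side is bounded below by $\lambda\int_{B_{\tilde\tau_1}}(|z_0|^2+|D(u-\ell)|^2)^{(p-2)/2}|D(u-\ell)|^2\dx$ through the strict quasiconvexity \eqref{sqc} applied at $z_0$ with test $u-\ell$, which the density built into \eqref{ls.3}-\eqref{ls.8} lifts to $W^{1,p}$-traces. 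By \eqref{equiv.1} this quantity is comparable to $|B_{\tilde\tau_1}|\,\mathfrak F(u,z_0;B_{\tilde\tau_1})^p$, and since $B_{\rr/2}\subset B_{\tilde\tau_1}\subset B_\rr$ the LHS of \eqref{cacc} is controlled.

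The annular surplus $\bar{\mathcal F}_0(\ell+\mathfrak T;\tilde A)-\bar{\mathcal F}_0(u;\tilde A)$ is estimated by \eqref{ls.16} and $\eqref{assf}_2$, producing $L^p$- and $L^q$-integrals of $|z_0+D\mathfrak T|$ and $|Du|$; the $L^p$-part is converted by $\eqref{ex.0}_{2,3,4}$ into $L^p$-norms of $D(u-\ell)$ on the full annulus, while the $L^q$-part is handled through \eqref{ex.1} with $d=q$, which transfers it into $L^p$-norms on a neighbouring annulus. A Poincar\'e bound on $u-\ell$ then turns these gradients into the averaged $|(u-\ell)/\rr|^p$- and $|z_0|^{p-2}|(u-\ell)/\rr|^2$-quantities inside the nonlinearity $\mathfrak K(\cdot)$ of \eqref{cacc}, the $s^{q/p}$-branch arising precisely from the $q/p$-imbalance after Poincar\'e; residual $|D(u-\ell)|^q$-pieces that resist this conversion fall into the $\mathds 1_{\{q>p\}}\mathfrak F(u,z_0;B_\rr)^q$-correction. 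For the forcing, H\"older with exponents $(m,m')$, Sobolev--Poincar\'e on $u-v$ (legitimate since $m'<2^*$ by \eqref{f.0}), and Young with weight $|z_0|^{(p-2)/2}$ yield exactly $|z_0|^{-(p-2)}(\rr^m\mint_{B_\rr}|f|^m\dx)^{2/m}$ plus an absorbable gradient piece.

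All unabsorbed terms proportional to $\mathfrak F(u,z_0;B_t)^p$ (necessarily present because $D\eta$ hitting $u-\ell$ produces $(t-s)^{-1}$-weighted gradient integrals) are then eliminated by the iteration Lemma \ref{l5} on $s\in[\rr/2,\rr]$. The principal technical obstacle is the triple requirement on the competitor $v$: it must be $W^{1,p}$-globally with $\supp(u-v)\Subset B_t$, $W^{1,q}_{\loc}$ on $\tilde A$ so that $\bar{\mathcal F}_0(v;\tilde A)$ factorises cleanly through \eqref{ls.16}, and quantitatively controlled in $L^q(\tilde A)$ by $L^p$-quantities over the full annulus $B_t\setminus B_s$ --- without this last control, the $L^q$-surplus would not match the $L^p$-flavoured right-hand side of \eqref{cacc}. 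The good-radii selection of Lemma \ref{exlem}, which relies crucially on the dimensional gap \eqref{pq}$_2$, is tailored precisely to deliver all three simultaneously.
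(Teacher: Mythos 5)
Your overall scaffolding (good-radii selection via Lemma \ref{exlem}, hole-filling, the Sobolev--Poincar\'e/Young treatment of the forcing term, iteration via Lemma \ref{l5}) does match the paper's strategy. However there are two genuine gaps in the central construction and in the coercivity step.

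First, your competitor is internally inconsistent. You set $v:=u$ outside $\tilde A=B_{\tilde\tau_2}\setminus\bar B_{\tilde\tau_1}$, which in particular means $v=u$ on all of $B_{\tilde\tau_1}$; you then assert $v\equiv\ell$ on $B_{\tilde\tau_1}$, which would require $u\equiv\ell$ there. The source of the trouble is that you apply the smoothing operator $\mathfrak T$ directly to $u-\ell$: by Lemma \ref{exlem}(ii), $\mathfrak T_{\tilde\tau_1,\tilde\tau_2}[u-\ell]=u-\ell$ a.e.\ on $B_{\tilde\tau_1}$, so $\ell+\mathfrak T$ agrees with $u$, not $\ell$, across the inner boundary. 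With this competitor the two functionals cancel on $B_{\tilde\tau_1}$ after the split and \emph{no} coercive term survives. What is needed is the cutoff: in the paper one defines $\varphi_1:=\mathfrak T_{\tilde\tau_1,\tilde\tau_2}[(1-\eta)(u-\ell)]$ for a cutoff $\eta$ with $\mathds 1_{B_{\tilde\tau_1}}\le\eta\le\mathds 1_{B_{\tilde\tau_2}}$. Then $(1-\eta)(u-\ell)\equiv 0$ on $B_{\tilde\tau_1}$, so $\varphi_1\equiv 0$ there and $\varphi_2:=u-\ell-\varphi_1\in W^{1,p}_0(B_{\tilde\tau_2},\mathbb R^N)$ with $\varphi_2\equiv u-\ell$ on $B_{\tilde\tau_1}$, and the competitor $u-\varphi_2=\ell+\varphi_1$ genuinely equals $\ell$ on $B_{\tilde\tau_1}$.

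Second, even with the corrected competitor, you cannot obtain the lower bound by invoking \eqref{sqc} ``with test $u-\ell$,'' since $u-\ell\notin W^{1,p}_0(B_{\tilde\tau_1},\mathbb R^N)$; and invoking the $W^{1,p}_0$-strengthened form \eqref{sqcp} is not available here, as Remark \ref{rpsqc} explicitly points out that the passage from \eqref{sqc} to \eqref{sqcp} is an open issue in the $(p,q)$-growth regime. Your phrase ``which the density built into \eqref{ls.3}--\eqref{ls.8} lifts to $W^{1,p}$-traces'' glosses over exactly this obstruction: \eqref{ls.4} gives only the non-strict inequality $\bar{\mathcal F}_0(\ell+\varphi;B)\ge\bar{\mathcal F}_0(\ell;B)$, with no coercive gain. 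The paper extracts coercivity by a distinct device: one splits off a $p$-Laplacean piece and works with the auxiliary integrand $G_0(z):=F(z)-c_0|z|^p$ (with $c_0$ small enough that $G_0$ remains quasiconvex). Then \eqref{ls.4} applied to $\bar{\mathcal G}_0$, together with the subadditivity estimate $\bar{\mathcal G}_0(w;B)\le\bar{\mathcal F}_0(w;B)-c_0\int_B|Dw|^p$ and \eqref{ls.2}, produces the explicit term $c\int|V_{|z_0|,p}(D\varphi_2)|^2$ on the left. Without this $G_0$ device, or an equivalent one, the strict quasiconvexity simply does not reach the $W^{1,p}_0$ test functions you need, and the estimate \eqref{cacc} is not established.
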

\begin{proof}
Let us define the auxiliary integrand
\begin{eqnarray*}
\mathbb{R}^{N\times n}\ni z\mapsto G_{0}(z):=F(z)-c_{0}\snr{z}^{p},\qquad c_{0}:=\frac{\min\{\Lambda^{-1},\lambda\}}{2\max\{\tilde{c},1\}},
\end{eqnarray*}
where $\tilde{c}\equiv \tilde{c}(n,N,p)$, $\Lambda$ and $\lambda$ are the positive constants appearing in \eqref{ls.2}, \eqref{assf} and \eqref{sqc} respectively. By \eqref{sqc} and \eqref{ls.2} we easily get that $G_{0}(\cdot)$ satisfies \eqref{sqc} and, via $\eqref{assf}_{2}$ we have $\snr{z}^{p}\lesssim G_{0}(z)\lesssim \snr{z}^{p}+\snr{z}^{q}$, with constants implicit in "$\lesssim$" depending on $ (n,N,\lambda,\Lambda,p)$. For $B\in \mathbb{B}_{\Omega}$, we introduce functional $$W^{1,p}(B,\mathbb{R}^{N})\ni w\mapsto \mathcal{G}_{0}(w;B):=\int_{B}G_{0}(Dw) \ \dx$$
and its Lebesgue-Serrin-Marcellini extension $\bar{\mathcal{G}}_{0}(\cdot)$. Now if $\{w_{j}\}_{j\in \N}\in \mathcal{C}(w;B)$ is any sequence, the weak $W^{1,p}$-semicontinuity of $z\mapsto \snr{z}^{p}$ yield that $\bar{\mathcal{G}}_{0}(w;B)\le \liminf_{j\to \infty}\mathcal{F}_{0}(w_{j};B)-c_{0}\nr{Dw}_{L^{p}(B)}^{p}$, therefore the very definition of $\bar{\mathcal{F}}_{0}(\cdot)$ yields that
\begin{eqnarray}\label{ls.9}
\bar{\mathcal{G}}_{0}(w;B)\le \bar{\mathcal{F}}_{0}(w;B)-c_{0}\int_{B}\snr{Dw}^{p} \ \dx.
\end{eqnarray}
Next, we select parameters $\rr/2\le \tau_{1}< \tau_{2}\le \rr$, introduce the set
\begin{eqnarray*}
\mathcal{N}:=\left\{t\in (\tau_{1},\tau_{2})\colon t\mapsto \int_{B_{t}(x_{0})}\snr{Du}^{p} \ \dx \ \mbox{is nondifferentiable at} \ t\right\},
\end{eqnarray*} 
notice that $\mathcal{H}^{1}(\mathcal{N})=0$ cf. \cite[Section 2.3]{gk}, and fix numbers $\ti{\tau}_{1}<\ti{\tau}_{2}\in (\tau_{1},\tau_{2})\setminus \mathcal{N}$ as in \eqref{ex.2}. We then let $\eta\in C^{1}_{\rm c}(B_{\ti{\tau}_{2}}(x_{0}))$ be such that
\begin{eqnarray*}
\mathds{1}_{B_{\ti{\tau}_{1}}(x_{0})}\le \eta\le \mathds{1}_{B_{\ti{\tau}_{2}}(x_{0})}\qquad \mbox{and}\qquad \snr{D\eta}\lesssim \frac{1}{\ti{\tau}_{2}-\ti{\tau}_{1}},
\end{eqnarray*}
set $S(x_{0}):=B_{\tau_{2}}(x_{0})\setminus B_{\tau_{1}}(x_{0})$, $\ti{S}(x_{0}):=B_{\ti{\tau}_{2}}(x_{0})\setminus B_{\ti{\tau}_{2}}(x_{0})$, $\uu(x):=u(x)-\ell(x)$ and define maps 
\begin{eqnarray*} \varphi_{1}(x):=\mathfrak{T}_{\ti{\tau}_{1},\ti{\tau}_{2}}[(1-\eta)\uu](x),\qquad  \varphi_{2}(x):=\uu(x)-\varphi_{1}(x).
\end{eqnarray*}
By Lemma \ref{exlem} (\emph{ii.})-(\emph{iii.}) we have that
\begin{flalign}\label{0}
\varphi_{1}\equiv 0 \ \  \mbox{on} \ \  B_{\ti{\tau}_{1}}(x_{0}),\quad\varphi_{2}\in W^{1,p}_{0}(B_{\ti{\tau}_{2}}(x_{0}),\mathbb{R}^{N}),\quad\varphi_{2}\equiv \uu \ \ \mbox{on} \ \ B_{\ti{\tau}_{1}}(x_{0}), \quad D\uu=D\varphi_{1}+D\varphi_{2};
\end{flalign}
in particular, $\eqref{pq}_{2}$ assures that $p/(p+1-q)<np/(n-1)$ so by \eqref{ex.1} it is
\eqn{ls.10}
$$\varphi_{1}\in W^{1,\frac{p}{p+1-q}}(B_{\ti{\tau}_{2}}(x_{0}),\mathbb{R}^{N})\cap W^{1,q}(B_{\ti{\tau}_{2}}(x_{0}),\mathbb{R}^{N}).$$ 
Moreover, the fact that $\ti{\tau}_{2}\not \in \mathcal{N}$ renders that
\begin{eqnarray}\label{ls.11}
\limsup_{\sigma\to 0}\frac{1}{\sigma}\int_{B_{\ti{\tau}_{2}+\sigma}(x_{0})\setminus B_{\ti{\tau}_{2}-\sigma}(x_{0})}\snr{Du}^{p} \ \dx<\infty.
\end{eqnarray}
By construction, we see that \eqref{ls.11} holds for $\uu$ and, by \eqref{ex.0}$_{3,4}$ we also deduce that \eqref{ls.11} is verified also for $\varphi_{1}$, $\ell+\varphi_{2}$ and $u-\varphi_{1}$. Recalling that \eqref{ls.11} is sufficient for the additivity of functional $\bar{\mathcal{G}}_{0}(\cdot)$, cf. \eqref{ls.5}-\eqref{ls.6}, we can conclude that
\begin{flalign}
 \mbox{the additivity property} \ \eqref{ls.6} \ \mbox{holds for functions} \  \uu,\ \varphi_{1}, \ \ell+\varphi_{2}, \  u-\varphi_{1}.\label{ls.12}
\end{flalign}
We then estimate
\begin{eqnarray}\label{name}
0&\stackrel{\eqref{ls.4}}{\le}&\bar{\mathcal{G}}_{0}(\ell+\varphi_{2};B_{\rr}(x_{0}))-\bar{\mathcal{G}}_{0}(\ell;B_{\rr}(x_{0}))\nonumber \\
&\stackrel{\eqref{ls.12},\eqref{0}_{2}}{=}&\bar{\mathcal{G}}_{0}(\ell+\varphi_{2};B_{\ti{\tau}_{2}}(x_{0}))+\bar{\mathcal{G}}_{0}(\ell;B_{\rr}(x_{0})\setminus \bar{B}_{\ti{\tau}_{2}}(x_{0}))-\bar{\mathcal{G}}_{0}(\ell;B_{\rr}(x_{0}))\nonumber\\
&=&\bar{\mathcal{G}}_{0}(\ell+\varphi_{2};B_{\ti{\tau}_{2}}(x_{0}))+\mathcal{G}_{0}(\ell;B_{\rr}(x_{0})\setminus \bar{B}_{\ti{\tau}_{2}}(x_{0}))-\mathcal{G}_{0}(\ell;B_{\rr}(x_{0}))\nonumber \\
&\stackrel{\eqref{ls.9}, \eqref{0}_{2}}{\le}&\bar{\mathcal{F}}_{0}(\ell+\varphi_{2};B_{\ti{\tau}_{2}}(x_{0}))-\int_{B_{\ti{\tau}_{2}}(x_{0})}F(z_{0}) \ \dx\nonumber \\
&&-c_{0}\int_{B_{\ti{\tau}_{2}}(x_{0})}\left[\snr{z_{0}+D\varphi_{2}}^{p}-\snr{z_{0}}^{p} \right]\ \dx\nonumber \\
&\stackrel{\eqref{ls.7}}{=}&\bar{\mathcal{F}}(u-\varphi_{1};B_{\ti{\tau}_{2}}(x_{0}))-\int_{B_{\ti{\tau}_{2}}(x_{0})}F(z_{0}) \ \dx+\int_{B_{\ti{\tau}_{2}}(x_{0})}f\cdot(u-\varphi_{1}) \ \dx\nonumber \\
&&-c_{0}\int_{B_{\ti{\tau}_{2}}(x_{0})}\left[\snr{z_{0}+D\varphi_{2}}^{p}-\snr{z_{0}}^{p} \right]\ \dx,
\end{eqnarray}
which in turn yields that
\begin{eqnarray}\label{ls.13}
c\int_{B_{\ti{\tau}_{2}}(x_{0})}\snr{V_{\snr{z_{0}},p}(D\varphi_{2})}^{2} \ \dx&\stackrel{\eqref{Vm},\eqref{ls.2}}{\le}&c_{0}\int_{B_{\ti{\tau}_{2}}(x_{0})}\left[\snr{z_{0}+D\varphi_{2}}^{p}-\snr{z_{0}}^{p} \right]\ \dx\nonumber \\
&\stackrel{\eqref{name}}{\le}&\bar{\mathcal{F}}(u-\varphi_{1};B_{\ti{\tau}_{2}}(x_{0}))-\int_{B_{\ti{\tau}_{2}}(x_{0})}F(z_{0}) \ \dx\nonumber \\
&&+\int_{B_{\ti{\tau}_{2}}(x_{0})}f\cdot(u-\varphi_{1}) \ \dx
\end{eqnarray}
with $c\equiv c(n,N,p,\lambda,\Lambda)$. The minimality of $u$ together with \eqref{ls.11}, \eqref{ls.12} and $\eqref{0}_{2}$ imply that
\begin{eqnarray}\label{ls.15.1}
\bar{\mathcal{F}}(u;B_{\ti{\tau}_{2}}(x_{0}))\le \bar{\mathcal{F}}(u-\varphi_{2};B_{\ti{\tau}_{2}}(x_{0}))
\end{eqnarray}
and, by \eqref{qc} and the basic property \eqref{ls.16} of Lebesgue-Serrin-Marcellini extension, it is 
\begin{eqnarray}\label{ls.17}
\bar{\mathcal{F}}(u-\varphi_{2};B_{\ti{\tau}_{2}}(x_{0}))&\stackrel{\eqref{ls.7}}{=}&\bar{\mathcal{F}}_{0}(\ell+\varphi_{1};B_{\ti{\tau}_{2}}(x_{0}))-\int_{B_{\ti{\tau}_{2}}(x_{0})}f\cdot(u-\varphi_{2}) \ \dx\nonumber \\
&\stackrel{\eqref{ls.8},\eqref{ls.10}}{=}&\int_{B_{\ti{\tau}_{2}}(x_{0})}F(z_{0}+D\varphi_{1}) \ \dx-\int_{B_{\ti{\tau}_{2}}(x_{0})}f\cdot(u-\varphi_{2}) \ \dx.
\end{eqnarray}
Moreover, using again \eqref{qc} and \eqref{ls.16} we get
\begin{eqnarray}\label{ls.14}
\bar{\mathcal{F}}(u-\varphi_{1};B_{\ti{\tau}_{2}}(x_{0}))-\bar{\mathcal{F}}(u;B_{\ti{\tau}_{2}}(x_{0}))&\stackrel{\eqref{ls.7}}{=}&\bar{\mathcal{F}}_{0}(u-\varphi_{1};B_{\ti{\tau}_{2}}(x_{0}))-\bar{\mathcal{F}}_{0}(u;B_{\ti{\tau}_{2}}(x_{0}))\nonumber \\
&&+\int_{B_{\ti{\tau}_{2}}(x_{0})}f\cdot \varphi_{1} \ \dx\nonumber \\
&\stackrel{\eqref{ls.3},\eqref{ls.10}}{=}&\int_{B_{\ti{\tau}_{2}}(x_{0})}\left[QF(Du-D\varphi_{1})-QF(Du)\right] \ \dx\nonumber \\
&&+\int_{B_{\ti{\tau}_{2}}(x_{0})}f\cdot \varphi_{1} \ \dx\nonumber \\
&\stackrel{\eqref{ls.8},\eqref{0}_{1}}{=}&\int_{\ti{S}(x_{0})}\left[F(Du-D\varphi_{1})-F(Du)\right] \ \dx\nonumber \\
&&+\int_{B_{\ti{\tau}_{2}}(x_{0})}f\cdot \varphi_{1} \ \dx.
\end{eqnarray}
At this stage, by means of $\eqref{0}_{1}$, \eqref{ls.15.1}, \eqref{ls.17} and \eqref{ls.14} we complete the estimate in \eqref{ls.13} as
\begin{eqnarray*}
c\int_{B_{\ti{\tau}_{2}}(x_{0})}\snr{V_{\snr{z_{0}},p}(D\varphi_{2})}^{2} \ \dx&\le&\left[\bar{\mathcal{F}}(u-\varphi_{1};B_{\ti{\tau}_{2}}(x_{0}))-\bar{\mathcal{F}}(u;B_{\ti{\tau}_{2}}(x_{0}))\right]\nonumber \\
&&+\left[\bar{\mathcal{F}}(u;B_{\ti{\tau}_{2}}(x_{0}))-\bar{\mathcal{F}}(u-\varphi_{2};B_{\ti{\tau}_{2}}(x_{0}))\right]\nonumber \\
&&+\left[\bar{\mathcal{F}}(u-\varphi_{2};B_{\ti{\tau}_{2}}(x_{0}))-\int_{B_{\ti{\tau}_{2}}(x_{0})}F(z_{0}) \ \dx\right]\nonumber \\
&&+\int_{B_{\ti{\tau}_{2}}(x_{0})}f\cdot(u-\varphi_{1}) \ \dx\nonumber \\
&\le &\int_{\ti{S}(x_{0})}\left[F(Du-D\varphi_{1})-F(Du)\right] \ \dx\nonumber \\
&&+\int_{\ti{S}(x_{0})}\left[F(z_{0}+D\varphi_{1})-F(z_{0})\right] \ \dx\nonumber \\
&&+\int_{B_{\ti{\tau}_{2}}(x_{0})}f\cdot \varphi_{2} \ \dx=:\left[\mbox{(I)}+\mbox{(II)}+\mbox{(III)}\right],
\end{eqnarray*}
for $c\equiv c(n,N,p,\lambda,\Lambda)$. We then split
\begin{eqnarray*}
\mbox{(I)}+\mbox{(II)}&=&\int_{\ti{S}(x_{0})}\left\langle\left(\int_{0}^{1}\left[\partial F(z_{0})-\partial F(z_{0}+D\uu-s D\varphi_{1})\right] \ \ds\right), D\varphi_{1}\right\rangle \ \dx\nonumber \\
&&+\int_{\ti{S}(x_{0})}\left\langle\left(\int_{0}^{1}\left[\partial F(z_{0}+s D\varphi_{1})-\partial F(z_{0})\right] \ \ds\right), D\varphi_{1}\right\rangle\  \dx=:\mbox{(I')}+\mbox{(II')}.
\end{eqnarray*}
Via Young inequality we get
\begin{eqnarray*}
\snr{\mbox{(I')}}&=&\left| \ \int_{\ti{S}(x_{0})}\left[\int_{0}^{1}\left(\int_{0}^{1}\partial^{2}F(z_{0}+t(D\uu-s D\varphi_{1})) \ \dt\right)\langle (D\uu-s D\varphi_{1}),D\varphi_{1}\rangle \ \ds\right] \dx \ \right|\nonumber \\
&\stackrel{\eqref{assf}_{3}}{\le}&c\int_{\ti{S}(x_{0})}\int_{0}^{1}\left(\snr{z_{0}}^{2}+\snr{D\uu-s D\varphi_{1}}^{2}\right)^{\frac{p-2}{2}}\snr{D\uu-s D\varphi_{1}} \ \ds \snr{D\varphi_{1}} \dx\nonumber \\
&&+c\int_{\ti{S}(x_{0})}\int_{0}^{1}\left(\snr{z_{0}}^{2}+\snr{D\uu-s D\varphi_{1}}^{2}\right)^{\frac{q-2}{2}}\snr{D\uu-s D\varphi_{1}} \ \ds \snr{D\varphi_{1}} \dx\nonumber \\
&\le&c\int_{\ti{S}(x_{0})\cap\{\snr{D\varphi_{1}}>\snr{D\uu}\}}\snr{V_{\snr{z_{0}},p}(D\varphi_{1})}^{2} \dx+c\int_{\ti{S}(x_{0})\cap\{\snr{D\varphi_{1}}\le\snr{D\uu}\}}\snr{V_{\snr{z_{0}},p}(D\uu)}^{2} \dx\nonumber \\
&&+c\int_{\ti{S}(x_{0})\cap\{\snr{D\varphi_{1}}\le \snr{D\uu}\}}\left(\snr{z_{0}}^{2}+\snr{D\uu}^{2}\right)^{\frac{q-2}{2}}\snr{D\uu}\snr{D\varphi_{1}} \dx\nonumber \\
&&+c \int_{\ti{S}(x_{0})\cap\{\snr{D\varphi_{1}}>\snr{D\uu}\}}\snr{V_{\snr{z_{0}},q}(D\varphi_{1})}^{2} \dx\nonumber \\
&\le&c\int_{\ti{S}(x_{0})}\left[\snr{V_{\snr{z_{0}},p}(D\uu)}^{2}+\snr{V_{\snr{z_{0}},p}(D\varphi_{1})}^{2}\right] \dx\nonumber \\
&&+c\int_{\ti{S}(x_{0})}\left[\snr{V_{\snr{z_{0}},q}(D\varphi_{1})}^{2}+\left(\snr{z_{0}}^{2}+\snr{D\uu}^{2}\right)^{\frac{q-2}{2}}\snr{D\uu}\snr{D\varphi_{1}}\right] \dx,
\end{eqnarray*}
with $c\equiv c(n,N,\Lambda,p,q)$. Similarly, it is
\begin{eqnarray*}
\snr{\mbox{(II')}}&=&\left| \ \int_{B_{\ti{\tau}_{2}}(x_{0})}\int_{0}^{1}\left(\int_{0}^{1}\partial^{2}F(z_{0}+s t D\varphi_{1}) \ \dt\right)s D\varphi_{1}\cdot D\varphi_{1} \ \ds \dx\ \right|\nonumber \\
&\stackrel{\eqref{assf}_{3}, \eqref{Vm}}{\le}&\Lambda\int_{\ti{S}(x_{0})}\left[\snr{V_{\snr{z_{0}},p}(D\varphi_{1})}^{2}+\snr{V_{\snr{z_{0}},q}(D\varphi_{1})}^{2}\right] \dx,
\end{eqnarray*}
for $c\equiv c(n,N,\Lambda,p,q)$ and, trivially, we have
\begin{eqnarray*}
\snr{\mbox{(III)}}&\le&\int_{B_{\ti{\tau}_{2}}(x_{0})}\snr{f}\snr{\varphi_{2}} \ \dx.
\end{eqnarray*}
Collecting all the above estimates, we obtain
\begin{eqnarray*}
\int_{B_{\ti{\tau}_{2}}(x_{0})}\snr{V_{\snr{z_{0}},p}(D\varphi_{2})}^{2} \dx &\le&c\int_{\ti{S}(x_{0})}\snr{V_{\snr{z_{0}},p}(D\uu)}^{2} \dx +\int_{B_{\ti{\tau}_{2}}(x_{0})}\snr{f}\snr{\varphi_{2}} \dx\nonumber \\
&&+c\int_{\ti{S}(x_{0})}\left[\snr{V_{\snr{z_{0}},p}(D\varphi_{1})}^{2}+\snr{V_{\snr{z_{0}},q}(D\varphi_{1})}^{2}\right] \dx\nonumber \\
&&+c\int_{\ti{S}(x_{0})}(\snr{z_{0}}^{2}+\snr{D\uu}^{2})^{\frac{q-2}{2}}\snr{D\uu}\snr{D\varphi_{1}} \dx\nonumber \\
&=:&c\int_{\ti{S}(x_{0})}\snr{V_{\snr{z_{0}},p}(D\uu)}^{2} \dx +c\left(\mbox{T}_{1}+\mbox{T}_{2}+\mbox{T}_{3}\right),
\end{eqnarray*}
for $c\equiv c(n,N,\lambda,\Lambda,p,q)$. Recalling \eqref{f}-\eqref{f.0}, by H\"older, Young and Sobolev-Poincar'\'e inequalities we get
\begin{eqnarray*}
\mbox{T}_{1}
&\le&\snr{B_{\ti{\tau}_{2}}(x_{0})}\left(\ti{\tau}_{2}^{m}\mint_{B_{\ti{\tau}_{2}}(x_{0})}\snr{f}^{m} \dx\right)^{1/m}\left(\ti{\tau_{2}}^{-m'}\mint_{B_{\ti{\tau}_{2}}(x_{0})}\snr{\varphi_{2}}^{m'} \dx\right)^{\frac{1}{m'}}\nonumber \\
&\stackrel{\eqref{f}}{\le}&\snr{B_{\ti{\tau}_{2}}(x_{0})}\left(\ti{\tau}_{2}^{m}\mint_{B_{\ti{\tau}_{2}}(x_{0})}\snr{f}^{m} \dx\right)^{1/m}\left(\mint_{B_{\ti{\tau}_{2}}(x_{0})}\left| \ \frac{\varphi_{2}}{\ti{\tau}_{2}} \ \right|^{2^{*}} \dx\right)^{\frac{1}{2^{*}}}\nonumber \\
&\stackrel{\eqref{0}_{2}}{\le}&\snr{B_{\ti{\tau}_{2}}(x_{0})}\left(\ti{\tau}_{2}^{m}\mint_{B_{\ti{\tau}_{2}}(x_{0})}\snr{f}^{m} \dx\right)^{1/m}\left(\mint_{B_{\ti{\tau}_{2}}(x_{0})}\snr{D\varphi_{2}}^{2} \dx\right)^{\frac{1}{2}}\nonumber \\
&\le&\varepsilon\int_{B_{\ti{\tau}_{2}}(x_{0})}\snr{V_{\snr{z_{0}},p}(D\varphi_{2})}^{2} \dx+\frac{c\snr{B_{\rr}(x_{0})}}{\varepsilon\snr{z_{0}}^{p-2}}\left(\rr^{m}\mint_{B_{\rr}(x_{0})}\snr{f}^{m} \dx\right)^{\frac{2}{m}},
\end{eqnarray*}
where we also used that $\rr/2\le \ti{\tau}_{2}\le \rr$ and $c\equiv c(n,p,m)$. Before estimating terms $\mbox{T}_{2}$-$\mbox{T}_{3}$, let $w\in W^{1,t}(\ti{S}(x_{0}),\mathbb{R}^{N})$ for $t\in \{p,q\}$, define
\begin{eqnarray*}
&\ti{S}_{-}(x_{0};w):=\left\{x\in \ti{S}(x_{0})\colon \snr{z_{0}}>\snr{Dw}\right\},\qquad\ti{S}_{+}(x_{0};w):=\left\{x\in \ti{S}(x_{0})\colon \snr{z_{0}}\le\snr{Dw}\right\}
\end{eqnarray*}
and observe that as $t\in \{p,q\}$ it is
\begin{flalign}\label{1}
\begin{cases}
\ \mathds{1}_{\ti{S}_{-}(x_{0};w)}\snr{V_{\snr{z_{0}},t}(Dw)}^{2}\le 2^{2t}\snr{z_{0}}^{t-2}\snr{Dw}^{2}\\
\ \mathds{1}_{\ti{S}_{+}(x_{0};w)}(\snr{z_{0}}^{2}+\snr{Dw}^{2})^{\frac{t}{2}}\le 2^{2t}\snr{Dw}^{t}.
\end{cases}
\end{flalign}
Moreover, for $0<s_{1}<s_{2}\le1$, set
\begin{flalign*}
&\mf{V}(s_{2}-s_{1}):=\int_{B_{s_{2}}(x_{0})\setminus B_{s_{1}}(x_{0})}\left[\snr{V_{\snr{z_{0}},p}(D\uu)}^{2}+\left|V_{\snr{z_{0}},p}\left(\frac{\uu}{s_{2}-s_{1}}\right) \right|^{2}\right] \dx;\nonumber \\
&\mf{D}(s_{2}-s_{1}):=\int_{B_{s_{2}}(x_{0})\setminus B_{s_{1}}(x_{0})}\left[\snr{D\uu}^{p}+\left|\frac{\uu}{s_{2}-s_{1}} \right|^{p}\right] \dx.
\end{flalign*}
Now, concerning term $\mbox{T}_{2}$, we bound by means of \eqref{1}, \eqref{pq}, \eqref{ex.0}$_{2}$, \eqref{ex.1}$_{2}$, \eqref{ex.2} and \eqref{equiv.1},
\begin{eqnarray*}
\mbox{T}_{2}&\le &c(1+M^{q-p})\int_{\ti{S}_{-}(x_{0};\varphi_{1})}\snr{z_{0}}^{p-2}\snr{D\varphi_{1}}^{2}+c\int_{\ti{S}_{+}(x_{0};\varphi_{1})}\left[\snr{D\varphi_{1}}^{p}+\snr{D\varphi_{1}}^{q}\right] \dx\nonumber \\
&\le &c(1+M^{q-p})\snr{z_{0}}^{p-2}\int_{\ti{S}(x_{0})}\left[\snr{D\uu}^{2}+\left|\frac{\uu}{\ti{\tau}_{2}-\ti{\tau}_{1}}\right|^{2}\right] \dx\nonumber \\
&&+c\mf{D}(\ti{\tau}_{2}-\ti{\tau}_{1})+\frac{c\mathds{1}_{\{q>p\}}\mf{D}(\tau_{2}-\tau_{1})^{q/p}}{(\tau_{2}-\tau_{1})^{n\left(\frac{q}{p}-1\right)}}\nonumber \\
&\le&c(1+M^{q-p})\mf{V}(\ti{\tau}_{2}-\ti{\tau}_{1})+c\mf{D}(\ti{\tau}_{2}-\ti{\tau}_{1})+\frac{c\mathds{1}_{\{q>p\}}\mf{D}(\tau_{2}-\tau_{1})^{q/p}}{(\tau_{2}-\tau_{1})^{n\left(\frac{q}{p}-1\right)}},
\end{eqnarray*}
with $c\equiv c(n,N,p,q)$. Finally, using the definition of $V_{\snr{z_{0}},p}(\cdot)$ and H\"older inequality we estimate
\begin{eqnarray*}
\mbox{T}_{3}&\le&cM^{q-p}\int_{\ti{S}_{-}(x_{0};\uu)}\snr{V_{\snr{z_{0}},p}(D\uu)}\snr{z_{0}}^{\frac{p-2}{2}}\snr{D\varphi_{1}} \dx+c\int_{\ti{S}_{+}(x_{0};\uu)}\snr{D\uu}^{q-1}\snr{D\varphi_{1}} \dx\nonumber \\
&\stackrel{\eqref{pq}}{\le}&cM^{q-p}\left(\int_{\ti{S}(x_{0})}\snr{V_{\snr{z_{0}},p}(D\uu)}^{2} \dx\right)^{1/2}\left(\int_{\ti{S}(x_{0})}\snr{z_{0}}^{p-2}\snr{D\varphi_{1}}^{2} \dx\right)^{1/2}\nonumber \\
&&+c\left(\int_{\ti{S}(x_{0})}\snr{D\uu}^{p} \dx\right)^{\frac{q-1}{p}}\left(\int_{\ti{S}(x_{0})}\snr{D\varphi_{1}}^{\frac{p}{1-q+p}} \dx\right)^{\frac{1-q+p}{p}}\nonumber \\
&\stackrel{\eqref{ex.1}_{2},\eqref{ex.0}_{2}}{\le}&\frac{c\mathds{1}_{\{q>p\}}}{(\ti{\tau}_{2}-\ti{\tau}_{1})^{n\left(\frac{q}{p}-1\right)}}\left(\int_{S(x_{0})}\left[\snr{D\uu}^{p}+\left|\frac{\uu}{\ti{\tau}_{2}-\ti{\tau}_{1}}\right|^{p}\right] \dx\right)^{\frac{q}{p}}+cM^{q-p}\mf{V}(\ti{\tau}_{2}-\ti{\tau}_{1})\nonumber \\
&\stackrel{\eqref{ex.2}}{\le}&cM^{q-p}\mf{V}(\ti{\tau}_{2}-\ti{\tau}_{1})+\frac{c\mathds{1}_{\{q>p\}}\mf{D}(\tau_{2}-\tau_{1})^{q/p}}{(\tau_{2}-\tau_{1})^{n\left(\frac{q}{p}-1\right)}},
\end{eqnarray*}
for $c\equiv c(n,N,p,q)$. Merging the content of the above displays and choosing $\varepsilon>0$ sufficiently small to reabsorb terms, we obtain
\begin{eqnarray*}
\int_{B_{\tau_{1}}(x_{0})}\snr{V_{\snr{z_{0}},p}(D\uu)}^{2} \dx &\le&c(1+M^{q-p})\mf{V}(\ti{\tau}_{2}-\ti{\tau}_{1})+c\mf{D}(\ti{\tau}_{2}-\ti{\tau}_{1})\nonumber \\
&&+\frac{c\mathds{1}_{\{q>p\}}\mf{D}(\tau_{2}-\tau_{1})^{q/p}}{(\tau_{2}-\tau_{1})^{n\left(\frac{q}{p}-1\right)}}+\frac{c\snr{B_{\rr}(x_{0})}}{\varepsilon\snr{z_{0}}^{p-2}}\left(\rr^{m}\mint_{B_{\rr}(x_{0})}\snr{f}^{m} \dx\right)^{\frac{2}{m}}\nonumber \\
&\stackrel{\eqref{equiv.1},\eqref{ex.2}}{\le}&c\int_{S(x_{0})}\snr{V_{\snr{z_{0}},p}(D\uu)}^{2} \dx\nonumber \\
&&+c\int_{S(x_{0})}\left[\snr{z_{0}}^{p-2}\left| \frac{\uu}{\tau_{2}-\tau_{1}} \right|^{2}+\left|\frac{\uu}{\tau_{2}-\tau_{1}}\right|^{p}\right] \dx\nonumber \\
&&+\frac{c\mathds{1}_{\{q>p\}}\mf{D}(\tau_{2}-\tau_{1})^{q/p}}{(\tau_{2}-\tau_{1})^{n\left(\frac{q}{p}-1\right)}}+\frac{c\snr{B_{\rr}(x_{0})}}{\varepsilon\snr{z_{0}}^{p-2}}\left(\rr^{m}\mint_{B_{\rr}(x_{0})}\snr{f}^{m} \dx\right)^{\frac{2}{m}}
\end{eqnarray*}
with $c\equiv c(\textnormal{\texttt{data}},M^{q-p})$. Summing on both sides of the above inequality $c\int_{B_{\tau_{1}}(x_{0})}\snr{V_{\snr{z_{0}},p}(D\uu)}^{2} \dx$ we obtain
\begin{eqnarray}\label{ls.18}
\int_{B_{\tau_{1}}(x_{0})}\snr{V_{\snr{z_{0}},p}(D\uu)}^{2} \dx&\le&\frac{c}{1+c}\int_{B_{\tau_{2}}(x_{0})}\snr{V_{\snr{z_{0}},p}(D\uu)}^{2} \dx\nonumber \\
&&+c\int_{S(x_{0})}\left[\snr{z_{0}}^{p-2}\left| \frac{\uu}{\tau_{2}-\tau_{1}} \right|^{2}+\left|\frac{\uu}{\tau_{2}-\tau_{1}}\right|^{p}\right] \dx\nonumber \\
&&+\frac{c\mathds{1}_{\{q>p\}}\mf{D}(\tau_{2}-\tau_{1})^{q/p}}{(\tau_{2}-\tau_{1})^{n\left(\frac{q}{p}-1\right)}}+\frac{c\snr{B_{\rr}(x_{0})}}{\varepsilon\snr{z_{0}}^{p-2}}\left(\rr^{m}\mint_{B_{\rr}(x_{0})}\snr{f}^{m} \dx\right)^{\frac{2}{m}},
\end{eqnarray}
for $c\equiv c(\textnormal{\texttt{data}},M^{q-p})$, therefore Lemma \ref{l5} applies and renders \eqref{cacc}. The proof is complete.
\end{proof}
In the next lemma we run a linearization procedure that will allow to apply Lemma \ref{ahar}. For the ease of notation, we set $\ti{\mu}(t):=\mu(t)^{1-\frac{q-1}{p}}$.
\begin{lemma}\label{linlem}
Under assumptions \eqref{assf}, \eqref{pq}, \eqref{sqc} and \eqref{f}, let $B_{\rr}(x_{0})\Subset \Omega$ be a ball, $u\in W^{1,p}(\Omega,\mathbb{R}^{N})$ be a local minimizer of \eqref{exfun} and $z_{0}\in \left(\mathbb{R}^{N\times n}\setminus \{0\}\right)\cap \left\{\snr{z}\le 80000(M+1)\right\}$, for some constant $M\ge 0$, be any matrix such that $\mf{F}(u,z_{0};B_{\rr}(x_{0}))>0$. Then, it holds that
\begin{flalign}\label{4}
&\left| \ \mint_{B_{\rr}(x_{0})} \frac{\partial^{2}F(z_{0})}{\snr{z_{0}}^{p-2}}\left\langle \frac{\snr{z_{0}}^{\frac{p-2}{2}}(Du-z_{0})}{\mf{F}(u,z_{0};B_{\rr}(x_{0}))^{\frac{p}{2}}},D\varphi \right\rangle \dx \ \right|\nonumber \\
&\qquad\quad  \le \frac{c\nr{D\varphi}_{L^{\infty}(B_{\rr}(x_{0}))}\snr{z_{0}}^{\frac{2-p}{2}}}{\mf{F}(u,z_{0};B_{\rr}(x_{0}))^{\frac{p}{2}}}\left(\rr^{m}\mint_{B_{\rr}(x_{0})}\snr{f}^{m} \dx\right)^{1/m}\nonumber \\
&\qquad\quad\qquad  +c\nr{D\varphi}_{L^{\infty}(B_{\rr}(x_{0}))}\ti{\mu}\left(\frac{\mf{F}(u,z_{0};B_{\rr}(x_{0}))}{\snr{z_{0}}}\right)\left[1+\mf{H}\left(\frac{\mf{F}(u,z_{0};B_{\rr}(x_{0}))}{\snr{z_{0}}}\right)\right],
\end{flalign}
for all $\varphi\in C^{\infty}_{\rm c}(B_{\rr}(x_{0}),\mathbb{R}^{N})$, where $\mf{H}(t):=\left[t^{\frac{p-2}{2}}+t^{\frac{2q-p-2}{2}}\right]$ and $c\equiv c(p,q,M^{q-p})$.
\end{lemma}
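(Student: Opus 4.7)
The starting point is the Euler--Lagrange identity \eqref{3}, which holds under assumptions \eqref{qc}, $\eqref{assf}_2$ and \eqref{f} thanks to \eqref{pq} (so that $q<\min\{np/(n-1),p+1\}$). Since $\partial F(z_{0})$ is a constant and $\varphi\in C^{\infty}_{\rm c}(B_{\rr}(x_{0}),\mathbb{R}^{N})$, adding/subtracting it we obtain
\[
\int_{B_{\rr}(x_{0})}\langle\partial F(Du)-\partial F(z_{0}),D\varphi\rangle\dx=\int_{B_{\rr}(x_{0})}f\cdot\varphi\dx.
\]
Next, a first-order Taylor expansion of $\partial F$ centered at $z_{0}$ yields
\[
\partial F(Du)-\partial F(z_{0})=\partial^{2}F(z_{0})(Du-z_{0})+\mathcal{R}(x),\qquad \mathcal{R}(x):=\int_{0}^{1}\bigl[\partial^{2}F(z_{0}+s(Du-z_{0}))-\partial^{2}F(z_{0})\bigr]\,\ds\,(Du-z_{0}),
\]
so the quantity to be estimated equals $\mint f\cdot\varphi\dx-\mint\langle\mathcal{R},D\varphi\rangle\dx$ divided by the normalizing factor $\snr{z_{0}}^{(p-2)/2}\mf{F}(u,z_{0};B_{\rr}(x_{0}))^{p/2}$.

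For the forcing term: Poincar\'e applied to $\varphi$ (which vanishes on $\partial B_{\rr}(x_{0})$) gives $\nr{\varphi}_{L^{m'}(B_{\rr}(x_{0}))}\le c\rr^{1+n/m'}\nr{D\varphi}_{L^{\infty}}$ (here \eqref{f.0} ensures $m'<p^{*}$, so the embedding works). H\"older's inequality then yields
\[
\left|\mint_{B_{\rr}(x_{0})}f\cdot\varphi\dx\right|\le c\nr{D\varphi}_{L^{\infty}}\Bigl(\rr^{m}\mint_{B_{\rr}(x_{0})}\snr{f}^{m}\dx\Bigr)^{1/m},
\]
which, after division by the normalizing factor, reproduces the first term on the right-hand side of \eqref{4}.

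The bulk of the work is estimating $\mathcal{R}$. Apply $\eqref{assf}_4$ inside the defining integral: with $z_{1}=z_{0}$, $z_{2}=z_{0}+s(Du-z_{0})$, the argument of $\mu$ is bounded above by $\min\{\snr{Du-z_{0}}/\snr{z_{0}},1\}$, and I split $B_{\rr}(x_{0})=\mathcal{E}_{-}\cup\mathcal{E}_{+}$ along the threshold $\snr{Du-z_{0}}\lessgtr\snr{z_{0}}$. On $\mathcal{E}_{-}$, Lemma \ref{l6} controls the $s$-integral so that the power-brackets in $\eqref{assf}_4$ collapse to $(\snr{z_{0}}^{p-2}+\snr{z_{0}}^{q-2})\lesssim(1+M^{q-p})\snr{z_{0}}^{p-2}$ (using $\snr{z_{0}}\lesssim M+1$), giving
\[
\snr{\mathcal{R}}\mathds{1}_{\mathcal{E}_{-}}\lesssim(1+M^{q-p})\mu\Bigl(\tfrac{\snr{Du-z_{0}}}{\snr{z_{0}}}\Bigr)\snr{z_{0}}^{p-2}\snr{Du-z_{0}}.
\]
On $\mathcal{E}_{+}$, where $\snr{Du-z_{0}}$ dominates, the same brackets are absorbed into $\snr{Du-z_{0}}^{p-2}+\snr{Du-z_{0}}^{q-2}$.

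To produce the exponent $1-(q-1)/p$ in $\tilde\mu$, apply H\"older's inequality on each piece with conjugate exponents $\bigl(1/(1-\tau),1/\tau\bigr)$, $\tau:=(q-1)/p\in(0,1)$ (this range is ensured by $\eqref{pq}_2$), and use $\mu\le 1$ to replace $\mu^{1/(1-\tau)}$ by $\mu$. The concavity of $\mu$ (and of $\mu^{1-\tau}$) lets me invoke Jensen's inequality:
\[
\Bigl(\mint\mu\bigl(\tfrac{\snr{Du-z_{0}}}{\snr{z_{0}}}\bigr)\dx\Bigr)^{1-\tau}\le\mu\Bigl(\mint\tfrac{\snr{Du-z_{0}}}{\snr{z_{0}}}\dx\Bigr)^{1-\tau}\le\mu\bigl(\mf{F}(u,z_{0};B_{\rr}(x_{0}))/\snr{z_{0}}\bigr)^{1-\tau},
\]
where in the last step I use Cauchy--Schwarz and the definition \eqref{excess} to control the mean by $(\mf{F}/\snr{z_{0}})^{p/2}$, then the monotonicity of $\mu$ and $p/2\ge 1$ to pass to $\mf{F}/\snr{z_{0}}$. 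The remaining H\"older factor on $\mathcal{E}_{-}$ yields $\snr{z_{0}}^{(p-2)/2}\mf{F}^{p/2}$, matching the first summand in $[1+\mf{H}(\cdot)]$; on $\mathcal{E}_{+}$ it produces $\mf{F}^{p-1}$ and $\mf{F}^{q-1}\snr{z_{0}}^{p-q}$ via the fact that $\mint\snr{Du-z_{0}}^{p}\dx\le\mf{F}^{p}$ together with $q-1\le p$, which account for the two terms in the definition of $\mf{H}$.

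The main obstacle is the bookkeeping in the Hölder/Jensen step: choosing the conjugate pair so that \emph{exactly} the exponent $1-(q-1)/p$ on $\mu$ appears while the residual $L^{p/(q-1)}$-type norm of the power-part of $\mathcal{R}$ gets absorbed into the excess $\mf{F}$ and the function $\mf{H}$. Once this is arranged on both $\mathcal{E}_{-}$ and $\mathcal{E}_{+}$, adding the two contributions and dividing by $\snr{z_{0}}^{(p-2)/2}\mf{F}^{p/2}$ gives \eqref{4}.
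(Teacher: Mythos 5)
Your strategy is the same as the paper's: start from the Euler--Lagrange identity \eqref{3}, write $\partial F(Du)-\partial F(z_0)=\int_0^1\partial^2F(z_0+s(Du-z_0))\,\ds\,(Du-z_0)$, move the $\partial^2F(z_0)$-piece to the left, and control the remainder via $\eqref{assf}_4$, a dichotomy $\snr{Du-z_0}\lessgtr\snr{z_0}$ (which is exactly what the paper does through \eqref{equiv.1}), H\"older, and Jensen's inequality for the concave modulus $\mu$. The paper simply uses three different H\"older pairs $(2,2)$, $(p,p')$, $(p/(p-q+1),p/(q-1))$ tailored to the three resulting pieces, then observes that $\mu\le1$ lets one dominate all the $\mu$-exponents by the smallest one, $1-(q-1)/p$; your single-exponent variant $(1/(1-\tau),1/\tau)$ with $\tau=(q-1)/p$ works too, provided you first factor $\snr{z_0}^{(p-2)/2}$ out of the degenerate piece and then apply Jensen to the residual power of $\snr{z_0}^{p-2}\snr{Du-z_0}^2$.

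There is, however, a genuine (though easily repairable) slip in your Jensen step. You write $\mint\snr{Du-z_0}/\snr{z_0}\,\dx\le(\mf{F}/\snr{z_0})^{p/2}$ by Cauchy--Schwarz, and then pass from $\mu\bigl((\mf{F}/\snr{z_0})^{p/2}\bigr)$ to $\mu(\mf{F}/\snr{z_0})$ ``using $p/2\ge1$ and the monotonicity of $\mu$.'' But for $p>2$, the inequality $t^{p/2}\le t$ is equivalent to $t\le1$; when $\mf{F}/\snr{z_0}>1$ you have $t^{p/2}>t$ and the monotonicity of $\mu$ goes the wrong way, so the chain breaks. Since the lemma makes no smallness assumption on $\mf{F}(u,z_0;B_\rr(x_0))/\snr{z_0}$, this must be fixed. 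The correct route — and what the paper uses — is H\"older with exponent $p$: $\mint\snr{Du-z_0}/\snr{z_0}\,\dx\le\snr{z_0}^{-1}\bigl(\mint\snr{Du-z_0}^p\,\dx\bigr)^{1/p}\le\mf{F}(u,z_0;B_\rr(x_0))/\snr{z_0}$, which gives $\mu(\mf{F}/\snr{z_0})$ directly with no restriction.

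Two small further remarks. First, your claim that the $\mathcal{E}_+$ piece produces $\mf{F}^{q-1}\snr{z_0}^{p-q}$ is not what the H\"older bookkeeping actually gives (it gives $\mf{F}^{q-1}$); this is harmless because after dividing by $\snr{z_0}^{(p-2)/2}\mf{F}^{p/2}$ the residual discrepancy is exactly $\snr{z_0}^{q-p}\lesssim M^{q-p}$, and the constant in \eqref{4} is allowed to depend on $M^{q-p}$, but you should be aware that the factor is being absorbed into the constant rather than appearing naturally. Second, the parenthetical about the concavity of $\mu^{1-\tau}$ is never used; Jensen is applied to $\mu$ alone before raising to the $(1-\tau)$-th power.
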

\begin{proof}
Let $\varphi\in C^{\infty}_{\rm c}(B_{\rr}(x_{0}),\RN)$ be any map. For simplicity, we shall abbreviate $\nr{D\varphi}_{L^{\infty}(B_{\rr}(x_{0}))}\equiv \nr{D\varphi}_{\infty}$ and $\mf{F}(u,z_{0};B_{\rr}(x_{0}))\equiv \mf{F}_{0}(u)$. By \eqref{3} we see that
\begin{eqnarray*}
0&=&\mint_{B_{\rr}(x_{0})}\left[\langle\partial F(Du)-\partial F(z_{0}),D\varphi\rangle-f\cdot \varphi\right] \ \dx\nonumber \\
&=&\mint_{B_{\rr}(x_{0})}\left(\int_{0}^{1}\partial^{2}F(z_{0}+s(Du-z_{0})) \ \ds\right)\langle Du-z_{0},D\varphi\rangle-f\cdot \varphi \ \dx, 
\end{eqnarray*}
therefore we have
\begin{flalign*}
&\left| \ \mint_{B_{\rr}(x_{0})}\partial^{2} F(z_{0})\langle Du-z_{0}, D\varphi\rangle \dx \ \right|\stackrel{\eqref{3}}{\le}\mint_{B_{\rr}(x_{0})}\snr{f}\snr{\varphi} \dx\nonumber \\
&\qquad \qquad +\mint_{B_{\rr}(x_{0})}\left(\int_{0}^{1}\snr{\partial^{2}F(z_{0})-\partial^{2}F(z_{0}+s(Du-z_{0}))} \ \ds\right)\snr{Du-z_{0}}\snr{D\varphi} \dx\nonumber \\
&\qquad \qquad =:\mbox{(I)}+\mbox{(II)}.
\end{flalign*}
By the mean value theorem and H\"older inequality we have
\begin{eqnarray*}
\mbox{(I)}\le 4\nr{D\varphi}_{\infty}\left(\rr^{m}\mint_{B_{\rr}(x_{0})}\snr{f}^{m} \dx\right)^{1/m}.
\end{eqnarray*}
Moreover, by H\"older inequality, $\eqref{pq}$, the concavity of $t\mapsto \mu(t)$ and Jensen inequality it is
\begin{eqnarray*}
\mbox{(II)}&\stackrel{\eqref{assf}_{4}}{\le}&\nr{D\varphi}_{\infty}\mint_{B_{\rr}(x_{0})}\mu\left(\frac{\snr{Du-z_{0}}}{\snr{z_{0}}}\right) \ (\snr{z_{0}}^{2}+\snr{Du-z_{0}}^{2})^{\frac{p-2}{2}}\snr{Du-z_{0}} \dx\nonumber \\
&&+\nr{D\varphi}_{\infty}\mint_{B_{\rr}(x_{0})}\mu\left(\frac{\snr{Du-z_{0}}}{\snr{z_{0}}}\right) \ (\snr{z_{0}}^{2}+\snr{Du-z_{0}}^{2})^{\frac{q-2}{2}}\snr{Du-z_{0}} \dx\nonumber \\
&\stackrel{\eqref{equiv.1}}{\le}&c(1+M^{q-p})\nr{D\varphi}_{\infty}\snr{z_{0}}^{\frac{p-2}{2}}\left(\mint_{B_{\rr}(x_{0})}\mu\left(\frac{\snr{Du-z_{0}}}{\snr{z_{0}}}\right)^{2} \dx\right)^{\frac{1}{2}}\left(\mint_{B_{\rr}(x_{0})}\snr{z_{0}}^{p-2}\snr{Du-z_{0}}^{2} \dx\right)^{\frac{1}{2}}\nonumber \\
&&+c\nr{D\varphi}_{\infty}\left(\mint_{B_{\rr}(x_{0})}\mu\left(\frac{\snr{Du-z_{0}}}{\snr{z_{0}}}\right)^{p} \dx\right)^{\frac{1}{p}}\left(\mint_{B_{\rr}(x_{0})}\snr{Du-z_{0}}^{p} \dx\right)^{\frac{p-1}{p}}\nonumber \\
&&+c\nr{D\varphi}_{\infty}\left(\mint_{B_{\rr}(x_{0})}\mu\left(\frac{\snr{Du-z_{0}}}{\snr{z_{0}}}\right)^{\frac{p}{p-q+1}} \dx\right)^{\frac{p-q+1}{p}}\left(\mint_{B_{\rr}(x_{0})}\snr{Du-z_{0}}^{p} \dx\right)^{\frac{q-1}{p}}\nonumber \\
&\stackrel{\eqref{pq}}{\le}&c\nr{D\varphi}_{\infty}\ti{\mu}\left(\frac{\mf{F}_{0}(u)}{\snr{z_{0}}} \right)\left[(1+M^{q-p})\snr{z_{0}}^{\frac{p-2}{2}}\mf{F}_{0}(u)^{\frac{p}{2}}+\mf{F}_{0}(u)^{p-1}+\mf{F}_{0}(u)^{q-1}\right],
\end{eqnarray*}
with $c\equiv c(p,q)$. Combining the content of the three previous displays and dividing both sides of the resulting inequality by $\snr{z_{0}}^{\frac{p-2}{2}}\mf{F}_{0}(u)^{\frac{p}{2}}$ we obtain \eqref{4} and the proof is complete.
\end{proof}
In the next proposition, we show that under suitable smallness conditions, a local minimizer of \eqref{exfun} is approximately $\mathcal{A}$-harmonic in the sense of Section \ref{harsec} with $\mathcal{A}\equiv \snr{z_{0}}^{2-p}\partial^{2}F(z_{0})$ for a certain $z_{0}\in \mathbb{R}^{N\times n}\cap\left\{0<\snr{z}\le 40000(M+1)\right\}$, with $M\ge 0$. Assumptions \eqref{assf}$_{1,3}$ and inequality \eqref{sqc.1} assure that the bilinear form $\snr{z_{0}}^{2-p}\partial^{2}F(z_{0})$ satisfies \eqref{h.0} with $L\equiv L(\lambda,\Lambda,M^{q-p})$, see \cite[Chapter 5]{giu} and \cite{k}.
\begin{proposition}\label{p1}
Assume \eqref{assf}, \eqref{pq}, \eqref{sqc}, \eqref{f} and let $u\in W^{1,p}(\Omega,\mathbb{R}^{N})$ be a local minimizer of \eqref{exfun} verifying
\begin{eqnarray}\label{5.1}
\snr{(Du)_{B_{\rr}(x_{0})}}\le 40000(M+1)
\end{eqnarray}
for some constant $M\ge 0$ on a ball $B_{\rr}(x_{0})\Subset \Omega$. Then, there exists $\tau\equiv \tau(\textnormal{\texttt{data}},M^{q-p})\in (0,2^{-10})$, $\varepsilon_{0}\equiv \varepsilon_{0}(\textnormal{\texttt{data}},\mu(\cdot),M^{q-p})\in (0,1)$ and $\varepsilon_{1}\equiv \varepsilon_{1}(\textnormal{\texttt{data}},M^{q-p})\in (0,1)$ such that if the smallness conditions
\begin{eqnarray}\label{5}
\mf{F}(u;B_{\rr}(x_{0}))<\varepsilon_{0}\snr{(Du)_{B_{\rr}(x_{0})}}
\end{eqnarray}
and
\begin{eqnarray}\label{6}
\left(\rr^{m}\mint_{B_{\rr}(x_{0})}\snr{f}^{m} \dx\right)^{1/m}\le \varepsilon_{1}\snr{(Du)_{B_{\rr}(x_{0})}}^{\frac{p-2}{2}}\mf{F}(u;B_{\rr}(x_{0}))^{\frac{p}{2}}
\end{eqnarray}
are verified on $B_{\rr}(x_{0})$, it holds that
\begin{eqnarray}\label{30}
\
\mf{F}(u;B_{\tau\rr}(x_{0}))\le \tau^{\beta_{0}}\mf{F}(u;B_{\rr}(x_{0})),
\end{eqnarray}
for all $\beta_{0}\in (0,2/p)$, with $c_{0}\equiv c_{0}(\textnormal{\texttt{data}},M^{q-p})$.
\end{proposition}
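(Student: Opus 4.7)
I would follow the classical $\mathcal{A}$-harmonic approximation route, tailored to the nonuniformly elliptic setting. Let $z_0 := (Du)_{B_\rr(x_0)}$ and $E := \mf{F}(u;B_\rr(x_0))$; we may assume $E>0$, and by \eqref{5.1}--\eqref{5} we have $0<\snr{z_0}\le 40000(M+1)$ together with $E<\varepsilon_0\snr{z_0}$, so Lemmas \ref{ndegc} and \ref{linlem} are available. Consider the renormalized map
\[
V(x) := \frac{\snr{z_0}^{(p-2)/2}\bigl(u(x)-\ell(x)\bigr)}{E^{p/2}},\qquad \ell(x):=(u)_{B_{\rr}(x_0)}+\langle z_0,x-x_0\rangle,
\]
so that $DV=\snr{z_0}^{(p-2)/2}(Du-z_0)/E^{p/2}$. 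By the very definition of $\mf{F}(\cdot)$ in \eqref{excess} one has $\mint_{B_\rr(x_0)}\snr{DV}^2\dx\le 1$ and $\sigma^{p-2}\mint_{B_\rr(x_0)}\snr{DV}^p\dx\le 1$ with the choice $\sigma:=(E/\snr{z_0})^{p/2}\in(0,1]$.

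Next I would feed the estimate from Lemma \ref{linlem} into Lemma \ref{ahar} with $\mathcal{A}:=\snr{z_0}^{2-p}\partial^2 F(z_0)$, whose Legendre--Hadamard ellipticity constants depend on $(\lambda,\Lambda,M^{q-p})$. The RHS of \eqref{4} is controlled by
\[
c\,\ti\mu\!\left(E/\snr{z_0}\right)\!\bigl[1+\mf{H}(E/\snr{z_0})\bigr]\nr{D\varphi}_\infty+\frac{c\snr{z_0}^{(2-p)/2}}{E^{p/2}}\!\left(\rr^m\mint_{B_\rr(x_0)}\snr{f}^m\dx\right)^{1/m}\!\nr{D\varphi}_\infty,
\]
which, thanks to $E/\snr{z_0}<\varepsilon_0$ (making the $\ti\mu(\cdot)$ and $\mf{H}(\cdot)$ factors as small as desired) and assumption \eqref{6} (making the $f$-term no larger than $c\varepsilon_1$), is $\le\delta\nr{D\varphi}_\infty$ for any prescribed $\delta>0$, provided $\varepsilon_0=\varepsilon_0(\textnormal{\texttt{data}},\mu(\cdot),M^{q-p})$ and $\varepsilon_1=\varepsilon_1(\textnormal{\texttt{data}},M^{q-p})$ are chosen small enough. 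Lemma \ref{ahar} then delivers an $\mathcal{A}$-harmonic map $h$ on $B_{3\rr/4}(x_0)$ with $\mint\snr{Dh}^2+\sigma^{p-2}\snr{Dh}^p\le c$ and $\mint\snr{(V-h)/\rr}^2+\sigma^{p-2}\snr{(V-h)/\rr}^p\le\ti\varepsilon$, where $\ti\varepsilon$ is prescribable through $\delta$.

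Using the Schauder-type estimate \eqref{h.1} for $h$ on $B_{\rr/2}(x_0)$, I would approximate $h$ by its first-order Taylor polynomial $\ti h(x):=h(x_0)+\langle Dh(x_0),x-x_0\rangle$, yielding $\snr{h-\ti h}\le c(\tau\rr)^2\sup\snr{D^2 h}\le c\tau^2\rr$ on $B_{\tau\rr}(x_0)$. This naturally suggests the new comparison affine map
\[
\ell_\star(x):=\ell(x)+\frac{E^{p/2}}{\snr{z_0}^{(p-2)/2}}\,\ti h(x),
\]
whose slope $D\ell_\star$ still lies in $\{0<\snr{z}\le 80000(M+1)\}$ because $E^{p/2}/\snr{z_0}^{(p-2)/2}\le \varepsilon_0^{p/2}\snr{z_0}$. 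Hence Lemma \ref{ndegc} applies on $B_{\tau\rr}(x_0)$ with $\ell_\star$. Substituting the harmonic-approximation bound and the Taylor remainder into the right-hand side of \eqref{cacc}, a direct calculation (compare $u-\ell_\star$ on $B_{\tau\rr}(x_0)$ to the $L^2$/$L^p$ distance between $V$ and $h$, plus $\ti h-h$) produces
\[
\mf{F}(u,D\ell_\star;B_{\tau\rr/2}(x_0))^p \le c\bigl(\tau^{2}+\tau^{-n}\ti\varepsilon\bigr)E^p+c\,\mathds{1}_{\{q>p\}}E^q+(\text{scaled $f$-term}),
\]
after which \eqref{minav} and \eqref{tri.1} transfer the estimate to $\mf{F}(u;B_{\tau\rr}(x_0))^p$. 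Choosing $\tau$ small so that $c\tau^{2}\le\tfrac12\tau^{p\beta_0}$ (this requires $p\beta_0<2$, i.e. $\beta_0<2/p$), then $\ti\varepsilon=\ti\varepsilon(\tau)$ small, then $\varepsilon_0,\varepsilon_1$ small accordingly, and using \eqref{6} once more to absorb the $f$-term within the same threshold, yields \eqref{30}.

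The main obstacle is the layered choice of the smallness parameters: $\varepsilon_0$ must be small enough to simultaneously (i) generate approximate $\mathcal{A}$-harmonicity with $\delta$ small in Lemma \ref{linlem}, through the factor $\ti\mu(E/\snr{z_0})[1+\mf H(E/\snr{z_0})]$; (ii) make the $q$-excess correction $\mathds{1}_{\{q>p\}}E^{q-p}$ in \eqref{cacc} negligible relative to $\tau^{p\beta_0}$ for the chosen $\beta_0<2/p$; and (iii) keep $D\ell_\star$ inside the admissible range so that the Caccioppoli inequality can be reapplied at the next scale. Once this hierarchy ($\beta_0\to\tau\to\ti\varepsilon\to\delta\to(\varepsilon_0,\varepsilon_1)$) is fixed, the decay \eqref{30} follows.
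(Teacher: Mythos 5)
Your route---linearize via Lemma \ref{linlem}, apply the $\mathcal{A}$-harmonic approximation Lemma \ref{ahar} to the renormalized map $u_0$, Taylor-expand the harmonic approximant $h_0$, and feed the result back through the Caccioppoli inequality of Lemma \ref{ndegc}---is exactly the paper's strategy. The renormalization, the choice of $\sigma=(E/\snr{z_0})^{p/2}$, the ellipticity of $\mathcal{A}=\snr{z_0}^{2-p}\partial^2F(z_0)$, the hierarchy $\beta_0\to\tau\to\tilde\varepsilon\to\delta\to(\varepsilon_0,\varepsilon_1)$, and the reason $\beta_0<2/p$ is forced all match the paper.

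The one genuine deviation is the choice of affine comparison map: you feed the scaled Taylor polynomial $\ell_\star$ directly into the Caccioppoli inequality, whereas the paper replaces it by the best $L^2$-affine $\ell_{2\tau\rr}$ via the quasi-minimality inequality \eqref{ll.0.1} \emph{before} invoking Caccioppoli. That substitution is not cosmetic: the hard part of the proof is passing from $\mathfrak{F}(u,D\ell_\star;B_{\tau\rr})$ to $\mathfrak{F}(u;B_{\tau\rr})=\mathfrak{F}(u,(Du)_{B_{\tau\rr}};B_{\tau\rr})$, because the weight in the degenerate part changes from $\snr{D\ell_\star}^{p-2}$ to $\snr{(Du)_{B_{\tau\rr}}}^{p-2}$. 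The paper does this by the decomposition
$\snr{(Du)_{\tau\rr}}^{p-2}\lesssim\snr{D\ell_{\tau\rr}-(Du)_{\tau\rr}}^{p-2}+\snr{D\ell_{2\tau\rr}-D\ell_{\tau\rr}}^{p-2}+\snr{D\ell_{2\tau\rr}}^{p-2}$
and controls the first two pieces via the slope inequalities \eqref{ll.0}--\eqref{ll.1}, which are structural properties of the \emph{optimal} affine approximations $\ell_{\tau\rr},\ell_{2\tau\rr}$ and do not hold for $\ell_\star$. Your sentence ``after which \eqref{minav} and \eqref{tri.1} transfer the estimate to $\mathfrak{F}(u;B_{\tau\rr}(x_0))^p$'' hides precisely this step; to make it work with $\ell_\star$ you would either need to first compare $\ell_\star$ with $\ell_{2\tau\rr}$ (at which point you are back to the paper's choice), or else establish directly that $\snr{D\ell_\star-(Du)_{B_{\tau\rr}}}$ decays at rate comparable to $\mathfrak{F}(u;B_{\tau\rr})$---which is not immediate, since $D\ell_\star-z_0=\gamma Dh_0(x_0)$ while $(Du)_{B_{\tau\rr}}-z_0$ is an average of $Du_0$, not of $Dh_0$, and Lemma \ref{ahar} controls only the zero-order discrepancy $\snr{u_0-h_0}/\rr$, not $D(u_0-h_0)$. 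So the proposal is sound in architecture but has a real gap in the final transfer; the paper's detour through the $L^2$-optimal affine via \eqref{ll.0.1} is what closes it.
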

\begin{proof}
For the sake of exposition, we shall adopt some abbreviations. Since all the balls considered here will be concentric to $B_{\rr}(x_{0})$ we will omit denoting the center. Moreover, given any ball $B_{\varsigma}(x_{0})$ with $0<\varsigma\le \rr$, we shall shorten $(Du)_{B_{\varsigma}(x_{0})}\equiv (Du)_{\varsigma}$ and for all $\varphi\in C^{\infty}_{\rm c}(B_{\rr},\mathbb{R}^{N})$ we denote $\nr{D\varphi}_{L^{\infty}(B_{\rr})}\equiv \nr{D\varphi}_{\infty}$. To avoid trivialities, we can assume that $\mf{F}(u,B_{\rr})>0$ and, being \eqref{5} in force, also that 
\begin{eqnarray}\label{7}
\snr{(Du)_{\rr}}>0.
\end{eqnarray}
We let $\varepsilon_{0}\in (0,1)$, define map
\begin{eqnarray*}
B_{\rr}\ni x\mapsto u_{0}(x):=\frac{\snr{(Du)_{\rr}}^{\frac{p-2}{2}}\left(u(x)-(u)_{\rr}-\langle(Du)_{\rr},x-x_{0}\rangle\right)}{\mf{F}(u,B_{\rr})^{\frac{p}{2}}}
\end{eqnarray*}
and set 
\begin{eqnarray}\label{18.1}
\sigma:=\left(\frac{\mf{F}(u;B_{\rr})}{\snr{(Du)_{\rr}}}\right)^{\frac{p}{2}}\qquad \mbox{and}\qquad \gamma:=\frac{\mf{F}(u,B_{\rr})^{\frac{p}{2}}}{\snr{(Du)_{\rr}}^{\frac{p-2}{2}}}.
\end{eqnarray}
As $\varepsilon_{0}\in (0,1]$, by $\eqref{5}$ it is $\sigma\in (0,1]$. The very definition of the excess functional $\mf{F}(\cdot)$ and a straightforward computation render that
\begin{eqnarray*}
\mint_{B_{\rr}}\snr{Du_{0}}^{2} \dx+\sigma^{p-2}\mint_{B_{\rr}}\snr{Du_{0}}^{p} \dx&=&\frac{1}{\mf{F}(u;B_{\rr})^{p}}\mint_{B_{\rr}}\snr{Du-(Du)_{B\rr}}^{p} \dx\nonumber \\
&&+\frac{1}{\mf{F}(u;B_{\rr})^{p}}\mint_{B_{\rr}}\snr{(Du)_{\rr}}^{p-2}\snr{Du-(Du)_{\rr}}^{2} \dx\le 1.
\end{eqnarray*}
By \eqref{5.1} and \eqref{7} we see that Lemma \ref{linlem} applies therefore setting
\begin{eqnarray}\label{10}
\mathcal{A}:=\partial^{2}F((Du)_{B_{\rr}})\snr{(Du)_{\rr}}^{2-p}
\end{eqnarray}
we have that
\begin{eqnarray}
\left| \ \mint_{B_{\rr}}\mathcal{A}\langle Du_{0},D\varphi\rangle \dx\ \right|&\stackrel{\eqref{4}}{\le}&\frac{c\nr{D\varphi}_{\infty}\snr{(Du)_{\rr}}^{\frac{2-p}{2}}}{\mf{F}(u;B_{\rr})^{\frac{p}{2}}}\left(\rr^{m}\mint_{B_{\rr}}\snr{f}^{m} \dx\right)^{1/m}\nonumber \\
&& +c\nr{D\varphi}_{\infty}\ti{\mu}\left(\frac{\mf{F}(u;B_{\rr})}{\snr{(Du)_{\rr}}}\right)\left(1+\mf{H}\left(\frac{\mf{F}(u;B_{\rr})}{\snr{(Du)_{B_{\rr}}}}\right)\right)\nonumber \\
& \stackrel{\eqref{5},\eqref{6}}{\le}&c\varepsilon_{1}\nr{D\varphi}_{\infty}+c\ti{\mu}(\varepsilon_{0})\left(1+\mf{H}(\varepsilon_{0})\right)\nr{D\varphi}_{\infty},\label{8}
\end{eqnarray}
with $c\equiv c(p,q,M^{q-p})$. Now let $\varepsilon\in (0,1)$ be a small number to be fixed in a few lines, $\delta\equiv \delta(\textnormal{\texttt{data}},M^{q-p},\varepsilon)\in (0,1)$ be the parameter given by Lemma \ref{ahar} and reduce the size of $\varepsilon_{0}$ and $\varepsilon_{1}$ in such a way that 
\begin{eqnarray}\label{9}
c\varepsilon_{1}+c\ti{\mu}(\varepsilon_{0})\left(1+\mf{H}(\varepsilon_{0})\right)\le \delta.
\end{eqnarray}
This choice establishes the dependencies $\varepsilon_{0}\equiv \varepsilon_{0}(\textnormal{\texttt{data}},M^{q-p},\mu(\cdot),\varepsilon)$ and $\varepsilon_{1}\equiv \varepsilon_{1}(\textnormal{\texttt{data}},M^{q-p},\varepsilon)$. Further restrictions on the size of $\varepsilon_{0}$, $\varepsilon_{1}$ will be imposed in a few lines and, once fixed $\varepsilon$, they will exhibit the dependency announced in the statement. Plugging \eqref{9} in \eqref{8}, we see that $u_{0}$ is approximately $\mathcal{A}$-harmonic in the sense of \eqref{ahar.0} with $\mathcal{A}$ defined as in \eqref{10}, therefore by Lemma \ref{ahar} there exists an $\mathcal{A}$-harmonic function $h_{0}\in W^{1,2}(B_{\rr}(x_{0}),\mathbb{R}^{N})$ satisfying
\begin{eqnarray}\label{11}
\mint_{B_{3\rr/4}}\snr{Dh_{0}}^{2} \dx+\sigma^{p-2}\mint_{B_{3\rr/4}}\snr{Dh_{0}}^{p} \dx\le 8^{2np}
\end{eqnarray}
and 
\begin{eqnarray}\label{12}
\mint_{B_{3\rr/4}}\left|\frac{u_{0}-h_{0}}{\rr}\right|^{2}+\sigma^{p-2}\left|\frac{u_{0}-h_{0}}{\rr}\right|^{p} \dx\le \varepsilon.
\end{eqnarray}
With $\tau\in (0,2^{-10})$ to be determined later on, by \eqref{12}, \eqref{h.1} and \eqref{11} we estimate
\begin{flalign}\label{13}
&\mint_{B_{2\tau\rr}}\left|\frac{u_{0}(x)-h_{0}(x_{0})-\langle Dh_{0}(x_{0}),x-x_{0}\rangle}{\tau\rr}\right|^{2} \dx\nonumber \\
&\qquad \qquad \le c\mint_{B_{2\tau\rr}}\left|\frac{h_{0}(x)-h_{0}(x_{0})-\langle Dh_{0}(x_{0}),x-x_{0}\rangle}{\tau\rr}\right|^{2} \dx+c\mint_{B_{2\tau\rr}}\left|\frac{u_{0}-h_{0}}{\tau\rr}\right|^{2} \dx\nonumber \\
&\qquad \qquad \le c(\tau\rr)^{2}\nr{D^{2}h_{0}}_{L^{\infty}(B_{\rr/2})}^{2}+\frac{c\varepsilon}{\tau^{n+2}}\nonumber \\
&\qquad \qquad\le c\tau^{2}\mint_{B_{3\rr/4}}\snr{Dh_{0}}^{2} \dx+\frac{c\varepsilon}{\tau^{n+2}} \le c\tau^{2}+\frac{c\varepsilon}{\tau^{n+2}},
\end{flalign}
with $c\equiv c(\textnormal{\texttt{data}},M^{q-p})$. We then choose $\varepsilon=\tau^{n+2p}$, so now it is $\varepsilon_{0}\equiv \varepsilon_{0}(\textnormal{\texttt{data}},M^{q-p},\mu(\cdot),\tau)$ and $\varepsilon_{1}\equiv \varepsilon_{1}(\textnormal{\texttt{data}},M^{q-p},\tau)$. The very definition of $u_{0}$, the choice of $\varepsilon$ and \eqref{13} yield that
\begin{flalign}\label{14}
&\mint_{B_{2\tau\rr}}\left|\frac{u-(u)_{\rr}-\langle(Du)_{\rr},x-x_{0}\rangle-\gamma\left(h_{0}(x_{0})-\langle Dh_{0}(x_{0}),x-x_{0}\rangle\right)}{\tau\rr}\right|^{2} \dx 
\le c\gamma^{2}\tau^{2},
\end{flalign}
for $c\equiv c(\textnormal{\texttt{data}},M^{q-p})$. In a similar fashion we have
\begin{flalign}\label{15}
&\sigma^{p-2}\mint_{B_{2\tau\rr}}\left|\frac{u_{0}-h_{0}(x_{0})-\langle Dh_{0}(x_{0}),x-x_{0}\rangle}{\tau\rr}\right|^{p} \dx\nonumber \\
&\qquad \quad \le c\sigma^{p-2}\mint_{B_{2\tau\rr}}\left|\frac{h_{0}(x)-h_{0}(x_{0})-\langle Dh_{0}(x_{0}),x-x_{0}\rangle}{\tau\rr}\right|^{p} \dx+c\sigma^{p-2}\mint_{B_{2\tau\rr}}\left|\frac{u_{0}-h_{0}}{\tau\rr}\right|^{p} \dx\nonumber \\
&\qquad \quad \le c\sigma^{p-2}(\tau\rr)^{p}\nr{D^{2}h_{0}}_{L^{\infty}(B_{\rr/2})}^{p}+\frac{c\varepsilon}{\tau^{n+p}}\le c\tau^{p}
\end{flalign}
where we also used \eqref{h.1}, \eqref{11} and it is $c\equiv c(\textnormal{\texttt{data}},M^{q-p})$, so scaling back to $u$ in \eqref{15} we obtain
\begin{flalign}
&\mint_{B_{2\tau\rr}}\left|\frac{u-(u)_{\rr}-\langle(Du)_{\rr},x-x_{0}\rangle-\gamma\left(h_{0}(x_{0})-\langle Dh_{0}(x_{0}),x-x_{0}\rangle\right)}{\tau\rr}\right|^{p} \dx\le c\tau^{p}\sigma^{2-p}\gamma^{p}\label{16},
\end{flalign}
with $c\equiv c(\textnormal{\texttt{data}},M^{q-p})$. Denote by $\ell_{2\tau\rr}$ the unique affine function such that
\begin{eqnarray*}
\ell_{2\tau\rr}\mapsto \min_{\ell \ \textnormal{affine}}\mint_{B_{2\tau\rr}}\snr{u-\ell}^{2} \dx.
\end{eqnarray*}
By \eqref{ll.0.1}, \eqref{14} and \eqref{16} we have that
\begin{flalign}\label{17}
\mint_{B_{2\tau\rr}}\snr{(Du)_{\rr}}^{p-2}\left|\frac{u-\ell_{2\tau\rr}}{2\tau\rr}\right|^{2}+\left|\frac{u-\ell_{2\tau\rr}}{2\tau\rr}\right|^{p} \dx\le c\tau^{2}\mf{F}(u;B_{\rr})^{p},
\end{flalign}
where $c\equiv c(\textnormal{\texttt{data}},M^{q-p})$ and we also expanded the expression of $\sigma$ and $\gamma$, cf. \eqref{18.1}. The definition \eqref{excess} of the excess functional $\mf{F}(\cdot)$ also renders that
\begin{eqnarray}\label{24}
\snr{D\ell_{2\tau\rr}-(Du)_{\rr}}&\le&\snr{D\ell_{2\tau\rr}-(Du)_{2\tau\rr}}+\snr{(Du)_{2\tau\rr}-(Du)_{\rr}}\nonumber \\
&\stackrel{\eqref{ll.1}}{\le}&c\left(\mint_{B_{2\tau\rr}}\snr{Du-(Du)_{2\tau\rr}}^{2} \dx\right)^{\frac{1}{2}}+\left(\mint_{B_{2\tau\rr}}\snr{Du-(Du)_{\rr}}^{2} \dx\right)^{\frac{1}{2}}\nonumber \\
&\stackrel{\eqref{minav}}{\le}&\frac{c}{\tau^{n/2}}\left(\mint_{B_{\rr}}\snr{Du-(Du)_{\rr}}^{2} \dx\right)^{\frac{1}{2}}\nonumber \\
&=&\frac{c\snr{(Du)_{\rr}}^{\frac{2-p}{2}}}{\tau^{n/2}}\left(\mint_{B_{\rr}}\snr{(Du)_{\rr}}^{p-2}\snr{Du-(Du)_{\rr}}^{2} \dx\right)^{\frac{1}{2}}\nonumber \\
&\le&\frac{c}{\tau^{n/2}}\left(\frac{\mf{F}(u,B_{\rr})}{\snr{(Du)_{\rr}}}\right)^{\frac{p}{2}}\snr{(Du)_{\rr}},
\end{eqnarray}
for $c\equiv c(n)$, so if the size of $\varepsilon_{0}$ is further reduced in such a way that
\begin{eqnarray*}
\left(\frac{\mf{F}(u,B_{\rr})}{\snr{(Du)_{\rr}}}\right)^{\frac{p}{2}}\stackrel{\eqref{5}}{\le} \varepsilon_{0}^{\frac{p}{2}}\le \frac{\tau^{n/2}}{c2^{2n+4}},
\end{eqnarray*}
where $c\equiv c(n)$ is the same constant appearing in \eqref{24}, we then have 
\begin{eqnarray}\label{18}
\snr{D\ell_{2\tau\rr}-(Du)_{\rr}}\le \frac{\snr{(Du)_{\rr}}}{8}.
\end{eqnarray}
Merging \eqref{18} and \eqref{17} and using triangular inequality we obtain
\begin{flalign}\label{19}
\mint_{B_{2\tau\rr}}\snr{D\ell_{2\tau\rr}}^{p-2}\left|\frac{u-\ell_{2\tau\rr}}{2\tau\rr}\right|^{2}+\left|\frac{u-\ell_{2\tau\rr}}{2\tau\rr}\right|^{p} \dx\le c\tau^{2}\mf{F}(u;B_{\rr})^{p},
\end{flalign}
for $c\equiv c(\textnormal{\texttt{data}},M^{q-p})$. Furthermore, by triangular inequality we also get
\begin{eqnarray}\label{20}
\snr{D\ell_{2\tau\rr}}\ge \snr{(Du)_{\rr}}-\snr{D\ell_{2\tau\rr}-(Du)_{\rr}}\stackrel{\eqref{18}}{\ge}\frac{7\snr{(Du)_{\rr}}}{8}
\end{eqnarray}
and as a consequence it is 
\begin{eqnarray}\label{22}
\snr{D\ell_{2\tau\rr}}>0\qquad \mbox{and}\qquad \frac{7}{8}\le \frac{\snr{D\ell_{2\tau\rr}}}{\snr{(Du)_{B_{\rr}}}}\le \frac{9}{8}.
\end{eqnarray}
We then bound via \eqref{cacc}, \eqref{19} and \eqref{22},
\begin{flalign}\label{21}
&\mint_{B_{\tau\rr}}\snr{D\ell_{2\tau\rr}}^{p-2}\snr{Du-D\ell_{2\tau\rr}}^{2} \dx+\inf_{z\in \mathbb{R}^{N\times n}}\mint_{B_{\tau\rr}}\snr{Du-z}^{p} \dx\nonumber \\
&\qquad \quad \le c\mf{K}\left(\mint_{B_{2\tau\rr}}\snr{D\ell_{2\tau\rr}}^{p-2}\left|\frac{u-\ell_{2\tau\rr}}{2\tau\rr}\right|^{2}+\left|\frac{u-\ell_{2\tau\rr}}{2\tau\rr}\right|^{p} \dx\right)\nonumber \\
&\qquad \quad \qquad  +\frac{c}{\snr{D\ell_{2\tau\rr}}^{p-2}}\left((2\tau\rr)^{m}\mint_{B_{2\tau\rr}}\snr{f}^{m} \dx\right)^{\frac{2}{m}}+c\mathds{1}_{\{q>p\}}\mf{F}(u,D\ell_{2\tau\rr};B_{2\tau\rr})^{q}\nonumber \\
&\qquad \quad \le c\mf{K}\left(\tau^{2}\mf{F}(u,B_{\rr})^{p}\right)+\frac{c\tau^{2-2n/m}}{\snr{(Du)_{\rr}}^{p-2}}\left(\rr^{m}\mint_{B_{\rr}}\snr{f}^{m} \dx\right)^{\frac{2}{m}}+c\mathds{1}_{\{q>p\}}\mf{F}(u,D\ell_{2\tau\rr};B_{2\tau\rr})^{q},
\end{flalign}
with $c\equiv c(\textnormal{\texttt{data}},M^{q-p})$. With \eqref{ik}, \eqref{ik.1} and \eqref{7} in mind we keep estimating
\begin{eqnarray}\label{26}
\mf{K}\left(\tau^{2}\mf{F}(u;B_{\rr})^{p}\right)&\le&\tau^{2}\mf{F}(u;B_{\rr})^{p}+\tau^{\frac{2q}{p}}\snr{(Du)_{B_{\rr}}}^{q-p}\left(\frac{\mf{F}(u;B_{\rr})}{\snr{(Du)_{B_{\rr}}}}\right)^{q-p}\mf{F}(u;B_{\rr})^{p}\nonumber \\
&\stackrel{\eqref{5}}{\le}&(1+M^{q-p}) \left(\tau^{2}+\tau^{\frac{2q}{p}}\varepsilon_{0}^{q-p}\right)\mf{F}(u;B_{\rr})^{p},
\end{eqnarray}
for $c\equiv c(p,q,M)$ and, by triangular inequality, 
\begin{eqnarray}\label{25}
\mf{F}(u,D\ell_{2\tau\rr};B_{2\tau\rr})^{p}&\stackrel{\eqref{22}_{2},\eqref{7}}{\le}&c\mint_{B_{2\tau\rr}}\snr{(Du)_{\rr}}^{p-2}\snr{Du-(Du)_{2\tau\rr}}^{2} \ \dx\nonumber \\
&&+c\mint_{B_{2\tau\rr}}\snr{Du-(Du)_{2\tau\rr}}^{p} \ \dx\nonumber \\
&&+c\snr{(Du)_{\rr}}^{p-2}\snr{(Du)_{2\tau\rr}-D\ell_{2\tau\rr}}^{2}+c\snr{D\ell_{2\tau\rr}-(Du)_{2\tau\rr}}^{p}\nonumber \\
&\stackrel{\eqref{ll.1},\eqref{minav}}{\le}&\frac{c}{\tau^{n}}\mf{F}(u;B_{\rr})^{p}+c\snr{(Du)_{\rr}}^{p-2}\mint_{B_{2\tau\rr}}\snr{Du-(Du)_{2\tau\rr}}^{2} \ \dx\nonumber \\
&&+c\mint_{B_{2\tau\rr}}\snr{Du-(Du)_{2\tau\rr}}^{p} \ \dx\stackrel{\eqref{minav}}{\le}\frac{c}{\tau^{n}}\mf{F}(u;B_{\rr})^{p},
\end{eqnarray}
with $c\equiv c(n,p)$, therefore
\begin{eqnarray}\label{27}
\mf{F}(u,D\ell_{2\tau\rr};B_{2\tau\rr})^{q}&\stackrel{\eqref{5},\eqref{25}}{\le}&\frac{cM^{q-p}\varepsilon_{0}^{q-p}}{\tau^{nq/p}}\mf{F}(u;B_{\rr})^{p}
\end{eqnarray}
for $c\equiv c(n,p)$. Combining \eqref{21}, \eqref{26} and \eqref{27} we obtain
\begin{flalign}\label{28}
&\mint_{B_{\tau\rr}}\snr{D\ell_{2\tau\rr}}^{p-2}\snr{Du-D\ell_{2\tau\rr}}^{2} \dx+\inf_{z\in \mathbb{R}^{N\times n}}\mint_{B_{\tau\rr}}\snr{Du-z}^{p} \dx\nonumber \\
&\qquad \qquad \le c\left(\tau^{2}+\tau^{2q/p}\varepsilon_{0}^{q-p}+\mathds{1}_{\{q>p\}}\tau^{-nq/p}\varepsilon_{0}^{q-p}\right)\mf{F}(u;B_{\rr})^{p}+\frac{c\tau^{2-2n/m}}{\snr{(Du)_{\rr}}^{p-2}}\left(\rr^{m}\mint_{B_{\rr}}\snr{f}^{m} \dx\right)^{\frac{2}{m}},
\end{flalign}
with $c\equiv c(\textnormal{\texttt{data}},M^{q-p})$. Furthermore, we have
\begin{eqnarray*}
\mint_{B_{\tau\rr}}\snr{(Du)_{\tau\rr}}^{p-2}\snr{Du-(Du)_{\tau\rr}}^{2} \dx&\le& 2^{4p}\mint_{B_{\tau\rr}}\snr{D\ell_{\tau\rr}-(Du)_{\tau\rr}}^{p-2}\snr{Du-(Du)_{\tau\rr}}^{2} \dx\nonumber \\
&&+2^{4p}\mint_{B_{\tau\rr}}\snr{D\ell_{2\tau\rr}-D\ell_{\tau\rr}}^{p-2}\snr{Du-(Du)_{\tau\rr}}^{2} \dx\nonumber\\
&&+2^{4p}\mint_{B_{\tau\rr}}\snr{D\ell_{2\tau\rr}}^{p-2}\snr{Du-(Du)_{\tau\rr}}^{2} \dx\nonumber \\
&=&\mbox{(I)}+\mbox{(II)}+\mbox{(III)}.
\end{eqnarray*}
By Young and triangular inequalities we get
\begin{eqnarray*}
\mbox{(I)}&\le&c\snr{D\ell_{\tau\rr}-(Du)_{\tau\rr}}^{p}+c\mint_{B_{\tau\rr}}\snr{Du-(Du)_{\tau\rr}}^{p} \dx\nonumber \\
&\stackrel{\eqref{ll.1}}{\le}&c\mint_{B_{\tau\rr}}\snr{Du-(Du)_{\tau\rr}}^{p} \dx\stackrel{\eqref{minav}}{\le}c\inf_{z\in \mathbb{R}^{N\times n}}\mint_{B_{\tau\rr}}\snr{Du-z}^{p} \dx\nonumber \\
&\stackrel{\eqref{28}}{\le}&c\left(\tau^{2}+\tau^{2q/p}\varepsilon_{0}^{q-p}+\mathds{1}_{\{q>p\}}\tau^{-nq/p}\varepsilon_{0}^{q-p}\right)\mf{F}(u;B_{\rr})^{p} \dx\nonumber \\
&&+\frac{c\tau^{2-2n/m}}{\snr{(Du)_{\rr}}^{p-2}}\left(\rr^{m}\mint_{B_{\rr}}\snr{f}^{m} \dx\right)^{\frac{2}{m}},
\end{eqnarray*}
with $c\equiv c(\textnormal{\texttt{data}},M^{q-p})$ and, via Jensen inequality in a similar way we also obtain,
\begin{eqnarray*}
\mbox{(II)}+\mbox{(III)}&\le &c\snr{D\ell_{2\tau\rr}-D\ell_{\tau\rr}}^{p}+c\mint_{B_{\tau\rr}}\snr{Du-(Du)_{\tau\rr}}^{p} \dx\nonumber \\
&&+c\mint_{B_{\tau\rr}}\snr{D\ell_{2\tau\rr}}^{p-2}\snr{Du-D\ell_{2\tau\rr}}^{2} \dx+c\snr{D\ell_{2\tau\rr}}^{p-2}\snr{(Du)_{\tau\rr}-D\ell_{2\tau\rr}}^{2}\nonumber\\
&\stackrel{\eqref{minav},\eqref{ll.0}}{\le}&c\mint_{B_{2\tau\rr}}\left|\frac{u-\ell_{2\tau\rr}}{2\tau\rr}\right|^{p} \dx+c\inf_{z\in \mathbb{R}^{N\times n}}\mint_{B_{\tau\rr}}\snr{Du-z}^{p} \dx\nonumber \\
&&+c\mint_{B_{\tau\rr}}\snr{D\ell_{2\tau\rr}}^{p-2}\snr{Du-D\ell_{2\tau\rr}}^{2} \dx\nonumber \\
&\stackrel{\eqref{19},\eqref{28}}{\le}&c\left(\tau^{2}+\tau^{2q/p}\varepsilon_{0}^{q-p}+\mathds{1}_{\{q>p\}}\tau^{-nq/p}\varepsilon_{0}^{q-p}\right)\mf{F}(u;B_{\rr})^{p}\nonumber \\
&&+\frac{c\tau^{2-2n/m}}{\snr{(Du)_{\rr}}^{p-2}}\left(\rr^{m}\mint_{B_{\rr}}\snr{f}^{m} \dx\right)^{\frac{2}{m}},
\end{eqnarray*}
for $c\equiv c(\textnormal{\texttt{data}},M^{q-p})$. 
Keeping \eqref{minav} in mind, we can merge the previous three displays and use \eqref{6} to get
\begin{flalign}\label{29}
\mf{F}(u;B_{\tau\rr})\le c \left(\tau^{2/p}+\tau^{2q/p^{2}}\varepsilon_{0}^{(q-p)/p}+\mathds{1}_{\{q>p\}}\tau^{-nq/p^{2}}\varepsilon_{0}^{(q-p)/p}+\tau^{2/p-2n/mp}\varepsilon_{1}^{2/p}\right)\mf{F}(u;B_{\rr}),
\end{flalign}
with $c\equiv c(\textnormal{\texttt{data}},M^{q-p})$. In \eqref{29} we reduce further the size of $\varepsilon_{0}$, $\varepsilon_{1}$ in such a way that
\begin{eqnarray}\label{ls.36}
\frac{2^{4np}\varepsilon_{0}^{p/2}}{\tau^{np}}+\mathds{1}_{\{q>p\}}\frac{\varepsilon_{0}^{(q-p)/p}}{\tau^{nq}}\le \tau^{2/p}\qquad \mbox{and}\qquad \tau^{-2n/mp}\varepsilon_{1}^{2/p}\le 1,
\end{eqnarray}
thus getting
\begin{eqnarray*}
\mf{F}(u;B_{\tau\rr}(x_{0}))\le c\tau^{2/p}\mf{F}(u;B_{\rr}(x_{0})),
\end{eqnarray*}
which holds true for all $\tau\in (0,2^{-10})$ with $c\equiv c(\textnormal{\texttt{data}},M^{q-p})$. Fixing any $\beta_{0}\in (0,2/p)$ and choosing $\tau$ so small that
\eqn{ls.36.1.1}
$$
\tau^{1-\beta_{0}}c<2^{-10}\qquad \mbox{and}\qquad \tau^{\beta_{0}}\le 2^{-8}
$$
we obtain \eqref{30}. Let us remark that since $\beta_{0}$ strictly variates between $0$ and $2/p$, we incorporated the dependency on $\beta_{0}$ of the various parameters appearing in \eqref{30} into a dependency on $p$. The proof is complete.
\end{proof}
Let us take care of the case complementary to \eqref{6}.
\begin{proposition}\label{p2}
Assume \eqref{assf}, \eqref{pq}, \eqref{sqc}, \eqref{f}. Let $u\in W^{1,p}(\Omega,\mathbb{R}^{N})$ be a local minimizer of \eqref{exfun} verifying \eqref{5.1} on some ball $B_{\rr}(x_{0})\Subset \Omega$ and $\tau\equiv \tau(\textnormal{\texttt{data}},M^{q-p})$, $\varepsilon_{0}\equiv \varepsilon_{0}(\textnormal{\texttt{data}},\mu(\cdot),M^{q-p})$, $\varepsilon_{1}\equiv \varepsilon_{1}(\textnormal{\texttt{data}},M^{q-p})$ be the parameters appearing in Proposition \ref{p1}. If the smallness conditions \eqref{5} holds and \begin{eqnarray}\label{6.1}
\left(\rr^{m}\mint_{B_{\rr}(x_{0})}\snr{f}^{m} \dx\right)^{1/m}> \varepsilon_{1}\snr{(Du)_{B_{\rr}(x_{0})}}^{\frac{p-2}{2}}\mf{F}(u;B_{\rr}(x_{0}))^{\frac{p}{2}}
\end{eqnarray}
is satisfied on $B_{\rr}(x_{0})$, then
\begin{eqnarray}\label{31}
\mf{F}(u;B_{\tau\rr}(x_{0}))\le c_{0}\left(\rr^{m}\mint_{B_{\rr}(x_{0})}\snr{f}^{m} \dx\right)^{\frac{1}{m(p-1)}},
\end{eqnarray}
with $c_{0}:=8^{2p}\tau^{-2n}\left(\varepsilon_{0}^{(p-2)/2}\varepsilon_{1}^{-1}\right)^{1/(p-1)}$.
\end{proposition}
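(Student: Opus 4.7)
The proof is essentially algebraic: I would combine the two smallness assumptions \eqref{5} and \eqref{6.1} to bound $\mf{F}(u;B_{\rr}(x_0))$ directly by a power of the scaled $L^m$-norm of $f$, and then transfer this bound to the smaller ball $B_{\tau\rr}(x_0)$ via the standard excess transfer inequality \eqref{tri.1}. No harmonic approximation, Caccioppoli inequality, or linearization is needed here: this is exactly the regime envisioned in the first bullet of Section~\ref{si}, where the forcing term dominates the composite excess and the decay estimate is almost tautological. Hypothesis \eqref{5.1} plays no direct role in this case; it is carried along only for consistency with Proposition~\ref{p1}.

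Concretely, I would first rewrite \eqref{6.1} as
\begin{eqnarray*}
\snr{(Du)_{B_{\rr}(x_0)}}^{(p-2)/2}\,\mf{F}(u;B_{\rr}(x_0))^{p/2}<\varepsilon_{1}^{-1}\left(\rr^{m}\mint_{B_{\rr}(x_0)}\snr{f}^{m}\dx\right)^{1/m},
\end{eqnarray*}
and then insert the lower bound $\snr{(Du)_{B_{\rr}(x_0)}}^{(p-2)/2}>\varepsilon_{0}^{-(p-2)/2}\mf{F}(u;B_{\rr}(x_0))^{(p-2)/2}$, which follows from \eqref{5} by raising to the $(p-2)/2$-th power (degenerating harmlessly to the identity when $p=2$). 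Collecting powers of $\mf{F}(u;B_{\rr}(x_0))$ on the left gives
\begin{eqnarray*}
\mf{F}(u;B_{\rr}(x_0))^{p-1}<\varepsilon_{0}^{(p-2)/2}\varepsilon_{1}^{-1}\left(\rr^{m}\mint_{B_{\rr}(x_0)}\snr{f}^{m}\dx\right)^{1/m},
\end{eqnarray*}
so extracting the $(p-1)$-th root yields
\begin{eqnarray*}
\mf{F}(u;B_{\rr}(x_0))<\bigl(\varepsilon_{0}^{(p-2)/2}\varepsilon_{1}^{-1}\bigr)^{1/(p-1)}\left(\rr^{m}\mint_{B_{\rr}(x_0)}\snr{f}^{m}\dx\right)^{1/(m(p-1))}.
\end{eqnarray*}

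To conclude, I apply \eqref{tri.1} with $\nu=\tau$, obtaining $\mf{F}(u;B_{\tau\rr}(x_0))^{p/2}\le 2^{3p}\tau^{-n/2}\mf{F}(u;B_{\rr}(x_0))^{p/2}$, and taking the $(2/p)$-th power gives $\mf{F}(u;B_{\tau\rr}(x_0))\le 2^{6}\tau^{-n/p}\mf{F}(u;B_{\rr}(x_0))$. Using $\tau<1$ and $p\ge 2$ to enlarge the prefactor to $8^{2p}\tau^{-2n}$ and combining with the previous bound on $\mf{F}(u;B_{\rr}(x_0))$ produces \eqref{31} with exactly the stated constant $c_{0}$. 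The argument presents no genuine obstacle; the only minor caveat is the borderline case $p=2$, where the exponent $(p-2)/2$ vanishes and the intermediate step collapses, confirming that the final formula remains consistent (with $(p-1)^{-1}=1$).
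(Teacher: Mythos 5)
Your proof is correct and follows essentially the same route as the paper's: both are elementary rearrangements of \eqref{5}, \eqref{6.1}, and \eqref{tri.1} with no analysis required, differing only in the order in which the three inequalities are combined (you first bound $\mf{F}(u;B_{\rr}(x_0))$ at scale $\rr$ and then transfer to $\tau\rr$, whereas the paper first transfers to $\tau\rr$ via \eqref{tri.1}, applies \eqref{6.1}, and then multiplies through by $\mf{F}(u;B_{\tau\rr}(x_0))^{(p-2)/2}$ before invoking \eqref{tri.1} and \eqref{5} again). Both orderings give the same sharper prefactor $8^{2}\tau^{-n/p}\bigl(\varepsilon_{0}^{(p-2)/2}\varepsilon_{1}^{-1}\bigr)^{1/(p-1)}$, which is then generously enlarged to $c_{0}$.
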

\begin{proof}
The proof consists in a straightforward manipulation of \eqref{6.1}. In fact we have
\begin{eqnarray}\label{??.?}
\mf{F}(u;B_{\tau\rr}(x_{0}))^{\frac{p}{2}}&\stackrel{\eqref{tri.1}}{\le}&\frac{8^{p}}{\tau^{n/2}}\mf{F}(u;B_{\rr}(x_{0}))^{\frac{p}{2}}\nonumber \\
&\stackrel{\eqref{6.1}}{\le}&\frac{8^{p}}{\varepsilon_{1}\tau^{n/2}\snr{(Du)_{B_{\rr}(x_{0})}}^{\frac{p-2}{2}}}\left(\rr^{m}\mint_{B_{\rr}(x_{0})}\snr{f}^{m} \dx\right)^{1/m}.
\end{eqnarray}
Multiplying both sides of \eqref{??.?} by $\mf{F}(u;B_{\tau\rr}(x_{0}))^{\frac{p-2}{2}}$ and using \eqref{tri.1} and \eqref{5} we obtain \eqref{31}.
\end{proof}

\section{The degenerate scenario}\label{deg.1}
Next, we consider the case in which the smallness condition \eqref{5} fails. We start by proving an alternative version of Lemma \ref{ndegc}.
\begin{lemma}\label{cdeg}
Assume \eqref{assf}$_{1,2,3}$, \eqref{pq}, \eqref{sqc} and \eqref{f}, let $B_{\rr}(x_{0})\Subset \Omega$ be any ball with radius $\rr\in (0,1]$, $u\in W^{1,p}(\Omega,\mathbb{R}^{N})$ be a local minimizer of \eqref{exfun} and $\ell(x):=v_{0}+\langle z_{0},x-x_{0}\rangle$ be any affine function with $v_{0}\in \mathbb{R}^{N}$ and $z_{0}\in \mathbb{R}^{N\times n}$. Then it holds that
\begin{eqnarray}\label{caccdeg}
\mf{F}(u,z_{0};B_{\rr/2}(x_{0}))^{p}&\le&c\mathds{1}_{\{q>p\}}\snr{z_{0}}^{q-p}\mf{F}(u,z_{0};B_{\rr}(x_{0}))^{p}\nonumber \\
&&+c(1+\snr{z_{0}}^{q-p})\mint_{B_{\rr}(x_{0})}\snr{z_{0}}^{p-2}\left|\frac{u-\ell}{\rr} \right|^{2}+\left|\frac{u-\ell}{\rr} \right|^{p} \dx\nonumber \\
&&+c\left(\mint_{B_{\rr}(x_{0})}\snr{z_{0}}^{p-2}\left|\frac{u-\ell}{\rr} \right|^{2}+\left|\frac{u-\ell}{\rr} \right|^{p} \dx\right)^{\frac{q}{p}}\nonumber \\
&&+c\mathds{1}_{\{q>p\}}\mf{F}(u,z_{0};B_{\rr}(x_{0}))^{q}+c\left(\rr^{m}\mint_{B_{\rr}(x_{0})}\snr{f}^{m} \dx\right)^{\frac{p}{m(p-1)}},
\end{eqnarray}
where $c\equiv c(\textnormal{\texttt{data}})$ and $\mathds{1}_{\{q>p\}}$ has been defined in \eqref{ik.1}.
\end{lemma}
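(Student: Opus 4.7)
The plan is to rerun the blueprint of Lemma \ref{ndegc} verbatim up through the comparison inequality
\begin{equation*}
c\int_{B_{\ti{\tau}_{2}}(x_{0})}\snr{V_{\snr{z_{0}},p}(D\varphi_{2})}^{2} \dx\le \mbox{(I)}+\mbox{(II)}+\mbox{(III)},
\end{equation*}
introduced after \eqref{ls.13}--\eqref{ls.14}, with the same auxiliary integrand $G_{0}(z):=F(z)-c_{0}\snr{z}^{p}$, the same cutoff $\eta$, smoothing operator $\mf{T}_{\ti{\tau}_{1},\ti{\tau}_{2}}$, and splitting $\uu=u-\ell=\varphi_{1}+\varphi_{2}$. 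None of those algebraic manipulations used the nondegeneracy of $z_{0}$, so they transfer without change. The entire novelty of \eqref{caccdeg} lies in how the three terms are estimated when $\snr{z_{0}}$ may vanish, so the factor $\snr{z_{0}}^{-(p-2)}$ present in \eqref{cacc} cannot be produced.

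For the forcing term $\mbox{(III)}=\int_{B_{\ti{\tau}_{2}}(x_{0})} f\cdot\varphi_{2}\dx$, I would abandon the splitting through $V_{\snr{z_{0}},p}(\cdot)$ (which needed $\snr{z_{0}}>0$). Since $\varphi_{2}\in W_{0}^{1,p}(B_{\ti{\tau}_{2}}(x_{0}),\RN)$ and $m'<p^{*}$ by \eqref{f.0}, H\"older and Sobolev--Poincar\'e give
\begin{equation*}
\snr{\mbox{(III)}}\le c\snr{B_{\rr}(x_{0})}\left(\rr^{m}\mint_{B_{\rr}(x_{0})}\snr{f}^{m}\dx\right)^{1/m}\left(\mint_{B_{\ti{\tau}_{2}}(x_{0})}\snr{D\varphi_{2}}^{p}\dx\right)^{1/p},
\end{equation*}
after which Young's inequality with exponents $p$ and $p/(p-1)$ (rather than the $2$-$2$ Young of Lemma \ref{ndegc}) yields
\begin{equation*}
\snr{\mbox{(III)}}\le \varepsilon \int_{B_{\ti{\tau}_{2}}(x_{0})}\snr{D\varphi_{2}}^{p}\dx + c\snr{B_{\rr}(x_{0})}\left(\rr^{m}\mint_{B_{\rr}(x_{0})}\snr{f}^{m}\dx\right)^{\frac{p}{m(p-1)}};
\end{equation*}
the first summand is absorbed on the left since \eqref{equiv.1} guarantees $\snr{D\varphi_{2}}^{p}\le c\snr{V_{\snr{z_{0}},p}(D\varphi_{2})}^{2}$ for $p\ge 2$, and the second is exactly the last term of \eqref{caccdeg}.

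For $\mbox{(I)}$ and $\mbox{(II)}$ I would follow the same expansion into $\mbox{(I')}$ and $\mbox{(II')}$ through the second derivative as in Lemma \ref{ndegc}, but now discard the dichotomy on $\{\snr{D\varphi_{1}}\lessgtr\snr{D\uu}\}$ that produced the $V_{\snr{z_0},p}$-pairings responsible for the $\snr{z_0}^{p-2}$-weighted Young bounds. Instead, pair the $\snr{z_0}^{p-2}$-part through the elementary inequality $\snr{V_{\snr{z_{0}},p}(z)}^{2}\le c(\snr{z_{0}}^{p-2}\snr{z}^{2}+\snr{z}^{p})$ and bound the cross-products by Young with exponent pairs $(p,p)$ and $(q/(q-1),q)$, using the factor $\snr{z_{0}}^{q-p}$ to soak up mismatches between $p$- and $q$-growth. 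This produces the structure on the right-hand side of \eqref{caccdeg}: the purely $p$-growth part through $\mf{T}_{\ti{\tau}_{1},\ti{\tau}_{2}}$ and \eqref{ex.0}$_2$ gives the $\int \snr{z_{0}}^{p-2}\snr{\uu/\rr}^{2}+\snr{\uu/\rr}^{p}\dx$ contribution (with the $(1+\snr{z_{0}}^{q-p})$ prefactor coming from Young on the $q$-branch), while the super-$p$-growth terms via \eqref{ex.1}$_2$ and $\eqref{pq}_2$ generate the $(\cdots)^{q/p}$ power and the $\mathds{1}_{\{q>p\}}\mf{F}(u,z_{0};B_{\rr}(x_{0}))^{q}$ and $\mathds{1}_{\{q>p\}}\snr{z_{0}}^{q-p}\mf{F}(u,z_{0};B_{\rr}(x_{0}))^{p}$ terms.

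Combining everything over $B_{\tau_{1}}(x_{0})\Subset B_{\tau_{2}}(x_{0})\subset B_{\rr}(x_{0})$ gives a hole-filling inequality of the type \eqref{ls.18} with the new right-hand side; Lemma \ref{l5} then iterates away the $B_{\tau_{2}}(x_{0})$ term and yields \eqref{caccdeg} via \eqref{equiv.1}. The main delicacy, and the step most likely to require care, is the Young balancing in $\mbox{(I')}+\mbox{(II')}$ so that no negative power of $\snr{z_{0}}$ ever appears on the right-hand side and all $\snr{z_{0}}^{q-p}$ prefactors come out exactly in the shape of \eqref{caccdeg}; everything else is a straightforward replay of the ingredients already deployed in Lemma \ref{ndegc}.
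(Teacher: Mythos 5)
Your plan correctly identifies the two modifications that distinguish \eqref{caccdeg} from \eqref{cacc}, and your treatment of the forcing term is exactly what the paper does: replace the Sobolev--Poincar\'e embedding with exponent $2^*$ and the subsequent $2$-$2$ Young absorption (which produced the forbidden $\snr{z_0}^{-(p-2)}$ penalty) by the embedding $m'<p^*$ and Young with exponents $(p,p')$, then absorb $\varepsilon\int\snr{D\varphi_2}^p$ on the left through $\snr{D\varphi_2}^p\lesssim\snr{V_{\snr{z_0},p}(D\varphi_2)}^2$. This is the key technical step and you got it right. For the remaining terms, however, you misattribute the source of the problem: the dichotomy on $\{\snr{D\varphi_1}\lessgtr\snr{D\uu}\}$ used to split $\mbox{(I')}$ is entirely benign when $z_0$ may vanish -- it never divides by $\snr{z_0}$, it only produces the $V$-paired terms $\mbox{T}_2,\mbox{T}_3$ on the right-hand side. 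The only place $\snr{z_0}^{-(p-2)}$ appeared in Lemma \ref{ndegc} was the $\varepsilon$-Young for $\mbox{T}_1$, which you already fixed. So there is no reason to discard that dichotomy and rebalance Young from scratch; the paper simply reuses the $\mbox{T}_1$--$\mbox{T}_3$ decomposition verbatim, applies the same $\ti{S}_{\pm}$-splitting via \eqref{1}, and the only change in $\mbox{T}_2,\mbox{T}_3$ is to keep $\snr{z_0}^{q-p}$ explicit rather than bounding it by $M^{q-p}$ (which was the only place the cap $\snr{z_0}\le 80000(M+1)$ was used). This produces the factors $(1+\snr{z_0}^{q-p})$ and $\mathds{1}_{\{q>p\}}\snr{z_0}^{q-p}$ on the right of \eqref{caccdeg} directly; the hole-filling and Lemma \ref{l5} iteration then proceed as before, with the $\snr{z_0}^{q-p}$-weighted excess term carried along rather than reabsorbed. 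Your proposed alternative route through a fresh Young balancing on $\mbox{(I')}+\mbox{(II')}$ with exponent pairs you describe only loosely (the pair $(p,p)$ is not conjugate) is harder to make precise and is unnecessary; the paper's route is a more economical and reliable path to \eqref{caccdeg}.
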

\begin{proof}
The proof is the same as the one of Lemma \ref{ndegc} till the estimate of terms $\mbox{T}_{1}$-$\mbox{T}_{3}$, that need to be treated in a slightly different way, owing to the fact that this time $z_{0}\in \mathbb{R}^{N\times n}$ is allowed to vanish. By H\"older, Young and Sobolev Poincar\'e inequalities we have
\begin{eqnarray*}
\mbox{T}_{1}&\le&\snr{B_{\ti{\tau}_{2}}(x_{0})}\left(\ti{\tau}_{2}^{m}\mint_{B_{\ti{\tau}_{2}}(x_{0})}\snr{f}^{m} \dx\right)^{1/m}\left(\mint_{B_{\ti{\tau}_{2}}(x_{0})}\left|\frac{\varphi_{2}}{\ti{\tau}_{2}}\right|^{m'} \dx\right)^{\frac{1}{m'}}\nonumber \\
&\stackrel{\eqref{f.0},\eqref{0}_{2}}{\le}&\snr{B_{\ti{\tau}_{2}}(x_{0})}\left(\ti{\tau}_{2}^{m}\mint_{B_{\ti{\tau}_{2}}(x_{0})}\snr{f}^{m} \dx\right)^{1/m}\left(\mint_{B_{\ti{\tau}_{2}}(x_{0})}\snr{D\varphi_{2}}^{p} \dx\right)^{\frac{1}{p}}\nonumber \\
&\le&\varepsilon\int_{B_{\ti{\tau}_{2}}(x_{0})}\snr{V_{\snr{z_{0}},p}(D\varphi_{2})}^{2} \dx+\frac{c\snr{B_{\rr}(x_{0})}}{\varepsilon^{1/(p-1)}}\left(\rr^{m}\mint_{B_{\rr}(x_{0})}\snr{f}^{m} \dx\right)^{\frac{p}{m(p-1)}},
\end{eqnarray*}
with $c\equiv c(n,p,m)$. Proceeding exactly as in Lemma \ref{ndegc}, but this time maintaining explicit the dependency on $\snr{z_{0}}$, we bound
\begin{eqnarray*}
\mbox{T}_{2}
&\le&c(1+\snr{z_{0}}^{q-p})\int_{\ti{S}(x_{0})}\left[\snr{V_{\snr{z_{0}},p}(D\mf{u})}^{2}+\snr{z_{0}}^{p-2}\left|\frac{\mf{u}}{\ti{\tau}_{2}-\ti{\tau}_{1}}\right|^{2}+\left|\frac{\mf{u}}{\ti{\tau}_{2}-\ti{\tau}_{1}}\right|^{p}\right] \dx\nonumber \\
&&+\frac{c\mathds{1}_{\{q>p\}}}{(\tau_{2}-\tau_{1})^{n\left(\frac{q}{p}-1\right)}}\left(\int_{S(x_{0})}\left[\snr{D\uu}^{p}+\left|\frac{\uu}{\tau_{2}-\tau_{1}}\right|^{p}\right] \dx\right)^{\frac{q}{p}},
\end{eqnarray*}
for $c\equiv c(n,N,p,q)$ and
\begin{eqnarray*}
\mbox{T}_{3}
&\le&c\snr{z_{0}}^{q-p}\int_{\ti{S}(x_{0})}\left[\snr{V_{\snr{z_{0}},p}(D\uu)}^{2}+\snr{z_{0}}^{p-2}\left|\frac{\uu}{\ti{\tau}_{2}-\ti{\tau}_{1}}\right|^{2}\right] \dx\nonumber \\
&&+\frac{c\mathds{1}_{\{q>p\}}}{(\tau_{2}-\tau_{1})^{n\left(\frac{q}{p}-1\right)}}\left(\int_{S(x_{0})}\left[\snr{D\uu}^{p}+\left|\frac{\uu}{\tau_{2}-\tau_{1}}\right|^{p}\right] \dx\right)^{\frac{q}{p}},
\end{eqnarray*}
with $c\equiv c(n,N,p,q)$. With these three estimates at hand, we can continue with the proof of Lemma \ref{ndegc} (without eventually reabsorbing those terms multiplying $\snr{z_{0}}^{q-p}$ in \eqref{ls.18}), use \eqref{equiv.1} and Lemma \ref{l5} to end up with \eqref{caccdeg}.
\end{proof}
At this stage, we prove that local minimizers of \eqref{exfun} are close to $p$-harmonic maps.
\begin{lemma}
Assume \eqref{assf}-\eqref{p0} and \eqref{f} and let $u\in W^{1,p}(\Omega,\mathbb{R}^{N})$ be a local minimizer of \eqref{exfun}. Then, for every ball $B_{\rr}(x_{0})\Subset \Omega$ and all $s\in (0,\infty)$ it holds that
\begin{eqnarray}\label{34}
\left| \ \mint_{B_{\rr}(x_{0})}\langle \snr{Du}^{p-2} Du,D\varphi\rangle \dx\ \right|&\le&s\nr{D\varphi}_{L^{\infty}(B_{\rr}(x_{0}))}\left(\mint_{B_{\rr}(x_{0})}\snr{Du}^{p} \dx\right)^{\frac{p-1}{p}}\nonumber \\
&&+c\nr{D\varphi}_{L^{\infty}(B_{\rr}(x_{0}))}\left(\omega(s)^{-1}+\omega(s)^{q-1-p}\right)\mint_{B_{\rr}(x_{0})}\snr{Du}^{p} \dx\nonumber \\
&&+4\nr{D\varphi}_{L^{\infty}(B_{\rr}(x_{0}))}\left(\rr^{m}\mint_{B_{\rr}(x_{0})}\snr{f}^{m} \dx\right)^{1/m},
\end{eqnarray}
for all $\varphi\in C^{\infty}_{\rm c}(B_{\rr}(x_{0}),\mathbb{R}^{N})$, with $c\equiv c(n,N,\Lambda,p,q)$.
\end{lemma}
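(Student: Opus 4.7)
The plan is to start from the Euler--Lagrange identity \eqref{3} and rewrite the test integral against $|Du|^{p-2}Du$ as a controlled deviation from $\partial F(Du)-\partial F(0)$. Since \eqref{assf}$_{2}$, $p\ge 2$ and \eqref{df} force $\partial F(0)=0$, and since $\varphi \in C^\infty_{\rm c}(B_{\rr}(x_0),\mathbb{R}^N)$, the constant $\partial F(0)$ contributes nothing to $\mint \langle\partial F(0),D\varphi\rangle\,dx$. Thus I would write
\begin{flalign*}
\mint_{B_{\rr}(x_{0})}\langle \snr{Du}^{p-2} Du,D\varphi\rangle \dx
= \mint_{B_{\rr}(x_{0})}\langle \snr{Du}^{p-2} Du-(\partial F(Du)-\partial F(0)),D\varphi\rangle \dx + \mint_{B_{\rr}(x_{0})} f\cdot\varphi \dx,
\end{flalign*}
where the second identity uses \eqref{3}.

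The $f$-term is handled with Poincaré in $L^\infty$: since $\varphi$ vanishes on $\partial B_{\rr}(x_0)$, $\nr{\varphi}_{L^\infty}\le 2\rr\nr{D\varphi}_{L^\infty}$, and Jensen gives $\rr\mint |f|\dx\le (\rr^m\mint|f|^m\dx)^{1/m}$, yielding the last term of \eqref{34}. The main work is the first term, which I would split according to the size of $|Du|$ relative to $\omega(s)$ provided by \eqref{p0}--\eqref{p0.1}. On $\{|Du|\le \omega(s)\}$, assumption \eqref{p0.1} directly yields the pointwise bound
\begin{equation*}
\snr{\snr{Du}^{p-2}Du-\partial F(Du)+\partial F(0)}\le s\snr{Du}^{p-1},
\end{equation*}
and H\"older's inequality then produces the $s\nr{D\varphi}_{L^\infty}(\mint |Du|^p\,dx)^{(p-1)/p}$ contribution.

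On the complementary set $\{|Du|>\omega(s)\}$, I would use $\partial F(0)=0$ together with the growth bound \eqref{df}, so
\begin{equation*}
\snr{\snr{Du}^{p-2}Du-\partial F(Du)}\le c\snr{Du}^{p-1}+c\snr{Du}^{q-1}.
\end{equation*}
The key observation is that \eqref{pq} forces $q-1-p<0$, which is the whole reason $\omega(s)^{q-1-p}$ (and not $\omega(s)^{-(q-1-p)}$) appears in the statement. Indeed, on $\{|Du|>\omega(s)\}$, $\snr{Du}^{p-1}\le \omega(s)^{-1}\snr{Du}^{p}$, while the negativity of $q-1-p$ gives $\snr{Du}^{q-1}=\snr{Du}^{q-1-p}\snr{Du}^{p}\le \omega(s)^{q-1-p}\snr{Du}^{p}$. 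Combining and integrating yields the middle term of \eqref{34}.

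The main technical subtlety — which is minor here, but easy to get wrong — is the sign of $q-1-p$ and the direction of the inequality when raising $|Du|>\omega(s)$ to a negative exponent; getting this right is what converts the potentially unbounded $|Du|^{q-1}$ contribution into an $|Du|^p$-integrable one times a finite (though $s$-dependent) factor. Once the three pieces are assembled and $\nr{D\varphi}_{L^\infty}$ is factored out, inequality \eqref{34} follows directly.
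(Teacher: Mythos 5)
Your proposal is correct and follows essentially the same route as the paper: start from \eqref{3}, isolate the forcing term, and split $B_{\rr}(x_0)$ into $\{\snr{Du}\le\omega(s)\}$ (handled by \eqref{p0.1}) and $\{\snr{Du}>\omega(s)\}$ (handled by \eqref{df} and $\partial F(0)=0$). The only difference is in the large-gradient set: you use the pointwise inequalities $\snr{Du}^{p-1}\le\omega(s)^{-1}\snr{Du}^p$ and $\snr{Du}^{q-1}\le\omega(s)^{q-1-p}\snr{Du}^p$ (valid since $q-1-p<0$ by \eqref{pq}), whereas the paper reaches the same bound by H\"older on the restricted domain combined with a Chebyshev estimate on $\snr{B_{\rr}(x_0)\cap\{\snr{Du}>\omega(s)\}}$; your version is a bit more direct but both hinge on the same sign observation $q<p+1$.
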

\begin{proof}
With $\varphi\in C^{\infty}_{\rm c}(B_{\rr}(x_{0}),\mathbb{R}^{N})$ we split
\begin{eqnarray*}
\left|\ \mint_{B_{\rr}(x_{0})}\langle \snr{Du}^{p-2}Du,D\varphi\rangle  \dx\ \right|&\stackrel{\eqref{3}}{\le}&\left|\ \mint_{B_{\rr}(x_{0})}\langle\partial F(Du)-\partial F(0)-\snr{Du}^{p-2}Du,D\varphi\rangle  \dx\ \right|\nonumber \\
&&+\left| \ \mint_{B_{\rr}(x_{0})}f\cdot \varphi \dx \ \right|=:\mbox{(I)}+\mbox{(II)}.
\end{eqnarray*}
Given any $s\in (0,\infty)$, by \eqref{p0.1}, \eqref{df} and \eqref{pq} we estimate
\begin{eqnarray*}
\mbox{(I)}&\le&\frac{\nr{D\varphi}_{L^{\infty}(B_{\rr}(x_{0}))}}{\snr{B_{\rr}(x_{0})}}\int_{B_{\rr}(x_{0})\cap\left\{\snr{Du}\le \omega(s)\right\}}\snr{\partial F(Du)-\partial F(0)-\snr{Du}^{p-2}Du} \dx\nonumber \\
&&+\frac{\nr{D\varphi}_{L^{\infty}(B_{\rr}(x_{0}))}}{\snr{B_{\rr}(x_{0})}}\int_{B_{\rr}(x_{0})\cap\left\{\snr{Du}> \omega(s)\right\}}\snr{\partial F(Du)-\partial F(0)-\snr{Du}^{p-2}Du} \dx\nonumber \\
&\le&s\nr{D\varphi}_{L^{\infty}(B_{\rr}(x_{0}))}\mf{I}_{p}(Du;B_{\rr}(x_{0}))^{p-1}\nonumber \\
&&+\frac{c\nr{D\varphi}_{L^{\infty}(B_{\rr}(x_{0}))}}{\snr{B_{\rr}(x_{0})}}\int_{B_{\rr}(x_{0)}\cap\left\{\snr{Du}>\omega(s)\right\}}\left[\snr{Du}^{p-1}+\snr{Du}^{q-1}\right] \dx\nonumber\\
&\le&s\nr{D\varphi}_{L^{\infty}(B_{\rr}(x_{0}))}\mf{I}_{p}(Du;B_{\rr}(x_{0}))^{p-1}\nonumber \\
&&+\frac{c\nr{D\varphi}_{L^{\infty}(B_{\rr}(x_{0}))}}{\snr{B_{\rr}(x_{0})}^{\frac{1}{p}}}\left|B_{\rr}(x_{0})\cap\left\{\snr{Du}>\omega(s)\right\}\right|^{\frac{1}{p}}\mf{I}_{p}(Du;B_{\rr}(x_{0}))^{p-1}\nonumber\\
&&+\frac{c\nr{D\varphi}_{L^{\infty}(B_{\rr}(x_{0}))}}{\snr{B_{\rr}(x_{0})}^{\frac{p-q+1}{p}}}\left|B_{\rr}(x_{0})\cap\left\{\snr{Du}>\omega(s)\right\}\right|^{\frac{p-q+1}{p}}\mf{I}_{p}(Du;B_{\rr}(x_{0}))^{q-1}\nonumber \\
&\le&s\nr{D\varphi}_{L^{\infty}(B_{\rr}(x_{0}))}\mf{I}_{p}(Du;B_{\rr}(x_{0}))^{p-1}\nonumber \\
&&+\frac{c\nr{D\varphi}_{L^{\infty}(B_{\rr}(x_{0}))}}{\omega(s)\snr{B_{\rr}(x_{0})}^{\frac{1}{p}}}\left(\int_{B_{\rr}(x_{0})\cap\left\{\snr{Du}>\omega(s)\right\}}\snr{Du}^{p} \dx\right)^{\frac{1}{p}}\mf{I}_{p}(Du;B_{\rr}(x_{0}))^{p-1}\nonumber\\
&&+\frac{c\nr{D\varphi}_{L^{\infty}(B_{\rr}(x_{0}))}}{\omega(s)^{p-q+1}\snr{B_{\rr}(x_{0})}^{\frac{p-q+1}{p}}}\left(\int_{B_{\rr}(x_{0})\cap\left\{\snr{Du}>\omega(s)\right\}}\snr{Du}^{p} \dx\right)^{\frac{p-q+1}{p}}\mf{I}_{p}(Du;B_{\rr}(x_{0}))^{q-1}\nonumber \\
&\le&s\nr{D\varphi}_{L^{\infty}(B_{\rr}(x_{0}))}\mf{I}_{p}(Du;B_{\rr}(x_{0}))^{p-1}\nonumber \\
&&+c\nr{D\varphi}_{L^{\infty}(B_{\rr}(x_{0}))}\left(\omega(s)^{-1}+\omega(s)^{q-1-p}\right)\mf{I}_{p}(Du;B_{\rr}(x_{0}))^{p},
\end{eqnarray*}
for $c\equiv c(n,N,\Lambda,p,q)$. Term $\mbox{(II)}$ can be easily bounded via
\begin{eqnarray*}
\mbox{(II)}\le 4\nr{D\varphi}_{L^{\infty}(B_{\rr}(x_{0}))}\left(\rr^{m}\mint_{B_{\rr}(x_{0})}\snr{f}^{m} \dx\right)^{1/m}.
\end{eqnarray*}
Combining the content of the two above displays, we obtain \eqref{34} and the proof is complete.
\end{proof}
Now we are ready to prove an excess decay result valid for the degenerate case.
\begin{proposition}\label{p3}
Assume \eqref{assf}-\eqref{p0} and \eqref{f} and let $u\in W^{1,p}(\Omega,\mathbb{R}^{N})$ be a local minimizer of \eqref{exfun}. Then for every $\chi\in (0,1]$ there exists $\theta\equiv \theta(\textnormal{\texttt{data}},\chi)\in (0,2^{-10})$, $\varepsilon_{2}\equiv \varepsilon_{2}(\textnormal{\texttt{data}},\omega(\cdot),\chi)\in (0,1)$ such that if the smallness conditions
\begin{eqnarray}\label{32}
\chi\snr{(Du)_{B_{\rr}(x_{0})}}\le \mf{F}(u;B_{\rr}(x_{0}))\qquad \mbox{and}\qquad \mf{F}(u;B_{\rr}(x_{0}))\le \varepsilon_{2}
\end{eqnarray}
are satisfied on a ball $B_{\rr}(x_{0})\subset \mathbb{R}^{n}$, then
\begin{eqnarray}\label{51}
\mf{F}(u;B_{\theta\rr}(x_{0}))\le \theta^{\gamma_{0}}\mf{F}(u;B_{\rr}(x_{0}))+c_{1}\mf{K}\left[\left(\rr^{m}\mint_{B_{\rr}(x_{0})}\snr{f}^{m} \dx\right)^{1/m}\right]^{\frac{1}{p-1}},
\end{eqnarray}
for all $\gamma_{0}\in (0,2\alpha/p)$, where $\alpha\equiv \alpha(n,N,p)\in (0,1)$ is the exponent in $\eqref{h.2}_{2}$ and $c_{1}\equiv c_{1}(\textnormal{\texttt{data}}_{\textnormal{c}},\chi)$.
\end{proposition}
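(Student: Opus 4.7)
The plan is to split into two cases depending on whether $\mf{F}(u;B_{\rr}(x_{0}))^{p-1}$ dominates the quantity $(\rr^{m}\mint_{B_{\rr}(x_{0})}\snr{f}^{m}\,\dx)^{1/m}$. If it does not, \eqref{51} follows trivially from \eqref{tri.1} after enlarging $c_{1}$, so I may assume the $f$-term is subordinate. The core of the argument is then a $p$-harmonic approximation in the spirit of \cite{dumi,ts2}, combined with the degenerate Caccioppoli inequality \eqref{caccdeg}.

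First I would normalize: set $\Lambda:=c(\chi)\mf{F}(u;B_{\rr}(x_{0}))$ with $c(\chi)$ chosen so that \eqref{32}$_{1}$ implies $\mf{I}_{p}(Du;B_{\rr}(x_{0}))\le \Lambda$, and define $v:=u/\Lambda$. Inserting \eqref{34} into the Euler--Lagrange identity \eqref{3} and dividing by $\Lambda^{p-1}$, one sees that $v$ is approximately $p$-harmonic on $B_{\rr}(x_{0})$ with error
\[
c\nr{D\varphi}_{L^{\infty}(B_{\rr}(x_{0}))}\left[s+(\omega(s)^{-1}+\omega(s)^{q-1-p})\mf{F}(u;B_{\rr}(x_{0}))+\Lambda^{1-p}\left(\rr^{m}\mint_{B_{\rr}(x_{0})}\snr{f}^{m}\,\dx\right)^{\!1/m}\right].
\]
Given $\varepsilon>0$ to be fixed later and $\delta\equiv\delta(n,N,p,\varepsilon)$ the parameter from Lemma \ref{phar}, I would first pick $s$ small, then $\varepsilon_{2}$ small enough in terms of $\omega(\cdot)$ and $\chi$ so that \eqref{32}$_{2}$ renders the second summand $\le \delta/3$, and use the case assumption to bound the $f$-contribution by $\delta/3$ as well. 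Lemma \ref{phar} then yields a $p$-harmonic $h\in v+W^{1,p}_{0}(B_{\rr}(x_{0}),\mathbb{R}^{N})$ with $\mf{I}_{p}(Dh;B_{\rr}(x_{0}))\le c$ and $\mint_{B_{\rr}(x_{0})}\snr{v-h}^{p}/\rr^{p}\,\dx\le c\varepsilon$.

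Next, exploiting the $C^{1,\alpha}$-regularity \eqref{h.2}$_{2}$ of $h$ together with Poincar\'e, I get $\mint_{B_{2\theta\rr}(x_{0})}\snr{h-h(x_{0})-\langle Dh(x_{0}),\cdot-x_{0}\rangle}^{p}/(\theta\rr)^{p}\,\dx\le c\theta^{p\alpha}$ for $\theta\in(0,2^{-10})$. Setting $z_{0}:=\Lambda Dh(x_{0})$ (whose smallness $\snr{z_{0}}\le c(\chi)\varepsilon_{2}$ is essential) and $\ell(x):=\Lambda h(x_{0})+\langle z_{0},x-x_{0}\rangle$, I plug into \eqref{caccdeg} on concentric balls inside $B_{2\theta\rr}(x_{0})$ via $\snr{u-\ell}^{p}\le c\Lambda^{p}(\snr{v-h}^{p}+\snr{h-h(x_{0})-\langle Dh(x_{0}),\cdot-x_{0}\rangle}^{p})$: the first piece gives $c\Lambda^{p}\theta^{-n-p}\varepsilon$, the second $c\Lambda^{p}\theta^{p\alpha}$; the residual $(p,q)$-terms $\mathds{1}_{\{q>p\}}\snr{z_{0}}^{q-p}\mf{F}(u,z_{0};B_{2\theta\rr}(x_{0}))^{p}$ and $\mathds{1}_{\{q>p\}}\mf{F}(u,z_{0};B_{2\theta\rr}(x_{0}))^{q}$ appearing in \eqref{caccdeg} are reabsorbed thanks to the smallness of $\snr{z_{0}}^{q-p}$ and of $\mf{F}$. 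Taking $p$-th roots, passing from $\mf{F}(u,z_{0};\cdot)$ to $\mf{F}(u;\cdot)$ via \eqref{minav}, restoring the $f$-contribution, and choosing $\varepsilon\ll\theta^{n+p+p\alpha}$ yields $\mf{F}(u;B_{\theta\rr}(x_{0}))\le c\theta^{\alpha}\mf{F}(u;B_{\rr}(x_{0}))+c_{1}\mf{K}[\,\cdot\,]^{1/(p-1)}$. Since $2\alpha/p\le\alpha$, for any target $\gamma_{0}\in(0,2\alpha/p)$ one further shrinks $\theta$ to absorb the constant into $\theta^{\gamma_{0}}$, producing \eqref{51}.

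The main obstacle is the cascade of parameter choices: $\theta$ must be fixed first (small enough to dominate the Caccioppoli constant and to suppress the $(p,q)$-contributions in \eqref{caccdeg}), then $\varepsilon$ in terms of $\theta$, then $s$ in terms of $\delta$, and finally $\varepsilon_{2}$ so that the nonquantitative modulus $\omega(\cdot)$ of \eqref{p0.1} respects $\delta$ and $\snr{z_{0}}^{q-p}$ stays negligible. This last step is precisely what introduces the $\omega(\cdot)$-dependence of $\varepsilon_{2}$ recorded in the statement.
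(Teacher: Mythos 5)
Your proposal is essentially correct and reaches the conclusion, but it takes a noticeably different route from the paper's argument; the differences are genuine and worth spelling out. The paper avoids your case split entirely by absorbing the $f$-contribution into the normalizing scalar $\kk := c_\chi^{1/p}\mf{F}(u;B_\rr(x_0)) + (4/\varepsilon_3)^{1/(p-1)}\bigl(\rr^m\mint_{B_\rr(x_0)}|f|^m\,\dx\bigr)^{1/(m(p-1))}$; dividing the Euler--Lagrange estimate \eqref{36} by $\kk^{p-1}$ then automatically makes the $f$-contribution $\le\varepsilon_3$, with no dichotomy. Your split (is $\mf{F}^{p-1}$ dominated by the $f$-term or not, with the threshold tied to $\delta$) buys a simpler normalization constant and in fact produces the slightly sharper bound $c\mf{S}^{1/(p-1)}$ rather than $c\mf{K}(\mf{S})^{1/(p-1)}$ in the $f$-term; the paper's normalization, on the other hand, is uniform and is what naturally feeds the $\mf{K}(\cdot)$-shaped $f$-contribution into the iteration machinery of Section 5. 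You also use pointwise evaluation $Dh(x_0)$ (relying on interior $C^{1,\alpha}$-regularity of $p$-harmonic maps) rather than the averaged slope $(Dh)_{2\theta\rr}$ (which the paper controls solely via the excess decay $\eqref{h.2}_2$ and a Poincar\'e--Wirtinger inequality); and you bypass the paper's terms (VI)--(VIII) by directly comparing $\mf{F}(u;B_{\theta\rr})$ with $\mf{F}(u,z_0;B_{\theta\rr})$, which does work through a short two-case argument comparing $|(Du)_{B_{\theta\rr}}|$ with $|z_0|$, though it is a little more than an appeal to \eqref{minav} alone, since the weighted $L^2$ part of the excess is not handled by that display.

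Two points deserve a cleanup. First, the decay rate you extract from the Caccioppoli step is $\theta^{2\alpha/p}$, not $\theta^{\alpha}$: the bottleneck is the weighted $L^2$-term $\mint_{B_{2\theta\rr}}|z_0|^{p-2}|(u-\ell)/(2\theta\rr)|^2\dx$, which via H\"older is $\le c\Lambda^p(\theta^{-2(n+p)/p}\varepsilon^{2/p}+\theta^{2\alpha})$; after the $p$-th root this gives $\theta^{2\alpha/p}$, exactly matching the admissible range $\gamma_0\in(0,2\alpha/p)$ in the statement. Your phrase ``since $2\alpha/p\le\alpha$'' is the right observation pointing in the wrong direction: you should note that the slower decay $\theta^{2\alpha/p}$ is the binding one. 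Second, the parameter cascade you describe (``$\theta$ first, then $\varepsilon$, then $s$, then $\varepsilon_2$'') and your closing instruction (``one further shrinks $\theta$'' to beat the constant into $\theta^{\gamma_0}$) are in mild tension: the choice of $\theta$ should be tied to $\gamma_0$ at the start, and everything downstream ($\varepsilon$, $s$, $\varepsilon_2$, and the case-split threshold) re-chosen accordingly, which is what ultimately makes $c_1$ depend on $\textnormal{\texttt{data}}_{\textnormal{c}}$ and $\chi$ only. Neither issue is a gap in the approach; both are bookkeeping slips in the exposition.
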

\begin{proof}
We premise that the same abbreviations used in the proof of Proposition \ref{p1} will be employed also here. By \eqref{32}$_{1}$ and triangular inequality we have that
\begin{eqnarray}\label{35}
\mf{I}_{p}(Du;B_{\rr})^{p} \le c_{\chi}\mf{F}(u;B_{\rr})^{p}\qquad \mbox{where} \ \  c_{\chi}:=2^{p}\left(1+\frac{1}{\chi^{p}}\right)>1,
\end{eqnarray}
therefore \eqref{34} reads as
\begin{eqnarray}\label{36}
\left| \ \mint_{B_{\rr}}\langle \snr{Du}^{p-2}Du,D\varphi\rangle \dx\ \right|&\stackrel{\eqref{35}}{\le}&sc_{\chi}^{(p-1)/p}\nr{D\varphi}_{\infty}\mf{F}(u;B_{\rr})^{p-1}\nonumber \\
&&+cc_{\chi}\nr{D\varphi}_{\infty}\left(\omega(s)^{-1}+\omega(s)^{q-1-p}\right)\mf{F}(u;B_{\rr})^{p}\nonumber \\
&&+4\nr{D\varphi}_{\infty}\left(\rr^{m}\mint_{B_{\rr}}\snr{f}^{m} \dx\right)^{1/m},
\end{eqnarray}
with $c\equiv c(n,N,\Lambda,p,q)$. We fix $\varepsilon_{3}\in (0,1]$ and define quantities
\begin{eqnarray*}
\kk:=c_{\chi}^{1/p}\mf{F}(u;B_{\rr})+\left(\frac{4}{\varepsilon_{3}}\right)^{1/(p-1)}\left(\rr^{m}\mint_{B_{\rr}}\snr{f}^{m} \dx\right)^{\frac{1}{m(p-1)}}\quad \mbox{and}\quad u_{0}:=\kk^{-1}u.
\end{eqnarray*}
Then we divide both sides of \eqref{36} by $\kk^{p-1}$ and use $\eqref{32}_{2}$ thus getting
\begin{flalign}
\left| \ \mint_{B_{\rr}}\langle \snr{Du_{0}}^{p-2}Du_{0},D\varphi\rangle \dx\ \right|\stackrel{\eqref{32}_{2}}{\le}(s+\varepsilon_{3})\nr{D\varphi}_{\infty}+cc_{\chi}^{1/p}\nr{D\varphi}_{\infty}\left(\omega(s)^{-1}+\omega(s)^{q-p-1}\right)\varepsilon_{2},
\label{37}
\end{flalign}
for $c\equiv c(n,N,\Lambda,p,q)$ and
\begin{eqnarray}\label{38}
\mf{I}_{p}(Du_{0};B_{\rr})^{p}\le\frac{\mf{I}_{p}(Du;B_{\rr})^{p}}{c_{\chi}\mf{F}(u;B_{\rr})^{p}} \stackrel{\eqref{35}}{\le}1.
\end{eqnarray}
Let $\varepsilon\in (0,1]$, $\theta\in (0,2^{-10})$ be numbers whose size will be quantified later on and $\delta\equiv \delta(n,N,p,\varepsilon)\in (0,1]$ be the small parameter given by Lemma \ref{phar}. We impose the following initial restrictions on the sizes of the various parameters appearing on the right hand side of \eqref{37}:
\begin{eqnarray}\label{52.1}
\varepsilon_{3}=\frac{\delta}{10},\qquad s=\frac{\delta }{10}
\end{eqnarray}
and
\begin{flalign}\label{52}
cc_{\chi}^{1/p}\left(\omega(s)^{-1}+\omega(s)^{q-1-p}\right)\varepsilon_{2}\le \frac{\delta}{10}
\end{flalign}
so we have dependencies $\varepsilon_{3},s\equiv \varepsilon_{3},s(n,N,p,\varepsilon)$ and $\varepsilon_{2}\equiv \varepsilon_{2}(n,N,\Lambda,p,q,\omega(\cdot),\chi,\varepsilon)$.
With these choices, \eqref{37} reads as
\begin{eqnarray*}
\left| \ \mint_{B_{\rr}}\langle \snr{Du_{0}}^{p-2}Du_{0},D\varphi\rangle \dx\ \right|\le \delta\nr{D\varphi}_{\infty},
\end{eqnarray*}
therefore recalling also \eqref{38} we see that Lemma \ref{phar} applies and renders a $p$-harmonic map $h\in u_{0}+ W^{1,p}_{0}(B_{\rr},\mathbb{R}^{N})$ such that
\begin{flalign}\label{39}
\mf{I}_{p}(Dh;B_{\rr})\le c,\qquad \mf{I}_{\ti{p}}(Du_{0}-Dh;B_{\rr})^{p}\le c\varepsilon,\qquad   \mf{I}_{p}\left(\frac{u_{0}-h}{\rr};B_{\rr}\right)^{p}\le c \varepsilon,
\end{flalign}
for $c\equiv c(n,N,p)$. Estimates \eqref{h.2} and \eqref{39}$_{1}$ in particular imply that
\begin{eqnarray}\label{40}
\mf{F}(h;B_{2\theta\rr})\le c \theta^{\alpha}\quad \mbox{and}\quad \mf{F}(h;B_{\rr})\le c,
\end{eqnarray}
with $c\equiv c(n,N,p)$. By triangular and Poincar\'e inequalities, \eqref{39}$_{3}$ and \eqref{40} we have
\begin{eqnarray*}
\mint_{B_{2\theta\rr}}\left|\frac{u_{0}-(h)_{2\theta\rr}-\langle(Dh)_{2\theta\rr},x-x_{0}\rangle}{2\theta\rr}\right|^{p} \dx &\le&c\mint_{B_{2\theta\rr}}\left| \frac{u_{0}-h}{2\theta\rr}\right|^{p} \dx\nonumber \\
&&+c\mint_{B_{2\theta\rr}}\left|\frac{h-(h)_{2\theta\rr}-\langle (Dh)_{2\theta\rr},x-x_{0}\rangle}{2\theta\rr}\right|^{p} \dx\nonumber \\
&\le&c\theta^{-(n+p)}\varepsilon+c\mint_{B_{2\theta\rr}}\snr{Dh-(Dh)_{2\theta\rr}}^{p} \dx\nonumber \\
&\le&c\left(\theta^{-(n+p)}\varepsilon+\theta^{\alpha p}\right),
\end{eqnarray*}
for $c\equiv c(n,N,p)$. Set $h_{0}(x):=\kk\left((h)_{2\theta\rr}+\langle(Dh)_{2\theta\rr},x-x_{0}\rangle\right)$ and notice that
\begin{eqnarray}\label{46}
\snr{Dh_{0}}=\kk\snr{(Dh)_{2\theta\rr}\mathds{I}_{n\times n}}\le c\kk \nr{Dh}_{L^{\infty}(B_{2\theta\rr})}\stackrel{\eqref{h.2}_{1}}{\le}c\kk\mf{I}_{p}(h;B_{\rr})&\stackrel{\eqref{39}_{1}}{\le}&c\kk,
\end{eqnarray}
with $c\equiv c(n,N,p)$. The content of the previous display and the very definition of $u_{0}$ yield that
\begin{eqnarray}\label{42}
\mint_{B_{2\theta\rr}}\left|\frac{u-h_{0}}{2\theta\rr}\right|^{p} \dx&\le& c\kk^{p}\left(\theta^{-(n+p)}\varepsilon+\theta^{\alpha p}\right),
\end{eqnarray}
for $c\equiv c(n,N,p)$. We further estimate by H\"older inequality
\begin{flalign}
\mint_{B_{2\theta\rr}}\snr{Dh_{0}}^{p-2}\left|\frac{u-h_{0}}{2\theta\rr}\right|^{2} \dx\stackrel{\eqref{46}}{\le}c\kk^{p-2}\left(\mint_{B_{2\theta\rr}}\left| \frac{u-h_{0}}{2\theta\rr} \right|^{p} \ \dx\right)^{2/p}\stackrel{\eqref{42}}{\le}c\kk^{p}\left(\theta^{-2(n+p)/p}\varepsilon^{2/p}+\theta^{2\alpha}\right),\label{41}
\end{flalign}
with $c\equiv c(n,N,p)$. We then apply \eqref{caccdeg} with $z_{0}\equiv Dh_{0}$ and \eqref{equiv.1} to get
\begin{eqnarray*}
\mf{F}(u,Dh_{0};B_{\theta\rr})^{p}&\le&c\mathds{1}_{\{q>p\}}\snr{Dh_{0}}^{q-p}\mf{F}(u,Dh_{0};B_{2\theta\rr})^{p}\nonumber \\
&&+c(1+\snr{Dh_{0}}^{q-p})\mint_{B_{2\theta\rr}}\snr{Dh_{0}}^{p-2}\left|\frac{u-h_{0}}{2\theta\rr} \right|^{2}+\left|\frac{u-h_{0}}{2\theta\rr} \right|^{p}\  \dx\nonumber \\
&&+c\left(\mint_{B_{2\theta\rr}}\snr{Dh_{0}}^{p-2}\left|\frac{u-h_{0}}{2\theta\rr} \right|^{2}+\left|\frac{u-h_{0}}{2\theta\rr} \right|^{p}\  \dx\right)^{q/p}\nonumber \\
&&+c\left((2\theta\rr)^{m}\mint_{B_{2\theta\rr}}\snr{f}^{m} \dx\right)^{\frac{p}{m(p-1)}}+c\mathds{1}_{\{q>p\}}\mf{F}(u,Dh_{0};B_{2\theta\rr})^{q}\nonumber \\
&=:&\mbox{(I)}+\mbox{(II)}+\mbox{(III)}+\mbox{(IV)}+\mbox{(V)},
\end{eqnarray*}
for $c\equiv c(\textnormal{\texttt{data}})$. Before estimating the various quantities appearing in the previous display, we first observe that by Young and triangular inequalities it is,
\begin{eqnarray}\label{47}
\mf{F}(u,Dh_{0};B_{2\theta\rr})^{p}&\stackrel{\eqref{46}}{\le}&c\kk^{p}+c\mf{I}_{p}(Du;B_{2\theta\rr})^{p}\stackrel{\eqref{35}}{\le}c\kk^{p}+cc_{\chi}\theta^{-n}\mf{F}(u;B_{\rr})^{p}\nonumber \\
&\le&cc_{\chi}\theta^{-n}\mf{F}(u;B_{\rr})^{p}+c\varepsilon_{3}^{-\frac{p}{p-1}}\left(\rr^{m}\mint_{B_{\rr}}\snr{f}^{m} \dx\right)^{\frac{p}{m(p-1)}},
\end{eqnarray}
with $c\equiv c(n,p)$. Keeping in mind that in $\mbox{(I)}$ and $\mbox{(V)}$ $q$ must be larger than $p$ otherwise both terms vanish, we bound via Young inequality with conjugate exponents $\left(\frac{q}{q-p},\frac{q}{p}\right)$
\begin{eqnarray*}
\mbox{(I)}+\mbox{(V)}&\stackrel{\eqref{46}}{\le}&c\mathds{1}_{\{q>p\}}\kk^{q-p}\mf{F}(u,Dh_{0};B_{2\theta\rr})^{p}+c\mathds{1}_{\{q>p\}}\mf{F}(u,Dh_{0};B_{2\theta\rr})^{q}\nonumber \\
&\le&c\mathds{1}_{\{q>p\}}\kk^{q}+c\mathds{1}_{\{q>p\}}\mf{F}(u,Dh_{0};B_{2\theta\rr})^{q}\nonumber \\
&\stackrel{\eqref{47}}{\le}&c\mathds{1}_{\{q>p\}}\theta^{-nq/p}c_{\chi}^{q/p}\mf{F}(u;B_{\rr})^{q}+c\mathds{1}_{\{q>p\}}\varepsilon_{3}^{-q/(p-1)}\left(\rr^{m}\mint_{B_{\rr}}\snr{f}^{m} \ \dx\right)^{\frac{q}{m(p-1)}}\nonumber \\
&\stackrel{\eqref{32}_{2}}{\le}&c\mathds{1}_{\{q>p\}}\theta^{-nq/p}c_{\chi}^{q/p}\varepsilon_{2}^{q-p}\mf{F}(u;B_{\rr})^{p}+c\mathds{1}_{\{q>p\}}\varepsilon_{3}^{-q/(p-1)}\left(\rr^{m}\mint_{B_{\rr}}\snr{f}^{m} \ \dx\right)^{\frac{q}{m(p-1)}},
\end{eqnarray*}
where it is $c\equiv c(\textnormal{\texttt{data}})$. Moreover we have
\begin{eqnarray*}
\mbox{(II)}+\mbox{(III)}&\stackrel{\eqref{46}}{\le}&c(1+\kk^{q-p})\mint_{B_{2\theta\rr}}\snr{Dh_{0}}^{p-2}\left|\frac{u-h_{0}}{2\theta\rr}\right|^{2}+\left|\frac{u-h_{0}}{2\theta\rr}\right|^{p} \dx\nonumber \\
&&+c\left(\mint_{B_{2\theta\rr}}\snr{Dh_{0}}^{p-2}\left|\frac{u-h_{0}}{2\theta\rr}\right|^{2}+\left|\frac{u-h_{0}}{2\theta\rr}\right|^{p} \dx\right)^{\frac{q}{p}}\nonumber \\
&\stackrel{\eqref{42},\eqref{41}}{\le}&c(1+\kk^{q-p})\kk^{p}\left(\theta^{-(n+p)}\varepsilon^{2/p}+\theta^{2\alpha}\right)+c\kk^{q}\left(\theta^{-q(n+p)/p}\varepsilon^{2/p}+\theta^{2\alpha}\right)\nonumber \\
&\le&cc_{\chi}^{q/p}\mf{F}(u;B_{\rr})^{p}\left(\theta^{-q(n+p)/p}\varepsilon^{2/p}+\theta^{2\alpha}\right)\nonumber \\
&&+c\varepsilon_{3}^{-\frac{q}{p-1}}\left(\theta^{-q(n+p)/p}\varepsilon^{2/p}+\theta^{2\alpha}\right)\mf{K}\left[\left(\rr^{m}\mint_{B_{\rr}}\snr{f}^{m} \ \dx\right)^{1/m}\right]^{\frac{p}{p-1}}
\end{eqnarray*}
for $c\equiv c(\textnormal{\texttt{data}})$ and, trivially,
\begin{eqnarray*}
\mbox{(IV)}\le c\theta^{\frac{p}{p-1}\left(1-\frac{n}{m}\right)}\left(\rr^{m}\mint_{B_{\rr}}\snr{f}^{m} \dx\right)^{\frac{p}{m(p-1)}},
\end{eqnarray*}
with $c\equiv c(\textnormal{\texttt{data}})$. Combining the content of the previous displays we obtain that
\begin{eqnarray}\label{50}
\mf{F}(u,Dh_{0};B_{\theta\rr})^{p}&\le&c\mathcal{S}_{1}\mf{F}(u;B_{\rr})^{p}+c\mathcal{S}_{2}\mf{K}\left[\left(\rr^{m}\mint_{B_{\rr}}\snr{f}^{m} \dx\right)^{1/m}\right]^{\frac{p}{p-1}},
\end{eqnarray}
where $c\equiv c(\textnormal{\texttt{data}})$ and for simplicity, we set
\begin{flalign*}
&\mathcal{S}_{1}\equiv\mathcal{S}_{1}(\chi,\varepsilon,\varepsilon_{2},\theta):=\mathds{1}_{\{q>p\}}c_{\chi}^{q/p}\theta^{-nq/p}\varepsilon_{2}^{q-p}+c_{\chi}^{q/p}\left(\theta^{-q(n+p)/p}\varepsilon^{2/p}+\theta^{2\alpha}\right)\nonumber \\
&\mathcal{S}_{2}\equiv\mathcal{S}_{2}(\chi,\varepsilon,\varepsilon_{3},\theta):=\varepsilon_{3}^{-\frac{q}{p-1}}\left(1+\theta^{-q(n+p)/p}\varepsilon^{2/p}+\theta^{2\alpha}\right)+\theta^{\frac{p}{p-1}\left(1-\frac{n}{m}\right)}.
\end{flalign*}
Using \eqref{50} and \eqref{minav} we then obtain
\begin{flalign}\label{60}
&\mint_{B_{\theta\rr}}\snr{Dh_{0}}^{p-2}\snr{Du-(Du)_{\theta\rr}}^{2}+\snr{Du-(Du)_{\theta\rr}}^{p}\dx\le 8^{p}\mf{F}(u,Dh_{0};B_{\theta\rr})^{p}\nonumber \\
&\qquad \qquad \quad \le c\mathcal{S}_{1}\mf{F}(u;B_{\rr})^{p}+c\mathcal{S}_{2}\mf{K}\left[\left(\rr^{m}\mint_{B_{\rr}}\snr{f}^{m} \dx\right)^{1/m}\right]^{\frac{p}{p-1}},
\end{flalign}
with $c\equiv c(\textnormal{\texttt{data}})$. Now, if $p=2$ we can directly jump to \eqref{43.1} (which would be essentially equivalent to \eqref{60} in this case), so in the next four displays we shall assume $p>2$. With $h$ being the $p$-harmonic map whose features are described in \eqref{39}, set $\bar{h}:=\kk h$ and split
\begin{eqnarray*}
\mf{F}(u;B_{\theta\rr})^{p}&\stackrel{\eqref{minav}}{\le}&\mint_{B_{\theta\rr}}\snr{Du-(Du)_{\theta\rr}}^{p} \ \dx+c\mathds{1}_{\{p>2\}}\mint_{B_{\theta\rr}}\snr{(Du)_{\theta\rr}}^{p-2}\snr{Du-(D\bar{h})_{\theta\rr}}^{2} \ \dx\nonumber \\
&\stackrel{\eqref{60}}{\le}&c\mathcal{S}_{1}\mf{F}(u;B_{\rr})^{p}+c\mathcal{S}_{2}\mf{K}\left[\left(\rr^{m}\mint_{B_{\rr}}\snr{f}^{m} \dx\right)^{1/m}\right]^{\frac{p}{p-1}}\nonumber \\
&&+c\mathds{1}_{\{p>2\}}\mint_{B_{\theta\rr}}\snr{(Du)_{\theta\rr}}^{p-2}\snr{Du-D\bar{h}}^{2} \ \dx+c\mf{F}(\bar{h};B_{\theta\rr})^{p}\nonumber \\
&&+c\mathds{1}_{\{p>2\}}\snr{(Du)_{\theta\rr}-(D\bar{h})_{\theta\rr}}^{p-2}\mint_{B_{\theta\rr}}\snr{D\bar{h}-(D\bar{h})_{\theta\rr}}^{2} \ \dx\nonumber \\
&\le&c\mathcal{S}_{1}\mf{F}(u;B_{\rr})^{p}+c\mathcal{S}_{2}\mf{K}\left[\left(\rr^{m}\mint_{B_{\rr}}\snr{f}^{m} \dx\right)^{1/m}\right]^{\frac{p}{p-1}}+c\mbox{(VI)}+c\mbox{(VII)}+c\mbox{(VIII)},
\end{eqnarray*}
with $c\equiv c(\textnormal{\texttt{data}})$. Keeping in mind that being $p>2$, in Lemma \ref{phar} it is $\ti{p}\ge 2$, we bound
\begin{eqnarray*}
\mbox{(VI)}&\le&\mathds{1}_{\{p>2\}}\kk^{2}\theta^{-n}\snr{(Du)_{\theta\rr}}^{p-2}\left(\mint_{B_{\rr}}\snr{Du_{0}-Dh}^{\ti{p}} \ \dx \right)^{2/\ti{p}}\stackrel{\eqref{39}_{2}}{\le}c\mathds{1}_{\{p>2\}}\varepsilon^{2/p}\kk^{2}\theta^{-n}\snr{(Du)_{\theta\rr}}^{p-2}\nonumber \\
&\le&c\mathds{1}_{\{p>2\}}\varepsilon^{2/p}\theta^{-n}\kk^{p}+c\mathds{1}_{\{p>2\}}\varepsilon^{2/p}\theta^{-n}\snr{(Du)_{\theta\rr}}^{p}\nonumber \\
&\stackrel{\eqref{35}}{\le}&c\mathds{1}_{\{p>2\}}c_{\chi}\varepsilon^{2/p}\theta^{-n}\mf{F}(u;B_{\rr})^{p}+c\mathds{1}_{\{p>2\}}\varepsilon^{2/p}\theta^{-n}\varepsilon_{3}^{-\frac{p}{p-1}}\left(\rr^{m}\mint_{B_{\rr}}\snr{f}^{m} \ \dx\right)^{\frac{p}{m(p-1)}}\nonumber \\
&&+c\mathds{1}_{\{p>2\}}c_{\chi}\varepsilon^{2/p}\theta^{-2n}\mf{F}(u;B_{\rr})^{p}\nonumber \\
&\le&c\mathds{1}_{\{p>2\}}c_{\chi}\varepsilon^{2/p}\theta^{-2n}\mf{F}(u;B_{\rr})^{p}+c\mathds{1}_{\{p>2\}}\varepsilon^{2/p}\theta^{-n}\varepsilon_{3}^{-\frac{p}{p-1}}\left(\rr^{m}\mint_{B_{\rr}}\snr{f}^{m} \ \dx\right)^{\frac{p}{m(p-1)}},
\end{eqnarray*}
for $c\equiv c(\textnormal{\texttt{data}})$. Moreover, combining $\eqref{h.2}_{2}$ and \eqref{40} we get
\begin{eqnarray*}
\mbox{(VII)}&=&\kk^{p}\mf{F}(h;B_{\theta\rr})^{p}\le c\kk^{p}\theta^{\alpha p}\le cc_{\chi}\theta^{\alpha p}\mf{F}(u;B_{\rr})^{p}+c\theta^{\alpha p}\varepsilon_{3}^{-\frac{p}{p-1}}\left(\rr^{m}\mint_{B_{\rr}}\snr{f}^{m} \ \dx\right)^{\frac{p}{m(p-1)}},
\end{eqnarray*}
with $c\equiv c(\textnormal{\texttt{data}})$. Finally, by H\"older inequality we have
\begin{eqnarray*}
\mbox{(VIII)}&\stackrel{\eqref{40}}{\le}&c\mathds{1}_{\{p>2\}}\theta^{-n}\kk^{p}\mf{I}_{\ti{p}}(Du_{0}-Dh;B_{\rr})^{p-2}\mf{F}(h;B_{\theta\rr})^{2}\nonumber \\
&\stackrel{\eqref{h.2}_{2},\eqref{39}_{2}}{\le}&c\mathds{1}_{\{p>2\}}\theta^{2\alpha-n}\varepsilon^{\frac{p-2}{p}}\kk^{p}\le c\mathds{1}_{\{p>2\}}c_{\chi}\theta^{2\alpha-n}\varepsilon^{\frac{p-2}{p}}\mf{F}(u;B_{\rr})^{p}\nonumber \\
&&+c\mathds{1}_{\{p>2\}}\theta^{2\alpha-n}\varepsilon^{\frac{p-2}{p}}\varepsilon_{3}^{-\frac{p}{p-1}}\left(\rr^{m}\mint_{B_{\rr}}\snr{f}^{m} \ \dx\right)^{\frac{p}{m(p-1)}},
\end{eqnarray*}
for $\equiv c(\textnormal{\texttt{data}})$. Merging the content of the three above displays, we obtain
\begin{eqnarray}\label{43.1}
\mf{F}(u;B_{\theta\rr})^{p}&\le&c\left(\mathcal{S}_{1}+c_{\chi}\theta^{-2n}\varepsilon^{2/p}+c_{\chi}\theta^{\alpha p}+\mathds{1}_{\{p>2\}}c_{\chi}\theta^{2\alpha-n}\varepsilon^{\frac{p-2}{p}}\right)\mf{F}(u;B_{\rr})^{p}\nonumber \\
&&+c\left(\mathcal{S}_{2}+\theta^{-n}\varepsilon_{3}^{-\frac{p}{p-1}}\right)\mf{K}\left[\left(\rr^{m}\mint_{B_{\rr}}\snr{f}^{m} \ \dx\right)^{1/m}\right]^{\frac{p}{p-1}},
\end{eqnarray}
with $c\equiv c(\textnormal{\texttt{data}})$. In \eqref{43.1} we first select $\varepsilon\in (0,1)$ in such a way that 
\begin{eqnarray}\label{58.1}
cc_{\chi}^{q/p}\varepsilon^{2/p}\left(\theta^{-q(n+p)/p}+\theta^{-2n}\right)+\mathds{1}_{\{p>2\}}cc_{\chi}\theta^{-n}\varepsilon^{(p-2)/p}\le \frac{\theta^{2\alpha}}{2^{40}},
\end{eqnarray}
and reduce further the size of $\varepsilon_{2}$ with respect to \eqref{52} in such a way that
\begin{eqnarray}\label{52.1.1}
2^{8}c_{\chi}\varepsilon_{2}^{p}+\mathds{1}_{\{q>p\}}c_{\chi}^{q/p}\theta^{-nq/p}\varepsilon_{2}^{q-p}\le \frac{\theta^{2\alpha}}{2^{40}}
\end{eqnarray}
so we can rearrange \eqref{43.1} as
\begin{eqnarray*}
\mf{F}(u;B_{\theta\rr})^{p}&\le&\left(cc_{\chi}^{q/p}\theta^{2\alpha}+\frac{\theta^{2\alpha}}{2^{20}}\right)\mf{F}(u;B_{\rr})^{p}\nonumber \\
&&+c\left(\mathcal{S}_{2}+\theta^{-n}\varepsilon_{3}^{-\frac{p}{p-1}}\right)\mf{K}\left[\left(\rr^{m}\mint_{B_{\rr}}\snr{f}^{m} \ \dx\right)^{1/m}\right]^{\frac{p}{p-1}}.
\end{eqnarray*}
We then take any $\gamma_{0}\in (0,2\alpha/p)$ with $\alpha$ as in \eqref{h.2}$_{2}$ and fix $\theta\in (0,2^{-10})$ so that $$cc_{\chi}^{q/p}\theta^{2\alpha-p\gamma_{0}}<\frac{1}{2^{40}}\qquad \mbox{and}\qquad \theta^{\gamma_{0}}\le 2^{-8}$$ which means that it is $\theta\equiv \theta(\textnormal{\texttt{data}},\chi)$ (where we included the dependencies on $\gamma_{0}$ into those on $\alpha$).
Keeping in mind \eqref{52.1}-\eqref{52} and \eqref{58.1}-\eqref{52.1.1}, we see that the previous position fix dependencies $\varepsilon,\varepsilon_{3}\equiv \varepsilon,\varepsilon_{3}(\textnormal{\texttt{data}},\chi)$ and $\varepsilon_{2}\equiv \varepsilon_{2}(\textnormal{\texttt{data}},\omega(\cdot),\chi)$. With these choices, we obtain \eqref{51} and the proof is complete.
\end{proof}

\section{Excess decay}\label{ex}
In this section we shall prove that the excess functional $\mf{F}(\cdot)$ decays by iterating Propositions \ref{p1}, \ref{p2} and \ref{p3} on a suitable subset of $\Omega$.
\subsection{The regular set}\label{rs}
With $u\in W^{1,p}(\Omega,\mathbb{R}^{N})$ being a local minimizer of \eqref{exfun}, we introduce the set 
\begin{flalign}\label{ru.0}
&\mathcal{R}_{u}:=\left\{\frac{}{} x_{0}\in \Omega\colon \exists \ M\equiv M(x_{0})\subset (0,\infty),\ \bar{\rr}\equiv \bar{\rr}(\textnormal{\texttt{data}}_{\textnormal{c}},M^{q-p},f(\cdot))<d_{x_{0}}, \ \bar{\varepsilon}\equiv \bar{\varepsilon}(\textnormal{\texttt{data}}_{\textnormal{c}},M^{q-p})\frac{}{}\right.\nonumber \\
&\qquad \qquad \qquad \qquad\qquad\qquad \left. \frac{}{}\mbox{such that} \ \snr{(Du)_{B_{\rr}(x_{0})}}<M \ \mbox{and} \ \mf{F}(u;B_{\rr}(x_{0}))<\bar{\varepsilon} \ \mbox{for some} \ \rr\in (0,\bar{\rr}]\frac{}{}\right\}
\end{flalign}
and prove that it is open and of full $n$-dimensional Lebesgue measure. The above position is quite involved and at a first sight all the parameters involved seem to feature mutual dependencies, so dangerous vicious circles may occur or the definition itself might be trivial. Let us show that \eqref{ru.0} makes sense. The idea consists in taking any point $x_{0}$ belonging to the interior $\Omega$, fixing a positive constant $M\equiv M(x_{0})$ depending at most from $x_{0}$ and $\bar{\varepsilon}$, $\bar{\rr}$ with (at most) the dependencies described in \eqref{ru.0}. Once identified $M$, $\bar{\varepsilon}$ and $\bar{\rr}$, we check if the two conditions appearing in \eqref{ru.0} are verified on some ball centered at $x_{0}$ with radius less than or equal to $\bar{\rr}$. This closes the ambiguity related to possible vicious circles, but still $\mathcal{R}_{u}$ may be empty. To show that the latter cannot be the case, let us introduce sets
\begin{flalign*}
&\mathcal{L}_{u}:=\left\{x_{0}\in \Omega\colon \lim_{\sigma\to 0}(Du)_{B_{\sigma}(x_{0})}=Du(x_{0})\right\},\qquad \ 
\mathcal{S}_{u}:=\left\{x_{0}\in \Omega\colon \limsup_{\sigma\to 0}\snr{(Du)_{B_{\sigma}(x_{0})}}<\infty\right\}\\
&\mathcal{W}_{u}:=\left\{x_{0}\in \Omega\colon \lim_{\sigma\to 0}\ti{\mf{F}}(u;B_{\sigma}(x_{0}))=0\right\},\qquad \qquad \ \mathcal{V}_{u}:=\left\{x_{0}\in \Omega\colon \liminf_{\sigma\to 0}\ti{\mf{F}}(u;B_{\sigma}(x_{0}))=0\right\}
\end{flalign*}
and note that by definition and standard Lebesgue theory it is \begin{flalign}\label{ru.1}
 \mathcal{L}_{u}\subset \mathcal{S}_{u},\qquad \mathcal{W}_{u}\subset \mathcal{V}_{u},\qquad \snr{\Omega\setminus (\mathcal{S}_{u}\cap \mathcal{V}_{u})}=0.
\end{flalign}
Now, if $x_{0}\in (\mathcal{S}_{u}\cap\mathcal{V}_{u})$ is any point, the quantity $M:=1+2\limsup_{\sigma\to 0}\snr{(Du)_{B_{\sigma}(x_{0})}}$ is finite and depends only on $x_{0}$, so we can determine the corresponding $\bar{\varepsilon}$ and the threshold radius $\bar{\rr}<d_{x_{0}}$. Moreover, by \eqref{equiv} it is $\mf{F}(\cdot)^{p/2}\approx\ti{\mf{F}}(\cdot)$, thus we can find a radius $\rr\equiv \rr(\textnormal{\texttt{data}}_{\textnormal{c}},M^{q-p},f(\cdot))\in (0,\bar{\rr}]$ such that both $\snr{(Du)_{B_{\rr}(x_{0})}}<M$ and $\mf{F}(u;B_{\rr}(x_{0}))<\bar{\varepsilon}$ hold true. Therefore we can conclude that $\left(\mathcal{S}_{u}\cap \mathcal{V}_{u}\right)\subset \mathcal{R}_{u}$, so by $\eqref{ru.1}_{3}$ it is $\snr{\Omega\setminus \mathcal{R}_{u}}=0$, and the definition in \eqref{ru.0} is nontrivial. Moreover $\mathcal{R}_{u}$ is an open set. To see this, let $x_{0}\in \mathcal{R}_{u}$, then by definition there exists $\rr\in (0,\bar{\rr}]$ such that $\snr{(Du)_{B_{\rr}(x_{0})}}<M$ and $\mf{F}(u;B_{\rr}(x_{0}))<\bar{\varepsilon}$. The absolute continuity of Lebesgue integral yields that there exists an open neighborhood $B(x_{0})$ of $x_{0}$ and a radius $\rr_{x_{0}}\in (0,\bar{\rr}]$ such that $B(x_{0})\Subset \Omega$ and 
\begin{flalign}\label{70}
\snr{(Du)_{B_{\rr_{x_{0}}}(x)}}<M\quad \mbox{and}\quad \mf{F}(u;B_{\rr_{x_{0}}}(x))<\bar{\varepsilon}\qquad \mbox{for all} \ \ x\in B(x_{0}).
\end{flalign}
This proves that $\mathcal{R}_{u}$ is open. In the next sections we shall prove that under suitable assumptions on the potential and for certain values of $\bar{\varepsilon}$ and of $\bar{\rr}$, on the set $\mathcal{R}_{u}$ the gradient of minima of \eqref{exfun} is continuous.
\begin{remark}\label{rx}
\emph{With $x_{0}\in \mathcal{R}_{u}$, all the radii considered from now on will be implicitly assumed to be less than $d_{x_{0}}$, therefore all balls centered at $x_{0}$ will be (compactly) contained in $B_{d_{x_{0}}}(x_{0})$.}
\end{remark}
\subsection{Iteration of the excess functional}\label{itex}
Let $x_{0}\in \mathcal{R}_{u}$ be a point verifying the conditions displayed in \eqref{ru.0} for some positive constant $M\equiv M(x_{0})$ and parameters $\bar{\varepsilon},\bar{\rr}$ still to be fixed, set $\nu:=1/4$, if $\beta_{0},\gamma_{0}\in (0,1)$ are the exponents appearing in \eqref{30}-\eqref{51} respectively, define $\alpha_{0}:=\min\{\beta_{0},\gamma_{0}\}$ and choose $\chi=\varepsilon_{0}$ in Proposition \ref{p3} thus fixing dependencies $\tau,\varepsilon_{1}\equiv \tau,\varepsilon_{1}(\textnormal{\texttt{data}},M^{q-p})$, $\varepsilon_{0},\theta\equiv \varepsilon_{0},\theta(\textnormal{\texttt{data}},\mu(\cdot),M^{q-p})$ and $\varepsilon_{2}\equiv \varepsilon_{2}(\textnormal{\texttt{data}}_{\textnormal{c}},M^{q-p})$. We then define constants
\begin{flalign*}
c_{2}:=4(c_{0}+c_{1}),\qquad c_{3}:=c_{2}\max\left\{\frac{1}{1-\nu^{\alpha_{0}}},\frac{1}{1-\tau^{\alpha_{0}}},\frac{1}{1-\theta^{\alpha_{0}}}\right\},
\end{flalign*}
where $c_{0},c_{1}$ are the same appearing in Propositions \ref{p2}-\ref{p3},
\begin{flalign}\label{hhh}
H_{1}:=\frac{2^{24p}}{(\theta\tau)^{np}\varepsilon_{0}^{p}},\qquad  H_{2}:=\max\left\{2^{16p}c_{2}^{p/2}H_{1},\left(\frac{2^{6p+8}H_{1}}{(\theta\tau)^{n/2}\varepsilon_{1}}\right)^{\frac{p}{2(p-1)}},\left(\frac{2^{80}}{\tau\theta}\right)^{np^{2}q/2}c_{3}^{p/2}H_{1}\right\}
\end{flalign}
and set
\begin{eqnarray}\label{be}
\hat{\varepsilon}:=\varepsilon_{2}\min\left\{\frac{(\tau\theta)^{16npq}}{2^{80npq}},\left(\frac{(\tau\theta)^{8npq}\varepsilon_{0}^{p}}{2^{16npq}}\right)^{2/p},\frac{(\theta\tau)^{16npq}\varepsilon_{0}^{npq}}{2^{180npq}}\right\}.
\end{eqnarray}
We assume that
\begin{eqnarray}\label{fpm.2}
\mathbf{I}_{1,m}^{f}(x_{0},1)<\infty,
\end{eqnarray}
so by the absolute continuity of Lebesgue integral we can find $\hat{\rr}\equiv\hat{\rr}(\textnormal{\texttt{data}}_{\textnormal{c}},M^{q-p},f(\cdot))\in (0,d_{x_{0}})$ such that
\begin{eqnarray}\label{fpm.1}
\frac{2^{64np(q+2)}c_{3}H_{2}^{1+2/p}}{(\varepsilon_{0}\tau\theta)^{4np(q+2)}}\mf{K}\left(\mathbf{I}^{f}_{1,m}(x_{0},s)\right)^{\frac{1}{p-1}}<\frac{\hat{\varepsilon}}{10}\qquad \mbox{for all} \ \ 0<s\le \hat{\rr}.
\end{eqnarray}
Now, if $\delta\in \{\nu,\tau,\theta\}$ and $0<s\le \hat{\rr}$, the very definition of potential $\mathbf{I}^{f}_{1,m}(\cdot)$ yields
\begin{eqnarray*}
\sum_{j=0}^{\infty}\left((\delta^{j+1}s)^{m}\mint_{B_{\delta^{j+1}s}(x_{0})}\snr{f}^{m} \dx\right)^{1/m}&\le&\frac{1}{\delta^{n/m}\log(\delta^{-1})}\sum_{j=0}^{\infty}\int_{\delta^{j+1}s}^{\delta^{j}s}\left(\sigma^{m}\mint_{B_{\sigma}(x_{0})}\snr{f}^{m} \dx\right)^{1/m} \ \frac{\d\sigma}{\sigma}\nonumber \\
&\le&\frac{\mathbf{I}^{f}_{1,m}(x_{0},s)}{\delta^{2n}}\le \frac{2^{4n}\mathbf{I}^{f}_{1,m}(x_{0},s)}{(\tau\theta)^{2n}},
\end{eqnarray*}
therefore
\begin{flalign}\label{d.13}
\max_{\delta\in \{\nu,\tau, \theta\}}\left\{ \sum_{j=0}^{\infty}\left((\delta^{j+1}s)^{m}\mint_{B_{\delta^{j+1}s}(x_{0})}\snr{f}^{m} \dx\right)^{1/m} \right\}\le \frac{2^{4n}\mathbf{I}^{f}_{1,m}(x_{0},s)}{(\tau\theta)^{2n}}.
\end{flalign}
Combining \eqref{d.13} with standard interpolation arguments and recalling that $\nu>\max\{\tau,\theta\}$, we have
\begin{eqnarray}\label{d.4}
\left(\sigma^{m}\mint_{B_{\sigma}(x_{0})}\snr{f}^{m} \dx\right)^{1/m}\le \frac{2^{8n}\mathbf{I}^{f}_{1,m}(x_{0},s)}{(\tau\theta)^{4n}}\qquad \mbox{for all} \ \ 0<\sigma\le s/4 .
\end{eqnarray}
We then deduce that
\begin{flalign}
\sup_{\sigma\le s/4}\mf{K}\left[\left(\sigma^{m}\mint_{B_{\sigma}(x_{0})}\snr{f}^{m} \dx\right)^{1/m}\right]^{1/(p-1)}\stackrel{\eqref{d.4}}{\le} \frac{2^{\frac{8nq}{p-1}}\mf{K}\left(\mathbf{I}^{f}_{1,m}(x_{0},s)\right)^{1/(p-1)}}{(\tau\theta)^{\frac{4nq}{p-1}}}
\stackrel{\eqref{fpm.1}}{<}\frac{\hat{\varepsilon}(\varepsilon_{0}\tau\theta)^{8np}}{2^{56np(q+2)}c_{3}H_{2}^{1+2/p}},\label{d.3}
\end{flalign}
for all $0<s\le \hat{\rr}$ with $\mf{K}(\cdot)$ defined in \eqref{ik}, and that
\begin{eqnarray}\label{d.3.1.1}
\lim_{\sigma\to 0}\mf{K}\left[\left(\sigma^{m}\mint_{B_{\sigma}(x_{0})}\snr{f}^{m} \dx\right)^{1/m}\right]^{1/(p-1)}=0.
\end{eqnarray}
 Finally in \eqref{ru.0} we fix $\bar{\varepsilon}=\hat{\varepsilon}$ and $\bar{\rr}=\hat{\rr}$, thus determining a ball $B_{\rr}(x_{0})\Subset \Omega$ with $\rr\in (0,\hat{\rr}]$ such that 
\begin{eqnarray}\label{55}
\snr{(Du)_{B_{\rr}(x_{0})}}< M\qquad\mbox{and}\qquad \mf{F}(u;B_{\rr}(x_{0}))<\hat{\varepsilon}.
\end{eqnarray}
Once the matter of determining the dependencies of the constants and fixing the threshold radii has been settled, we are ready to state
\begin{theorem}\label{t.ex}
Under assumptions \eqref{assf}-\eqref{p0}, \eqref{f} and \eqref{fpm.2}, let $u\in W^{1,p}(\Omega,\mathbb{R}^{N})$ be a local minimizer of \eqref{exfun}, $x_{0}\in \mathcal{R}_{u}$ be a point and $M\equiv M(x_{0})$ be the positive constant in \eqref{ru.0}. There exist $\hat{\varepsilon}\equiv \hat{\varepsilon}(\textnormal{\texttt{data}}_{\textnormal{c}},M^{q-p})\in (0,1)$ and $\hat{\rr}\equiv \hat{\rr}(\textnormal{\texttt{data}}_{\textnormal{c}},M^{q-p},f(\cdot))<d_{x_{0}}$ such that if $\bar{\varepsilon}\equiv \hat{\varepsilon}$ and $\bar{\rr}\equiv \hat{\rr}$ in \eqref{ru.0}, then
whenever $B_{\varsigma}(x_{0})\subset B_{\rr}(x_{0})$ is any ball, it is
\begin{eqnarray}\label{68.1}
\snr{(Du)_{B_{\varsigma}(x_{0})}}<2^{6}(M+1)
\end{eqnarray}
and
\begin{flalign}\label{68}
\mf{F}(u;B_{\varsigma}(x_{0}))\le c_{4}\left(\frac{\varsigma}{\rr}\right)^{\alpha_{0}}\mf{F}(u;B_{\rr}(x_{0}))+c_{5}\sup_{\sigma\le\rr/4  }\mf{K}\left[\left(\sigma^{m}\mint_{B_{\sigma}(x_{0})}\snr{f}^{m} \dx\right)^{1/m}\right]^{1/(p-1)},
\end{flalign}
with $c_{4},c_{5}\equiv{c}_{4},c_{5}(\textnormal{\texttt{data}}_{\textnormal{c}},M^{q-p})$.
\end{theorem}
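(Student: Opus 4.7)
The plan is to iterate Propositions \ref{p1}, \ref{p2}, \ref{p3} along a dyadic sequence of radii $\varsigma_k := \delta^k \rr$, choosing the ratio $\delta \in \{\nu,\tau,\theta\}$ adaptively at each scale according to which regime is in force. The constants $c_2, c_3, H_1, H_2, \hat{\varepsilon}, \hat{\rr}$ introduced in \eqref{hhh}-\eqref{fpm.1} are engineered precisely so that the geometric decays from \eqref{30}, \eqref{31}, \eqref{51} combine with the summability \eqref{d.13} of the nonlinear potential to keep all gradient averages below $2^6(M+1)$ while running the iteration, thereby preserving the structural constants in the three propositions (which depend on $M^{q-p}$).

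First I would set up the ``blocks and chains'' scheme announced in the introduction. Abbreviating
\begin{equation*}
\mf{E}(\varsigma) := \mf{F}(u;B_{\varsigma}(x_0)), \qquad \mf{P}(\varsigma) := \mf{K}\left[\left(\varsigma^m \mint_{B_{\varsigma}(x_0)}\snr{f}^m \dx\right)^{1/m}\right]^{1/(p-1)},
\end{equation*}
I would split the admissible scales into two classes of blocks according to the dichotomy $\mf{E}(\varsigma) \le H_2 \mf{P}(\varsigma)$ versus $\mf{E}(\varsigma) > H_2 \mf{P}(\varsigma)$. On a ``small-excess'' block the bound \eqref{68} is almost tautological via \eqref{d.3} and the definition of $H_2$. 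On a ``large-excess'' block I would run an exit-time argument of the type in \cite{dumi}, showing that inside the block the nondegenerate regime \eqref{5}-\eqref{6} is stable: by the quantitative choice of $H_1, H_2, \hat{\varepsilon}$, the threshold $H_2 \mf{P}(\varsigma)$ absorbs the additive perturbation produced by Proposition \ref{p2} in \eqref{31}, so Proposition \ref{p1} applies at every scale and yields $\mf{E}(\tau\varsigma) \le \tau^{\beta_0}\mf{E}(\varsigma)$. In the parallel degenerate regime, where \eqref{5} fails, Proposition \ref{p3} takes the role of Proposition \ref{p1} and an analogous argument produces $\mf{E}(\theta\varsigma) \le \theta^{\gamma_0}\mf{E}(\varsigma) + c_1 \mf{P}(\varsigma)$.

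In parallel with the excess decay I would run an induction on $\snr{(Du)_{B_{\varsigma_k}(x_0)}}$. At each passage from $\varsigma_k$ to $\varsigma_{k+1}$, inequality \eqref{tri.1.1} adds a term of order $\mf{E}(\varsigma_k)^{p/2}$ to the gradient average. Summing these contributions, the series $\sum_k \mf{E}(\varsigma_k)^{p/2}$ decomposes along the blocks into a geometric tail bounded by a multiple of $\hat{\varepsilon}^{p/2}$ and a residual piece bounded by $\sum_k \mf{P}(\varsigma_k)^{p/2}$; both are then dominated, via \eqref{d.13} and \eqref{fpm.1}, by a multiple of $\mf{K}(\mathbf{I}^{f}_{1,m}(x_0,\rr))^{p/(2(p-1))}$, whose smallness guarantees that the accumulated drift keeps the gradient average below $2^6(M+1)$, closing the induction and establishing \eqref{68.1}. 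To pass from the dyadic radii $\varsigma_k$ to a general $\varsigma\in(0,\rr]$ one invokes \eqref{ls.42.1} and \eqref{tri.1.2}, which interpolate both the excess and the average at the cost of universal constants that are absorbed into $c_4, c_5$.

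The main obstacle, stressed by the author in the introduction, is exactly the failure of the standard stability of the nondegenerate regime when $f\not\equiv 0$: without the block structure, the alternation between Proposition \ref{p1}'s decay, the $f$-driven bound \eqref{31} and Proposition \ref{p3}'s degenerate decay could repeat indefinitely and no clean excess decay would result. The delicate step is therefore to verify that an ``exit'' from a large-excess block cannot occur as long as $\mf{E}(\varsigma) > H_2\mf{P}(\varsigma)$, and that gluing the local estimates \eqref{30}, \eqref{31}, \eqref{51} across block boundaries preserves the final bound in the form \eqref{68}. This is what dictates the specific interlocking of constants in \eqref{hhh}-\eqref{be} and the smallness threshold \eqref{fpm.1} on the potential.
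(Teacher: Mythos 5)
Your high-level skeleton is right -- a dichotomy of scales into blocks, exit-time arguments for the degenerate/nondegenerate transition, and a drift estimate controlling the gradient average -- but your dichotomy is set up on the wrong quantity, and this breaks the central stability argument. You propose to split scales according to $\mf{E}(\varsigma)\lessgtr H_2\mf{P}(\varsigma)$, i.e., comparing the \emph{excess functional alone} to a \emph{scale-dependent} potential quantity. The paper instead uses the composite excess functional $\mf{C}(\varsigma):=A(\varsigma)^{p/2}+H_1\mf{F}(\varsigma)^{p/2}$ (gradient average \emph{plus} weighted excess) compared against the \emph{scale-independent} threshold $\tfrac{H_2}{\varepsilon_0^{p/2}}\mf{K}_{\mf{s}}^{p/(2(p-1))}$, where $\mf{K}_{\mf{s}}=\sup_{\sigma\le\rr/4}\mf{K}(\mf{S}(\sigma))$. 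These two changes are not cosmetic. First, the fixed threshold $\mf{K}_{\mf{s}}$ is what makes the maximal iteration chains of Step 4 produce a genuine partition of $(0,r_1]$ into disjoint intervals $\texttt{I}_0,\texttt{I}_d^1,\texttt{K}_d,\texttt{I}_d^2$; with a scale-dependent $\mf{P}(\varsigma)$ the indices moving in and out of $\mathcal{J}_0$ would not assemble into the clean block structure $\{\texttt{B}_d\}$. Second, and more seriously, the dichotomy must involve the gradient average: the heart of the argument is the contradiction step \eqref{lms.1} in Step 5.5 showing that the nondegenerate regime cannot be exited inside $\texttt{I}_0$ or $\texttt{I}_d^2$. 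That argument starts from the lower bound $\mf{C}(r_2')\gtrsim \mf{K}_{\mf{s}}^{p/(2(p-1))}$ (which gives a lower bound on $H_1^{2/p}\mf{F}(r_2')+A(r_2')$) and derives $\mf{F}(r_2')<\varepsilon_0 A(r_2')$, contradicting the exit condition. If you only know $\mf{F}(r_2')>H_2\mf{P}(r_2')$, the average $A(r_2')$ could be arbitrarily small, and the contradiction cannot be produced: a transition from the nondegenerate to the degenerate regime would remain possible inside a ``large-excess'' block. The constant $H_1$ in the definition of $\mf{C}$ is chosen precisely to make this argument and the inductive Lemma \ref{til} close.

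Related to this, your description of the gradient-average control as a separate summation running in parallel to the excess decay understates the difficulty: in the paper the two are inextricably coupled through Lemma \ref{til}, which simultaneously propagates $\mf{C}(\tau_j r)\le\gamma$ and the summability $\sum_j\mf{F}(\tau_j r)^{p/2}\lesssim\gamma/H_1$. One cannot first prove the excess decay and then read off the average bound, because the structural constants in Propositions \ref{p1}--\ref{p3} depend on the average through $M^{q-p}$, so one must know the average stays bounded \emph{at each step} to be allowed to apply the decay estimates at the next step. Finally, you flag as ``delicate'' precisely the stability claim, but offer no mechanism for it; the paper's mechanism (the contradiction computation preceding \eqref{ls.175}$_2$, plus the separation of the two exit times $j_1,j_2$ between the degenerate and nondegenerate regimes, and the careful matching of the two different scales $\tau$ and $\theta$) is nontrivial and constitutes the technical core of the theorem. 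Without replicating it, the proof does not close.
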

\begin{proof}
The proof of Theorem \ref{t.ex} is quite involved and will be splitted into several steps eventually leading to the verification of \eqref{68.1}-\eqref{68}. 
\subsection*{Step 1: extra notation}
For $j\in \N\cup\{0\}$, $\tau,\theta$ as in Proposition \ref{p1}-\ref{p3} and $\nu$ as in Section \ref{itex}, we define $\tau_{j}:=\tau^{j}$, $\theta_{j}:=\theta^{j}$, $\nu_{j}:=\nu^{j}$ with $\tau_{0}\equiv\theta_{0}\equiv\nu_{0}=1$, $r_{1}:=\nu_{1}\rr$ and, for $s> 0$ we introduce the following symbols:
\begin{flalign*}
&\mf{F}(s):=\mf{F}(u;B_{s}(x_{0})),\qquad \qquad\quad \ \  A(s):=\snr{(Du)_{B_{s}(x_{0})}}, \nonumber \\
&\mf{C}(s):=A(s)^{p/2}+H_{1}\mf{F}(s)^{p/2},\qquad \mf{S}(s):=\left(s^{m}\mint_{B_{s}(x_{0})}\snr{f}^{m} \ \dx\right)^{1/m}.
\end{flalign*}
We shall refer to $\mf{C}(\cdot)$ as the composite excess functional. According to a by now standard terminology \cite{dumi}, given any ball $B\Subset \Omega$, we shall refer to the validity of \eqref{5} on $B$ as the "nondegenerate regime", while the occurrence of the complementary condition \eqref{32}$_{1}$ means that the problem is in a "degenerate regime".
\subsection*{Step 2: iterations at a fixed scale}
We estimate
\begin{flalign}
\left\{
\begin{array}{c}
\displaystyle
\ \mf{F}(r_{1})\stackrel{\eqref{tri.1}}{\le}2^{3+2n/p}\mf{F}(\rr)\stackrel{\eqref{55}_{2}}{<}2^{3+2n/p}\hat{\varepsilon}\stackrel{\eqref{be}}{<}2^{-6nq}(\tau\theta)^{4npq}\varepsilon_{2}<\varepsilon_{2}\\[8pt]\displaystyle 
\ A(r_{1})\stackrel{\eqref{55}_{1}}{<}M+\snr{A(r_{1})-A(\rr)}\le M+2^{2n/p}\mf{F}(\rr)\stackrel{\eqref{55}_{2}}{<}M+2^{2n/p}\hat{\varepsilon}\stackrel{\eqref{be}}{\le} M+\varepsilon_{2}.
\end{array}
\right.\label{a.10}
\end{flalign}
This is enough to start iterations.
\subsection*{Step 3: a technical inductive lemma} Let us record an inductive lemma that will ultimately allow transferring to successive scales the boundedness of averages while preserving the nonlinear potential $\mathbf{I}_{1,m}^{f}(\cdot)$.
\begin{lemma}\label{til}
Let $M\ge 0$ be the constant in \eqref{ru.0}, $\gamma\in \left(0,2^{5p}(M+1)^{p/2}\right)$ any number, $\tau\in (0,2^{-10})$ the small parameter from Propositions \ref{p1}-\ref{p2}, $r\in (0,r_{1}]$ a radius and assume that for integers $k\ge i\ge 0$ the following inequalities:
\begin{eqnarray}\label{ls.130}
\varepsilon_{0}A(\tau_{j}r)>\mf{F}(\tau_{j}r)\qquad \mbox{for all} \ \ j\in \{i,\cdots,k\},
\end{eqnarray}
and
\begin{eqnarray}\label{ls.131}
\left\{
\begin{array}{c}
\displaystyle 
\ \mf{C}(\tau_{j}r)\le \gamma,\quad \mf{C}(\tau_{j+1}r)\ge \frac{\gamma}{16} \ \ \mbox{for all} \ \ j\in \{i,\cdots,k\},\quad \mf{C}(\tau_{i}r)\le \frac{\gamma}{4}\\[8pt]\displaystyle
\ \left(\sum_{j=i}^{k}\mf{S}(\tau_{j}r)\right)^{\frac{p}{2(p-1)}}\le \frac{2\gamma}{H_{2}}
\end{array}
\right.
\end{eqnarray}
hold with $H_{2}$ described in \eqref{hhh}. Then
\begin{flalign}\label{ls.132}
 \mf{C}(\tau_{k+1}r)\le \gamma\qquad \mbox{and}\qquad \sum_{j=i}^{k+1}\mf{F}(\tau_{j}r)^{p/2}\le \frac{\gamma}{2H_{1}}
\end{flalign}
and
\begin{eqnarray}\label{ls.133}
\sum_{j=i}^{k+1}\mf{F}(\tau_{j}r)^{p/2}\le 2\mf{F}(\tau_{i}r)^{p/2}+\frac{2^{7p}\gamma^{(2-p)/p}}{\varepsilon_{1}\tau^{n/2}}\sum_{j=i}^{k}\mf{S}(\tau_{j}r),
\end{eqnarray}
with $H_{1}$, $\varepsilon_{1}$ as in \eqref{hhh} and Propositions \ref{p1}-\ref{p2} respectively.
\end{lemma}
\begin{proof}
The proof is a variation of the one of \cite[Lemma 6.1]{kumi}. There are only three different points: first, we need to enforce the original statement with assumption \eqref{ls.130} to guarantee the stability of the nondegenerate regime over the whole set of indices $\{i,\cdots,k\}$; second, we have to restrict the range of admissible values for $\gamma$ in order to apply Propositions \ref{p1}-\ref{p2}. Keeping in mind these two details, the proof is the same.
\end{proof}

\subsection*{Step 4: maximal iteration chains} Given any nonempty set of indices $\mathcal{J}_{0}\subset \N\cup \{0\}$, following \cite[Section 12.4]{kumig}, for $\kk\in \N$ we introduce the maximal iteration chain of length $\kk$ starting at $\iota$:
\begin{eqnarray*}
\mathcal{C}^{\kk}_{\iota}:=\left\{j\in \N\cup\{0\} \colon \iota \le j\le \iota+\kk,\  \iota \in \mathcal{J}_{0}, \ \iota+\kk+1\in \mathcal{J}_{0}, \ j\not \in \mathcal{J}_{0} \ \mbox{if} \ j>\iota\right\}.
\end{eqnarray*}
In other words, $\mathcal{C}^{\kk}_{\iota}=\{\iota,\iota+1,\cdots,\iota+\kk\}$ and all its elements lie outside $\mathcal{J}_{0}$ but $\iota$, which belongs to $\mathcal{J}_{0}$. Furthermore, $\mathcal{C}^{\kk}_{\iota}$ is maximal, in the sense that there are no other sets of the same type properly containing it. In the same way, we define the infinite maximal chain starting at $\iota$ as
\begin{eqnarray*}
\mathcal{C}^{\infty}_{\iota}:=\left\{j\in \N\cup\{0\} \colon \iota \le j<\infty ,\  \iota \in \mathcal{J}_{0},  \ j\not \in \mathcal{J}_{0} \ \mbox{if} \ j>\iota\right\}.
\end{eqnarray*}
We then consider two alternatives:
\begin{eqnarray}\label{ls.100}
\mf{C}(r_{1})> \frac{H_{2}}{\varepsilon_{0}^{p/2}}\mf{K}_{\mf{s}}^{\frac{p}{2(p-1)}}\qquad \mbox{or}\qquad \mf{C}(r_{1})\le \frac{H_{2}}{\varepsilon_{0}^{p/2}}\mf{K}_{\mf{s}}^{\frac{p}{2(p-1)}},
\end{eqnarray}
where we set $\mf{K}_{\mf{s}}:=\sup_{\sigma\le \rr/4}\mf{K}(\mf{S}(\sigma))$, with $\mf{K}(\cdot)$ and $r_{1}$ defined in \eqref{ik} and in \textbf{Step 1} respectively, $H_{2}$ is the constant in \eqref{hhh} and $\varepsilon_{0}$ is the same parameter appearing in \eqref{5}.
\subsection*{Step 5: starting from large composite excess - \eqref{ls.100}$_{1}$ holds} With $\nu\in (0,1)$ as in \textbf{Step 1}, we characterize the set $\mathcal{J}_{0}$ as
\begin{eqnarray*}
\mathcal{J}_{0}:=\left\{j\in \N\cup\{0\}\colon \mf{C}(\nu_{j}r_{1})> \frac{H_{2}}{\varepsilon_{0}^{p/2}}\mf{K}_{\mf{s}}^{\frac{p}{2(p-1)}}\right\},
\end{eqnarray*}
which is nonempty because by $\eqref{ls.100}_{1}$ we have that $0\in \mathcal{J}_{0}$. Looking at the iteration chains introduced in \textbf{Step 4}, we have two possibilities:
\begin{itemize}
    \item[\emph{i.}] there is at least one maximal iteration chain $\mathcal{C}^{\kk}_{\iota}$ for some $\iota\in \mathcal{J}_{0}$, $\kk\le \infty$;
    \item[\emph{ii.}] $\mathcal{J}_{0}=\mathbb{N}\cup\{0\}$.
\end{itemize}
We start with (\emph{i}.) and consider the worst possible scenario: there are infinitely many finite iteration chains $\mathcal{C}^{\kk_{d}}_{\iota_{d}}$ corresponding to discrete sequences $\{\iota_{d}\}_{d\in \N}, \{\kk_{d}\}_{d\in \N}\subset \N$. Notice that by maximality it is
\begin{flalign}\label{ls.104}
  \iota_{d+1}\ge \iota_{d}+\kk_{d}+1 \ \mbox{for all} \ d\in \N  \ \Longrightarrow \ \{\iota_{d}\}_{d\in \N}  \ \mbox{is strictly increasing and} \ \iota_{d}\to \infty.
\end{flalign}
For $d\in \N$, set
\begin{flalign*}
&\texttt{I}_{0}:=(\nu_{\iota_{1}}r_{1},r_{1}],\qquad\qquad \  \texttt{K}_{d}:=(\nu_{\iota_{d}+\kk_{d}+1}r_{1},\nu_{\iota_{d}+1}r_{1}]\nonumber \\ &\texttt{I}_{d}^{1}:=(\nu_{\iota_{d}+1}r_{1},\nu_{\iota_{d}}r_{1}],\qquad \texttt{I}_{d}^{2}:=(\nu_{\iota_{d+1}}r_{1},\nu_{\iota_{d}+\kk_{d}+1}r_{1}].
\end{flalign*}
The above positions make sense in the light of \eqref{ls.104}. Moreover, by \eqref{ls.104} we can split interval $(0,r_{1}]$ into the disjoint union
\eqn{ls.150}
$$
(0,r_{1}]=\left(\texttt{I}_{0}\cup \texttt{I}^{1}_{1}\cup \texttt{K}_{1}\right)\cup\left(\bigcup_{d\in \N}\texttt{I}^{2}_{d}\cup \texttt{I}^{1}_{d+1}\cup\texttt{K}_{d+1}\right)=:\texttt{B}_{0}\cup\left( \bigcup_{d\in \N}\texttt{B}_{d}\right),
$$
where we introduced the blocks $\texttt{B}_{0}:=\texttt{I}_{0}\cup \texttt{I}^{1}_{1}\cup\texttt{K}_{1}$ and $\texttt{B}_{d}:=\texttt{I}^{2}_{d}\cup \texttt{I}^{1}_{d+1}\cup\texttt{K}_{d+1}$.
Precisely $\texttt{I}_{0}$, $\texttt{K}_{d}$, $\texttt{I}_{d}^{1}$, $\texttt{I}_{d}^{2}$ are mutually disjoint thus $\{\texttt{B}_{d}\}_{d\in \N\cup\{0\}}$ are disjoint as well and only $\texttt{I}_{d}^{2}$ can be empty. Before proceeding further, let us remark that given any $\varsigma\in (0,r_{1}]$ for which there is $j\in \N\cup \{0\}$ so that $\nu_{j+1}r_{1}<\varsigma\le \nu_{j}r_{1}$, by \eqref{ls.42.1}-\eqref{tri.1.2} it holds that
\begin{eqnarray}\label{ls.134}
\frac{1}{2^{3p+n+2}}\mf{C}(\nu_{j+1}r_{1})\le \mf{C}(\varsigma)\le 2^{3p+n+2}\mf{C}(\nu_{j}r_{1}).
\end{eqnarray}
By definition of maximal iteration chain, we have 
\begin{flalign}\label{ls.101}
\mf{C}(\nu_{\iota_{d}}r_{1})>\frac{H_{2}}{\varepsilon_{0}^{p/2}}\mf{K}_{\mf{s}}^{\frac{p}{2(p-1)}}\quad \mbox{and}\quad \mf{C}(\nu_{j}r_{1})\le \frac{H_{2}}{\varepsilon_{0}^{p/2}}\mf{K}_{\mf{s}}^{\frac{p}{2(p-1)}} \ \ \mbox{for all} \ \ j\in \{\iota_{d}+1,\cdots,\iota_{d}+\kk_{d}\},
\end{flalign}
so whenever $\varsigma\in \texttt{K}_{d}$ we can find $j_{\varsigma}\in \{\iota_{d}+1,\cdots,\iota_{d}+\kk_{d}\}$ satisfying $\nu_{j_{\varsigma}+1}r_{1}<\varsigma\le \nu_{j_{\varsigma}}r_{1}$ and
\begin{eqnarray}\label{ls.102}
\mf{C}(\varsigma)\stackrel{\eqref{ls.134}}{\le}2^{3p+n+2}\mf{C}(\nu_{j_{\varsigma}}r_{1})\stackrel{\eqref{ls.101}_{2}}{\le}\frac{2^{3p+n+2}H_{2}}{\varepsilon_{0}^{p/2}}\mf{K}_{\mf{s}}^{\frac{p}{2(p-1)}},
\end{eqnarray}
while if $\varsigma\in \texttt{I}_{0}$ or $\varsigma\in \texttt{I}^{2}_{d}$ then there is $j_{\varsigma}\in \{0,\cdots,\iota_{1}-1\}$ or $j_{\varsigma}\in \{\iota_{d}+\kk_{d}+1,\cdots,\iota_{d+1}-1\}$ so that $\nu_{j_{\varsigma}+1}r_{1}<\varsigma\le \nu_{j_{\varsigma}}r_{1}$ and
\begin{eqnarray}\label{ls.102.1}
\mf{C}(\varsigma)\stackrel{\eqref{ls.134}}{\ge} \frac{1}{2^{3p+n+2}}\mf{C}(\nu_{j_{\varsigma}+1}r_{1})\ge \frac{H_{2}}{2^{3p+n+2}\varepsilon_{0}^{p/2}}\mf{K}_{\mf{s}}^{\frac{p}{2(p-1)}},
\end{eqnarray}
where we used the definition of maximal iteration chain to assure that
\begin{eqnarray}\label{ls.102.1.1}
\{0,\cdots,\iota_{1}\}, \ \{\iota_{d}+\kk_{d}+1,\cdots, \iota_{d+1}\}\subset \mathcal{J}_{0}.
\end{eqnarray}
Notice that the occurrence $\texttt{I}_{d}^{2}=\{\emptyset\}$ generates two consecutive chains. This is possible only if $\iota_{d+1}=\iota_{d}+\kk_{d}+1$, which means that \eqref{ls.101}$_{2}$ is verified for all $j\in \{\iota_{d}+1,\cdots,\iota_{d+1}-1,\iota_{d+1},\iota_{d+1}+1,\cdots\iota_{d+1}+\kk_{d+1}\}\setminus \{\iota_{d+1}\}$ and
\begin{flalign}
\frac{2^{3p+n+2}H_{2}}{\varepsilon_{0}^{p/2}}\mf{K}_{\mf{s}}^{\frac{p}{2(p-1)}}\stackrel{\eqref{ls.101}}{\ge}2^{3p+n+2}\mf{C}(\nu_{\iota_{d}+\kk_{d}}r_{1})\stackrel{\eqref{ls.134}}{\ge}\mf{C}(\nu_{\iota_{d+1}}r_{1})>\frac{H_{2}}{\varepsilon_{0}^{p/2}}\mf{K}_{\mf{s}}^{\frac{p}{2(p-1)}},\label{ls.106.1}
\end{flalign}
therefore for $\varsigma\in \texttt{K}_{d}\cup \texttt{I}_{d+1}^{1}\cup \texttt{K}_{d+1}$ there is $j_{\varsigma}\in \{\iota_{d}+1,\cdots,\iota_{d+1}+\kk_{d+1}\}$ so that $\nu_{j_{\varsigma}+1}r_{1}<\varsigma\le \nu_{j_{\varsigma}}r_{1}$ and
\begin{eqnarray}\label{lms.10}
\mf{C}(\varsigma)\stackrel{\eqref{ls.134},\eqref{ls.106.1}}{\le} \frac{2^{6p+2n+4}H_{2}}{\varepsilon_{0}^{p/2}}\mf{K}_{\mf{s}}^{\frac{p}{2(p-1)}}.
\end{eqnarray}
\subsubsection*{Step 5.1: two alternatives at different scales}
At this stage, we need to consider two occurrences:
\begin{eqnarray}\label{ls.112}
\varepsilon_{0}A(r_{1})\le \mf{F}(r_{1})\qquad \mbox{or}\qquad \varepsilon_{0}A(r_{1})>\mf{F}(r_{1}).
\end{eqnarray}
Assume that $\eqref{ls.112}_{1}$ holds and, with $\theta$ being the same small parameter appearing in Proposition \ref{p3}, let us introduce the new set of indices
\begin{eqnarray*}
\mathcal{J}_{1}:=\left\{j\in \N\cup\{0\}\colon \varepsilon_{0}A(\theta_{j}r_{1})\le \mf{F}(\theta_{j}r_{1})\right\},
\end{eqnarray*}
which is nonempty as $0\in \mathcal{J}_{1}$, cf. \eqref{ls.112}$_{1}$. 
\subsubsection*{Step 5.2: the degenerate regime is stable}
If $\mathcal{J}_{1}\equiv \N\cup\{0\}$, with $\eqref{a.10}_{1}$ at hand, we apply Proposition \ref{p3} to get
\begin{eqnarray*}
\left\{
\begin{array}{c}
\displaystyle 
\ \mf{F}(\theta_{1} r_{1})\stackrel{\eqref{51}}{\le}\theta^{\gamma_{0}}\mf{F}(r_{0})+c_{1}\mf{K}(\mf{S}(r_{1}))^{1/(p-1)}\le \frac{\theta^{\gamma_{0}}}{2^{6nq}}\varepsilon_{2}+c_{3}\mf{K}_{\mf{s}}^{1/(p-1)}\stackrel{\eqref{d.3}}{<}\varepsilon_{2} \\[8pt]\displaystyle
\ A(r_{1})\le \varepsilon_{0}^{-1}\mf{F}(r_{1})<\varepsilon_{2}\varepsilon_{0}^{-1}\stackrel{\eqref{52.1.1}}{<}1.
\end{array}
\right.
\end{eqnarray*}
By induction, fix $j\in \N$ and suppose that
\eqn{aaa}
$$
\mf{F}(\theta_{i}r_{1})<\varepsilon_{2}\qquad \mbox{for all} \ \ i\in \{0,\cdots,j\},
$$
which, given that the degeneracy condition defining $\mathcal{J}_{1}$ holds for all $i\in \{0,\cdots,j\}$, readily implies 
$$
A(\theta_{i}r_{1})\le \varepsilon_{0}^{-1}\mf{F}(\theta_{i}r_{1})<\varepsilon_{0}^{-1}\varepsilon_{2}\stackrel{\eqref{52.1.1}}{<}1\qquad \mbox{for all} \ \ i\in \{0,\cdots,j\}.
$$
 Since $\mathcal{J}_{1}\equiv \N\cup \{0\}$, we can then apply repeatedly Proposition \ref{p3} and obtain
\begin{eqnarray}\label{ls.108}
\mf{F}(\theta_{j+1}r_{1})&\stackrel{\eqref{51}}{\le}&\theta^{\gamma_{0}}\mf{F}(\theta_{j}r_{1})+c_{1}\mf{K}(\mf{S}(\theta_{j}r_{1}))^{1/(p-1)}\nonumber \\
&\le&\theta^{\gamma_{0}(j+1)}\mf{F}(r_{1})+c_{1}\sum_{i=0}^{j}\theta^{\gamma_{0}(j-i)}\mf{K}(\mf{S}(\theta_{i}r_{1}))^{1/(p-1)}\nonumber \\
&\le&\theta^{\gamma_{0}(j+1)}\mf{F}(r_{1})+c_{3}\mf{K}_{\mf{s}}^{1/(p-1)}\stackrel{\eqref{a.10}_{1},\eqref{d.3}}{<}\varepsilon_{2}.
\end{eqnarray}
In particular, for all $i\in \{0,\cdots,j+1\}$ it is
\begin{eqnarray}\label{ls.109}
A(\theta_{i}r_{1})&\le&\frac{1}{\varepsilon_{0}}\mf{F}(\theta_{i}r_{1})\stackrel{\eqref{aaa},\eqref{ls.108}}{\le}\frac{\varepsilon_{2}}{\varepsilon_{0}}\stackrel{\eqref{52.1.1}}{<}1.
\end{eqnarray}
The arbitrariety of $j\in\N$ and \eqref{ls.108} yield that the iteration of Proposition \ref{p3} is stable and, as a consequence, that \eqref{ls.108}-\eqref{ls.109} hold for all $j\in \N$. Now, since we assumed that $\mathcal{J}_{1}\equiv \N\cup\{0\}$, given any $\varsigma\in (0,r_{1}]$, regardless which one of the intervals in decomposition \eqref{ls.150} it belongs to, we can find $j_{\varsigma}\in \N\cup\{0\}$ so that $\theta_{j_{\varsigma}+1}r_{1}<\varsigma\le \theta_{j_{\varsigma}}r_{1}$ and
\begin{eqnarray}\label{lms.24}
\mf{F}(\varsigma)&\stackrel{\eqref{ls.42.1}}{\le}&\frac{2^{6}}{\theta^{n/p}}\mf{F}(\theta_{j_{\varsigma}}r_{1})\stackrel{\eqref{ls.108}}{\le}\frac{2^{6}}{\theta^{n/p}}\left(\theta^{\gamma_{0}j_{\varsigma}}\mf{F}(r_{1})+c_{3}\mf{K}_{\mf{s}}^{1/(p-1)}\right)\nonumber \\
&\le&\frac{2^{6}}{\theta^{n/p+\gamma_{0}}}\left(\frac{\varsigma}{r_{1}}\right)^{\gamma_{0}}\mf{F}(r_{1})+\frac{2^{6}c_{3}}{\theta^{n/p}}\mf{K}_{\mf{s}}^{1/(p-1)}\nonumber \\
&\stackrel{\eqref{tri.1}}{\le}&\frac{2^{12+2n+2\gamma_{0}}}{\theta^{n/p+\gamma_{0}}}\left(\frac{\varsigma}{\rr}\right)^{\gamma_{0}}\mf{F}(\rr)+\frac{2^{6}c_{3}}{\theta^{n/p}}\mf{K}_{\mf{s}}^{1/(p-1)}.
\end{eqnarray}
On the other hand, if $\varsigma\in (r_{1},\rr]$, by means of \eqref{ls.42.1} we have
\begin{eqnarray}\label{lms.24.1}
\mf{F}(\varsigma)\le 2^{6+2n+2\gamma_{0}}\left(\frac{\varsigma}{\rr}\right)^{\gamma_{0}}\mf{F}(\rr).
\end{eqnarray}
Moreover, for $\varsigma\in (0,r_{1}]$ as above, we have
\begin{eqnarray}\label{lms.14}
A(\varsigma)&\le&\snr{A(\theta_{j_{\varsigma}}r_{1})-A(\varsigma)}+A(\theta_{j_{\varsigma}}r_{1})\stackrel{\eqref{ls.109}}{<}\frac{1}{\theta^{n/p}}\mf{F}(\theta_{j_{\varsigma}}r_{1})+1\nonumber \\
&\stackrel{\eqref{ls.108}}{\le}&\frac{\theta^{j_{\varsigma}\gamma_{0}}}{\theta^{n/p}}\mf{F}(r_{1})+\frac{c_{3}}{\theta^{n/p}}\mf{K}_{\mf{s}}^{1/(p-1)}+1\stackrel{\eqref{a.10}}{\le}2,
\end{eqnarray}
while for $\varsigma\in (r_{1},\rr]$, proceeding as before we have
\begin{eqnarray}\label{lms.25}
A(\varsigma)\stackrel{\eqref{55}_{1}}{\le} 2^{n}\mf{F}(\rr)+M\stackrel{\eqref{55}_{2},\eqref{be}}{<}M+1.
\end{eqnarray}
All in all, for any $\varsigma\in (0,\rr]$ we have
\begin{flalign}\label{final.0}
 \left\{
\begin{array}{c}
\displaystyle 
\ \mf{F}(\varsigma)\le \frac{2^{14+2n}}{\theta^{n/p+1}}\left(\frac{\varsigma}{\rr}\right)^{\gamma_{0}}\mf{F}(\rr)+\frac{2^{8}c_{3}}{\theta^{n/p}}\mf{K}_{\mf{s}}^{1/(p-1)}\\[15pt]\displaystyle
\ A(\varsigma)\le 2(M+1).
\end{array}
\right.
\end{flalign}
\subsubsection*{Step 5.3: first exit time}
Next we consider the occurrence $\mathcal{J}_{1}\not \equiv\mathbb{N}\cup \{0\}$, which means that there is $j_{1}\in \N$ satisfying
\begin{eqnarray*}
j_{1}:=\min\left\{j\in \N\colon \varepsilon_{0}A(\theta_{j}r_{1})>\mf{F}(\theta_{j}r_{1})\right\};
\end{eqnarray*}
in other words,
\begin{flalign}\label{ls.151}
\varepsilon_{0}A(\theta_{j}r_{1})\le\mf{F}(\theta_{j}r_{1}) \ \mbox{for all} \ j\in \{0,\cdots,j_{1}-1\}\quad\mbox{and}\quad \varepsilon_{0}A(\theta_{j_{1}}r_{1})>\mf{F}(\theta_{j_{1}}r_{1}).
\end{flalign}
We stress that $j_{1}\ge 1$ because of $\eqref{ls.112}_{1}$. We then define $r_{1}':=\theta_{j_{1}}r_{1}$, $r_{1}'':=\tau_{1}r_{1}'$ and consider another set of indices
$$
\mathcal{J}_{2}:=\left\{j\in \N\cup\{0\}\colon \varepsilon_{0}A(\tau_{j}r_{1}')>\mf{F}(\tau_{j}r_{1}')\right\},
$$
notice that it is nonempty as $0\in \mathcal{J}_{2}$ owing to the very definition of $j_{1}$. The minimality of $j_{1}$ and $\eqref{ls.151}_{1}$ imply that \eqref{ls.108} holds for all $j\in \{0,\cdots,j_{1}-1\}$ and
\begin{flalign}
A(r_{1}')\le\snr{A(r_{1}')-A(\theta_{j_{1}-1}r_{1})}+A(\theta_{j_{1}-1}r_{1})\stackrel{\eqref{ls.109}}{<}\frac{1}{\theta^{n/p}}\mf{F}(\theta_{j_{1}-1}r_{1})+1\stackrel{\eqref{ls.108},\eqref{a.10}_{1}}{\le}2.\label{ls.152}
\end{flalign}
Moreover, since \eqref{ls.108} and $\eqref{ls.151}_{1}$ hold for $j=j_{1}-1$, we get that
\begin{eqnarray}\label{ls.167}
\mf{F}(r_{1}')\le\theta^{\gamma_{0}j_{1}}\mf{F}(r_{1})+c_{3}\mf{K}_{\mf{s}}^{1/(p-1)}\stackrel{\eqref{a.10}_{1}}{\le} 2^{3+2n/p}\hat{\varepsilon}+c_{3}\mf{K}_{\mf{s}}^{1/(p-1)}.
\end{eqnarray}
By \eqref{ls.152}, \eqref{ls.167}, \eqref{be} and \eqref{d.3} we obtain
\begin{eqnarray}\label{ls.167.1}
A(r_{1}'')\le \frac{1}{\tau^{n/p}}\mf{F}(r_{1}')+A(r_{1}')<\frac{2^{3+2n/p}\hat{\varepsilon}}{\tau^{n/p}}+\frac{c_{3}\mf{K}_{\mf{s}}^{1/(p-1)}}{\tau^{n/p}}+2\le 3.
\end{eqnarray}
\subsubsection*{Step 5.4: the nondegenerate regime is stable}
Suppose now that $\mathcal{J}_{2}\equiv \N\cup \{0\}$ and introduce the nonhomogeneous excess functional:
\begin{eqnarray*}
(0,\rr]\ni \sigma\mapsto \mf{N}(x_{0},\sigma):=\frac{2^{8npq}H_{1}}{(\tau\theta)^{4npq}}\mf{F}(\sigma)^{p/2}+\frac{2^{16npq}H_{2}}{(\tau\theta)^{4npq}}\mf{K}\left(\mathbf{I}^{f}_{1,m}(x_{0},\sigma)\right)^{\frac{p}{2(p-1)}},
\end{eqnarray*}
where $H_{1}$, $H_{2}$ are as in \eqref{hhh}, and assume that
\begin{eqnarray}\label{ls.153}
\frac{\gamma}{8}:=A(r_{1}'')^{p/2}>\frac{\mf{N}(x_{0},r_{1}')}{16} \ \stackrel{\eqref{ls.167.1}}{\Longrightarrow} \ \gamma\le 2^{3+p}.
\end{eqnarray} 
Notice that by the very definition of $\mf{N}(x_{0},r_{1}')$ we have
\begin{flalign}
\mf{C}(r_{1}'')\stackrel{\eqref{tri.1}}{\le}\frac{2^{3p}H_{1}}{\tau^{n/2}}\mf{F}(r_{1}')^{p/2}+A(r_{1}'')^{p/2}\le\frac{2^{3p}(\tau\theta)^{8npq}}{2^{8npq}\tau^{n/2}}\mf{N}(x_{0},r_{1}')+A(r_{1}'')^{p/2}\stackrel{\eqref{ls.153}}{\le}\frac{2\gamma}{2^{6npq}}+\frac{\gamma}{8}\le \frac{\gamma}{4}.\label{ls.163}
\end{flalign}
Furthermore, via \eqref{tri.1}, \eqref{tri.1.1} and \eqref{ls.153} it is
\begin{eqnarray*}
A(r_{1}'')^{p/2}\le\frac{2^{6p}}{\tau^{n}}\mf{F}(r_{1}')^{p/2}+A(\tau_{1}r_{1}'')^{p/2}\le \frac{(\tau\theta)^{2npq}}{2^{6npq}H_{1}}\mf{N}(x_{0},r_{1}')+A(\tau r_{1}'')^{p/2}\le \frac{1}{2}A(r_{1}'')^{p/2}+A(\tau r_{1}'')^{p/2},
\end{eqnarray*}
which in turn implies that
\begin{eqnarray}\label{ls.155}
A(\tau_{1}r_{1}'')^{p/2}\stackrel{\eqref{ls.36}}{\ge}\frac{A(r_{1}'')^{p/2}}{2}.
\end{eqnarray}
Next, keeping in mind that $\mathcal{J}_{2}\equiv \N\cup\{0\}$, we extend the validity of \eqref{ls.155} for all $j\in \N$, i.e.:
\begin{eqnarray}\label{ls.160}
A(\tau_{j}r_{1}'')^{p/2}\ge \frac{A(r_{1}'')^{p/2}}{2}\qquad \mbox{for all} \ \ j\in \N.
\end{eqnarray}
The case $j=0$ is a triviality, case $j=1$ is \eqref{ls.155}, therefore by contradiction let us assume that there is a finite exit time index $J\ge 2$ so that
\begin{flalign}\label{ls.161}
A(\tau_{J}r_{1}'')^{p/2}<\frac{A(r_{1}'')^{p/2}}{2}\quad \mbox{and}\quad A(\tau_{j}r_{1}'')^{p/2}\ge \frac{A(r_{1}'')^{p/2}}{2} \ \ \mbox{for all} \ \ j\in \{0,\cdots,J-1\}.
\end{flalign}
Let us show the implication
\begin{flalign}
A(\tau_{j}r_{1}'')^{p/2}\ge \frac{A(r_{1}'')^{p/2}}{2}  \ \mbox{for all}  \ j\in \{0,\cdots,J-1\} \ \Longrightarrow \ \mf{C}(\tau_{j}r_{1}'')\le \gamma  \ \mbox{for all}  \ j\in \{0,\cdots,J-1\}.\label{ls.162}
\end{flalign}
The induction basis is \eqref{ls.163}. We then assume that
\begin{eqnarray}\label{ls.163.1}
\mf{C}(\tau_{j}r_{1}'')\le \gamma\quad \mbox{for all} \ \ j\in \{0,\cdots,k\},
\end{eqnarray}
for some $k\le J-2$ and prove that $\mf{C}(\tau_{k+1}r_{1}'')\le \gamma$. Proceeding as done for \eqref{d.13} we obtain
\begin{eqnarray}\label{ls.157}
\left(\sum_{j=0}^{\infty}\mf{S}(\tau_{j}r_{1}'')\right)^{\frac{p}{2(p-1)}}&=&\left(\sum_{j=0}^{\infty}\mf{S}(\tau_{j+1}r_{1}')\right)^{\frac{p}{2(p-1)}}\stackrel{\eqref{d.13}}{\le}\frac{2^{\frac{2np}{p-1}}\mathbf{I}^{f}_{1,m}(x_{0},r_{1}')^{\frac{p}{2(p-1)}}}{(\tau\theta)^{\frac{np}{p-1}}}\nonumber \\
&\le& \frac{(\tau\theta)^{2npq}}{2^{12npq}H_{2}}\mf{N}(x_{0},r_{1}')\stackrel{\eqref{ls.153}}{\le} \frac{\gamma}{2H_{2}}.
\end{eqnarray}
Next, we have by definition that
\begin{eqnarray}\label{ls.164}
\mf{C}(\tau_{j}r_{1}'')\ge A(\tau_{j}r_{1}'')^{p/2}\stackrel{\eqref{ls.161}}{\ge}\frac{A(r_{1}'')^{p/2}}{2}\stackrel{\eqref{ls.153}}{=}\frac{\gamma}{16}\qquad \mbox{for all} \ \ j\in \{0,\cdots,J-1\},
\end{eqnarray}
therefore, by \eqref{ls.163.1}, \eqref{ls.157} and recalling that $\mathcal{J}_{2}\equiv \N\cup \{0\}$ and that $k+1\le J-1$ we apply Lemma \ref{til} with $i=0$ and $k$ being the number fixed here to get that $\mf{C}(\tau_{k+1}r_{1}'')\le \gamma$ and \eqref{ls.162} is verified. Being $\mathcal{J}_{2}\equiv \N\cup\{0\}$ and with \eqref{ls.163}, \eqref{ls.163.1}, \eqref{ls.157} and \eqref{ls.164} available, we can now exploit Lemma \ref{til} with $i=0$ and $k=J-2$ to get
\begin{eqnarray}\label{ls.165}
\sum_{j=0}^{J-1}\mf{F}(\tau_{j}r_{1}'')^{p/2}&\stackrel{\eqref{ls.132}_{2}}{\le}&\frac{\gamma}{2H_{1}}\stackrel{\eqref{ls.153}}{=}\frac{4A(r_{1}'')^{p/2}}{H_{1}},
\end{eqnarray}
so
\begin{eqnarray*}
\snr{A(\tau_{J}r_{1}'')^{p/2}-A(r_{1}'')^{p/2}}&\le&\sum_{j=0}^{J-1}\snr{A(\tau_{j+1}r_{1}'')^{p/2}-A(\tau_{j}r_{1}'')^{p/2}}\nonumber \\
&\stackrel{\eqref{tri.1}}{\le}&\frac{2^{3p}}{\tau^{n/2}}\sum_{j=0}^{J-1}\mf{F}(\tau_{j}r_{1}'')^{p/2}\stackrel{\eqref{ls.165}}{\le}\frac{2^{3(p+1)}A(r_{1}'')^{p/2}}{H_{1}\tau^{n/2}}\stackrel{\eqref{hhh}}{\le}\frac{A(r_{1}'')^{p/2}}{4}
\end{eqnarray*}
and we can conclude with
\begin{eqnarray*}
A(\tau_{J}r_{1}'')^{p/2}\ge A(r_{1}'')^{p/2}-\snr{A(\tau_{J}r_{1}'')^{p/2}-A(r_{1}'')^{p/2}}\ge \frac{3A(r_{1}'')^{p/2}}{4},
\end{eqnarray*}
in contradiction with \eqref{ls.162}. This and the arbitrariety of $J\ge 2$ prove \eqref{ls.160}. With \eqref{ls.160} at hand, we can apply \eqref{ls.162} for all $j\in \N\cup \{0\}$ to conclude with 
\begin{eqnarray}\label{ls.166}
A(\tau_{j}r_{1}'')^{p/2}\le \mf{C}(\tau_{j}r_{1}'')\le \gamma\stackrel{\eqref{ls.153}}{\le}2^{3+p}\qquad \mbox{for all} \ \ j\in \N\cup\{0\}.
\end{eqnarray}
Let us consider the complementary case to \eqref{ls.153}, i.e.:
\begin{eqnarray}\label{ls.168}
A(r_{1}'')^{p/2}\le \frac{\mf{N}(x_{0},r_{1}')}{16}=:\frac{\gamma}{8} \ \Longrightarrow \ \gamma\le 2,
\end{eqnarray}
where the last inequality is justified by means of \eqref{ls.167}, \eqref{be}, \eqref{d.3} and \eqref{hhh}. We aim at proving that
\begin{eqnarray}\label{ls.170}
\mf{C}(\tau_{j}r_{1}'')\le \gamma\qquad \mbox{for all} \ \ j\in \N\cup\{0\}.
\end{eqnarray}
Notice that
\begin{flalign}
\mf{C}(r_{1}'')\stackrel{\eqref{tri.1}}{\le}\frac{2^{3p}H_{1}}{\tau^{n/2}}\mf{F}(r_{1}')^{p/2}+A(r_{1}'')^{p/2}\le \frac{(\tau\theta)^{2npq}}{2^{6npq}}\mf{N}(x_{0},r_{1}')+A(r_{1}'')^{p/2}\stackrel{\eqref{ls.168}}{\le}\frac{\gamma}{4}.\label{ls.169}
\end{flalign}
We then define $k:=\min\{s\in \N\cup \{0\}\colon \mf{C}(\tau_{s+1}r_{1}'')>\gamma\}$ as the smallest integer minus one for which \eqref{ls.170} fails and introduce the set
\begin{eqnarray*}
\mathcal{I}_{k}:=\left\{j\in \N\cup\{0\}\colon \mf{C}(\tau_{j}r_{1}'')\le \gamma/4, \ j<k+1\right\}
\end{eqnarray*}
and let $b:=\max\mathcal{I}_{k}$. The inequality in \eqref{ls.169} yields that $\mathcal{I}_{k}\not =\{\emptyset\}$ as $0\in \mathcal{I}_{k}$ and if $j\in \{b+1,\cdots,k+1\}$ it is $\mf{C}(\tau_{j}r_{1}'')\ge \gamma/4>\gamma/16$. Finally, the minimality of $k$ yields that if $j\in \{b,\cdots,k\}$ it is $\mf{C}(\tau_{j}r_{1}'')\le \gamma$. To apply Lemma \ref{til}, we only need to check the validity of $\eqref{ls.131}_{2}$ with the value of $\gamma$ displayed in \eqref{ls.168}, which can be proven as done for \eqref{ls.157}. Lemma \ref{til} applies and renders that $\mf{C}(\tau_{k+1}r_{1}'')\le \gamma$ in contradiction with the definition of $k$. This proves \eqref{ls.170}. By \eqref{ls.152}, \eqref{ls.167.1} and \eqref{ls.166}-\eqref{ls.170} (keep in mind the definition of $r_{1}''$) we deduce that
\begin{eqnarray}\label{ls.171}
A(\tau_{j}r_{1}')\le 2^{5}\qquad \mbox{for all} \ \ j\in \N\cup\{0\}
\end{eqnarray}
so recalling that $\mathcal{J}_{2}\equiv \N\cup\{0\}$, we can apply repeatedly Propositions \ref{p1}-\ref{p2} to get that
\begin{eqnarray}\label{ls.172}
\mf{F}(\tau_{j+1}r_{1}')\le\tau^{(j+1)\beta_{0}}\mf{F}(r_{1}')+c_{0}\sum_{i=0}^{j}\tau^{(j-i)\beta_{0}}\mf{S}(\tau_{i}r_{1}')^{1/(p-1)}\le\tau^{(j+1)\beta_{0}}\mf{F}(r_{1}')+c_{3}\mf{K}_{\mf{s}}^{1/(p-1)}.
\end{eqnarray}
The fact that $\mathcal{J}_{2}\equiv \N\cup\{0\}$ yields that there is only one change of scale, therefore we can ignore again the presence of blocks $\{\texttt{B}_{d}\}_{d\in \N\cup\{0\}}$ and split the interval $(0,\rr]$ as $(0,r_{1}']\cup(r_{1}',r_{1}]\cup (r_{1},\rr]$. If $\varsigma\in (0,r_{1}']$, we can find $j_{\varsigma}\in \N\cup\{0\}$ so that $\tau_{j_{\varsigma}+1}r_{1}'<\varsigma\le \tau_{j_{\varsigma}}r_{1}'$ and
\begin{eqnarray}\label{lms.26}
\mf{F}(\varsigma)&\stackrel{\eqref{ls.42.1}}{\le}&\frac{2^{6}}{\tau^{n/p}}\mf{F}(\tau_{j_{\varsigma}}r_{1}')\stackrel{\eqref{ls.172}}{\le}\frac{2^{6}\tau^{j_{\varsigma}\beta_{0}}}{\tau^{n/p}}\mf{F}(r_{1}')+\frac{2^{6}c_{3}}{\tau^{n/p}}\mf{K}_{\mf{s}}^{1/(p-1)}\nonumber \\
&\stackrel{\eqref{ls.167}}{\le}&\frac{2^{6}\tau^{j_{\varsigma}\beta_{0}}\theta^{j_{1}\gamma_{0}}}{\tau^{n/p}}\mf{F}(r_{1})+\frac{2^{6}c_{3}}{\tau^{n/p}}\mf{K}_{\mf{s}}^{1/(p-1)}(\tau^{j_{\varsigma}\beta_{0}}+1)\nonumber \\
&\stackrel{\eqref{tri.1}}{\le}&\frac{2^{14+2n}}{\tau^{n/p+1}}\left(\frac{\varsigma}{\rr}\right)^{\alpha_{0}}\mf{F}(\rr)+\frac{2^{8}c_{3}}{\tau^{n/p}}\mf{K}_{\mf{s}}^{1/(p-1)},
\end{eqnarray}
where we used that $\alpha_{0}=\min\{\beta_{0},\gamma_{0}\}$ by definition and also
\begin{eqnarray}\label{lms.15}
A(\varsigma)&\stackrel{\eqref{ls.171}}{\le}&\snr{A(\varsigma)-A(\tau_{j_{\varsigma}}r_{1}')}+2^{5}\le\frac{1}{\tau^{n/p}}\mf{F}(\tau_{j_{\varsigma}}r_{1}')+2^{5}\nonumber \\
&\stackrel{\eqref{ls.167},\eqref{ls.172}}{\le}&\frac{\tau^{j_{\varsigma}\beta_{0}}\theta^{j_{\varsigma}\gamma_{0}}}{\tau^{n/p}}\mf{F}(r_{1})+\frac{2c_{3}}{\tau^{n/p}}\mf{K}_{\mf{s}}^{1/(p-1)}+2^{5}\nonumber \\
&\stackrel{\eqref{ls.167}}{\le}&2^{3+2n/p}\tau^{-n/p}\hat{\varepsilon}+2c_{3}\mf{K}_{\mf{s}}^{1/(p-1)}+2^{5}\stackrel{\eqref{be},\eqref{d.3}}{\le}1+2^{5}.
\end{eqnarray}
On the other hand, if $\varsigma\in (r_{1}',r_{1}]$ there is $j_{\varsigma}\in \{0,\cdots,j_{1}-1\}$ so that $\theta_{j_{\varsigma}+1}r_{1}<\varsigma\le \theta_{j_{\varsigma}}r_{1}$ and
\begin{eqnarray}\label{lms.2}
\mf{F}(\varsigma)&\stackrel{\eqref{ls.42.1}}{\le}&\frac{2^{6}}{\theta^{n/p}}\mf{F}(\theta_{j_{\varsigma}}r_{1})\stackrel{\eqref{ls.108}}{\le}\frac{2^{6}}{\theta^{n/p+\gamma_{0}}}\left(\frac{\varsigma}{r_{1}}\right)^{\gamma_{0}}\mf{F}(r_{1})+\frac{2^{6}c_{3}}{\theta^{n/p}}\mf{K}_{\mf{s}}^{1/(p-1)}\nonumber \\
&\stackrel{\eqref{tri.1}}{\le}&\frac{2^{14+2n}}{\theta^{n/p+1}}\left(\frac{\varsigma}{\rr}\right)^{\gamma_{0}}\mf{F}(\rr)+\frac{2^{6}c_{3}}{\theta^{n/p}}\mf{K}_{\mf{s}}^{1/(p-1)}
\end{eqnarray}
and
\begin{eqnarray}\label{lms.3}
A(\varsigma)&\stackrel{\eqref{ls.109}}{\le}&\snr{A(\varsigma)-A(\theta_{j_{\varsigma}}r_{1})}+1\stackrel{\eqref{ls.108}}{\le}\frac{\theta^{j_{\varsigma}\gamma_{0}}}{\theta^{n/p}}\mf{F}(r_{1})+\frac{c_{3}}{\theta^{n/p}}\mf{K}_{\mf{s}}^{1/(p-1)}+1\stackrel{\eqref{a.10}_{1},\eqref{tri.1},\eqref{d.3}}{\le}2
\end{eqnarray}
where of course we applied \eqref{ls.108}-\eqref{ls.109} within the admissible range $j_{\varsigma}\in \{0,\cdots,j_{1}-1\}$. Finally, if $\varsigma\in (r_{1},\rr]$, applying \eqref{tri.1} and \eqref{a.10} we get
\begin{eqnarray}\label{lms.4}
\mf{F}(\varsigma)\le 2^{8+2n}\left(\frac{\varsigma}{\rr}\right)^{\alpha_{0}}\mf{F}(\rr)\qquad \mbox{and}\qquad A(\varsigma)\le M+1.
\end{eqnarray}
To summarize, we have just proven that for all $\varsigma\in (0,\rr]$ it is
\begin{flalign}\label{final.1}
 \left\{
\begin{array}{c}
\displaystyle 
\ \mf{F}(\varsigma)\le \frac{2^{14+2n}}{(\tau\theta)^{n/p+1}}\left(\frac{\varsigma}{\rr}\right)^{\alpha_{0}}\mf{F}(\rr)+\frac{2^{8}c_{3}}{(\tau\theta)^{n/p}}\mf{K}_{\mf{s}}^{1/(p-1)}\\[15pt]\displaystyle
\ A(\varsigma)\le 2^{5}(M+1).
\end{array}
\right.
\end{flalign}
\begin{remark}\label{stin}
\emph{
We stress that if instead of $\eqref{ls.112}_{1}$ we started from $\eqref{ls.112}_{2}$, to prove \eqref{final.1} in case of stability of the nondegenerate regime we would just need to replace $r_{1}'$ with $r_{1}$, $r_{1}''$ with $\tau_{1} r_{1}$, \eqref{ls.152}-\eqref{ls.167} and \eqref{ls.108}-\eqref{ls.109}, with \eqref{a.10} and \eqref{55} in \emph{Step 5.4} and to notice that this time in \eqref{ls.153} it is $\gamma\le 2^{3}(M+2)^{p/2}$, cf. $\eqref{a.10}_{2}$, while \eqref{ls.168} remains unchanged due to \eqref{a.10}$_{1}$, \eqref{be}, \eqref{fpm.1}, \eqref{hhh}; or to directly jump to \emph{Step 5.5} below, again with $r_{1}$ instead of $r_{1}'$, $r_{1}''$ replaced by $\tau_{1}r_{1}$ in case the nondegenerate regime is unstable.}
\end{remark}
\subsubsection*{Step 5.5: second exit time} Assume now that $\mathcal{J}_{2}\not =\N\cup\{0\}$. This means that there is $j_{2}\in \N$ with the property that
\begin{eqnarray*}
j_{2}:=\min\left\{j\in \N\colon \varepsilon_{0}A(\tau_{j}r_{1}')\le \mf{F}(\tau_{j}r_{1}')\right\}.
\end{eqnarray*}
Notice that being $\mathcal{J}_{2}\not =\{\emptyset\}$, it is $j_{2}\ge 1$ cf. \emph{Step 5.3} and, by minimality it is
\begin{flalign}\label{ls.175}
\varepsilon_{0}A(\tau_{j}r_{1}')>\mf{F}(\tau_{j}r_{1}') \ \ \mbox{for all} \ \ j\in \{0,\cdots,j_{2}-1\} \quad \mbox{and}\quad \varepsilon_{0}A(\tau_{j_{2}}r_{1}')\le \mf{F}(\tau_{j_{2}}r_{1}').
\end{flalign}
We can then repeat the same procedure\footnote{The only detail to be careful about is the passage from $r_{1}'$ to $r_{1}''=\tau_{1}r_{1}$, so by \eqref{ls.175}$_{1}$, any application of Lemma \ref{til} is possible over nonempty (possibly improper) subsets of $\{0,\cdots,j_{2}-2\}$. This forces the indices $J$ and $k$ employed in the proof of \eqref{ls.166} and of \eqref{ls.170} respectively to verify $J\le j_{2}$ and $k\le j_{2}-2$. This choice renders \eqref{ls.176} with no other variations to the procedure described in \emph{Step 5.4}. Notice that there is no loss of generality in assuming $j_{2}\ge 2$, otherwise \eqref{ls.176} would trivially follow by means of \eqref{ls.152} and \eqref{ls.167}.} leading to \eqref{ls.171}-\eqref{ls.172} iterating this time on $j$ belonging to the finite set $\{0,\cdots, j_{2}-1\}$ to get
\begin{flalign}\label{ls.176}
A(\tau_{j}r_{1}')\le 2^{5} \qquad \mbox{and}\qquad \mf{F}(\tau_{j+1}r_{1}')\le \tau^{(j+1)\beta_{0}}\mf{F}(r_{1}')+c_{3}\mf{K}_{\mf{s}}^{1/(p-1)}.
\end{flalign}
Set $r_{2}':=\tau_{j_{2}}r_{1}'$. We claim that
\begin{eqnarray}\label{lms.1}
r_{2}' \ \ \mbox{cannot belong to} \ \ \texttt{I}_{0} \ \ \mbox{or to} \ \ \texttt{I}_{d}^{2} \ \ \mbox{for all} \ \ d\in \N.
\end{eqnarray}
If this were the case we would have that
\begin{eqnarray}
A(\tau_{j_{2}-1}r_{1}')&\le& \frac{1}{\tau^{n/p}}\mf{F}(\tau_{j_{2}-1}r_{1}')+A(r_{2}')\nonumber \\
&\stackrel{\eqref{ls.175}_{1}}{\le}&\frac{\varepsilon_{0}}{\tau^{n/p}}A(\tau_{j_{2}-1}r_{1}')+A(r_{2}')\ \stackrel{\eqref{ls.36}}{\Longrightarrow }\ 2A(r_{2}')\ge A(\tau_{j_{2}-1}r_{1}'),\label{ls.123}
\end{eqnarray}
therefore keeping in mind that the validity of $\eqref{ls.175}_{1}$ and $\eqref{ls.176}_{1}$ allow applying Propositions \ref{p1}-\ref{p2}, we get
\begin{eqnarray*}
\mf{F}(r_{2}')&\stackrel{\eqref{30},\eqref{31}}{\le}&\tau^{\beta_{0}}\mf{F}(\tau_{j_{2}-1}r_{1}')+c_{0}\mf{S}(\tau_{j_{2}-1}r_{1}')^{1/(p-1)}\stackrel{\eqref{ls.175}_{1},\eqref{ls.123}}{\le}2\tau^{\beta_{0}}\varepsilon_{0}A(r_{2}')+c_{3}\mf{K}_{\mf{s}}^{1/(p-1)}\nonumber \\
&\stackrel{\eqref{ls.102.1}}{\le}&2\tau^{\beta_{0}}\varepsilon_{0}A(r_{2}')+\frac{c_{3}\varepsilon_{0}2^{2(3p+n+2)/p}}{H_{2}^{2/p}}\left(H_{1}^{2/p}\mf{F}(r_{2}')+A(r_{2}')\right)\nonumber \\
&\stackrel{\eqref{tri.1}}{\le}&\varepsilon_{0}A(r_{2}')\left(2\tau^{\beta_{0}}+\frac{c_{3}2^{2(3p+n+2)/p}}{H_{2}^{2/p}}\right)+\frac{c_{3}\varepsilon_{0}2^{6+2(3p+n+2)/p}}{\tau^{n/p}}\left(\frac{H_{1}}{H_{2}}\right)^{2/p}\mf{F}(\tau_{j_{2}-1}r_{1}')\nonumber \\
&\stackrel{\eqref{ls.175}_{1}, \eqref{ls.123}}{\le}&\varepsilon_{0} A(r_{2}')\left[2\tau^{\beta_{0}}+\frac{c_{3}2^{2(3p+n+2)/p}}{H_{2}^{2/p}}+\frac{c_{3}\varepsilon_{0}2^{7+2(3p+n+2)/p}}{\tau^{n/p}}\left(\frac{H_{1}}{H_{2}}\right)^{2/p}\right]\stackrel{\eqref{ls.36.1.1},\eqref{hhh}}{<}\varepsilon_{0}A(r_{2}'),
\end{eqnarray*}
in plain contradiction with \eqref{ls.175}$_{2}$. This means that the nondegenerate regime is unstable only in $\texttt{I}^{1}_{d+1}$ or in $\texttt{K}_{d+1}$ for all $d\in \N\cup\{0\}$. Now, let $\{\texttt{B}_{d}\}_{d\in \N\cup\{0\}}$ be the blocks introduced in \textbf{Step 5}; we aim to show that they behave as independent units of the iterative process described so far. We recall that by \eqref{lms.1} it is
\eqn{lms.9}
$$\texttt{I}_{0}\subseteq (r_{2}',r_{1}']\cup(r_{1}',r_{1}].$$ If $\varsigma\in (r_{1}',r_{1}]$, there is $j_{\varsigma}\in \{0,\cdots,j_{1}-1\}$ verifying $\theta_{j_{\varsigma}+1}r_{1}<\varsigma\le \theta_{j_{\varsigma}}r_{1}$, so we can proceed as done in \emph{Step 5.4} to recover \eqref{lms.2}-\eqref{lms.3}. Next, when $\varsigma\in (r_{2}',r_{1}']$ we can find $j_{\varsigma}\in \{0,\cdots,j_{2}-1\}$ so that $\tau_{j_{\varsigma}+1}r_{1}'<\varsigma\le\tau_{j_{\varsigma}}r_{1}'$,
\begin{eqnarray}\label{lms.5}
\mf{F}(\varsigma)&\stackrel{\eqref{ls.42.1}}{\le}&\frac{2^{6}}{\tau^{n/p}}\mf{F}(\tau_{j_{\varsigma}}r_{1}')\stackrel{\eqref{ls.176}}{\le}\frac{2^{6}\tau^{j_{\varsigma}\beta_{0}}}{\tau^{n/p}}\mf{F}(r_{1}')+\frac{2^{6}c_{3}}{\tau^{n/p}}\mf{K}_{\mf{s}}^{1/(p-1)}\nonumber \\
&\stackrel{\eqref{ls.167}}{\le}&\frac{2^{6}}{\tau^{n/p+\beta_{0}}}\left(\frac{\varsigma}{r_{1}'}\right)^{\beta_{0}}\theta^{j_{1}\gamma_{0}}\mf{F}(r_{1})+\frac{2^{7}c_{3}}{\tau^{n/p}}\mf{K}_{\mf{s}}^{1/(p-1)}\nonumber \\
&=&\frac{2^{6}}{\tau^{n/p+1}}\left(\frac{\varsigma}{r_{1}}\right)^{\alpha_{0}}\mf{F}(r_{1})+\frac{2^{7}c_{3}}{\tau^{n/p}}\mf{K}_{\mf{s}}^{1/(p-1)}\stackrel{\eqref{tri.1}}{\le}\frac{2^{14+2n}}{\tau^{n/p+1}}\left(\frac{\varsigma}{\rr}\right)^{\alpha_{0}}\mf{F}(\rr)+\frac{2^{8}c_{3}}{\tau^{n/p}}\mf{K}_{\mf{s}}^{1/(p-1)}
\end{eqnarray}
and
\begin{eqnarray}\label{lms.6}
A(\varsigma)&\le&\frac{1}{\tau^{n/p}}\mf{F}(\tau_{j_{\varsigma}}r_{1}')+A(\tau_{j_{\varsigma}}r_{1}')\stackrel{\eqref{ls.176}}{\le}\frac{\tau^{j_{\varsigma}\beta_{0}}}{\tau^{n/p}}\mf{F}(r_{1}')+\frac{c_{3}}{\tau^{n/p}}\mf{K}_{\mf{s}}^{1/(p-1)}+2^{5}\nonumber \\
&\stackrel{\eqref{ls.167}}{\le}&\frac{2^{3+n/p}\hat{\varepsilon}}{\tau^{n/p}}+\frac{2c_{3}}{\tau^{n/p}}\mf{K}_{\mf{s}}^{1/(p-1)}+2^{5}\stackrel{\eqref{be},\eqref{d.3}}{\le}1+2^{5},
\end{eqnarray}
so, by \eqref{lms.9}, estimates \eqref{68.1}-\eqref{68} have been proven in particular for all $\varsigma\in \texttt{I}_{0}$. Now, keeping \eqref{lms.9} in mind, we can apply \eqref{tri.1},  \eqref{lms.2}-\eqref{lms.3} or \eqref{lms.5}-\eqref{lms.6} (depending on the position of $\nu_{\iota_{1}}r_{1}$ with respect to $r_{1}'$) to get
\begin{eqnarray}\label{lms.7}
\mf{F}(\nu_{\iota_{1}}r_{1})&\le& 2^{6+n}\mf{F}(\nu_{\iota_{1}-1}r_{1})\le \frac{2^{20+3n}}{(\tau\theta)^{n/p+1}}\left(\frac{\nu_{\iota_{1}-1}r_{1}}{\rr}\right)^{\alpha_{0}}\mf{F}(\rr)+\frac{2^{14+n}c_{3}}{(\tau\theta)^{n/p}}\mf{K}_{\mf{s}}^{1/(p-1)}\nonumber \\
&\le&\frac{2^{22+3n}}{(\tau\theta)^{n/p+1}}\left(\frac{\nu_{\iota_{1}}r_{1}}{\rr}\right)^{\alpha_{0}}\mf{F}(\rr)+\frac{2^{14+n}c_{3}}{(\tau\theta)^{n/p}}\mf{K}_{\mf{s}}^{1/(p-1)}
\end{eqnarray}
and 
\begin{eqnarray}\label{lms.8}
A(\nu_{\iota_{1}}r_{1})&\le& 2^{n}\mf{F}(\nu_{\iota_{1}-1}r_{1})+A(\nu_{\iota_{1}-1}r_{1})\nonumber \\
&\le&1+2^{5}+\frac{2^{14+2n}}{\tau^{n/p+1}}\mf{F}(\rr)+\frac{2^{8+n}c_{3}}{\tau^{n/p}}\mf{K}_{\mf{s}}^{1/(p-1)}\le 2+2^{5},
\end{eqnarray}
where we also used $\eqref{55}_{2}$, \eqref{be}, \eqref{fpm.1} and \eqref{d.3}. Therefore if $\varsigma \in \texttt{I}_{1}^{1}$, via \eqref{ls.42.1}, \eqref{lms.7}, \eqref{lms.8}, \eqref{be}, \eqref{fpm.1} and \eqref{d.3} we obtain
\begin{flalign}\label{lms.8.1}
\left\{
\begin{array}{c}
\displaystyle 
\ \mf{F}(\varsigma)\le2^{n+6}\mf{F}(\nu_{\iota_{1}}r_{1})\le \frac{2^{32+4n}}{(\tau\theta)^{n/p+1}}\left(\frac{\varsigma}{\rr}\right)^{\alpha_{0}}\mf{F}(\rr)+\frac{2^{20+2n}c_{3}}{(\tau\theta)^{n/p}}\mf{K}_{\mf{s}}^{1/(p-1)}\\[15pt]\displaystyle
\ A(\varsigma)\le 2^{n}\mf{F}(\nu_{\iota_{1}}r_{1})+A(\nu_{\iota_{1}}r_{1})\le 3+2^{5}.
\end{array}
\right.
\end{flalign}
Finally, if $\varsigma\in \texttt{K}_{1}$ we just need to recall \eqref{ls.102}, \eqref{hhh}, \eqref{fpm.1} and \eqref{d.3} to conclude. We have just proven that estimates
\begin{flalign}\label{final.2}
\left\{
\begin{array}{c}
\displaystyle 
\ \mf{F}(\varsigma)\le \frac{2^{32+4n}}{(\tau\theta)^{n/p+1}}\left(\frac{\varsigma}{\rr}\right)^{\alpha_{0}}\mf{F}(\rr)+\frac{2^{24+3p+2n}c_{3}}{\varepsilon_{0}(\tau\theta)^{n/p}}\left(\frac{H_{2}}{H_{1}}\right)^{2/p}\mf{K}_{\mf{s}}^{1/(p-1)}\\[15pt]\displaystyle
\ A(\varsigma)\le 4+2^{5}(M+1).
\end{array}
\right.
\end{flalign}
hold for all $\varsigma\in \texttt{B}_{0}$. We stress that the presence of $M$ in the above display accounts for the case in which iterations start from $\eqref{ls.112}_{2}$ rather than \eqref{ls.112}$_{1}$ - the procedure leading to \eqref{final.2} remains identical up to replace everywhere $r_{1}'$ by $r_{1}$, $r_{1}''$ by $\tau_{1}r_{1}$ and \eqref{ls.152} by \eqref{a.10}$_{2}$; see also Remark \ref{stin}. It is important to notice that in the light of \eqref{lms.1}, the fact that $r_{2}'$ belongs to $\texttt{B}_{0}$ or not is totally irrelevant as decay estimates are available on $\texttt{I}_{0}$, cf. \eqref{lms.9} and \eqref{lms.2}-\eqref{lms.3} or \eqref{lms.5}-\eqref{lms.6}; $\texttt{I}_{0}$ and $\texttt{I}_{1}^{1}$ differ only by one scale and estimates can be transferred as we did in the above displays, while by means of \eqref{ls.102} in $\texttt{K}_{1}$ there is essentially nothing to prove. Next, given a general block $\texttt{B}_{d}$ with $d\in \N$, it is easy to see that the procedure described in \emph{Step 5.1}-\emph{Step 5.5} can be entirely reproduced on $\texttt{B}_{d}$. In fact, if $\texttt{I}_{d}^{2}=\{\emptyset\}$ there is nothing to prove as $\texttt{B}_{d}\equiv \texttt{I}^{1}_{d+1}\cup \texttt{K}_{d+1}$ on which \eqref{lms.10} holds; otherwise set $r_{d}:=\nu_{\iota_{d}+\kk_{d}+1}r_{1}$ and notice that
\begin{flalign}
\frac{H_{2}}{\varepsilon_{0}^{p/2}}\mf{K}_{\mf{s}}^{\frac{p}{2(p-1)}}\stackrel{\eqref{ls.102.1.1}}{\le}\mf{C}(r_{d})\stackrel{\eqref{ls.134}}{\le}2^{3p+n+2}\mf{C}(\nu_{\iota_{d}+\kk_{d}}r_{1})\stackrel{\eqref{ls.101}_{2}}{\le}\frac{2^{3p+n+2}H_{2}}{\varepsilon_{0}^{p/2}}\mf{K}_{\mf{s}}^{\frac{p}{2(p-1)}}\stackrel{\eqref{fpm.1}}{\le}\hat{\varepsilon}^{p/2},\label{lms.12}
\end{flalign}
which via \eqref{be} yields that
\begin{eqnarray}\label{lms.13}
A(r_{d})\le 1\qquad \mbox{and}\qquad \mf{F}(r_{d})<\frac{(\tau\theta)^{4npq}\varepsilon_{2}}{2^{6nq}}<\varepsilon_{2}.
\end{eqnarray}
This means that we can repeat the whole construction laid down in \emph{Step 5.1}-\emph{Step 5.5} with $r_{d}$ replacing $r_{1}$ and $r_{1}'$, $r_{2}'$ this time defined as $r_{1}':=\theta_{j_{1}}r_{d}$ and $r_{2}':=\tau_{j_{2}}\theta_{j_{1}}r_{d}$ (here the values of $j_{1},j_{2}\ge 1$ may differ from those considered in the aforementioned steps) to obtain in case of stability of either the degenerate regime or of the nondegenerate regime (i.e. when $\mathcal{J}_{1}\equiv \N\cup\{0\}$ or $\mathcal{J}_{2}\equiv \N\cup\{0\}$ respectively):
\begin{eqnarray}\label{lms.17}
\mf{F}(\varsigma)&\le&\frac{2^{14+2n}}{(\theta\tau)^{n/p+1}}\left(\frac{\varsigma}{r_{d}}\right)^{\alpha_{0}}\mf{F}(r_{d})+\frac{2^{8}c_{3}}{(\tau\theta)^{n/p}}\mf{K}_{\mf{s}}^{1/(p-1)}\nonumber \\
&\stackrel{\eqref{lms.12}}{\le}&\left[\frac{2^{16+3p+3n}}{\varepsilon_{0}(\tau\theta)^{n/p+1}}\left(\frac{H_{2}}{H_{1}}\right)^{2/p}+\frac{2^{8}c_{3}}{(\tau\theta)^{n/p}}\right]\mf{K}_{\mf{s}}^{1/(p-1)}\le\frac{2^{18+3n+3p}c_{3}}{\varepsilon_{0}(\tau\theta)^{n/p+1}}\left(\frac{H_{2}}{H_{1}}\right)^{2/p}\mf{K}_{\mf{s}}^{1/(p-1)}
\end{eqnarray}
and, combining \eqref{lms.12}-\eqref{lms.13} with \eqref{lms.14}, \eqref{lms.15}, \eqref{lms.3} (of course with $r_{d}$ instead of $r_{1}$), we get
\begin{eqnarray}\label{lms.18}
A(\varsigma)&\le&2^{5}+1.
\end{eqnarray}
Both \eqref{lms.17} and \eqref{lms.18} hold for all $\varsigma\in (0,r_{d}]$. Finally, if the nondegenerate regime is unstable, we observe that \eqref{lms.1} is true also for the newly defined $r_{2}'$, which in particular cannot belong to $\texttt{I}_{d}^{2}$ and so $\texttt{I}_{d}^{2}\subseteq (r_{2}',r_{1}']\cup(r_{1}',r_{d}]$, therefore proceeding as done for deriving \eqref{lms.2}-\eqref{lms.3} and \eqref{lms.5}-\eqref{lms.6}\footnote{We do not actually need the full estimates \eqref{lms.2}-\eqref{lms.3} and \eqref{lms.5}-\eqref{lms.6}, but just their content till the line in which $r_{1}$ - replaced by $r_{d}$ - appears.} we confirm the validity of \eqref{lms.17}-\eqref{lms.18} for all $\varsigma\in (r_{2}',r_{1}']\cup(r_{1}',r_{d}]$ and a fortiori for all $\varsigma\in \texttt{I}_{d}^{2}$. At this stage, we only need to transfer estimates \eqref{lms.17}-\eqref{lms.18} to interval $\texttt{I}^{1}_{d+1}$, but this is straightforward. Indeed we have
\begin{flalign}
\mf{F}(\nu_{\iota_{d+1}}r_{1})\stackrel{\eqref{tri.1}}{\le}2^{6+n}\mf{F}(\nu_{\iota_{d+1}-1}r_{1})\stackrel{\eqref{lms.17}}{\le}\frac{2^{24+3p+4n}c_{3}}{\varepsilon_{0}(\tau\theta)^{n/p+1}}\left(\frac{H_{2}}{H_{1}}\right)^{2/p}\mf{K}_{\mf{s}}^{1/(p-1)}\label{lms.19}
\end{flalign}
and
\begin{eqnarray}\label{lms.20}
 A(\nu_{\iota_{d+1}}r_{1})&\le&2^{n}\mf{F}(\nu_{\iota_{d+1}-1}r_{1})+A(\nu_{\iota_{d+1}-1}r_{1})\nonumber \\
 &\stackrel{\eqref{lms.17},\eqref{lms.18}}{\le}&\frac{c_{3}2^{18+3p+4n}}{\varepsilon_{0}(\tau\theta)^{n/p+1}}\left(\frac{H_{2}}{H_{1}}\right)^{2/p}\mf{K}_{\mf{s}}^{1/(p-1)}+1+2^{5}\stackrel{\eqref{fpm.1},\eqref{d.3}}{\le}2+2^{5},
\end{eqnarray}
therefore if $\varsigma\in \texttt{I}^{1}_{d+1}$ i.e., $\nu_{\iota_{d+1}+1}r_{1}<\varsigma\le \nu_{\iota_{d+1}}r_{1}$, by means of \eqref{ls.42.1}, \eqref{lms.19}, \eqref{lms.20}, \eqref{fpm.1} and \eqref{d.3} it is
\begin{eqnarray}\label{lms.21}
\mf{F}(\varsigma)\le \frac{c_{3}2^{30+3p+5n}}{\varepsilon_{0}(\tau\theta)^{n/p+1}}\left(\frac{H_{2}}{H_{1}}\right)^{2/p}\mf{K}_{\mf{s}}^{1/(p-1)}
\end{eqnarray}
and
\begin{flalign}
A(\varsigma)\stackrel{\eqref{lms.20}}{\le}2^{6+n}\mf{F}(\nu_{\iota_{d+1}}r_{1})+2+2^{5}\stackrel{\eqref{lms.19}}{\le}\frac{2^{30+3p+5n}c_{3}}{\varepsilon_{0}(\tau\theta)^{n/p+1}}\left(\frac{H_{2}}{H_{1}}\right)^{2/p}\mf{K}_{\mf{s}}^{1/(p-1)}+2+2^{5}\le 3+2^{5}.\label{lms.22}
\end{flalign}
Moreover, if $\varsigma\in \texttt{K}_{d+1}$ we can directly conclude with \eqref{ls.102}, thus whenever $\varsigma \in \texttt{B}_{d}$ it is
\begin{eqnarray}\label{final.3}
\left\{
\begin{array}{c}
\displaystyle 
\ \mf{F}(\varsigma)\le \frac{c_{3}2^{30+3p+5n}}{\varepsilon_{0}(\tau\theta)^{n/p+1}}\left(\frac{H_{2}}{H_{1}}\right)^{2/p}\mf{K}_{\mf{s}}^{1/(p-1)}\\[15pt]\displaystyle
\ A(\varsigma)\le 4+2^{5}.
\end{array}
\right.
\end{eqnarray}
Finally, if there is only a finite number of finite iteration chains, say $e_{*}\in \N$, we find $\{\iota_{1},\cdots,\iota_{e_{*}}\}\subset \N$, $\{\kk_{1},\cdots,\kk_{e_{*}}\}\subset \N$ with $\iota_{1}<\cdots<\iota_{e_{*}}$ determining chains $\{\mathcal{C}_{\iota_{d}}^{\kk_{d}}\}_{d\in \{1,\cdots,e_{*}\}}$. We then consider blocks $\{\texttt{B}_{d}\}_{d\in \{0,\cdots,e_{*}-1\}}$. On all blocks, the previous results apply so \eqref{final.2}-\eqref{final.3} are verified. Since there is only a finite number of iteration chains, we have that $\{j\in \N\colon j\ge \iota_{e_{*}}+\kk_{e_{*}}+1\}\subset \mathcal{J}_{0}$, thus for all $\varsigma\in (0,r_{1}]\setminus \bigcup_{d\in \{0,\cdots,e_{*}-1\}}\texttt{B}_{d}\equiv (0,\nu_{\iota_{e_{*}}+\kk_{e_{*}}+1}r_{1}]$ we can find $j_{\varsigma}\ge\iota_{e_{*}}+\kk_{e_{*}}+1 $ so that $\nu_{j_{\varsigma}+1}r_{1}<\varsigma\le \nu_{j_{\varsigma}}r_{1}$ and 
\begin{eqnarray}\label{lms.35}
\mf{C}(\varsigma)\stackrel{\eqref{ls.134}}{\ge}\frac{H_{2}}{2^{3p+n+2}\varepsilon_{0}^{p/2}}\mf{K}_{\mf{s}}^{\frac{p}{2(p-1)}}.
\end{eqnarray}
Recalling that $\iota_{e_{*}}+\kk_{e_{*}}\not \in \mathcal{J}_{0}$, it is
\begin{flalign}
\frac{H_{2}}{\varepsilon_{0}^{p/2}}\mf{K}_{\mf{s}}^{\frac{p}{2(p-1)}}<\mf{C}(\nu_{\iota_{e_{*}}+\kk_{e_{*}}+1}r_{1})\stackrel{\eqref{ls.134}}{\le}2^{3p+n+2}\mf{C}(\nu_{\iota_{e_{*}}+\kk_{e_{*}}}r_{1})\stackrel{\eqref{ls.101}}{\le} \frac{2^{3p+n+2}H_{2}}{\varepsilon_{0}^{p/2}}\mf{K}_{\mf{s}}^{\frac{p}{2(p-1)}},\label{namehere}
\end{flalign}
therefore via \eqref{fpm.1} and \eqref{be} we can prove the validity of \eqref{lms.13}. Furthermore, \eqref{lms.35} assures the applicability of the same contradiction argument described in \emph{Step 5.5} eventually leading to \eqref{lms.1}, which in turn yields that the nondegenerate regime is stable in $(0,\nu_{\iota_{e_{*}}+\kk_{e_{*}}+1}r_{1}]$. This means that we can set $r_{d}:=\nu_{\iota_{e_{*}}}+\kk_{\iota_{e_{*}}+1}r_{1}$ and apply the estimates derived in \emph{Step 5.2}-\emph{Step 5.4} with $r_{d}$ instead of $r_{1}$ depending on which regime holds\footnote{It can start with the degenerate regime and then switch to the nondegenerate one, which is stable or it directly starts with the nondegenerate regime, stable over the whole interval $(0,\nu_{\iota_{e_{*}}+\kk_{e_{*}}+1}r_{1}]$.}, to conclude with the validity of \eqref{final.3} on $(0,\nu_{\iota_{e_{*}}+\kk_{e_{*}}+1}r_{1}]$ thus \eqref{final.2}-\eqref{final.3} hold on the whole interval $(0,r_{1}]$. Finally, if $\varsigma\in (r_{1},\rr]$, via standard argument we recover \eqref{lms.4}
and \eqref{final.2} is again confirmed.
\subsubsection*{Step 5.6: an infinite iteration chain} The occurrence of an infinite iteration chain can be described by introducing a number $e_{*}\in \N$ and a finite set of integers $\{\iota_{1},\cdots,\iota_{e_{*}}\}\subset \N$ and $\{\kk_{1},\cdots,\kk_{e_{*}}\}$ with $\{\kk_{1},\cdots,\kk_{e_{*}-1}\}\subset \N$ and $\kk_{e_{*}}\equiv \infty$ which determine a finite number of finite iteration chains $\{\mathcal{C}^{\iota_{d}}_{\kk_{d}}\}_{d\in \{1,\cdots,e_{*}-1\}}$ and an infinite iteration chain $\mathcal{C}^{\infty}_{\iota_{e_{*}}}$, that must be unique by maximality. Assume first that $e_{*}\ge 2$. For $d\in \{1,\cdots,e_{*}\}$ and with the same notation introduced at the beginning of \textbf{Step 5}, we can determine the intervals $\texttt{I}_{0}$, $\texttt{I}^{1}_{d}$, $\texttt{K}_{d}$ and $\texttt{I}^{2}_{d}$  with $\texttt{K}_{e_{*}}=(0,\nu_{\iota_{e_{*}}+1}r_{1}]$ and the corresponding blocks $\{\texttt{B}_{d}\}_{d\in \{0,\cdots,e_{*}-1\}}$ with $\texttt{B}_{e_{*}-1}= (0,\nu_{\iota_{e_{*}-1}+\kk_{e_{*}-1}+1}r_{1}]$. Blocks $\texttt{B}_{0},\cdots, \texttt{B}_{e_{*}-2}$ can be treated as done in \emph{Step 5.5} with resulting estimates \eqref{final.2} and \eqref{final.3}, while concerning the first two components $\texttt{I}^{2}_{e_{*}-1}$-$\texttt{I}^{1}_{e_{*}}$ of the final block $\texttt{B}_{e_{*}-1}$, we see that either $\texttt{I}^{2}_{e_{*}-1}=\{\emptyset\}$ and so we can conclude by means of \eqref{lms.10} or $\texttt{I}^{2}_{e_{*}-1}\not =\{\emptyset\}$, \eqref{tri.1} and \eqref{ls.101} validate estimates \eqref{lms.12}-\eqref{lms.13} (with $d=e_{*}-1$) and, as in \emph{Step 5.5}, \eqref{final.3} follows. On $\texttt{K}_{e_{*}}$ we see that \eqref{ls.102} holds for all $\varsigma\in (0,\nu_{\iota_{e_{*}}+1}r_{1}]\equiv \texttt{K}_{e_{*}}$ and this immediately leads to the conclusion. Finally, if $e_{*}=1$ there is only the infinite iteration chain $\mathcal{C}^{\infty}_{\iota_{1}}$. Indeed the only block to be considered is $\texttt{B}_{0}$ with $\texttt{K}_{1}=(0,\nu_{\iota_{1}+1}r_{1}]$. By means of \eqref{a.10}, on $\texttt{I}_{0}$-$\texttt{I}_{1}^{1}$, the estimates in \eqref{final.2} hold true, while \eqref{ls.102} covers all $\varsigma\in \texttt{K}_{1}$, and the validity of \eqref{final.2} and \eqref{final.3} is confirmed again.
\subsubsection*{Step 5.7: occurrence \emph{(}ii.\emph{)}} To complete the analysis of the decay of the excess functional $\mf{F}(\cdot)$ and of the controlled boundedness of the average in case $\eqref{ls.100}_{1}$ is in force, we only need to consider occurrence (\emph{ii}.) at the beginning of \textbf{Step 5}. Since $\mathcal{J}_{0}=\N\cup\{0\}$, the lower bound in \eqref{ls.102.1} holds for all $\varsigma\in (0,r_{1}]$, therefore we have three possibilities: $\eqref{ls.112}_{1}$ is in force, the degenerate regime is stable and \eqref{final.0} holds true, cf. \emph{Step 5.2}; or with $\eqref{ls.112}_{1}$ still valid there is a first exit time $r_{1}'$, see \emph{Step 5.3} but the validity of \eqref{ls.102.1} for all $\varsigma\in (0,r_{1}]$ assures that there cannot be a second exit time $r_{2}'$ as the contradiction argument leading to \eqref{lms.1} can now be extended to the full interval $(0,r_{1}]$, therefore the nondegenerate regime is stable and \eqref{final.1} holds, cf. \emph{Step 5.4}; or if $\eqref{ls.100}_{2}$ is satisfied, the nondegenerate regime is directly in force and, as before, the validity of \eqref{ls.102.1} for all $\varsigma\in (0,r_{1}]$ guarantees its stability thus yielding again \eqref{final.1}. In any case, \eqref{final.0} or \eqref{final.1} hold true.
\subsection*{Step 6: starting from small composite excess - $\eqref{ls.100}_{2}$ holds} We define the set $\mathcal{J}_{0}$ as
\begin{eqnarray*}
\mathcal{J}_{0}:=\left\{j\in \N\cup\{0\}\colon \mf{C}(\nu_{j}r_{1})\le \frac{H_{2}}{\varepsilon_{0}^{p/2}}\mf{K}_{\mf{s}}^{\frac{p}{2(p-1)}}\right\},
\end{eqnarray*}
which is nonempty by means of $\eqref{ls.100}_{2}$. If $\mathcal{J}_{0}\equiv \N\cup\{0\}$, by \eqref{ls.101}$_{2}$, that in this case holds for all $j\in \N\cup\{0\}$, we obtain \eqref{ls.102} for all $\varsigma\in (0,r_{1}]$, which, together with \eqref{55} and \eqref{ls.42.1} yields that
\begin{eqnarray}\label{final.4}
\left\{
\begin{array}{c}
\displaystyle
\ \mf{F}(\varsigma)\le 2^{n+8}\left(\frac{\varsigma}{\rr}\right)^{\alpha_{0}}\mf{F}(\rr)+\frac{2^{3p+n+2}}{\varepsilon_{0}}\left(\frac{H_{2}}{H_{1}}\right)^{2/p}\mf{K}_{\mf{s}}^{1/(p-1)}\\[15pt]\displaystyle 
\ A(\varsigma)\le M+1.
\end{array}
\right.
\end{eqnarray}
Next, we assume the existence of infinitely many finite iteration chains $\mathcal{C}^{\kk_{d}}_{\iota_{d}}$ corresponding to discrete sequence $\{\iota_{d}\}_{d\in \N}, \{\kk_{d}\}_{d\in \N}\subset \N$ for which \eqref{ls.104} holds and, as $d\in \N$ consider the disjoint intervals $\texttt{I}_{0}$-$\texttt{I}^{1}_{d}$-$\texttt{K}_{d}$-$\texttt{I}^{2}_{d}$ defined at the beginning of \textbf{Step 5} and the related blocks $\{\texttt{B}_{d}\}_{d\in \N\cup\{0\}}$. Let us examine what happens on a generic block $\texttt{B}_{d}$. Now, for any $\varsigma \in \texttt{I}_{0}$ or $\varsigma\in \texttt{I}^{2}_{d}$ we can find $j_{\varsigma}\in \{0,\cdots,\iota_{1}-1\}\subset \mathcal{J}_{0}$ or $j_{\varsigma}\in \{\iota_{d}+\kk_{d}+1,\cdots,\iota_{d+1}-1\}\subset \mathcal{J}_{0}$ so that $\nu_{j_{\varsigma}+1}r_{1}<\varsigma\le \nu_{j_{\varsigma}}r_{1}$ and
\begin{eqnarray}\label{lms.30}
\mf{C}(\varsigma)\stackrel{\eqref{ls.134}}{\le}2^{3p+n+3}\mf{C}(\nu_{j_{\varsigma}}r_{1})\le \frac{2^{3p+n+2}H_{2}}{\varepsilon_{0}^{p/2}}\mf{K}_{\mf{s}}^{\frac{p}{2(p-1)}}.
\end{eqnarray}
Similarly, for $\varsigma\in \texttt{I}^{1}_{d+1}$, i.e. $\nu_{\iota_{d+1}+1}r_{1}<\varsigma\le \nu_{\iota_{d+1}}r_{1}$ we can use \eqref{ls.134} and the fact that $\iota_{d+1}\in \mathcal{J}_{0}$ for all $d\in \N\cup\{0\}$ to conclude again with \eqref{lms.30}. Notice that in contrast with what happens in \textbf{Step 5}, now it does not matter if $\texttt{I}^{2}_{d}$ is empty or not, the fact that $\iota_{d+1}\in \mathcal{J}_{0}$ assures the validity of \eqref{lms.30} for all $\varsigma\in \texttt{I}_{d+1}^{1}$ and this is enough to proceed by checking what happens in $\texttt{K}_{d+1}$. To this aim, let us consider the disjoint union $\texttt{K}_{d+1}=\texttt{K}_{d+1}^{1}\cup\texttt{K}_{d+1}^{2}$, where it is $\texttt{K}_{d+1}^{1}:=(\nu_{\iota_{d+1}+\kk_{d+1}}r_{1},\nu_{\iota_{d+1}+1}r_{1}]$ and $\texttt{K}_{d+1}^{2}:=(\nu_{\iota_{d+1}+\kk_{d+1}+1}r_{1},\nu_{\iota_{d+1}+\kk_{d+1}}r_{1}]$. If $\varsigma\in \texttt{K}^{1}_{d+1}$, there is $j_{\varsigma}\in \{\iota_{d+1}+1,\cdots,\iota_{d+1}+\kk_{d+1}-1\} \subset \mathcal{C}_{\kk_{d+1}}^{\iota_{d+1}}$ so that $\nu_{j_{\varsigma}+1}r_{1}<\varsigma\le \nu_{j_{\varsigma}}r_{1}$,
\begin{eqnarray}\label{lms.31}
\mf{C}(\varsigma)\stackrel{\eqref{ls.134}}{\ge}\frac{1}{2^{3p+n+2}}\mf{C}(\nu_{j_{\varsigma}+1}r_{1})>\frac{H_{2}}{2^{3p+n+2}\varepsilon_{0}^{p/2}}\mf{K}_{\mf{s}}^{\frac{p}{2(p-1)}}
\end{eqnarray}
and, since $\iota_{d+1}\in \mathcal{J}_{0}$ and $\iota_{d+1}+1\not \in \mathcal{J}_{0}$ is it
\begin{eqnarray}\label{lms.32}
\frac{H_{2}}{\varepsilon_{0}^{p/2}}\mf{K}_{\mf{s}}^{\frac{p}{2(p-1)}}<\mf{C}(\nu_{\iota_{d+1}+1}r_{1})\stackrel{\eqref{ls.134}}{\le}2^{3p+n+2}\mf{C}(\nu_{\iota_{d+1}}r_{1})\le \frac{2^{3p+n+2}H_{2}}{\varepsilon_{0}^{p/2}}\mf{K}_{\mf{s}}^{\frac{p}{2(p-1)}}. 
\end{eqnarray}
It may be that $\texttt{K}_{d+1}^{1}=\left\{\emptyset\right\}$, i.e. $\kk_{d+1}=1$, but in this case we can combine \eqref{lms.30} with \eqref{ls.134} to show that if $\varsigma\in \texttt{B}_{0}$ and $\texttt{K}^{1}_{1}=\left\{\emptyset\right\}$ or $\varsigma\in \texttt{B}_{d}$ with $\texttt{K}^{1}_{d+1}=\left\{\emptyset\right\}$ it is $\mf{C}(\varsigma)\le 2^{6p+2n+4}H_{2}\varepsilon_{0}^{-p/2}\mf{K}_{\mf{s}}^{\frac{p}{2(p-1)}}$; thus if $\texttt{K}_{d+1}^{1}=\left\{\emptyset\right\}$ for some $d\in \N\cup\{0\}$, there is nothing to prove on the corresponding block $\texttt{B}_{d}$. By \eqref{lms.32}, \eqref{fpm.1}, \eqref{d.3} and \eqref{be}, we deduce that \eqref{lms.13} holds in this case as well with $r_{d}:=\nu_{\iota_{d+1}+1}r_{1}$, which means that we can repeat the procedure described in \emph{Step 5.5}, from display \eqref{lms.17} to inequality \eqref{lms.20}; of course now $\texttt{K}_{d+1}^{1}$ replaces $\texttt{I}^{2}_{d}$. This in particular assures the validity of estimates \eqref{lms.17}-\eqref{lms.18} and \eqref{lms.19}-\eqref{lms.20} (with $\nu_{\iota_{d+1}+\kk_{d+1}}r_{1}$ instead of $\nu_{\iota_{d+1}}r_{1}$). Finally, via \eqref{lms.19}-\eqref{lms.20} and \eqref{ls.42.1} we can transfer decay estimates to all $\varsigma\in \texttt{K}_{d+1}^{2}$, thus recovering the estimates in \eqref{final.3}. When there is only a finite number of finite iteration chains, say $e_{*}\in \N$, we can find finite subsets $\{\iota_{1},\cdots,\iota_{e_{*}}\},\{\kk_{1},\cdots,\kk_{e_{*}}\}\subset \N$ with $\iota_{1}<\cdots<\iota_{e_{*}}$ determining chains $\mathcal{C}^{\kk_{d}}_{\iota_{d}}$. For $d\in \{1,\cdots,e_{*}\}$, we consider intervals $\texttt{I}_{0}$-$\texttt{I}_{d}^{1}$-$\texttt{K}_{d}^{1}$-$\texttt{K}_{d}^{2}$-$\texttt{I}_{d}^{2}$ and blocks $\{\texttt{B}_{d}\}_{d\in \{0,\cdots,e_{*}-1\}}$. On each of such blocks, the results obtained above apply and \eqref{final.3} holds true. The fact that there is only a finite number of chains yields that $\{j\in \N\colon j\ge \iota_{e_{*}}+\kk_{e_{*}}+1\}\subset \mathcal{J}_{0},$ therefore, whenever $\varsigma\in (0,r_{1}]\setminus \bigcup_{d\in \{0,\cdots,e_{*}-1\}}\texttt{B}_{d}\equiv (0,\nu_{\iota_{e_{*}}+\kk_{e_{*}}+1}r_{1}]$ there is $j_{\varsigma}\ge \iota_{e_{*}}+\kk_{e_{*}}+1$ so that $\nu_{j_{\varsigma}+1}r_{1}<\varsigma\le \nu_{j_{\varsigma}}r_{1}$ and $\mf{C}(\varsigma)\le 2^{3p+n+2}\varepsilon_{0}^{-p/2}H_{2}\mf{K}_{\mf{s}}^{\frac{p}{2(p-1)}},$
by means of \eqref{ls.134}. 
\subsubsection*{Step 6.1: an infinite iteration chain} As done in \emph{Step 5.6}, we introduce a number $e_{*}\in \N$ and a finite set of integers $\{\iota_{1},\cdots,\iota_{e_{*}}\}\subset \N$ and $\{\kk_{1},\cdots,\kk_{e_{*}}\}$ with $\{\kk_{1},\cdots,\kk_{e_{*}-1}\}\subset \N$ and $\kk_{e_{*}}\equiv \infty$ which determine a finite number of finite iteration chains $\{\mathcal{C}^{\iota_{d}}_{\kk_{d}}\}_{d\in \{1,\cdots,e_{*}-1\}}$ and an infinite iteration chain $\mathcal{C}^{\infty}_{\iota_{e_{*}}}$, unique by maximality. Assume first that $e_{*}\ge 2$. We determine intervals $\texttt{I}_{0}$, $\texttt{I}^{1}_{d}$, $\texttt{K}_{d}^{1}$, $\texttt{K}^{2}_{d}$, $\texttt{I}^{2}_{d}$ and blocks $\{\texttt{B}_{d}\}_{d\in \{0,\cdots,e_{*}-1\}}$ with $\texttt{K}_{e_{*}}=(0,\nu_{\iota_{e_{*}}+1}r_{1}]$ and $\texttt{B}_{e_{*}-1}=(0,\nu_{\iota_{e_{*}-1}+\kk_{e_{*}-1}+1}r_{1}]$. On blocks $\texttt{B}_{0}$-$\texttt{B}_{e_{*}-2}$ the results obtained in the first part of \textbf{Step 6} apply and \eqref{final.3} holds true. Concerning the first two components $\texttt{I}_{e_{*}-1}^{2}$-$\texttt{I}_{e_{*}}^{1}$ of the final block $\texttt{B}_{e_{*}-1}$, regardless on whether $\texttt{I}^{2}_{e_{*}-1}$ is empty or not, we have that \eqref{lms.30} is valid. Next, looking at $\texttt{K}_{e_{*}}$, we notice that by the definition of $\mathcal{C}_{\iota_{e_{*}}}^{\infty}$ it is $\iota_{e_{*}}\in \mathcal{J}_{0}$ and if $j\ge \iota_{e_{*}}+1$ then $j\not \in \mathcal{J}_{0}$. Setting this time $r_{d}:=\nu_{\iota_{e_{*}}+1}r_{1}$ we get that
\begin{eqnarray}\label{ls.34}
\left\{
\begin{array}{c}
\displaystyle 
\ \mf{C}(r_{d})\stackrel{\eqref{ls.134}}{\le}2^{3p+n+2}\mf{C}(\nu_{\iota_{e_{*}}}r_{1})\le \frac{2^{3p+n+2}H_{2}}{\varepsilon_{0}^{p/2}}\mf{K}_{\mf{s}}^{\frac{p}{2(p-1)}}\\[15pt]\displaystyle
\ \frac{H_{2}}{2^{3p+n+2}\varepsilon_{0}^{p/2}}\mf{K}_{\mf{s}}^{\frac{p}{2(p-1)}}\stackrel{\eqref{ls.134}}{<}\mf{C}(\varsigma)\quad \mbox{for all} \ \ \varsigma\in (0,\nu_{\iota_{e_{*}}+1}r_{1}],
\end{array}
\right.
\end{eqnarray}
therefore, by means of \eqref{fpm.1}, \eqref{d.3} and \eqref{be} we obtain that \eqref{lms.13} holds. Moreover, as already observed in \emph{Step 5.5}, $\eqref{ls.34}_{2}$ guarantees that the nondegenerate regime is stable on $(0,r_{d}]$, so we can proceed as in \emph{Step 5.2}-\emph{Step 5.4} to conclude with 
\begin{eqnarray}\label{nina}
\left\{
\begin{array}{c}
\displaystyle 
\ \mf{F}(\varsigma)\le \frac{2^{3p+n+12}c_{3}}{(\tau\theta)^{n/p+1}\varepsilon_{0}}\left(\frac{H_{2}}{H_{1}}\right)^{2/p}\mf{K}_{\mf{s}}^{1/(p-1)}\\[15pt]\displaystyle
\ A(\varsigma)\le 1+2^{5}.
\end{array}
\right.
\end{eqnarray}
Finally, if $e_{*}=1$ there is only the infinite iteration chain $\mathcal{C}^{\infty}_{\iota_{1}}$. Indeed the only block to be considered is $\texttt{B}_{0}$ with $\texttt{K}_{1}=(0,\nu_{\iota_{1}+1}r_{1}]$. For all $\varsigma\in \texttt{I}_{0}\cup \texttt{I}_{1}^{1}$, we have that $\mf{C}(\varsigma)\le 2^{3p+n+2}\varepsilon_{0}^{-p/2}H_{2}\mf{K}_{\mf{s}}^{\frac{p}{2(p-1)}}$
and, since $\iota_{1}\in \mathcal{J}_{0}$ and $j\ge \iota_{1}+1 \ \Longrightarrow j\not \in \mathcal{J}_{0}$ we get \eqref{ls.34} with $e_{*}=1$, which means that we can recover \eqref{nina} exactly as explained for the general case $e_{*}\ge 2$. To summarize, whenever starting from small composite excess,
\begin{flalign}\label{final.5}
\left\{
\begin{array}{c}
\displaystyle 
\ \mf{F}(\varsigma)\le  2^{n+8}\left(\frac{\varsigma}{\rr}\right)^{\alpha_{0}}\mf{F}(\rr)+ \frac{c_{3}2^{30+3p+5n}}{\varepsilon_{0}(\tau\theta)^{n/p+1}}\left(\frac{H_{2}}{H_{1}}\right)^{2/p}\mf{K}_{\mf{s}}^{1/(p-1)}\\[15pt]\displaystyle
\ A(\varsigma)\le 4+2^{5}(M+1)
\end{array}
\right.
\end{flalign}
holds true for all $\varsigma\in (0,\rr]$.
\subsection*{Step 7: conclusions} Setting
\eqn{c4c5}
$$
c_{4}:=\frac{2^{32+4n}}{(\tau\theta)^{n/p+1}},\qquad \qquad \quad c_{5}:=\frac{c_{3}2^{3p+30+5n}}{\varepsilon_{0}(\tau\theta)^{n/p+1}}\left(\frac{H_{2}}{H_{1}}\right)^{2/p},
$$
thus fixing dependencies $c_{4},c_{5}\equiv c_{4},c_{5}(\textnormal{\texttt{data}}_{\textnormal{c}},M^{q-p})$ and collecting estimates \eqref{final.0}, \eqref{final.1}, \eqref{final.2}, \eqref{final.3}, \eqref{final.4}, \eqref{final.5} we obtain \eqref{68.1}-\eqref{68} and the proof is complete.

 \end{proof}
 
Now we want to see if the content of Theorem \ref{t.ex} can be replicated on smaller scales. To do so, we need to assume that
\begin{eqnarray}\label{fpm}
\mathbf{I}_{1,m}^{f}(x,\sigma)\to 0\qquad \mbox{locally uniformly in} \ \ x\in \Omega.
\end{eqnarray}
We then have
\begin{proposition}\label{ret}
Under assumptions \eqref{assf}-\eqref{p0}, \eqref{f} and \eqref{fpm}, let $u\in W^{1,p}(\Omega,\mathbb{R}^{N})$ be a local minimizer of \eqref{exfun}, $x_{0}\in \mathcal{R}_{u}$ be a point and $M\equiv M(x_{0})$ be the positive constant in \eqref{ru.0}. If $\hat{\varepsilon}\equiv \hat{\varepsilon}(\textnormal{\texttt{data}}_{\textnormal{c}},M^{q-p})$ is the small parameter in \eqref{be}, $\hat{\rr}\equiv \hat{\rr}(\textnormal{\texttt{data}}_{\textnormal{c}},M^{q-p},f(\cdot))$ is the threshold radius determined by \eqref{fpm.1} and $\bar{\varepsilon}\equiv \hat{\varepsilon}$, $\bar{\rr}\equiv \hat{\rr}$ in \eqref{ru.0}, there is an open neighborhood $B(x_{0})\subset \mathcal{R}_{u}$ and a positive radius $\rr_{x_{0}}\equiv \rr_{x_{0}}(\textnormal{\texttt{data}}_{\textnormal{c}},M^{q-p},f(\cdot))$ such that
\begin{flalign}\label{ret.1}
\mf{F}(u;B_{s}(x))\le c_{6}\left(\frac{s}{\varsigma}\right)^{\alpha_{0}}\mf{F}(u;B_{\varsigma}(x))+c_{5}\sup_{\sigma\le \varsigma/4}\mf{K}\left[\left(\sigma^{m}\mint_{B_{\sigma}(x)}\snr{f}^{m} \ \dx\right)^{1/m}\right]^{1/(p-1)}
\end{flalign}
and
\begin{eqnarray}\label{ret.1.1}
\snr{(Du)_{B_{\varsigma}(x)}}<2^{6}(M+1)
\end{eqnarray}
hold for any $x\in B(x_{0})$ and all $0<s\le \varsigma\le \rr_{x_{0}}$, with $c_{6}\equiv c_{6}(\textnormal{\texttt{data}}_{\textnormal{c}},M^{q-p})$ and $c_{5}$ as in \eqref{c4c5}.
\end{proposition}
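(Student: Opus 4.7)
The plan is to upgrade the single-point result of Theorem \ref{t.ex} into a uniform statement on a neighborhood of $x_{0}$ and across intermediate scales by two separate applications of that theorem, the second with an enlarged constant controlling the gradient average. Starting from the point $x_{0}\in\mathcal{R}_{u}$, we have $M$, $\hat{\varepsilon}$, $\hat{\rr}$, and a radius $\rr\in(0,\hat{\rr}]$ such that \eqref{55} is satisfied. Arguing exactly as in the derivation of \eqref{70}, by the absolute continuity of the Lebesgue integral together with the openness of $\mathcal{R}_{u}$, we find an open neighborhood $B(x_{0})\Subset\Omega$ and an intermediate radius $\rr_{*}\in(0,\hat{\rr}]$ with $\snr{(Du)_{B_{\rr_{*}}(x)}}<M$ and $\mf{F}(u;B_{\rr_{*}}(x))<\hat{\varepsilon}$ for every $x\in B(x_{0})$. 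A further shrinking of $B(x_{0})$, exploiting the locally uniform decay \eqref{fpm}, lets us pick $\rr_{x_{0}}\le\rr_{*}$ so that the analogue of \eqref{fpm.1} holds with $x$ in place of $x_{0}$ uniformly for all $x\in B(x_{0})$.

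With this setup, Theorem \ref{t.ex} applied at each $x\in B(x_{0})$ with initial radius $\rr_{x_{0}}$ yields \eqref{ret.1.1} at once via \eqref{68.1}, and furthermore gives the one-scale decay
\begin{equation*}
\mf{F}(u;B_{\varsigma}(x))\le c_{4}\Bigl(\frac{\varsigma}{\rr_{x_{0}}}\Bigr)^{\alpha_{0}}\mf{F}(u;B_{\rr_{x_{0}}}(x))+c_{5}\sup_{\sigma\le\rr_{x_{0}}/4}\mf{K}[\mf{S}(\sigma)]^{1/(p-1)}
\end{equation*}
for all $\varsigma\le\rr_{x_{0}}$. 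To obtain the stronger two-scale decay \eqref{ret.1}, the strategy is to re-invoke Theorem \ref{t.ex} at the same point $x$, but now with $B_{\varsigma}(x)$ playing the role of the starting ball and with the enlarged constant $M':=2^{6}(M+1)$ in place of $M$. The first hypothesis $\snr{(Du)_{B_{\varsigma}(x)}}<M'$ is precisely \eqref{68.1} from the first application. The second hypothesis $\mf{F}(u;B_{\varsigma}(x))<\hat{\varepsilon}(M')$, where $\hat{\varepsilon}(M')$ is the quantity prescribed by \eqref{be} with $M$ replaced by $M'$, follows from the display above as long as $\hat{\varepsilon}$ and $\rr_{x_{0}}$ were originally calibrated so that both $c_{4}\hat{\varepsilon}$ and $c_{5}\sup_{\sigma\le\rr_{x_{0}}/4}\mf{K}[\mf{S}(\sigma)]^{1/(p-1)}$ sit below half of $\hat{\varepsilon}(M')$. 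The crucial point is that since $M'$ is a fixed multiple of $M+1$, all the constants $\hat{\varepsilon}(M')$, $\hat{\rr}(M')$, $c_{4}(M')$, $c_{5}(M')$ produced by the machinery of Sections \ref{ndeg.1}--\ref{ex} depend on no more than $\textnormal{\texttt{data}}_{\textnormal{c}}$ and $M^{q-p}$.

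The main obstacle is the calibration of this nested chain of smallness conditions across the two applications of Theorem \ref{t.ex}. The parameters must be fixed in the correct order: first $M$, then the amplified constant $M'$ and its associated threshold $\hat{\varepsilon}(M')$, then a sufficiently small original $\hat{\varepsilon}$ together with $\hat{\rr}$ and $\rr_{x_{0}}$, so that the decay already delivered by the first invocation of Theorem \ref{t.ex} at scale $\rr_{x_{0}}$ automatically meets the smallness requirement of the second invocation at every intermediate scale $\varsigma$; the locally uniform vanishing \eqref{fpm} on $B(x_{0})$ is what allows the inhomogeneous tail in \eqref{68} to be tamed uniformly. Once this calibration is in place, the second invocation of Theorem \ref{t.ex} delivers exactly the two-scale estimate
\begin{equation*}
\mf{F}(u;B_{s}(x))\le c_{4}(M')\Bigl(\frac{s}{\varsigma}\Bigr)^{\alpha_{0}}\mf{F}(u;B_{\varsigma}(x))+c_{5}(M')\sup_{\sigma\le\varsigma/4}\mf{K}[\mf{S}(\sigma)]^{1/(p-1)}
\end{equation*}
for every $x\in B(x_{0})$ and $0<s\le\varsigma\le\rr_{x_{0}}$; absorbing the factor relating $c_{5}(M')$ to $c_{5}$ and renaming $c_{4}(M')$ as $c_{6}$ (both of these depending only on $\textnormal{\texttt{data}}_{\textnormal{c}}$ and $M^{q-p}$) produces \eqref{ret.1}.
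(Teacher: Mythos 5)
Your plan correctly identifies the two core ingredients — a uniform neighborhood version of Theorem \ref{t.ex} and a second application of that theorem starting from the smaller ball $B_{\varsigma}(x)$ with the amplified bound $2^{6}(M+1)$ on the average — but there are two genuine gaps in the execution.

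First, the claim that after calibration "the second invocation of Theorem \ref{t.ex} delivers exactly the two-scale estimate for every $0<s\le\varsigma\le\rr_{x_{0}}$" does not follow. The one-scale decay \eqref{68} reads $\mf{F}(u;B_{\varsigma}(x))\le c_{4}(\varsigma/\rr_{x_{0}})^{\alpha_{0}}\mf{F}(u;B_{\rr_{x_{0}}}(x))+c_{5}\mf{K}_{\mf{s}}^{1/(p-1)}$ with $c_{4}>1$, so at a generic scale $\varsigma\in(0,\rr_{x_{0}}]$ you only know $\mf{F}(u;B_{\varsigma}(x))<c_{4}\hat\varepsilon+\cdots$, which is not below $\hat\varepsilon$. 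This is precisely why the paper introduces a strictly smaller radius $r_{x_{0}}:=(2^{10}c_{4})^{-1/\alpha_{0}}\rr_{x_{0}}$: only for $\varsigma\le r_{x_{0}}$ does the geometric factor beat $c_{4}$ and yield $\mf{F}(u;B_{\varsigma}(x))<\hat\varepsilon/2$, so Theorem \ref{t.ex} can be re-applied. The remaining regimes $0<s\le r_{x_{0}}<\varsigma\le\rr_{x_{0}}$ and $r_{x_{0}}<s\le\varsigma\le\rr_{x_{0}}$ are then dealt with by elementary interpolation using \eqref{ls.42.1}, absorbing the resulting factors into the new constant $c_{6}$. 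Your proposal omits this three-case splitting entirely and therefore does not actually prove \eqref{ret.1} at the intermediate scales.

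Second, the "calibration" of $\hat\varepsilon$ so that $c_{4}\hat\varepsilon$ sits below $\hat\varepsilon(M')$ is at odds with the statement, which fixes $\hat\varepsilon$ to be the quantity in \eqref{be}; shrinking it changes the set $\mathcal{R}_{u}$ and weakens the proposition. No recalibration is needed: the paper's Remark \ref{pvmo.3} (second bullet) observes that all the parameters emanating from Propositions \ref{p1}--\ref{p2} are computed under the hypothesis $\snr{(Du)_{B_{\rr}}}\le 40000(M+1)$, so replacing $M$ by $2^{6}(M+1)<40000(M+1)$ leaves $\hat\varepsilon$, $\hat\rr$, $c_{4}$, $c_{5}$ literally unchanged — not just with the same qualitative dependence. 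This observation is what makes the second invocation of Theorem \ref{t.ex} legitimate with the \emph{same} smallness threshold, rather than a recalibrated one. Your argument reaches the right conclusion about parameter dependence but misses this sharper point, which is the mechanism that closes the loop without altering $\hat\varepsilon$.
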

\begin{proof}
The arguments presented in Section \ref{rs} yield that $\mathcal{R}_{u}$ is an open set and, via the absolute continuity of Lebesgue integral, also that if $x_{0}\in \mathcal{R}_{u}$, there is an open neighborhood $B(x_{0})\subset \mathcal{R}_{u}$ for which \eqref{70} holds with $\bar{\varepsilon}\equiv\hat{\varepsilon}$ and, via \eqref{fpm}, \eqref{fpm.1} is uniformly verified in $B(x_{0})$. This means that Theorem \ref{t.ex} applies and estimates
\begin{flalign}\label{ret.2}
\mf{F}(u;B_{\varsigma}(x))\le c_{4}\left(\frac{\varsigma}{\rr_{x_{0}}}\right)^{\alpha_{0}}\mf{F}(u;B_{\rr_{x_{0}}}(x))+c_{5}\sup_{\sigma\le \rr_{x_{0}}/4}\mf{K}\left[\left(\sigma^{m}\mint_{B_{\sigma}(x)}\snr{f}^{m} \dx\right)^{1/m}\right]^{1/(p-1)}
\end{flalign}
and $\snr{(Du)_{B_{\varsigma}(x)}}<2^{6}(M+1)$ hold true for all $x\in B(x_{0})$ so \eqref{ret.1.1} is proven. Next, we define $r_{x_{0}}:=(2^{10}c_{4})^{-1/\alpha_{0}}\rr_{x_{0}}$. Notice that via \eqref{70} with $\bar{\varepsilon}\equiv\hat{\varepsilon}$ and $\bar{\rr}\equiv \hat{\rr}$, \eqref{fpm.1} - on the whole $B(x_{0})$ - and \eqref{ret.2}, for all $\varsigma\in (0,r_{x_{0}}]$ it is $\mf{F}(u;B_{\varsigma}(x))<\hat{\varepsilon}/2$, therefore by \eqref{ret.1.1} - keep in mind Remark \ref{pvmo.3} below - we see that Theorem \ref{t.ex} applies with $B_{\varsigma}(x)$ replacing $B_{\rr_{x_{0}}}(x)$ so
\begin{flalign}\label{ret.4}
\mf{F}(u;B_{s}(x))\le c_{4}\left(\frac{s}{\varsigma}\right)^{\alpha_{0}}\mf{F}(u;B_{\varsigma}(x))+c_{5}\sup_{\sigma\le \varsigma/4}\mf{K}\left[\left(\sigma^{m}\mint_{B_{\sigma}(x)}\snr{f}^{m} \dx\right)^{1/m}\right]^{1/(p-1)}
\end{flalign}
holds for all $s,\varsigma$ so that $0<s\le \varsigma\le r_{x_{0}}$. On the other hand, if $0<s\le r_{x_{0}}<\varsigma\le \rr_{x_{0}}$ we can apply \eqref{ret.4} with $\varsigma= r_{x_{0}}$ and \eqref{ls.42.1} with $\nu=(2^{10}c_{4})^{-1/\alpha_{0}}$ and $\kk=0$ to get
\begin{eqnarray*}
\mf{F}(u;B_{s}(x))&\le&c_{4}\left(\frac{s}{r_{x_{0}}}\right)^{\alpha_{0}}\mf{F}(u;B_{r_{x_{0}}}(x))+c_{5}\sup_{\sigma\le r_{x_{0}}/4}\mf{K}\left[\left(\sigma^{m}\mint_{B_{\sigma}(x)}\snr{f}^{m} \dx\right)^{1/m}\right]^{1/(p-1)}\nonumber \\
&\le&2^{16+\frac{10n}{p\alpha_{0}}}c_{4}^{2+\frac{np}{\alpha_{0}}}\left(\frac{s}{\varsigma}\right)^{\alpha_{0}}\mf{F}(u;B_{\varsigma}(x))+c_{5}\sup_{\sigma\le \varsigma/4}\mf{K}\left[\left(\sigma^{m}\mint_{B_{\sigma}(x)}\snr{f}^{m} \dx\right)^{1/m}\right]^{1/(p-1)}.
\end{eqnarray*}
Finally, when $0<r_{x_{0}}<s\le \varsigma\le \rr_{x_{0}}$ we see that $(2^{10}c_{4})^{-1/\alpha_{0}}\le s/\varsigma\le (2^{10}c_{4})^{1/\alpha_{0}}$, therefore \eqref{ls.42.1} yields that
\begin{eqnarray*}
\mf{F}(u;B_{s}(x))\le 2^{16+\frac{10n}{p\alpha_{0}}}c_{4}^{1+\frac{n}{p\alpha_{0}}}\left(\frac{s}{\varsigma}\right)^{\alpha_{0}}\mf{F}(u;B_{\varsigma}(x)).
\end{eqnarray*}
Setting $c_{6}:=2^{16+\frac{10n}{p\alpha_{0}}}c_{4}^{2+\frac{n}{p\alpha_{0}}}$ and merging the three previous displays, we obtain \eqref{ret.1}. The proof is complete.
\end{proof}
\begin{remark}\label{retrem}
\emph{We stress that if we just want to reiterate \eqref{68} to small scales without extending it to all points belonging to open neighborhoods of $x_{0}\in \mathcal{R}_{u}$, we can weaken assumption \eqref{fpm} to \eqref{fpm.2} to obtain
\begin{flalign}\label{retes}
\mf{F}(u;B_{s}(x_{0}))\le c_{6}\left(\frac{s}{\varsigma}\right)^{\alpha_{0}}\mf{F}(u;B_{\varsigma}(x))+c_{5}\sup_{\sigma\le \varsigma/4}\mf{K}\left[\left(\sigma^{m}\mint_{B_{\sigma}(x_{0})}\snr{f}^{m} \ \dx\right)^{1/m}\right]^{1/(p-1)}
\end{flalign}
for all $s,\varsigma$ so that $0<s\le \varsigma\le \rr$, where $\rr$ is the radius on which \eqref{55} originally holds.
}
\end{remark}
As a byproduct of the proof of Theorem \ref{t.ex} we find the following intermediate result.
\begin{corollary}\label{bmo}
Under assumptions \eqref{assf}-\eqref{p0}, \eqref{f} and \eqref{fpm.2}, let $u\in W^{1,p}(\Omega,\mathbb{R}^{N})$ be a local minimizer of \eqref{exfun}, $x_{0}\in \mathcal{R}_{u}$ be a point and $M\equiv M(x_{0})$ be the positive constant in \eqref{ru.0}. There are parameters $\hat{\varepsilon}\equiv \hat{\varepsilon}(\textnormal{\texttt{data}}_{\textnormal{c}},M^{q-p})$ and $\hat{\rr}\equiv \hat{\rr}(\textnormal{\texttt{data}}_{\textnormal{c}},f(\cdot),M^{q-p})$ such that if $\bar{\varepsilon}\equiv \hat{\varepsilon}$ and $\bar{\rr}\equiv \hat{\rr}$ in \eqref{ru.0}, then
\begin{eqnarray}\label{bmo.1}
\sup_{\sigma\le \rr}\mf{F}(u;B_{\sigma}(x_{0}))\le c_{7}\hat{\varepsilon},
\end{eqnarray}
where $c_{7}:=c_{6}+c_{5}$, $c_{7}\equiv c_{7}(\textnormal{\texttt{data}}_{\textnormal{c}},M^{q-p})$. 
\end{corollary}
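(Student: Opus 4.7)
The proof will be a direct consequence of the decay estimate already established in Theorem \ref{t.ex}, combined with the smallness conditions that characterize $\mathcal{R}_u$ under the chosen values $\bar\varepsilon \equiv \hat\varepsilon$ and $\bar\rr \equiv \hat\rr$. In particular, nothing new beyond the reiteration observed in Remark \ref{retrem} is needed, so this corollary is really just a packaging statement.

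The plan is as follows. First, since $x_0 \in \mathcal{R}_u$ with $\bar\varepsilon = \hat\varepsilon$ and $\bar\rr = \hat\rr$, the defining property \eqref{ru.0} produces a radius $\rr \in (0, \hat\rr]$ such that the base-scale smallness \eqref{55} holds, namely $\snr{(Du)_{B_\rr(x_0)}} < M$ and $\mf{F}(u; B_\rr(x_0)) < \hat\varepsilon$. At this point I would invoke the reiteration estimate \eqref{retes} from Remark \ref{retrem} (which requires only the pointwise finiteness \eqref{fpm.2} that is in force here) with the choice $s = \sigma$ and $\varsigma = \rr$. This yields, for every $0 < \sigma \le \rr$,
\begin{eqnarray*}
\mf{F}(u; B_\sigma(x_0)) \le c_6 \left(\frac{\sigma}{\rr}\right)^{\alpha_0} \mf{F}(u; B_\rr(x_0)) + c_5 \sup_{\sigma' \le \rr/4} \mf{K}\left[\left((\sigma')^m \mint_{B_{\sigma'}(x_0)} \snr{f}^m \dx\right)^{1/m}\right]^{1/(p-1)}.
\end{eqnarray*}

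Next, I would bound each of the two pieces. For the first term, I use $(\sigma/\rr)^{\alpha_0} \le 1$ together with $\mf{F}(u; B_\rr(x_0)) < \hat\varepsilon$ to get a contribution of at most $c_6 \hat\varepsilon$. For the second term, I invoke the calibration \eqref{fpm.1} of the threshold radius $\hat\rr$ made in Section \ref{ex}: since $\rr \le \hat\rr$, the interpolation bound \eqref{d.3} (which is exactly the derivation of the sup estimate from \eqref{fpm.1}) gives $\sup_{\sigma' \le \rr/4} \mf{K}[\cdot]^{1/(p-1)} < \hat\varepsilon$, so the second term is bounded by $c_5 \hat\varepsilon$. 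Adding these two contributions produces $\mf{F}(u; B_\sigma(x_0)) \le (c_6 + c_5)\hat\varepsilon = c_7 \hat\varepsilon$, uniformly in $\sigma \in (0, \rr]$, which is exactly \eqref{bmo.1}.

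There is no genuine obstacle here: the heavy lifting has been done in Theorem \ref{t.ex} and in the calibration of $\hat\varepsilon, \hat\rr$ carried out in Section \ref{ex}. The only minor point of care is checking that the sup in the right-hand side of \eqref{retes} is indeed controlled by $\hat\varepsilon$ at the specific scale $\rr$, which is immediate from \eqref{fpm.1} since $\rr \le \hat\rr$.
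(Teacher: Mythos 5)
Your proof is correct and follows essentially the same route the paper intends: invoke the decay estimate \eqref{retes} from Remark \ref{retrem} (equivalently, Theorem \ref{t.ex}) at the base scale $\rr$ guaranteed by \eqref{ru.0}, bound the geometric factor by $1$, and control the potential term via the calibration \eqref{fpm.1}--\eqref{d.3} built into the choice of $\hat{\rr}$. This is exactly the ``byproduct'' reading the paper indicates, and the constant $c_{7}=c_{6}+c_{5}$ falls out as you describe.
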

Let us compare our Corollary \ref{bmo} with \cite[Proposition 5.1]{kumi}, where the authors derive a $BMO$ result analogous to \eqref{bmo.1} for nonhomogeneous systems with standard $p$-growth. Given the uniform ellipticity of the differential operator considered there, the authors obtain pointwise $BMO$-estimates under weaker assumptions than \eqref{fpm.2}: precisely they only need to impose the Morrey type constraint
\eqn{wa}
$$\rr^{m}\mint_{B_{\rr}(x_{0})}\snr{f}^{m} \ \dx\lesssim \varepsilon,$$ which is guaranteed for instance if $f$ belongs to the Marcinkiewicz space $L(n,\infty)$, see \cite[Theorem 1.5]{kumi}. The smallness condition \eqref{wa} is in turn implied by \eqref{fpm.2}, cf. \eqref{fpm.1} and \eqref{d.3}, but the corresponding natural function space criterion $f\in L(n,\infty)$ is too weak to assure the validity of \eqref{fpm.2}. Similar results to \cite[Theorem 1.5, Proposition 5.1]{kumi} also hold in the quasiconvex setting when the integrand $F(\cdot)$ features standard $p$-growth as there is no need to provide uniform control on the gradient average during the iterative procedure, cf. \cite{dumi} (of course, when $p=q$, $\mathcal{F}(\cdot)\equiv \bar{\mathcal{F}}(\cdot)$). On the other hand, in the genuine $(p,q)$-growth case (i.e.: if $p<q$), the nonuniform Legendre-Hadamard ellipticity \eqref{ellr} of $F(\cdot)$ forces all the constants involved in the process of linearization to feature dependencies on $M^{q-p}$, therefore the whole machinery designed in the proof of Theorem \ref{t.ex}, crucially relying on the finiteness of $\mathbf{I}^{f}_{1,m}(\cdot)$, is fundamental for ruling out the possible blow up of the bounding constants appearing in \eqref{68}. This is not just a technical issue: indeed, at the level of partial regularity, the controlled boundedness of the average can be seen as a "linearized" counterpart of the boundedness of the $L^{\infty}$-norm of the gradient of minima of convex, nonuniformly elliptic variational integrals, which is in turn guaranteed by the boundedness of $\mathbf{I}^{f}_{1,m}(\cdot)$, see \cite{bm,demi1}.

\begin{remark}
\emph{It is worth mentioning that the above discussion is no longer true within the (slightly) weakened framework described in Remark \ref{assumptions}. In fact, with \eqref{assf.1.1}, \eqref{mm.1} and \eqref{mm.2} replacing $\eqref{assf}_{2}$, $\eqref{assf}_{3}$ and $\eqref{assf}_{4}$ respectively, all the bounding constants are nondecreasing, unknown functions of $M$ and this dependency does not necessarily disappear when $p=q$, so the boundedness of $\mathbf{I}^{f}_{1,m}(\cdot)$ is required to prevent the blow-up of such constants.}
\end{remark}

\subsection{VMO estimates}\label{pt1} 
Let $x_{0}\in \mathcal{R}_{u}$ be any point, with $M\equiv M(x_{0})$ being the positive constant in \eqref{ru.0} and $\bar{\varepsilon},\bar{\rr}$ still to be determined, assume \eqref{fpm} 
and, for reasons that will be clear in a few lines, introduce constants
\begin{flalign}\label{hhh.1}
&H_{3}:=H_{1}\max\left\{2^{10p}c_{6}^{p/2},\frac{2^{6p}}{\tau^{8n}}\right\},\quad  H_{4}:=H_{2}^{1+p/2}\max\left\{\frac{2^{16npq}c_{0}^{p/2}c_{5}^{p/2}H_{1}}{(\tau\theta)^{\frac{2npq}{p-1}}},\left(\frac{2^{16(p+n)}H_{1}}{\varepsilon_{1}(\tau\theta)^{8n}}\right)^{\frac{p}{2(p-1)}}\right\}.
\end{flalign}
In the above displays, $H_{1}$ and $H_{2}$ are the same constants defined in \eqref{hhh} and by definition it is $H_{3}>H_{1}$, $H_{4}>H_{2}$.  
We first fix 
\eqn{***}
 $$\bar{\varepsilon}\equiv \varepsilon_{*}:=2^{-10}c_{7}^{-1}\hat{\varepsilon}$$ with $\hat{\varepsilon}\equiv \hat{\varepsilon}(\textnormal{\texttt{data}}_{\textnormal{c}},M^{q-p})$ taken from \eqref{be}. Then, from assumption \eqref{fpm} and the absolute continuity of Lebesgue integral, we see that we can find a threshold radius $\rr_{*}\equiv \rr_{*}(\textnormal{\texttt{data}}_{\textnormal{c}},M^{q-p},f(\cdot))\in (0,\hat{\rr}]$, where $\hat{\rr}\equiv \hat{\rr}(\textnormal{\texttt{data}}_{\textnormal{c}},M^{q-p},f(\cdot))$ is the threshold radius appearing in Theorem \ref{t.ex}, such that \eqref{fpm.1} with $c_{7}\stackrel{\eqref{c4c5}}{>}c_{3}$ instead of $c_{3}$ and $H_{4}\stackrel{\eqref{hhh.1}}{>}H_{2}$ replacing $H_{2}$ holds in a neighborhood of $x_{0}$, i.e.:
\begin{flalign}\label{fpm.1.1.1}
c_{8}\mf{K}\left(\mathbf{I}^{f}_{1,m}(x,\rr_{*})\right)^{\frac{1}{p-1}}<\frac{\hat{\varepsilon}}{10} \quad \mbox{for all} \ \ x\in B_{d_{x_{0}}}(x_{0}),\qquad\qquad c_{8}:=\frac{2^{64np(q+2)}c_{7}H_{4}^{1+2/p}}{(\varepsilon_{0}\tau\theta)^{4np(q+2)}}.
\end{flalign}
We stress that, as a direct consequence, \eqref{d.3} and \eqref{d.3.1.1} are uniformly satisfied on $B_{d_{x_{0}}}(x_{0})$. By \eqref{fpm} and \eqref{fpm.1.1.1} we see that Proposition \ref{ret} applies and there is an open neighborhood $B(x_{0})\subset \mathcal{R}_{u}$ and a positive radius $\rr_{x_{0}}\equiv \rr_{x_{0}}(\textnormal{\texttt{data}}_{\textnormal{c}},M^{q-p},f(\cdot))\in (0,\rr_{*}]$ such that \eqref{ret.1}-\eqref{ret.1.1} hold for any $x\in B(x_{0})$ and all $0<s\le \varsigma\le \rr_{x_{0}}$. Clearly, we can always assume that $B(x_{0})\subset B_{d_{x_{0}}}(x_{0})$, cf. Remark \ref{rx}.
Now, let $r\in (0,1)$ be any number. Since \eqref{fpm} holds, we can select a radius $\rr''\equiv \rr''(\textnormal{\texttt{data}}_{\textnormal{c}},M^{q-p},f(\cdot),r)\in (0,\rr_{x_{0}}]$ such that
\begin{eqnarray}\label{74.1.1}
c_{5}\sup_{\sigma\le \rr''}\mf{K}\left[\left(\sigma^{m}\mint_{B_{\sigma}(x)}\snr{f}^{m} \dx\right)^{1/m}\right]^{1/(p-1)}\le \frac{r}{2}\qquad \mbox{for all} \ \ x\in B_{d_{x_{0}}}(x_{0}),
\end{eqnarray}
and by \eqref{ret.1} with $s\equiv \rr''$ and $\varsigma\equiv \rr_{x_{0}}$, \eqref{fpm.1.1.1}, \eqref{d.4} and \eqref{70} with $\bar{\varepsilon}\equiv \varepsilon_{*}$, $\bar{\rr}\equiv \rr_{*}$ we have
\begin{eqnarray}\label{74}
\mf{F}(u;B_{\rr''}(x))&\le& c_{6}\left(\frac{\rr''}{\rr_{x_{0}}}\right)^{\alpha_{0}}\mf{F}(u;B_{\rr_{x_{0}}}(x))+c_{5}\sup_{\sigma\le \rr_{x_{0}}/4}\mf{K}\left[\left(\sigma^{m}\mint_{B_{\sigma}(x)}\snr{f}^{m} \ \dx\right)\right]^{1/(p-1)}\nonumber \\
&\le&c_{6}\varepsilon_{*}+\frac{1}{2}\le \hat{\varepsilon}+ \frac{1}{2}\le 1.
\end{eqnarray}
Choose now $s_{r}\equiv s_{r}(\textnormal{\texttt{data}}_{\textnormal{c}},M^{q-p},f(\cdot),r)\in (0,\rr'']$ so small that
\eqn{74.1.2}
$$c_{7}(s_{r}/\rr'')^{\alpha_{0}}\le r/2.$$
Inserting inequalities \eqref{74.1.1}-\eqref{74.1.2} in \eqref{ret.1} with $\varsigma\equiv\rr''$ and $s\equiv s_{r}$, we deduce that for any $r\in (0,1)$ there exists a radius $s_{r}\equiv s_{r}(\textnormal{\texttt{data}}_{\textnormal{c}},M^{q-p},f(\cdot),r)\in (0,\rr'')$ such that
\begin{eqnarray*}
s\le s_{r} \ \Longrightarrow \ \mf{F}(u;B_{s}(x))\le r\qquad \mbox{for all} \ \ x\in B(x_{0}).
\end{eqnarray*}
This and a standard covering argument yield that $Du\in VMO_{\loc}(\mathcal{R}_{u},\mathbb{R}^{N\times n})$.
We have just proven
\begin{theorem}\label{t1}
Under assumptions \eqref{assf}-\eqref{p0}, \eqref{f} and \eqref{fpm}, let $u\in W^{1,p}(\Omega,\mathbb{R}^{N})$ be a local minimizer of \eqref{exfun}. Then, here exists an open set $\Omega_{u}\subset \Omega$ such that
\begin{eqnarray*}
\snr{\Omega\setminus \Omega_{u}}=0\qquad \mbox{and}\qquad Du\in VMO_{\loc}(\Omega_{u},\mathbb{R}^{N\times n}).
\end{eqnarray*}
In particular, $\Omega_{u}$ can be characterized as
\begin{flalign*}
&\Omega_{u}=\left\{\frac{}{} x_{0}\in \Omega\colon \exists \ M\equiv M(x_{0})\subset (0,\infty)\colon
  \snr{(Du)_{B_{\rr}(x_{0})}}<M \ \mbox{and} \ \mf{F}(u;B_{\rr}(x_{0}))<\varepsilon_{*} \ \mbox{for some} \ \rr\in (0,\rr_{*}]\frac{}{}\right\},
\end{flalign*}
where $\varepsilon_{*}\equiv \varepsilon_{*}(\textnormal{\texttt{data}}_{\textnormal{c}},M^{q-p})$ has been defined in \eqref{***} and $\rr_{*}\equiv \rr_{*}(\textnormal{\texttt{data}}_{\textnormal{c}},M^{q-p},f(\cdot))$ is as in \eqref{fpm.1.1.1}.
\end{theorem}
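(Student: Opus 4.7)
The plan is to identify the set $\Omega_{u}$ in the statement with the regular set $\mathcal{R}_{u}$ from Section \ref{rs} (upon choosing $\bar{\varepsilon}\equiv \varepsilon_{*}$ and $\bar{\rr}\equiv \rr_{*}$), and then read off the full-measure and openness properties from Section \ref{rs}, leaving only the $VMO_{\loc}$ claim to be verified. Concretely, first I would recall that Section \ref{rs} shows $\snr{\Omega\setminus\mathcal{R}_{u}}=0$ via the inclusion $(\mathcal{S}_{u}\cap\mathcal{V}_{u})\subset\mathcal{R}_{u}$ and Lebesgue differentiation, and that $\mathcal{R}_{u}$ is open because the averaged conditions defining it propagate to a neighborhood by absolute continuity of the Lebesgue integral (display \eqref{70}). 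These facts transfer verbatim to $\Omega_{u}$ once $\bar{\varepsilon}$ and $\bar{\rr}$ are fixed as in \eqref{***} and \eqref{fpm.1.1.1}.

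Next, I would fix an arbitrary $x_{0}\in\Omega_{u}$ with the associated constant $M\equiv M(x_{0})$, and apply Proposition \ref{ret}, whose hypotheses are in force because we assume \eqref{fpm} and \eqref{fpm.1.1.1} - itself a consequence of \eqref{fpm} and absolute continuity. This yields an open neighborhood $B(x_{0})\subset \mathcal{R}_{u}$ and a radius $\rr_{x_{0}}\equiv \rr_{x_{0}}(\textnormal{\texttt{data}}_{\textnormal{c}},M^{q-p},f(\cdot))$ such that the decay estimate \eqref{ret.1} holds for every $x\in B(x_{0})$ and every pair $0<s\le\varsigma\le\rr_{x_{0}}$, together with the uniform control $\snr{(Du)_{B_{\varsigma}(x)}}<2^{6}(M+1)$.

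Now the $VMO_{\loc}$ conclusion is obtained by following the two-scale argument already laid out in Section \ref{pt1}. Given any $r\in (0,1)$, I would invoke \eqref{fpm} to choose $\rr''\equiv\rr''(\textnormal{\texttt{data}}_{\textnormal{c}},M^{q-p},f(\cdot),r)\in (0,\rr_{x_{0}}]$ small enough that the potential term in \eqref{ret.1} is bounded by $r/2$ uniformly on $B(x_{0})$, as in \eqref{74.1.1}. Feeding this into \eqref{ret.1} applied with $\varsigma\equiv \rr_{x_{0}}$ and $s\equiv\rr''$ shows that $\mf{F}(u;B_{\rr''}(x))\le 1$, as in \eqref{74}. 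Finally, exploiting \eqref{ret.1} one more time, this time with $\varsigma\equiv\rr''$ and choosing $s_{r}\equiv s_{r}(\textnormal{\texttt{data}}_{\textnormal{c}},M^{q-p},f(\cdot),r)\in (0,\rr'']$ so that $c_{7}(s_{r}/\rr'')^{\alpha_{0}}\le r/2$, yields
\[
\mf{F}(u;B_{s}(x))\le r \qquad\text{for all }\ 0<s\le s_{r}\text{ and all }x\in B(x_{0}).
\]
Since $s_{r}$ is independent of $x\in B(x_{0})$, this is precisely the definition of $Du\in VMO$ on $B(x_{0})$. A standard covering argument then upgrades the local statement to $Du\in VMO_{\loc}(\Omega_{u},\mathbb{R}^{N\times n})$, which is the desired conclusion.

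The only subtle point - and the place where a reader should pause - is the coherence of the various smallness thresholds: the choice $\bar{\varepsilon}\equiv\varepsilon_{*}$ in \eqref{***} must be compatible with the parameter $\hat{\varepsilon}$ coming from Theorem \ref{t.ex} and Corollary \ref{bmo} (so that after one application of the excess decay one still lands in the regime where Proposition \ref{ret} can be iterated at all interior scales), and the radii $\rr_{*},\rr_{x_{0}},\rr''$ have to be selected in a nested fashion that respects the dependencies $(\textnormal{\texttt{data}}_{\textnormal{c}},M^{q-p},f(\cdot))$ without introducing hidden dependence on the scale $r$. All of this has been prearranged in the constants $H_{3},H_{4},c_{8}$ of \eqref{hhh.1}-\eqref{fpm.1.1.1}, so the argument reduces to verifying bookkeeping rather than proving new analytic inequalities.
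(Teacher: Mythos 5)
Your proposal is correct and follows essentially the same route as the paper's own proof in Section \ref{pt1}: identify $\Omega_u$ with $\mathcal{R}_u$ for the choices $\bar\varepsilon\equiv\varepsilon_*$, $\bar\rr\equiv\rr_*$, invoke Proposition \ref{ret} for the uniform decay estimate \eqref{ret.1} on a neighborhood $B(x_0)$, and then run the two-scale argument \eqref{74.1.1}--\eqref{74.1.2} to force $\mf{F}(u;B_s(x))\le r$ uniformly on $B(x_0)$ for $s\le s_r$, concluding by a covering argument. The remarks on threshold compatibility match the bookkeeping the paper performs in \eqref{hhh.1}--\eqref{fpm.1.1.1}, so there is nothing new or missing.
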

A pointwise consequence of the arguments developed so far is the following
\begin{corollary}\label{pvmo}
Under assumptions \eqref{assf}-\eqref{p0}, \eqref{f} and \eqref{fpm.2}, let $u\in W^{1,p}(\Omega,\mathbb{R}^{N})$ be a local minimizer of \eqref{exfun}, $x_{0}\in \mathcal{R}_{u}$ be a point and $M\equiv M(x_{0})$ be the positive constant in \eqref{ru.0}. There exist a number $\varepsilon_{*}\equiv \varepsilon_{*}(\textnormal{\texttt{data}}_{\textnormal{c}},M^{q-p})$ and a threshold radius $\rr_{*}\equiv \rr_{*}(\textnormal{\texttt{data}}_{\textnormal{c}},M^{q-p},f(\cdot))$ such that if $\bar{\varepsilon}\equiv \varepsilon_{*}$ and $\bar{\rr}\equiv \rr_{*}$ in \eqref{ru.0}, 
then 
\begin{eqnarray}\label{pvmo.2}
\lim_{\rr\to 0}\mf{F}(u;B_{\rr}(x_{0}))=0.
\end{eqnarray}
\end{corollary}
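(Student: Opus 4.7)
The plan is to obtain Corollary \ref{pvmo} as a pointwise consequence of the iterated excess decay \eqref{retes} noted in Remark \ref{retrem}, combined with the fact that the contribution of the Wolff-type potential vanishes as the radius shrinks, provided only \eqref{fpm.2} holds. The crucial point is that, unlike in Theorem \ref{t1}, we do not need the $f$-dependent term to vanish \emph{uniformly} in a neighborhood of $x_0$; the pointwise statement \eqref{d.3.1.1}, which follows purely from \eqref{fpm.2}, will suffice.

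First I will fix $\varepsilon_*$ and $\rr_*$ in the same way as in Section \ref{pt1}: namely, with $\hat{\varepsilon}\equiv \hat{\varepsilon}(\textnormal{\texttt{data}}_{\textnormal{c}},M^{q-p})$ and $\hat{\rr}\equiv \hat{\rr}(\textnormal{\texttt{data}}_{\textnormal{c}},M^{q-p},f(\cdot))$ the parameters provided by Theorem \ref{t.ex}, I set $\varepsilon_{*}:=2^{-10}c_{7}^{-1}\hat{\varepsilon}$ and pick $\rr_{*}\in (0,\hat{\rr}]$ small enough that the analogue of \eqref{fpm.1} holds at $x_{0}$ (no uniformity required since we only assume \eqref{fpm.2}). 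This choice of $(\varepsilon_{*},\rr_{*})$ in \eqref{ru.0} ensures, via \eqref{70}, that the hypotheses \eqref{55} of Theorem \ref{t.ex} are met on some ball $B_{\rr}(x_{0})$ with $\rr\in (0,\rr_{*}]$.

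Next, using Remark \ref{retrem}, which is exactly the pointwise version of Proposition \ref{ret} valid under \eqref{fpm.2} alone, I obtain the decay estimate
\begin{flalign*}
\mf{F}(u;B_{s}(x_{0}))\le c_{6}\left(\frac{s}{\varsigma}\right)^{\alpha_{0}}\mf{F}(u;B_{\varsigma}(x_{0}))+c_{5}\sup_{\sigma\le \varsigma/4}\mf{K}\!\left[\left(\sigma^{m}\mint_{B_{\sigma}(x_{0})}\snr{f}^{m}\dx\right)^{1/m}\right]^{1/(p-1)}
\end{flalign*}
for all $0<s\le \varsigma\le \rr$. I then fix an arbitrary $r>0$. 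By \eqref{d.3.1.1}, which is a purely pointwise consequence of the finiteness of $\mathbf{I}^{f}_{1,m}(x_{0},1)$, the quantity $\mf{K}[\mf{S}(\sigma)]^{1/(p-1)}$ tends to $0$ as $\sigma\to 0$, so I can choose $\varsigma_{r}\in (0,\rr]$ so small that $c_{5}\sup_{\sigma\le \varsigma_{r}/4}\mf{K}[\mf{S}(\sigma)]^{1/(p-1)}<r/2$. With this $\varsigma_{r}$ frozen, I further choose $s_{r}\in (0,\varsigma_{r}]$ small enough that $c_{6}(s_{r}/\varsigma_{r})^{\alpha_{0}}\mf{F}(u;B_{\varsigma_{r}}(x_{0}))<r/2$, which is legitimate because $\mf{F}(u;B_{\varsigma_{r}}(x_{0}))$ is finite thanks to Corollary \ref{bmo}. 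Combining the two estimates yields $\mf{F}(u;B_{s}(x_{0}))<r$ for every $s\in (0,s_{r}]$, which gives \eqref{pvmo.2} by the arbitrariness of $r$.

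The only real obstacle is bookkeeping: verifying that the parameters $\varepsilon_{*}$ and $\rr_{*}$ selected here are consistent with the hierarchy of thresholds imposed in Section \ref{itex} (in particular \eqref{be} and \eqref{fpm.1}), so that Theorem \ref{t.ex} and hence Remark \ref{retrem} can be invoked. Since the constant $c_{7}$ appearing in \eqref{bmo.1} is explicitly $c_{7}=c_{6}+c_{5}$ and depends only on $(\textnormal{\texttt{data}}_{\textnormal{c}},M^{q-p})$, the reduction $\varepsilon_{*}=2^{-10}c_{7}^{-1}\hat{\varepsilon}$ is quantitatively admissible, closing the argument.
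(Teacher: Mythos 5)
Your proof is correct and follows exactly the route the paper intends when it states that Corollary~\ref{pvmo} is ``a pointwise consequence of the arguments developed so far'': you correctly single out Remark~\ref{retrem} as the tool that makes the decay estimate \eqref{retes} available under the pointwise hypothesis \eqref{fpm.2} alone, and you use \eqref{d.3.1.1} to kill the $f$-contribution before shrinking $s$. The invocation of Corollary~\ref{bmo} for the finiteness of $\mf{F}(u;B_{\varsigma_r}(x_0))$ is more than is needed (mere $W^{1,p}_{\loc}$ membership already gives finiteness) but does no harm.
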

\begin{remark}\label{pvmo.3}
\emph{Let us record a couple of useful observations.
\begin{itemize}
    \item If $x_{0}\in \mathcal{R}_{u}$ with $M\equiv M(x_{0})$ positive constant in \eqref{ru.0}, $\bar{\varepsilon}\equiv \varepsilon_{*}$, $\bar{\rr}\equiv \rr_{*}<d_{x_{0}}$, there exists an open neighborhood of $x_{0}$ on which \eqref{70} is verified in correspondence of the same parameters, therefore if \eqref{fpm.1.1.1} holds uniformly in $B(x_{0})$ - and this is always the case if we assume \eqref{fpm} - then the convergence in \eqref{pvmo.2} is uniform on $B(x_{0})$.
    \item The results in Proposition \ref{bmo}, Theorem \ref{t1} and Corollary \ref{pvmo} are still true if we replace $M$ with $2^{6}(M+1)$ without affecting the various parameters involved, since they are computed in correspondence of values of $40000(M+1)$, cf. Propositions \ref{p1}-\ref{p2}.
\end{itemize}
}
\end{remark}

\section{Partial gradient continuity}\label{pgc}
In this section, we prove Theorems \ref{t4}, \ref{t2} and \ref{t5}. Let $x_{0}\in \mathcal{R}_{u}$ be any point with $\bar{\varepsilon},\bar{\rr}$ still to be determined and $M\equiv M(x_{0})$ being the positive constant prescribed by \eqref{ru.0}. We then set
\begin{eqnarray}\label{pe.0}
\varepsilon':=\frac{\varepsilon_{*}(\tau\theta)^{8npq}}{2^{16npq}} \ \Longrightarrow \ \varepsilon'\equiv \varepsilon'(\textnormal{\texttt{data}}_{\textnormal{c}},M^{q-p}),
\end{eqnarray}
where $\varepsilon_{*}\equiv \varepsilon_{*}(\textnormal{\texttt{data}}_{\textnormal{c}},M^{q-p})$ is the same smallness parameter from Theorem \ref{t1} and Corollary \ref{pvmo} and $\tau\equiv \tau(\textnormal{\texttt{data}},M^{q-p})$, $\theta\equiv \theta(\textnormal{\texttt{data}},\mu(\cdot),M^{q-p})$ are the parameters from Propositions \ref{p1}-\ref{p2} and Proposition \ref{p3} respectively,. Notice that if $\hat{\varepsilon}\equiv \hat{\varepsilon}(\textnormal{\texttt{data}}_{\textnormal{c}},M^{q-p})$ is the threshold quantity introduced in Theorem \ref{t.ex} and $\varepsilon_{2}\equiv \varepsilon_{2}(\textnormal{\texttt{data}}_{\textnormal{c}},M^{q-p})$ is the small parameter appearing in Proposition \ref{p3}, by definition it is 
\begin{eqnarray}\label{pe.1}
0<\varepsilon'<\varepsilon_{*}<\hat{\varepsilon}<\varepsilon_{2}.
\end{eqnarray}
Next, keeping \eqref{con.1.1} in mind, we can also find a limiting radius $\rr'\equiv \rr'(\textnormal{\texttt{data}}_{\textnormal{c}},M^{q-p},f(\cdot))\in (0,\rr_{*}]$ such that,
\begin{eqnarray}\label{pe.3}
0<\rr'\le \rr_{*}\le \hat{\rr},
\end{eqnarray}
where $\rr_{*}\equiv \rr_{*}(\textnormal{\texttt{data}}_{\textnormal{c}},M^{q-p},f(\cdot))$ is the threshold radius determined in Theorem \ref{t1} and Corollary \ref{pvmo} and $\hat{\rr}\equiv \hat{\rr}(\textnormal{\texttt{data}}_{\textnormal{c}},M^{q-p},f(\cdot))$ is the limiting radius from Theorem \ref{t.ex}, and
\begin{eqnarray}\label{pe.2}
c_{8}\mf{K}\left(\mathbf{I}^{f}_{1,m}(x_{0},\rr')\right)^{\frac{1}{p-1}}\le \frac{\varepsilon'(\tau\theta)^{n+\frac{4nq}{p-1}}}{2^{10npq}} ,
\end{eqnarray}
with $c_{8}$ being the constant in \eqref{fpm.1.1.1}, which implies that
\begin{eqnarray}\label{pe.2.1}
\sup_{\sigma\le \rr/4}\mf{K}\left[\left(\sigma^{m}\mint_{B_{\sigma}(x_{0})}\snr{f}^{m} \dx\right)^{1/m}\right]^{1/(p-1)}\stackrel{\eqref{d.3}}{\le} \frac{\varepsilon'(\varepsilon_{0}\tau\theta)^{8np}}{2^{56np(q+2)}c_{7}H_{4}^{1+2/p}}.
\end{eqnarray}
In \eqref{ru.0} we fix $\bar{\varepsilon}=\varepsilon'$ and $\bar{\rr}=\rr'$ so now for $x_{0}\in\mathcal{R}_{u}$ there is $\rr\in (0,\rr']$ such that
\begin{eqnarray}\label{pe.6}
\mf{F}(u;B_{\rr}(x_{0}))<\varepsilon'\qquad \mbox{and}\qquad \snr{(Du)_{B_{\rr}(x_{0})}}< M.
\end{eqnarray}
We then recall the definition of two quantities that already played a crucial role in the proof of Theorem \ref{t.ex}: the composite excess functional, i.e.:
\begin{eqnarray*}
(0,\rr]\ni s\mapsto \mf{C}(x_{0},s):=H_{1}\mf{F}(u;B_{s}(x_{0}))^{p/2}+\snr{(Du)_{B_{s}(x_{0})}}^{p/2},
\end{eqnarray*}
and the nonhomogeneous excess functional
\begin{eqnarray*}
(0,\rr]\ni s\mapsto \mf{N}(x_{0},s):=H_{3}\mf{F}(u;B_{s}(x_{0}))^{p/2}+\frac{2^{16npq}H_{4}}{(\tau\theta)^{4npq}}\mf{K}\left(\mathbf{I}^{f}_{1,m}(x_{0},s)\right)^{\frac{p}{2(p-1)}},
\end{eqnarray*}
with $H_{1}$ as in \eqref{hhh} and $H_{3}$, $H_{4}$ being defined in \eqref{hhh.1}. Let us further streamline the notation introduced in \textbf{Step 1} of the proof of Theorem \ref{t.ex}. For $j\in \N\cup\{-1,0\}$ we set $\rr_{j}:=\tau^{j+1}\rr$, $B_{j}:=B_{\rr_{j}}(x_{0})$ and
\begin{flalign*}
&\mf{F}_{j}:=\mf{F}(u;B_{j}),\qquad \ti{\mf{F}}_{j}:=\ti{\mf{F}}(u;B_{j}),\qquad  A_{j}:=\snr{(Du)_{B_{j}}},\nonumber \\
&\mf{C}_{j}:=\mf{C}(x_{0},\rr_{j}),\quad \ \ \mf{S}_{j}:=\left(\rr_{j}^{m}\mint_{B_{j}}\snr{f}^{m} \ \dx\right)^{1/m}.
\end{flalign*}
Notice that \eqref{pe.0}-\eqref{pe.2} and \eqref{pe.6} allow verifying the assumptions of Corollary \ref{pvmo} (recall Remark \ref{pvmo.3}), so
\begin{eqnarray}\label{pe.8}
\lim_{\sigma\to 0}\mf{F}(u;B_{\sigma}(x_{0}))=0 \ \Longrightarrow \ \lim_{\sigma\to 0}\mf{N}(x_{0},\sigma)=0.
\end{eqnarray}
In particular, Theorem \ref{t.ex}, Proposition \ref{ret} and Remark \ref{retrem} apply so \eqref{68.1}-\eqref{68} and \eqref{retes} hold in this setting as well. 
Once made these preliminary observations, let us rephrase Lemma \ref{til} in a way that better fits our needs. Precisely, we shall replace assumption \eqref{ls.130} with a more suitable one, involving the nonhomogeneous excess functional $\mf{N}(\cdot)$.
\begin{lemma}\label{til.1}
Let $x_{0}\in \mathcal{R}_{u}$, with $M\equiv M(x_{0})$ being the positive constant in \eqref{ru.0}, $\gamma$ be any positive number and $\tau\in (0,2^{-10})$ be the small parameter from Propositions \ref{p1}-\ref{p2}. Assume that $\bar{\varepsilon}\equiv \varepsilon'$, $\bar{\rr}\equiv \bar{\rr}'$ in \eqref{ru.0}, with $\varepsilon'$, $\rr'$ defined in \eqref{pe.0}, \eqref{pe.2} respectively; that
\begin{eqnarray}\label{pe.9}
\mf{N}(x_{0},\varsigma)\le 2\gamma\qquad \mbox{for some} \ \ \varsigma\le \rr
\end{eqnarray}
and that, for integers $k\ge i\ge 0$ the following inequalities:
\begin{flalign}\label{ls.131.1}
\mf{C}(\tau_{j}\varsigma_{0})\le \gamma,\quad \mf{C}(\tau_{j+1}\varsigma_{0})\ge \frac{\gamma}{16} \ \ \mbox{for all} \ \ j\in \{i,\cdots,k\},\qquad \mf{C}(\tau_{i}\varsigma_{0})\le \frac{\gamma}{4}
\end{flalign}
hold with $\varsigma_{0}:=\tau\varsigma$. 
Then it is
\begin{flalign}\label{ls.132.1}
\left\{
\begin{array}{c}
\displaystyle 
\ \mf{C}(\tau_{k+1}\varsigma_{0})\le \gamma \\[8pt]\displaystyle
\ \sum_{j=i}^{k+1}\mf{F}(\tau_{j}\varsigma_{0})^{p/2}\le \frac{\gamma}{2H_{1}}\\[8pt]\displaystyle
\ \sum_{j=i}^{k+1}\mf{F}(\tau_{j}\varsigma_{0})^{p/2}\le 2\mf{F}(\tau_{i}\varsigma_{0})^{p/2}+\frac{2^{7p}\gamma^{(2-p)/p}}{\varepsilon_{1}\tau^{n/2}}\sum_{j=i}^{k}\mf{S}(\tau_{j}\varsigma_{0}),
\end{array}
\right. 
\end{flalign}
with $H_{1}$, $\varepsilon_{1}$ as in \eqref{hhh} and in Propositions\ref{p1}-\ref{p2} respectively.
\end{lemma}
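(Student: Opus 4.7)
The strategy is to reduce Lemma \ref{til.1} to the already established Lemma \ref{til}. Since the conclusions \eqref{ls.132.1} are verbatim \eqref{ls.132}--\eqref{ls.133}, it suffices to recover the two hypotheses of Lemma \ref{til}, namely the nondegenerate-regime condition \eqref{ls.130} and the summability bound $\eqref{ls.131}_{2}$, from the single-scale nonhomogeneous bound \eqref{pe.9} together with the composite excess conditions \eqref{ls.131.1}. The key is that $\mf{N}(x_{0},\varsigma)$ simultaneously controls $\mf{F}(\varsigma)^{p/2}$ (via the $H_{3}$-weighted first term) and the truncated Riesz potential $\mathbf{I}^{f}_{1,m}(x_{0},\varsigma)$ (via the $H_{4}$-weighted second term), and the constants $H_{3}$, $H_{4}$ in \eqref{hhh.1} are deliberately engineered to dominate $H_{1}$ and $H_{2}$ by exactly the factors produced by the Morrey-type decay of Proposition \ref{ret}/Remark \ref{retrem}.

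The first step is to derive $\eqref{ls.131}_{2}$. From \eqref{pe.9} one has $\mf{K}(\mathbf{I}^{f}_{1,m}(x_{0},\varsigma))^{p/(2(p-1))}\le \frac{(\tau\theta)^{4npq}\gamma}{2^{16npq-1}H_{4}}$, and since $\mf{K}(s)\ge s$, also $\mathbf{I}^{f}_{1,m}(x_{0},\varsigma)^{p/(2(p-1))}$ is controlled similarly. Applying \eqref{d.13} with $\delta=\tau$ to the series $\sum_{j\ge 0}\mf{S}(\tau_{j}\varsigma_{0})$, one has $\sum_{j\ge 0}\mf{S}(\tau_{j}\varsigma_{0})\lesssim (\tau\theta)^{-2n}\mathbf{I}^{f}_{1,m}(x_{0},\varsigma)$. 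Raising to the $p/(2(p-1))$ power and invoking the lower bound $H_{4}\ge \frac{2^{16npq}c_{0}^{p/2}c_{5}^{p/2}H_{1}H_{2}^{1+p/2}}{(\tau\theta)^{2npq/(p-1)}}$ from \eqref{hhh.1}, the resulting quantity is bounded above by $2\gamma/H_{2}$, which is precisely $\eqref{ls.131}_{2}$.

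The second (and central) step is to establish \eqref{ls.130}. From \eqref{pe.9}, $\mf{F}(\varsigma)^{p/2}\le 2\gamma/H_{3}$, which together with the choice $\bar\varepsilon\equiv\varepsilon'$, $\bar\rr\equiv \rr'$ and \eqref{pe.2.1} ensures that the smallness and potential smallness conditions of Theorem \ref{t.ex}/Remark \ref{retrem} hold on $B_{\varsigma}(x_{0})$. Applying the decay estimate \eqref{retes} and raising to the $p/2$-power then yields, for every $j\ge 0$,
\[
\mf{F}(\tau_{j}\varsigma_{0})^{p/2}\lesssim c_{6}^{p/2}\mf{F}(\varsigma)^{p/2}+c_{5}^{p/2}\sup_{\sigma\le \varsigma/4}\mf{K}[\mf{S}(\sigma)]^{p/(2(p-1))}\lesssim \frac{c_{6}^{p/2}\gamma}{H_{3}}+\frac{c_{5}^{p/2}(\tau\theta)^{-\text{const}}\gamma}{H_{4}}.
\]
The definitions $H_{3}\ge 2^{10p}c_{6}^{p/2}H_{1}$ and $H_{4}\ge 2^{16npq}c_{0}^{p/2}c_{5}^{p/2}H_{1}H_{2}^{1+p/2}(\tau\theta)^{-2npq/(p-1)}$ are calibrated so that, after multiplying by $H_{1}$, the right-hand side is bounded by $\gamma/32$. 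Combining this with the lower bound $\mf{C}(\tau_{j+1}\varsigma_{0})\ge \gamma/16$ from \eqref{ls.131.1} forces $A(\tau_{j+1}\varsigma_{0})^{p/2}\ge \gamma/32$; coupling this with the upper bound $H_{1}\mf{F}(\tau_{j+1}\varsigma_{0})^{p/2}\le \gamma/32$ and recalling that $H_{1}$ absorbs the factor $\varepsilon_{0}^{-p}$ (cf. \eqref{hhh}) directly yields $\varepsilon_{0}A(\tau_{j+1}\varsigma_{0})>\mf{F}(\tau_{j+1}\varsigma_{0})$ for $j\in\{i,\ldots,k\}$, which is \eqref{ls.130}. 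The case $j=i$ is handled analogously using $\mf{C}(\tau_{i}\varsigma_{0})\le \gamma/4$.

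With \eqref{ls.130}, $\eqref{ls.131}_{1}=$\eqref{ls.131.1}, and $\eqref{ls.131}_{2}$ all in force on the sequence $\{\tau_{j}\varsigma_{0}\}$, Lemma \ref{til} applies verbatim and delivers \eqref{ls.132.1}. The main obstacle is the constant bookkeeping in Step 2: one must verify that the engineered choices of $H_{3}$ and $H_{4}$ in \eqref{hhh.1} interact correctly with the Morrey decay constants $c_{5},c_{6}$ of \eqref{retes} to produce the uniform-in-$j$ bound $H_{1}\mf{F}(\tau_{j}\varsigma_{0})^{p/2}\le \gamma/32$ without circularity, since \eqref{retes} itself is available only because $\mf{F}(\varsigma)$ is a priori small.
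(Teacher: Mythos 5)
Your overall reduction strategy — establish the two hypotheses of Lemma \ref{til}, namely $\eqref{ls.131}_2$ and \eqref{ls.130}, from \eqref{pe.9} and the decay machinery, then invoke Lemma \ref{til} — is the same as the paper's, which modifies the proof of the analogous lemma from \cite{kumi}. Your Step 1 (the summability bound) is sound: applying \eqref{d.13} with $\delta=\tau$ and invoking the definition of $H_4$ in \eqref{hhh.1} does produce a bound stronger than $2\gamma/H_2$, so $\eqref{ls.131}_2$ follows.

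The gap is in Step 2, in the verification of \eqref{ls.130}. Your direct argument produces the nondegeneracy inequality $\varepsilon_0 A(\tau_{j+1}\varsigma_0)>\mf{F}(\tau_{j+1}\varsigma_0)$ for $j\in\{i,\ldots,k\}$, i.e.\ at scales $\tau_{j'}\varsigma_0$ with $j'\in\{i+1,\ldots,k+1\}$. But \eqref{ls.130} requires nondegeneracy at the scales $j'\in\{i,\ldots,k\}$; your index shift leaves the initial scale $j'=i$ uncovered. Your proposed fix — ``handled analogously using $\mf{C}(\tau_i\varsigma_0)\le \gamma/4$'' — cannot work as stated: $\gamma/4$ is an \emph{upper} bound on the composite excess, and the direct argument needs a \emph{lower} bound on $A(\tau_i\varsigma_0)^{p/2}$ to deduce that the gradient average dominates the excess functional. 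The paper avoids this entirely by arguing by contradiction: if the degenerate regime held at some $j\in\{i,\ldots,k\}$, then one would have $A(\tau_j\varsigma_0)\le \varepsilon_0^{-1}\mf{F}(\tau_j\varsigma_0)$ (so the average is forced small because the excess is small, cf.\ \eqref{pe.13}), and combining this with the decay bound \eqref{pe.12} for $H_1\mf{F}(\tau_{j+1}\varsigma_0)^{p/2}$ and \eqref{tri.1.1} yields $\mf{C}(\tau_{j+1}\varsigma_0)<\gamma/16$, contradicting the second inequality in $\eqref{ls.131.1}_1$ — which is available precisely for every $j\in\{i,\ldots,k\}$, including $j=i$. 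The essential ingredient your direct argument is missing is the use of the degeneracy hypothesis itself to control $A(\tau_j\varsigma_0)$ at the problematic scale; without it, or without a scale-transfer step pushing the lower bound $A(\tau_{i+1}\varsigma_0)^{p/2}\ge\gamma/32$ down to scale $i$ via \eqref{tri.1.1} and the smallness of $2^{3p}/(\tau^{n/2}H_1)$, the case $j=i$ remains open.
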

\begin{proof}
The proof is again the same as \cite[Lemma 6.1]{kumi}. There are only two points that require some care. First, in \cite[Lemma 6.1]{kumi} it is assumed that
$$
\left(\sum_{j=i}^{k}\mf{S}(\tau_{j}\varsigma_{0})\right)^{\frac{p}{2(p-1)}}\le \frac{2\gamma}{H_{4}},
$$
which comes as a direct consequence of the definition of the nonhomogeneous excess functional, of \eqref{pe.9} and of \eqref{d.13}; in fact:
$$
\left(\sum_{j=i}^{k}\mf{S}(\tau_{j}\varsigma_{0})\right)^{\frac{p}{2(p-1)}}\le \frac{2^{4np}}{(\tau\theta)^{2np}}\left(\mathbf{I}^{f}_{1,m}(x_{0},\sigma)\right)^{\frac{p}{2(p-1)}}\le \frac{(\tau\theta)^{2npq}}{2^{12npq}H_{4}}\mf{N}(x_{0},\varsigma)\le \frac{2\gamma}{H_{4}}.
$$
Second, in \cite[\emph{Step 2} of the proof of Lemma 6.1]{kumi} it is shown that 
\begin{eqnarray}\label{pe.10}
\varepsilon_{0}A(\tau_{j}\varsigma_{0})\le \mf{F}(\tau_{j}\varsigma_{0})\quad \mbox{cannot hold for all} \ \ j\in \{i,\cdots,k\},
\end{eqnarray}
by means of a contradiction argument based on the alternating application of Propositions \ref{p1}-\ref{p2} and Proposition \ref{p3}. However, the change of scale occurring here between Propositions \ref{p1}-\ref{p2} and Proposition \ref{p3}, prevents us from applying directly the strategy in \cite{kumi}, so we need to follow a different approach to secure \eqref{pe.10}. Assume by contradiction that 
\begin{eqnarray}\label{pe.11}
\varepsilon_{0}A(\tau_{j}\varsigma_{0})\le \mf{F}(\tau_{j}\varsigma_{0})\quad \mbox{holds for some} \ \ j\in \{i,\cdots,k\}.
\end{eqnarray}
Keeping in mind that \eqref{pe.0}-\eqref{pe.2} and \eqref{pe.6} assure the validity of the content of Remark \ref{retrem}, we have
\begin{eqnarray}\label{pe.12}
H_{1}\mf{F}(\tau_{j+1}\varsigma_{0})^{p/2}&\stackrel{\eqref{retes}}{\le}&H_{1}\left[c_{6}\tau^{\alpha_{0}(j+2)}\mf{F}(\varsigma)+c_{5}\sup_{s\le \varsigma/4}\mf{K}\left(\mf{S}(s)\right)^{1/(p-1)}\right]^{p/2}\nonumber \\
&\le&2^{(p-2)/2}H_{1}c_{6}^{p/2}\mf{F}(\varsigma)^{p/2}+2^{(p-2)/2}c_{5}^{p/2}\sup_{s\le \varsigma/4}\mf{K}\left(\mf{S}(s)\right)^{\frac{p}{2(p-1)}}\nonumber \\
&\stackrel{\eqref{d.3}}{\le}&\frac{2^{(p-2)/2}H_{1}c_{6}^{p/2}\mf{N}(x_{0},\varsigma)}{H_{3}}+\frac{2^{\frac{4npq}{p-1}+\frac{p-2}{2}}H_{1}c_{5}^{p/2}}{(\tau\theta)^{\frac{2npq}{p-1}}}\mf{K}\left(\mathbf{I}^{f}_{1,m}(x_{0},\varsigma)\right)^{\frac{p}{2(p-1)}}\nonumber \\
&\le&\mf{N}(x_{0},\varsigma)\left(\frac{2^{(p-2)/2}H_{1}c_{6}^{p/2}}{H_{3}}+\frac{2^{\frac{4npq}{p-1}+\frac{p-2}{2}}H_{1}c_{5}^{p/2}(\tau\theta)^{4npq}}{H_{4}2^{16npq}(\tau\theta)^{\frac{2npq}{p-1}}}\right)\nonumber \\
&\stackrel{\eqref{pe.9}}{\le}&\gamma\left(\frac{2^{p/2}H_{1}c_{6}^{p/2}}{H_{3}}+\frac{H_{1}c_{5}^{p/2}}{H_{4}2^{10npq}}\right)\stackrel{\eqref{hhh.1}}{\le}\frac{\gamma}{2^{6}}.
\end{eqnarray}
Moreover, the contradiction assumption \eqref{pe.11} yields that
\begin{eqnarray}\label{pe.13}
A(\tau_{j}\varsigma_{0})\le \frac{1}{\varepsilon_{0}}\mf{F}(\tau_{j}\varsigma_{0})\le \frac{1}{\varepsilon_{0}}\left(\frac{\mf{C}(\tau_{j}\varsigma_{0})}{H_{1}}\right)^{2/p}\stackrel{\eqref{ls.131.1}_{1}}{\le}\frac{1}{\varepsilon_{0}}\left(\frac{\gamma}{H_{1}}\right)^{2/p}\stackrel{\eqref{hhh}}{\le}\left(\frac{\gamma}{2^{6}}\right)^{2/p},
\end{eqnarray}
therefore we can conclude with
\begin{eqnarray*}
\mf{C}(\tau_{j+1}\varsigma_{0})&\stackrel{\eqref{tri.1.1}}{\le}&\frac{2^{3p}}{\tau^{n/2}}\mf{F}(\tau_{j}\varsigma_{0})^{p/2}+A(\tau_{j}\varsigma_{0})^{p/2}+H_{1}\mf{F}(\tau_{j+1}\varsigma_{0})^{p/2}\nonumber \\
&\stackrel{\eqref{pe.12},\eqref{pe.13}}{\le}&\frac{2^{3p}\mf{C}(\tau_{j}\varsigma_{0})}{\tau^{n/2}H_{1}}+\frac{\gamma}{2^{5}}\stackrel{\eqref{hhh},\eqref{ls.131.1}_{1}}{\le}\frac{3\gamma}{2^{6}}<\frac{\gamma}{16},
\end{eqnarray*}
thus contradicting the second inequality in \eqref{ls.131.1}$_{1}$ and \eqref{pe.11} is proven. 
The rest of the proof is the same as \cite[Lemma 6.1]{kumi}, with $H_{4}$ in place of $H_{2}$.
\end{proof}
We stress that in the statement and in the (partial) proof of the above lemma we used the same notation described in \textbf{Step 1} of the proof of Theorem \ref{t.ex} for coherence.
At this stage, we consider two possibilities: 
\begin{eqnarray}\label{gr}
\frac{\gamma}{8}:=A_{0}^{p/2}>\frac{\mf{N}(x_{0},\rr)}{16}\qquad \mbox{or}\qquad A_{0}^{p/2}\le \frac{\mf{N}(x_{0},\rr)}{16}=:\frac{\gamma}{8},
\end{eqnarray}
and to observe that in any case it is
\begin{eqnarray}\label{pe.14}
\mf{N}(x_{0},\rr)\le 2\gamma.
\end{eqnarray}

\subsection{Large gradient regime}
We start by proving a technical lemma.
\begin{lemma}\label{t.lem}
Assume \eqref{gr}$_{1}$ and let $x_{0}\in \mathcal{R}_{u}$ be any point with $M\equiv M(x_{0})$ being the positive constant in \eqref{ru.0}. If $\bar{\varepsilon}\equiv \varepsilon'$ and $\bar{\rr}\equiv \rr'$ in \eqref{ru.0}, with $\varepsilon'$, $\rr'$ as in \eqref{pe.0}, \eqref{pe.2} respectively, it holds that
\begin{eqnarray}\label{pe.15}
\sum_{j=0}^{\infty}\mf{F}_{j}^{p/2}\le \frac{\mf{N}(x_{0},\rr)}{H_{1}}
\end{eqnarray}
and it is
\begin{eqnarray}\label{pe.16}
\frac{\gamma}{16}\le A_{j}^{p/2}\le \gamma\qquad \mbox{for all} \ \ j\in \N\cup\{0\}.
\end{eqnarray}
\end{lemma}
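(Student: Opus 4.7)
The plan is to replicate the nondegenerate--stable argument of Step~5.4 in the proof of Theorem~\ref{t.ex}, but now starting directly from the ball $B_{\rr}(x_{0})$ and using the refined Lemma~\ref{til.1}, which has replaced the \emph{a priori} nondegeneracy hypothesis \eqref{ls.130} by the structural bound \eqref{pe.9}. The new hypothesis is free of charge here: assumption $\eqref{gr}_{1}$ says exactly $\gamma=8A_{0}^{p/2}$ and $\mathfrak{N}(x_{0},\rr)<2\gamma$, which is \eqref{pe.14}.

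First I would establish the base case $\mathfrak{C}_{0}\le \gamma/4$. Using $H_{1}\mathfrak{F}_{0}^{p/2}\le (H_{1}/H_{3})\mathfrak{N}(x_{0},\rr)\le (2H_{1}/H_{3})\gamma$ and the defining inequality $H_{3}\ge 2^{6p}H_{1}/\tau^{8n}$ from \eqref{hhh.1}, this term is bounded by $\gamma/16$, and then $\mathfrak{C}_{0}\le\gamma/16+\gamma/8\le\gamma/4$. Next I would prove the lower bound in \eqref{pe.16} by a finite--exit--time contradiction, mimicking the derivation of \eqref{ls.160}. Set $J:=\inf\{j\ge 1:A_{j}^{p/2}<\gamma/16\}$ and assume $J<\infty$. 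Then $A_{j}^{p/2}\ge \gamma/16$ for $j\in\{0,\dots,J-1\}$, so $\mathfrak{C}_{j}\ge \gamma/16$ trivially holds on the same range. A sub--induction exactly as in the step leading to \eqref{ls.162}---at each stage one checks the upper bound $\mathfrak{C}_{k+1}\le\gamma$ by invoking $\eqref{ls.132.1}_{2}$ on $\{0,\dots,k\}$, which gives $H_{1}\mathfrak{F}_{k+1}^{p/2}\le\gamma/2$, and combining with \eqref{tri.1.1}---then yields $\mathfrak{C}_{j}\le\gamma$ for every $j\in\{0,\dots,J-1\}$.

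At this point I would apply Lemma~\ref{til.1} one final time with $i=0$ and $k=J-2$ to extract
\[
\sum_{j=0}^{J-1}\mathfrak{F}_{j}^{p/2}\le \frac{\gamma}{2H_{1}}=\frac{4A_{0}^{p/2}}{H_{1}}.
\]
Telescoping via \eqref{tri.1.1} gives
\[
\bigl|A_{J}^{p/2}-A_{0}^{p/2}\bigr|\le \frac{2^{3p}}{\tau^{n/2}}\sum_{j=0}^{J-1}\mathfrak{F}_{j}^{p/2}\le \frac{2^{3p+2}A_{0}^{p/2}}{\tau^{n/2}H_{1}}\le \frac{A_{0}^{p/2}}{4},
\]
where the last inequality is forced by the choice of $H_{1}$ in \eqref{hhh}. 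This contradicts $A_{J}^{p/2}<A_{0}^{p/2}/2$ and rules out a finite $J$, proving the lower bound in \eqref{pe.16}. Knowing now that $\mathfrak{C}_{j}\ge \gamma/16$ holds \emph{unconditionally} for all $j$, the same sub--induction, run without a terminal index, gives $\mathfrak{C}_{j}\le \gamma$ for every $j\in\mathbb{N}\cup\{0\}$, hence the upper bound $A_{j}^{p/2}\le\mathfrak{C}_{j}\le\gamma$. Finally, \eqref{pe.15} is obtained by letting $k\to\infty$ in $\eqref{ls.132.1}_{2}$ with $i=0$ (the monotone convergence of the partial sums being clear) and using $\gamma<2\mathfrak{N}(x_{0},\rr)$:
\[
\sum_{j=0}^{\infty}\mathfrak{F}_{j}^{p/2}\le \frac{\gamma}{2H_{1}}\le \frac{\mathfrak{N}(x_{0},\rr)}{H_{1}}.
\]

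The delicate point, as in Step~5.4, is that the hypotheses of Lemma~\ref{til.1} couple the upper and lower bounds on $\mathfrak{C}_{j}$, so the sub--induction yielding $\mathfrak{C}_{j}\le\gamma$ must be run \emph{conditionally} on the assumed lower bound $A_{j}^{p/2}\ge\gamma/16$; only after the contradiction argument disposes of a finite exit time can the lower bound be promoted to an unconditional statement, at which point the induction can be rerun without a ceiling on $j$ to recover $\eqref{pe.16}_{\text{upper}}$ and \eqref{pe.15}. Apart from this ordering issue, the proof is a direct transcription of the stability argument already developed in Theorem~\ref{t.ex}.
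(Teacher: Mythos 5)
The structure of your argument—base case via \eqref{hhh.1}, exit-time contradiction for the lower bound in \eqref{pe.16}, iterated application of Lemma \ref{til.1}—matches the paper's intended strategy, and your discussion of the ordering issue (conditional versus unconditional induction) is a fair reading of how Step~5.4 has to be rerun here. However, the final step contains a genuine error. You correctly record $\mathfrak{N}(x_{0},\rr)\le 2\gamma$ (this is \eqref{pe.14}, which follows from $\eqref{gr}_{1}$ in the form $\gamma/8=A_{0}^{p/2}>\mathfrak{N}(x_{0},\rr)/16$), but then a few lines later you invoke the \emph{reverse} inequality $\gamma<2\mathfrak{N}(x_{0},\rr)$ to pass from $\gamma/(2H_{1})$ to $\mathfrak{N}(x_{0},\rr)/H_{1}$. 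This reverse inequality is never established and is false in general: in the large-gradient regime $\gamma=8A_{0}^{p/2}$ can be of order $(M+1)^{p/2}$ while $\mathfrak{N}(x_{0},\rr)$ can be arbitrarily small. So $\eqref{ls.132.1}_{2}$ alone only gives the useless bound $\sum\mathfrak{F}_{j}^{p/2}\lesssim A_{0}^{p/2}$, not the sharp decay by $\mathfrak{N}(x_{0},\rr)$ that \eqref{pe.15} requires.

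The fix is to use $\eqref{ls.132.1}_{3}$ instead of $\eqref{ls.132.1}_{2}$: that inequality bounds the series by $2\mathfrak{F}_{0}^{p/2}$ plus a sum of the $\mathfrak{S}_{j}$ weighted by $\gamma^{(2-p)/p}$. Since $p\ge 2$, this power of $\gamma$ is nonpositive, so the correct direction $\gamma\ge\mathfrak{N}(x_{0},\rr)/2$ can be used to replace it by a nonpositive power of $\mathfrak{N}(x_{0},\rr)$. One then controls $\sum\mathfrak{S}_{j}$ by $\mathbf{I}^{f}_{1,m}(x_{0},\rr)$ via \eqref{d.13}, notes that a power of $\mathbf{I}^{f}_{1,m}$ is dominated by $\mathfrak{N}(x_{0},\rr)$ (by the very definition of $\mathfrak{N}$), and exploits the cancellation $(2-p)/p+2(p-1)/p=1$ to reconstitute a single factor of $\mathfrak{N}(x_{0},\rr)$, with the constant $H_{4}$ from \eqref{hhh.1} absorbing the remaining prefactors. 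Finally $\mathfrak{F}_{0}^{p/2}\lesssim\mathfrak{F}_{-1}^{p/2}\le\mathfrak{N}(x_{0},\rr)/H_{3}$. Without this passage through $\eqref{ls.132.1}_{3}$ and the structure of $\mathfrak{N}$, the conclusion \eqref{pe.15} cannot be reached.
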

\begin{proof}
The inequality on the right-hand side of \eqref{pe.16} can be proven as done for \eqref{ls.166}, while its left-hand side is derived by following the same arguments leading to \eqref{ls.160} and using $\eqref{gr}_{1}$. This in particular yields that
\begin{eqnarray}\label{pe.18}
\mf{C}_{j}\le \gamma\qquad \mbox{for all} \ \ j\in \N\cup\{0\}.
\end{eqnarray}
The only differences with the procedure developed in \emph{Step 5.4} of the proof of Theorem \ref{t.ex} is that we need to replace $r_{1}''$ with $\rr_{0}$, $r_{1}'$ with $\rr$ and to apply Lemma \ref{til.1} with $\varsigma\equiv \rr$ instead of Lemma \ref{til} - just keep in mind that now in the definition of $\mf{N}(\cdot)$ the constants defined in \eqref{hhh.1} substitute those in \eqref{hhh} and that all averages are bounded by means of \eqref{68.1} so no upper bound on $\gamma>0$ is needed. Next, let us check the validity of \eqref{pe.15}.
By \eqref{pe.16} we see that $\mf{C}_{j}\ge \gamma/16$ for all $j\in \N\cup\{0\}$ and by \eqref{gr}$_{1}$ and \eqref{68.1} it is $\gamma\le 2^{3+3p}(M+1)^{p/2}$, so recalling also \eqref{pe.14}, \eqref{pe.18}, \eqref{ls.163} and \eqref{ls.157} (with $\rr_{0}$ instead of $r_{1}''$), we can apply Lemma \ref{til.1} with $i=0$ and for all $k\in \N$. We then have
\begin{eqnarray*}
\sum_{j=0}^{\infty}\mf{F}_{j}^{p/2}&\stackrel{\eqref{ls.132.1}_{3}}{\le}&2\mf{F}_{0}^{p/2}+\frac{2^{7p}\gamma^{\frac{2-p}{p}}}{\tau^{n/2}\varepsilon_{1}}\sum_{j=0}^{\infty}\mf{S}_{j}\stackrel{\eqref{d.13},\eqref{pe.14}}{\le}2\mf{F}_{0}^{p/2}+\frac{2^{8p+4n}\mf{N}(x_{0},\rr)^{\frac{2-p}{p}}}{\varepsilon_{1}(\tau\theta)^{4n}}\mathbf{I}^{f}_{1,m}(x_{0},\rr)\nonumber \\
&\le&2\mf{F}_{0}^{p/2}+\frac{2^{8p+4n}\mf{N}(x_{0},\rr)}{\varepsilon_{1}(\tau\theta)^{4n}H_{4}^{2(p-1)/p}}\stackrel{\eqref{hhh.1}}{\le}2\mf{F}_{0}^{p/2}+\frac{\mf{N}(x_{0},\rr)}{4H_{1}}\nonumber \\
&\stackrel{\eqref{tri.1}}{\le}&\frac{2^{3p+1}\mf{F}_{-1}^{p/2}}{\tau^{n/2}}+\frac{\mf{N}(x_{0},\rr)}{4H_{1}}\le \frac{\mf{N}(x_{0},\rr)}{2H_{1}},
\end{eqnarray*}
where we also used the definition of $\mf{N}(\cdot)$. This guarantees the validity of \eqref{pe.15} and the proof is complete.
\end{proof}
Now we are ready for the core lemma of this section.
\begin{lemma}\label{l.pe}
Assume 
\begin{eqnarray}\label{d.22}
\snr{(Du)_{B_{\tau\varsigma}(x_{0})}}^{p/2}>\frac{\mf{N}(x_{0},\varsigma)}{16}\qquad \mbox{for some} \ \ \varsigma\le \rr
\end{eqnarray}
and let $x_{0}\in \mathcal{R}_{u}$ be any point with $M\equiv M(x_{0})$ being the positive constant in \eqref{ru.0}. If $\bar{\varepsilon}\equiv \varepsilon'$ and $\bar{\rr}\equiv \rr'$ in \eqref{ru.0}, with $\varepsilon'$, $\rr'$ as in \eqref{pe.0}, \eqref{pe.2} respectively, then the limits in \eqref{lp} and \eqref{lv} exist and the precise representatives of $Du$ and $V_{p}(Du)$ are attained. Moreover it holds that
\begin{eqnarray}\label{d.14}
\snr{Du-(Du)_{B_{\varsigma}(x_{0})}}\le c\mf{F}(u;B_{\varsigma}(x_{0}))+c\mf{K}\left(\mathbf{I}^{f}_{1,m}(x_{0},\varsigma)\right)^{\frac{1}{p-1}},
\end{eqnarray}
with $c\equiv c(\textnormal{\texttt{data}},M^{q-p})$ and
\begin{eqnarray}\label{d.15}
\snr{V_{p}(Du(x_{0}))-(V_{p}(Du))_{B_{\varsigma}(x_{0})}}\le c\ti{\mf{F}}(u;B_{\varsigma}(x_{0}))+c\mf{K}\left(\mathbf{I}^{f}_{1,m}(x_{0},\varsigma)\right)^{\frac{p}{2(p-1)}},
\end{eqnarray}
for $c\equiv c(\textnormal{\texttt{data}},M^{q-p})$.
\end{lemma}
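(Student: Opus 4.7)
The plan is to mimic the proof of Lemma \ref{t.lem}, but carried out at the generic scale $\varsigma \le \rr$ rather than at $\rr$ itself. Hypothesis \eqref{d.22} is precisely the scale-$\varsigma$ analogue of \eqref{gr}$_1$; since $\bar{\varepsilon} \equiv \varepsilon'$ and $\bar{\rr} \equiv \rr'$ have been fixed in \eqref{pe.0}-\eqref{pe.2} so that the required smallness conditions on the excess and on the potential $\mathbf{I}^{f}_{1,m}(x_0,\cdot)$ propagate to every scale below $\rr'$ (via \eqref{retes} and the uniform control on $\mf{K}(\mathbf{I}^{f}_{1,m}(x_0,\varsigma))^{1/(p-1)}$), the proof of Lemma \ref{t.lem}, now relying on Lemma \ref{til.1} with $\varsigma$ in place of $\rr$, applies verbatim. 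This produces, with $A_j := \snr{(Du)_{B_{\tau^{j+1}\varsigma}(x_0)}}$ and $\mf{F}_j := \mf{F}(u;B_{\tau^{j+1}\varsigma}(x_0))$, the summability estimate
$$\sum_{j=0}^{\infty} \mf{F}_j^{p/2} \le \frac{\mf{N}(x_0,\varsigma)}{H_1}$$
and the uniform two-sided bound $\gamma'/16 \le A_j^{p/2} \le \gamma'$ for all $j \in \N \cup \{0\}$, where $\gamma' := 8\snr{(Du)_{B_{\tau\varsigma}(x_0)}}^{p/2}$.

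The key exploitation of the nondegeneracy is next: the lower bound $A_j \ge (\gamma'/16)^{2/p} > 0$ together with the definition \eqref{excess} of $\mf{F}(\cdot)$ yields the elementary Cauchy-Schwarz estimate
$$\left(\mint_{B_{\tau^{j+1}\varsigma}(x_0)} \snr{Du-(Du)_{B_{\tau^{j+1}\varsigma}(x_0)}}^{2}\, \dx\right)^{1/2} \le A_j^{(2-p)/2} \mf{F}_j^{p/2},$$
so by triangular inequality $\snr{(Du)_{B_{\tau^{j+2}\varsigma}(x_0)} - (Du)_{B_{\tau^{j+1}\varsigma}(x_0)}} \lesssim \tau^{-n/2} A_j^{(2-p)/2}\mf{F}_j^{p/2}$. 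Summability of the right-hand side makes the dyadic sequence of averages Cauchy, hence convergent to some $z_\infty \in \mathbb{R}^{N\times n}$. The analogous bound for the $V_p$-averages follows by combining \eqref{Vm} with \eqref{equiv}, giving summability of $\ti{\mf{F}}_j$. A standard Lebesgue-differentiation argument identifies $z_\infty$ with $Du(x_0)$ at every Lebesgue point of $Du$, and the scheme described forces $x_0$ to be such a point; extending from the dyadic scales to all $\sigma \in (0,\varsigma]$ is routine through \eqref{ls.42.1}-\eqref{tri.1.2}, which bound the excess and averages over intermediate scales by those at the dyadic endpoints.

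Finally, \eqref{d.14}-\eqref{d.15} are obtained by telescoping:
$$\snr{Du(x_0) - (Du)_{B_\varsigma(x_0)}} \le \snr{(Du)_{B_{\tau\varsigma}(x_0)} - (Du)_{B_\varsigma(x_0)}} + \sum_{j=0}^{\infty}\snr{(Du)_{B_{\tau^{j+2}\varsigma}(x_0)} - (Du)_{B_{\tau^{j+1}\varsigma}(x_0)}},$$
using the step-by-step bound above and \eqref{tri.1} for the first summand. By \eqref{pe.15} the sum is controlled by $A_j^{(2-p)/2}\cdot \mf{N}(x_0,\varsigma)/H_1$, whence, unpacking the definition of $\mf{N}(\cdot)$ in \eqref{hhh.1} and raising to the proper power to convert $\mf{F}^{p/2}$ into $\mf{F}$ (which is legitimate since $A_j$ is comparable to $A_0 = \snr{(Du)_{B_{\tau\varsigma}(x_0)}}$, bounded above by \eqref{68.1}), the bound \eqref{d.14} emerges. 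The corresponding telescoping for $V_p(Du)$, combined with \eqref{equiv}, yields \eqref{d.15}.

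The main technical obstacle is less in the Cauchy/telescoping mechanics (which is largely algebraic) and more in ensuring that Lemma \ref{t.lem} can indeed be invoked at the generic scale $\varsigma$ with the very same constants. This hinges on the careful choice of $\varepsilon'$ and $\rr'$ in \eqref{pe.0}-\eqref{pe.2}, tailored so that \eqref{pe.14} holds at every scale below $\rr'$ and all absolute inequalities used in Lemma \ref{t.lem} (particularly the contradiction \eqref{pe.12}-\eqref{pe.13} that rules out the degenerate regime) remain valid; the role of Remark \ref{pvmo.3} and of the uniform $VMO$ information from Corollary \ref{pvmo} is exactly to guarantee this scale-independent applicability.
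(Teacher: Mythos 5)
Your proposal is correct and follows essentially the same strategy as the paper: establish (via Lemma \ref{til.1} under hypothesis \eqref{d.22}) that the dyadic sequence of gradient averages stays in a fixed annulus and that the excess tails are summable with $\sum_j \mf{F}_j^{p/2}\lesssim \mf{N}(x_0,\varsigma)/H_1$, then telescope to prove the averages are Cauchy and derive \eqref{d.14}--\eqref{d.15} by absorbing the $A_j^{(2-p)/2}$ factor against the lower bound $\gamma'\gtrsim\mf{N}(x_0,\varsigma)$ supplied by \eqref{d.22}. The only cosmetic difference is that you bound $\snr{(Du)_{B_{j+1}}-(Du)_{B_j}}$ by Cauchy--Schwarz directly from the definition \eqref{excess} of $\mf{F}$, whereas the paper routes the same estimate through the $V_p$-vector field via \eqref{Vm} and the intermediate bound $\snr{V_p((Du)_{B_{j+1}})-V_p((Du)_{B_j})}\lesssim\mf{F}_j^{p/2}$; these are equivalent once $A_j$ is pinned in an annulus, and either delivers the same comparison.
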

\begin{proof}
For the sake of simplicity, we split the proof in two steps.
\subsection*{Step 1: \eqref{d.14}-\eqref{d.15} at the $\rr$-scale}
Let $\varsigma=\rr$ and set $\gamma/8:=A_{0}^{p/2}$. Proceeding as for \cite[(6.53) from the proof of Lemma 6.4]{kumi}, we estimate
\begin{eqnarray}\label{d.16}
\snr{V_{p}((Du)_{B_{j+1}})-V_{p}((Du)_{B_{j}})}&\le&c\mf{F}_{j}^{p/2},
\end{eqnarray}
with $c\equiv c(\textnormal{\texttt{data}},M^{q-p})$ and it is
\begin{eqnarray}\label{d.18}
A_{j}^{2-p}\stackrel{\eqref{pe.16}}{\le}\left(\frac{16}{\gamma}\right)^{2(p-2)/p}\qquad \mbox{for all} \ \ j\in \N\cup\{0\}.
\end{eqnarray}
Then, for integers $0\le i\le k-1$ we bound
\begin{eqnarray}\label{d.17}
\snr{(Du)_{B_{k}}-(Du)_{B_{i}}}&\le&\sum_{j=i}^{k-1}\snr{(Du)_{B_{j+1}}-(Du)_{B_{j}}}\stackrel{\eqref{Vm}}{\le} c\sum_{j=i}^{k-1}\frac{\snr{V_{p}((Du)_{B_{j+1}})-V_{p}((Du)_{B_{j}})}}{(A_{j+1}+A_{j})^{(p-2)/2}}\nonumber \\
&\stackrel{\eqref{d.16},\eqref{d.18}}{\le}&c\gamma^{(2-p)/p}\sum_{j=i}^{\infty}\mf{F}_{j}^{p/2}\stackrel{\eqref{pe.15}}{\le}\frac{c\mf{N}(x_{0},\rr)}{H_{1}\gamma^{(p-2)/p}}\le c\gamma^{(2-p)/p}\mf{N}(x_{0},\rr),
\end{eqnarray}
for $c\equiv c(\textnormal{\texttt{data}},M^{q-p})$. Here we also used that $H_{1}\ge 1$, cf. \eqref{hhh}. This shows that the sequence $\{(Du)_{B_{j}}\}_{j\in \N\cup\{0\}}$ is Cauchy therefore there exists the limit $\lim_{j\to \infty}(Du)_{B_{j}}=:\ell\in\mathbb{R}^{N\times n}$, which defines the precise representative of $Du$ at $x_{0}$, i.e.: $\ell=Du(x_{0})$. In fact, by \eqref{pe.15} we see that the series in the last line of \eqref{d.17} converges. Moreover, given any $\sigma\le \rr_{0}$ there exists $j_{\sigma}\in \N$ such that $\tau^{j_{\sigma}+1}\rr_{0}<\sigma\le \tau^{j_{\sigma}}\rr_{0}$, therefore
\begin{eqnarray}\label{d.19}
\lim_{\sigma\to 0}\snr{\ell-(Du)_{B_{\sigma}(x_{0})}}&\le& \lim_{j_{\sigma}\to \infty}\left[\snr{\ell-(Du)_{B_{j_{\sigma}}}}+\snr{(Du)_{B_{\sigma}(x_{0})}-(Du)_{B_{j_{\sigma}}}}\right]\nonumber \\
&\le&\lim_{j_{\sigma}\to \infty}\left[\snr{\ell-(Du)_{B_{j_{\sigma}}}}+\tau^{-n/p}\mf{F}_{j_{\sigma}}\right]\stackrel{\eqref{d.17},\eqref{pe.8}}{=}0
\end{eqnarray}
and \eqref{lp} is proven. We then send $k\to \infty$ in \eqref{d.17} to get
\begin{eqnarray}\label{d.20}
\snr{Du(x_{0})-(Du)_{B_{i}}}&\stackrel{\eqref{d.19}}{\le}&c\gamma^{(2-p)/p}\mf{N}(x_{0},\rr),
\end{eqnarray}
with $c\equiv c(\textnormal{\texttt{data}},M^{q-p})$ and, by triangular inequality, 
\begin{eqnarray*}
\snr{Du(x_{0})-(Du)_{B_{-1}(x_{0})}}&\le&\snr{Du(x_{0})-(Du)_{B_{0}}}+\snr{(Du)_{B_{-1}}-(Du)_{B_{0}}}\nonumber \\
&\stackrel{\eqref{d.17}}{\le}&c\gamma^{(2-p)/p}\mf{N}(x_{0},\rr)+c\mf{F}_{-1}\stackrel{\eqref{gr}_{1}}{\le}c\mf{N}(x_{0},\rr)^{2/p},
\end{eqnarray*}
with $c\equiv c(\textnormal{\texttt{data}},M^{q-p})$, which is \eqref{d.14}. The proof of \eqref{d.15} and \eqref{lv} is quite similar. In fact with integers $0\le i\le k-1$ we have
\begin{eqnarray}\label{pe.30}
\snr{(V_{p}(Du))_{B_{k}}-(V_{p}(Du))_{B_{i}}}&\le&\sum_{j=i}^{k-1}\snr{(V_{p}(Du))_{B_{j+1}}-(V_{p}(Du))_{B_{j}}}\nonumber \\
&\le&\sum_{j=i}^{k-1}\mint_{B_{j+1}}\snr{V_{p}(Du)-(V_{p}(Du))_{B_{j}}} \ \dx\nonumber \\
&\le&c\sum_{j=i}^{k-1}\ti{\mf{F}}_{j}\stackrel{\eqref{equiv}}{\le}c\sum_{j=i}^{k-1}\mf{F}_{j}^{p/2}\stackrel{\eqref{pe.15}}{\le}c\mf{N}(x_{0},\rr),
\end{eqnarray}
for $c\equiv c(\textnormal{\texttt{data}},M^{q-p})$, so sequence $\{(V(Du))_{B_{j}}\}_{j\in \N\cup\{0\}}$ is Cauchy and the limit
\eqn{vvv}
$$\lim_{j\to \infty}(V_{p}(Du))_{B_{j}}=\ell_{V}\in \mathbb{R}^{N\times n}$$ exists. To be precise, the whole limit in \eqref{lv} exists and equals $\ell_{V}$, in fact combining \eqref{pe.30} with the same interpolative argument leading to \eqref{d.19} and passing to the limit we obtain \eqref{lv}. We further bound
\begin{eqnarray*}
\snr{(V_{p}(Du))_{B_{-1}}-(V_{p}(Du))_{B_{0}}}\le\frac{\ti{\mf{F}}(u;B_{-1})}{\tau^{n/2}}\stackrel{\eqref{equiv}}{\le}c\mf{F}_{-1}^{p/2}\le c\mf{N}(x_{0},\rr),
\end{eqnarray*}
with $c\equiv c(\textnormal{\texttt{data}},M^{q-p})$ and choosing $i=0$ and sending $k\to \infty$ in \eqref{pe.30} we obtain that
\begin{eqnarray*}
\snr{(V_{p}(Du))_{B_{0}}-\ell_{V}}\le c\mf{N}(x_{0},\rr).
\end{eqnarray*}
We finally notice that $\ell_{V}=(V_{p}(Du))(x_{0})$, i.e. the precise representative of $V_{p}(Du)$ at $x_{0}$ so, combining \cite[inequality (6.52)]{kumi} with \eqref{lp} and \eqref{vvv} we obtain that $(V_{p}(Du))(x_{0})=V_{p}(Du(x_{0}))$. This means that \eqref{lv} is completely proven and, merging this last information with the content of the two above displays we derive \eqref{d.15} via triangular inequality.

\subsection*{Step 2: reiteration on small scales} The key ingredients to transfer the arguments leading to \eqref{d.14}-\eqref{d.15} from $B_{-1}$ to $B_{\varsigma}(x_{0})$ for some $0<\varsigma\le \rr$ satisfying \eqref{d.22} are the decay estimate \eqref{retes} and the pointwise $VMO$ result \eqref{pvmo.2}. The latter has already been established, cf. Corollary \ref{pvmo}, while the former holds by means of conditions \eqref{pe.6} and \eqref{pe.2} (implying \eqref{pe.2.1} that is obviously true on smaller balls), see Theorem \ref{t.ex}, Proposition \ref{ret} and Remark \ref{retrem}. We are then allowed to repeat the whole procedure developed in Lemma \ref{t.lem} and in the first part of Lemma \ref{l.pe} on $B_{\varsigma}(x_{0})$ to conclude with \eqref{d.14}-\eqref{d.15}, provided that \eqref{d.22} is in force.
\end{proof}
\subsection{Small gradient regime}
Here we consider the case in which $\eqref{gr}_{2}$ is in force, thus covering also the degenerate regime.
\begin{lemma}\label{l.pe.1}
Assume that
\begin{eqnarray}\label{d.23}
\snr{(Du)_{B_{\tau\varsigma}(x_{0})}}^{p/2}\le \frac{\mf{N}(x_{0},\varsigma)}{16}\qquad \mbox{for some} \ \ \varsigma\le \rr
\end{eqnarray}
and let $x_{0}\in \mathcal{R}_{u}$ be any point with $M\equiv M(x_{0})$ being the positive constant in \eqref{ru.0}. If $\bar{\varepsilon}\equiv \varepsilon'$ and $\bar{\rr}\equiv \rr'$ in \eqref{ru.0}, with $\varepsilon'$, $\rr'$ as in \eqref{pe.0}, \eqref{pe.2} respectively, then the limits in \eqref{lp}-\eqref{lv} exist and the precise representative of $Du$ and of $V_{p}(Du)$ are attained. Moreover, \eqref{d.14}-\eqref{d.15} hold for a constant $c\equiv c(\textnormal{\texttt{data}},M^{q-p})$.
\end{lemma}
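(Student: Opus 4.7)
The plan is to mirror the two-step architecture of Lemma \ref{l.pe}, but working inside the small-gradient / degenerate regime. The key output needed to mimic the Cauchy-sequence argument of that lemma is a summability estimate of the form
\begin{eqnarray}\label{plan.sum}
\sum_{j\ge 0}\mf{F}(\tau^{j+1}\varsigma)^{p/2}\le c\,\mf{N}(x_{0},\varsigma),
\end{eqnarray}
with $c\equiv c(\textnormal{\texttt{data}},M^{q-p})$. First I would verify that assumption \eqref{d.23} places us in the "small composite excess" situation: by \eqref{tri.1} and the definition of $\mf{N}$ one gets $\mf{C}(x_{0},\tau\varsigma)\le c\mf{N}(x_{0},\varsigma)$, which is the analog of \eqref{ls.169}.

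Next I would establish \eqref{plan.sum} by invoking the block/chain machinery developed in Step 6 of Theorem \ref{t.ex}. Set $\gamma:=\mf{N}(x_{0},\varsigma)/2$ and decompose $\mathbb{N}\cup\{0\}$ according to
\[
\mathcal{J}_{0}:=\left\{j\in\mathbb{N}\cup\{0\}\colon \mf{C}(\tau^{j+1}\varsigma)\le\tfrac{\gamma}{4}\right\}.
\]
On maximal finite iteration chains outside $\mathcal{J}_{0}$, Lemma \ref{til.1} applies (its hypothesis \eqref{pe.9} is satisfied because $\mf{N}(x_{0},\tau^{j}\varsigma)\le\mf{N}(x_{0},\varsigma)=2\gamma$ monotonically) and yields $\sum\mf{F}(\tau^{j}\tau\varsigma)^{p/2}\le\gamma/(2H_{1})$ block by block; on indices in $\mathcal{J}_{0}$ the bound on $\mf{F}$ is immediate from the smallness of $\mf{C}$. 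Assembling the blocks exactly as in Step 6 (treating separately the finite-many-chains and infinite-chain cases, with \eqref{lms.32}, \eqref{ls.34} handling the boundary indices) produces \eqref{plan.sum}.

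With \eqref{plan.sum} at hand, I would close the proof via the $V_{p}$-route of Step 1 of Lemma \ref{l.pe}, which is insensitive to the size of $A_{j}$: the telescoping estimate
\[
\snr{(V_{p}(Du))_{B_{k}}-(V_{p}(Du))_{B_{i}}}\le c\sum_{j=i}^{k-1}\tilde{\mf{F}}_{j}\stackrel{\eqref{equiv}}{\le}c\sum_{j=i}^{k-1}\mf{F}_{j}^{p/2}
\]
shows that $\{(V_{p}(Du))_{B_{j}}\}$ is Cauchy, so its limit $\ell_{V}$ exists, equals $V_{p}(Du(x_{0}))$ by Lebesgue differentiation, and sending $k\to\infty$ with $i=0$ together with the $B_{-1}\to B_{0}$ step delivers \eqref{lv} and \eqref{d.15}. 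For the non-quadratic estimate \eqref{d.14} and \eqref{lp}, rather than dividing by $(A_{j+1}+A_{j})^{(p-2)/2}$ as in \eqref{d.17} — which is precisely what fails when $A_{j}\to 0$ — I would estimate $\snr{(Du)_{B_{j+1}}-(Du)_{B_{j}}}$ directly by H\"older as $\le c\mf{F}_{j}$ and interpolate $\mf{F}_{j}\le \mf{F}_{j}^{p/2}\cdot \mf{F}_{j}^{(2-p)/2}$, invoking the uniform smallness $\mf{F}_{j}\le c_{7}\hat{\varepsilon}$ from Corollary \ref{bmo} to absorb the negative power into an absolute constant depending on $M^{q-p}$. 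This yields the Cauchy property of $\{(Du)_{B_{j}}\}$, and summing \eqref{plan.sum} gives \eqref{d.14} at the $\rr$-scale. Reiteration to arbitrary $\varsigma\le\rr$ (Step 2 of Lemma \ref{l.pe}) is verbatim, since \eqref{retes} and \eqref{pvmo.2} hold regardless of the regime.

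The main obstacle is the last point: in the degenerate regime with $p>2$ the "denominator trick" \eqref{d.17} breaks down, so one must control $\snr{(Du)_{B_{j+1}}-(Du)_{B_{j}}}$ without a lower bound on $A_{j}$. The remedy just sketched — interpolating $\mf{F}_{j}$ against $\mf{F}_{j}^{p/2}$ using the global VMO smallness — is the only delicate point; all constants remain of the required dependence because the absorbing factor $\mf{F}_{j}^{(2-p)/2}\le (c_{7}\hat{\varepsilon})^{(2-p)/2}$ depends only on $\textnormal{\texttt{data}}_{\textnormal{c}}$ and $M^{q-p}$.
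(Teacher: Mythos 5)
The core of your plan for the non-quadratic estimate \eqref{d.14} fails. You write $\mf{F}_j\le\mf{F}_j^{p/2}\cdot\mf{F}_j^{(2-p)/2}$ (an identity) and then attempt to bound $\mf{F}_j^{(2-p)/2}\le (c_7\hat{\varepsilon})^{(2-p)/2}$. Since $p\ge 2$ the exponent $(2-p)/2\le 0$, so $t\mapsto t^{(2-p)/2}$ is nonincreasing and $\mf{F}_j\le c_7\hat\varepsilon$ gives $\mf{F}_j^{(2-p)/2}\ge(c_7\hat\varepsilon)^{(2-p)/2}$ — the inequality goes the wrong way. There is no lower bound available on $\mf{F}_j$ (indeed $\mf{F}_j\to 0$ by \eqref{pe.8}), so this factor cannot be absorbed into a constant, and summability of $\{\mf{F}_j^{p/2}\}$ does not yield summability of $\{\mf{F}_j\}$ when $p>2$. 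The Cauchy argument for $\{(Du)_{B_j}\}$ therefore does not close.

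You also miss the observation that makes the degenerate case genuinely easier than the nondegenerate one, and which renders the whole telescoping machinery unnecessary here. The paper first reduces to the situation where \eqref{d.23} holds at \emph{every} scale $\varsigma\in(0,\rr]$ (otherwise \eqref{d.22} holds at some scale and Lemma \ref{l.pe} already gives everything). In that situation $\snr{(Du)_{B_{\tau\varsigma}(x_0)}}^{p/2}\le \mf{N}(x_0,\varsigma)/16$ for all $\varsigma$, and since $\mf{N}(x_0,\varsigma)\to 0$ by the pointwise VMO property \eqref{pe.8}, it follows immediately that $\lim_{\sigma\to 0}(Du)_{B_\sigma(x_0)}=0$, and likewise $\lim_{\sigma\to 0}(\snr{Du}^p)_{B_\sigma(x_0)}=0$ hence $\lim_{\sigma\to 0}(V_p(Du))_{B_\sigma(x_0)}=0$. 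Existence of the limits \eqref{lp}–\eqref{lv} is thus automatic, with limit zero. The quantitative estimates \eqref{d.14}–\eqref{d.15} then follow not from a Cauchy sum but from the uniform bound $\mf{C}_j\le\gamma$ (derived as for \eqref{ls.170}, via Lemma \ref{til.1}), which by interpolation gives $\sup_{\sigma\le\rr}\snr{(Du)_{B_\sigma(x_0)}}\le 4\mf{N}(x_0,\rr)^{2/p}$ and hence $\snr{Du(x_0)-(Du)_{B_\rr(x_0)}}\le 8\mf{N}(x_0,\rr)^{2/p}$, and analogously for $V_p(Du)$ via \eqref{d.24}. Your summability estimate \eqref{plan.sum} and the $V_p$-Cauchy argument are fine as far as they go, but the route to \eqref{d.14}/\eqref{lp} through interpolation of $\mf{F}_j$ is the broken piece, and the correct strategy bypasses it entirely.
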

\begin{proof}
For the sake of clarity, the proof is divided into two steps.
\subsection*{Step 1: limits \eqref{lp}-\eqref{lv} exist} Set $\varsigma=\rr$ and $\gamma/8:=\mf{N}(x_{0},\rr)/16$. Clearly, we can take $\gamma>0$ otherwise $u\equiv \const$ and there is nothing to prove. Moreover, we can assume that \eqref{d.23} holds for all $\varsigma\in (0,\rr]$ otherwise if there is some $\varsigma\le \rr$ for which the opposite inequality to \eqref{d.23}, i.e. \eqref{d.22} holds, we can apply Lemma \ref{l.pe} to conclude with the limits in \eqref{lp}-\eqref{lv} and \eqref{d.14}-\eqref{d.15}. Now, since \eqref{d.23} holds for all $\varsigma\in (0,\rr]$, by \eqref{pe.8} we see that $\lim_{\sigma\to 0}(Du)_{B_{\sigma}(x_{0})}=0\in \mathbb{R}^{N\times n}$ and \eqref{lp} is proven. Combining this with $\eqref{pe.8}$ we also have that $\lim_{\sigma\to 0}(\snr{Du}^{p})_{B_{\sigma}(x_{0})}=0$, which is turn implies that $\lim_{\sigma\to 0}(V_{p}(Du))_{B_{\sigma}(x_{0})}=0\in \mathbb{R}^{N\times n}$ and \eqref{lv} is proven as well.
\subsection*{Step 2: proof of inequalities \eqref{d.14}-\eqref{d.15}}
Also in this case we can replicate the same arguments leading to \eqref{ls.170} with $\rr_{0}$ instead of $r_{1}''$, $\rr$ replacing $r_{1}'$, and Lemma \ref{til.1}\footnote{Bear in mind that \eqref{pe.14} holds, that now constants \eqref{hhh.1} replace those in \eqref{hhh} in the definition of $\mf{N}(\cdot)$ and that no upper bounds on the size of $\gamma$ are needed in the light of \eqref{68.1}.} with $\varsigma\equiv \rr$ instead of Lemma \ref{til} to get that
\begin{eqnarray}\label{pe.31}
\mf{C}_{j}\le \gamma\qquad \mbox{for all} \ \ j\in \N\cup\{0\}.
\end{eqnarray}
Now, let $\sigma\in (0,\rr_{0}]$, $j_{\sigma}\in \N\cup\{0\}$ be such that $\tau^{j_{\sigma}+1}\rr_{0}< \sigma\le \tau^{j_{\sigma}}\rr_{0}$ and bound via triangular inequality,
\begin{eqnarray*}
\snr{(Du)_{B_{\sigma}(x_{0})}}&\le&A_{j_{\sigma}}+\snr{(Du)_{B_{j_{\sigma}}}-(Du)_{B_{\sigma}(x_{0})}}\le A_{j_{\sigma}}+\tau^{-n/p}\mf{F}_{j_{\sigma}}\stackrel{\eqref{hhh}}{\le} A_{j_{\sigma}}+(H_{1}\mf{F}_{j_{\sigma}}^{p/2})^{2/p}\nonumber \\
&\le&2\left(A_{j_{\sigma}}^{p/2}+H_{1}\mf{F}_{j_{\sigma}}^{p/2}\right)^{2/p}\le 2\mf{C}_{j_{\sigma}}^{2/p}\stackrel{\eqref{pe.31}}{\le}2\gamma^{2/p}\stackrel{\eqref{gr}_{2}}{\le} 2^{1+2/p}\mf{N}(x_{0},\rr)^{2/p}.
\end{eqnarray*}
On the other hand, if $\sigma\in (\rr_{0},\rr]$ we have
\begin{eqnarray*}
\snr{(Du)_{B_{\sigma}(x_{0})}}&\le&A_{0}+\frac{2\mf{F}_{-1}}{\tau^{n/p}}\stackrel{\eqref{gr}_{2}}{\le} \frac{\mf{N}(x_{0},\rr)^{2/p}}{2^{8/p}}+\frac{2\mf{N}(x_{0},\rr)^{2/p}}{\tau^{n/p}H_{3}^{2/p}}\stackrel{\eqref{hhh},\eqref{hhh.1}}{\le} 2\mf{N}(x_{0},\rr)^{2/p}.
\end{eqnarray*}
Combining the content of the two above displays we obtain
\begin{eqnarray}\label{pe.32}
\sup_{\sigma\le\rr }\snr{(Du)_{B_{\sigma}(x_{0})}}\le 4\mf{N}(x_{0},\rr)^{2/p},
\end{eqnarray}
which in turn yields that
\begin{eqnarray*}
\snr{(Du)_{B_{\sigma}(x_{0})}-(Du)_{B_{-1}}}\stackrel{\eqref{pe.32}}{\le}8\mf{F}(x_{0},\rr)^{2/p},
\end{eqnarray*}
therefore, sending $\sigma\to 0$ and recalling \eqref{lp} we can conclude with \eqref{d.14} with $\varsigma=\rr$. Concerning \eqref{d.15}, let us first note that whenever $B\Subset \Omega$ is a ball, by H\"older inequality it is
\begin{eqnarray}\label{d.24}
\snr{(V_{p}(Du))_{B}}&\le&\left(\mint_{B}\snr{Du}^{p} \dx\right)^{1/2}\le 2^{p}\mf{F}(u;B)^{p/2}+2^{p}\snr{(Du)_{B}}^{p/2}.
\end{eqnarray}
Now, if $\sigma\le \rr_{0}$ and $j_{\sigma}\in \N\cup\{0\}$ is such that $\tau^{j_{\sigma}+1}\rr_{0}< \sigma\le \tau^{j_{\sigma}}\rr_{0}$, by triangular inequality we bound
\begin{eqnarray*}
\snr{(V_{p}(Du))_{B_{\sigma}(x_{0})}}&\stackrel{\eqref{d.24}}{\le}&\snr{(V_{p}(Du))_{B_{\sigma}(x_{0})}-(V_{p}(Du))_{B_{j_{\sigma}}}}+2^{p}\mf{F}_{j_{\sigma}}^{p/2}+2^{p}A_{j_{\sigma}}^{p/2}\nonumber \\
&\le&\frac{1}{\tau^{n/2}}\ti{\mf{F}}_{j_{\sigma}}+2^{p}\mf{F}_{j_{\sigma}}^{p/2}+2^{p}A_{j_{\sigma}}^{p/2}\nonumber \\
&\stackrel{\eqref{equiv},\eqref{pe.31}}{\le}&\left(\frac{c}{\tau^{n/2}}+2^{p}\right)\mf{F}_{j_{\sigma}}^{p/2}+2^{p}\gamma\le c\mf{N}(x_{0},\rr),
\end{eqnarray*}
with $c\equiv c(\textnormal{\texttt{data}}, M^{q-p})$. Furthermore, if $\sigma\in (\rr_{0},\rr]$, we directly have
\begin{eqnarray*}
\snr{(V_{p}(Du))_{B_{\sigma}(x_{0})}}&\stackrel{\eqref{d.24}}{\le}&\snr{(V_{p}(Du))_{B_{\sigma}(x_{0})}-(V_{p}(Du))_{B_{-1}}}\nonumber \\
&+&\snr{(V_{p}(Du))_{B_{0}}-(V_{p}(Du))_{B_{-1}}}+2^{p}\mf{F}_{0}^{p/2}+2^{p}A_{0}^{p/2}\nonumber \\
&\le&\frac{2}{\tau^{n/2}}\ti{\mf{F}}_{-1}+2^{p}A_{0}^{p/2}+2^{p}\mf{F}_{0}^{p/2}\stackrel{\eqref{equiv}}{\le}c\mf{F}_{-1}^{p/2}+cA_{0}^{p/2}\stackrel{\eqref{gr}_{2}}{\le}c\mf{N}(x_{0},\rr),
\end{eqnarray*}
for $c\equiv c(\textnormal{\texttt{data}}, M^{q-p})$. Merging the two above displays we obtain
\begin{eqnarray*}
\snr{(V_{p}(Du))_{B_{\sigma}(x_{0})}-(V_{p}(Du))_{B_{-1}}}\stackrel{\eqref{gr}_{2},\eqref{d.24}}{\le}c\mf{N}(x_{0},\rr),
\end{eqnarray*}
so sending $\sigma\to0$ and recalling \eqref{lv} we can conclude with \eqref{d.15} with $\varsigma=\rr$. To obtain \eqref{d.14}-\eqref{d.15} for all radii $\varsigma\le \rr$ satisfying \eqref{d.23}, we just need to recall \eqref{pe.0}-\eqref{pe.2}, Proposition \ref{ret}, Remark \ref{retrem} and Corollary \ref{pvmo}, that allow transferring the above estimates to smaller scales than the $\rr$-one, see also \textbf{Step 2} of the proof of Lemma \ref{l.pe}. The proof is complete.
\end{proof}
A direct consequence of Lemmas \ref{l.pe}-\ref{l.pe.1} is the following
\begin{corollary}\label{cccp}
With \eqref{con.1.1} in force, $x_{0}\in \mathcal{W}_{u}\cap \mathcal{L}_{u}$ if and only if \eqref{con.1} holds at $x_{0}$.
\end{corollary}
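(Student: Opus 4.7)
The plan is to prove the two implications separately. The backward direction essentially packages Lemmas \ref{l.pe}--\ref{l.pe.1} together with Corollary \ref{pvmo}, while the forward direction unpacks the definitions of $\mathcal{W}_{u}$ and $\mathcal{L}_{u}$ and exploits the absolute continuity of the truncated potential $\mathbf{I}^{f}_{1,m}(x_{0},\cdot)$.

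For the backward direction, I would suppose that \eqref{con.1} holds at $x_{0}$ (with \eqref{con.1.1} assumed throughout). Identifying the thresholds $\breve{\varepsilon},\breve{\rr}$ in Theorem \ref{t2} with the parameters $\varepsilon',\rr'$ fixed in Section \ref{pgc} via \eqref{pe.0}--\eqref{pe.3}, condition \eqref{con.1} places $x_{0}$ inside the regular set $\mathcal{R}_{u}$ introduced in \eqref{ru.0}. Then I would split along the dichotomy \eqref{d.22} versus \eqref{d.23}: Lemma \ref{l.pe} handles the large-gradient regime and Lemma \ref{l.pe.1} handles the small-gradient regime. In both cases the conclusion \eqref{lp} holds, which gives $x_{0}\in \mathcal{L}_{u}$. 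For membership in $\mathcal{W}_{u}$, Corollary \ref{pvmo} applies under the same smallness conditions and yields $\lim_{\rr\to 0}\mathfrak{F}(u;B_{\rr}(x_{0}))=0$; the equivalence $\mathfrak{F}^{p/2}\approx \tilde{\mathfrak{F}}$ recorded in \eqref{equiv} then transfers this to $\lim_{\rr\to 0}\tilde{\mathfrak{F}}(u;B_{\rr}(x_{0}))=0$, which is precisely $x_{0}\in \mathcal{W}_{u}$.

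For the forward direction the argument is elementary. If $x_{0}\in \mathcal{L}_{u}$, then $(Du)_{B_{\sigma}(x_{0})}\to Du(x_{0})$ as $\sigma\to 0$, so setting $M:=1+|Du(x_{0})|$ produces a constant depending only on $x_{0}$ such that $|(Du)_{B_{\rr}(x_{0})}|<M$ for all $\rr$ small enough; this choice of $M$ in turn determines the thresholds $\breve{\varepsilon}\equiv \breve{\varepsilon}(\textnormal{\texttt{data}}_{\textnormal{c}},M^{q-p})$ and $\breve{\rr}\equiv \breve{\rr}(\textnormal{\texttt{data}}_{\textnormal{c}},M^{q-p},f(\cdot))$. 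From $x_{0}\in \mathcal{W}_{u}$ and \eqref{equiv} one obtains $\mathfrak{F}(u;B_{\rr}(x_{0}))\to 0$, while the assumption \eqref{con.1.1} combined with the absolute continuity of the Lebesgue integral defining $\mathbf{I}^{f}_{1,m}$ yields $\mathbf{I}^{f}_{1,m}(x_{0},\rr)\to 0$. These three convergences together allow one to select a radius $\rr\in (0,\breve{\rr}]$ so small that the sum appearing in the second line of \eqref{con.1} lies below $\breve{\varepsilon}$.

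The statement is essentially at the level of a direct corollary, so I do not anticipate a genuine obstacle: the substantive analytic work has already been carried out inside Lemmas \ref{l.pe}--\ref{l.pe.1} and Corollary \ref{pvmo}. The only mild care required is to align the parameter dependencies in \eqref{con.1} with those governing the regime of applicability of the lemmas proving \eqref{d.14}--\eqref{d.15}, but this alignment has been prepared by the definitions in \eqref{pe.0}--\eqref{pe.3} and by Remark \ref{pvmo.3}, which also ensures that passing from $M$ to $2^{6}(M+1)$ in the thresholds does not disrupt the argument.
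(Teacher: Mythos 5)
Your proof is correct and follows essentially the same route as the paper's: the forward direction repeats the argument from Section \ref{rs} (fix $M$ from the limit of averages, use $\mathcal{W}_u$, \eqref{equiv} and absolute continuity of $\mathbf{I}^f_{1,m}(x_0,\cdot)$ to shrink the second line of \eqref{con.1} below threshold), and the backward direction packages Corollary \ref{pvmo}/\eqref{pe.8} with Lemmas \ref{l.pe}--\ref{l.pe.1}. The only minor imprecision is the identification $\breve{\varepsilon}\equiv\varepsilon'$: the paper actually takes $\breve{\varepsilon}=c_8^{-1}2^{-10npq}(\tau\theta)^{n+4nq/(p-1)}\varepsilon'$, a smaller constant, so that \eqref{con.1} genuinely implies the required bound \eqref{pe.2} on $c_8\mathfrak{K}(\mathbf{I}^f_{1,m}(x_0,\rr'))^{1/(p-1)}$ after passing through $\mathfrak{K}$; this does not change the structure of your argument.
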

\begin{proof}
If $x_{0}\in \mathcal{W}_{u}\cap \mathcal{L}_{u}$ and \eqref{con.1.1} holds, condition \eqref{con.1} is trivially satisfied, cf. Section \ref{rs}. Now let $x_{0}\in \Omega$ be any point satisfying \eqref{con.1} for some positive constant $M\equiv M(x_{0})$, the threshold radius $\breve{\rr}\equiv \rr'$, where $\rr'\equiv \rr'(\textnormal{\texttt{data}}_{\textnormal{c}},M^{q-p},f(\cdot))$ has been determined at the beginning of Section \ref{pgc}, and $\breve{\varepsilon}\equiv c_{8}^{-1}2^{-10npq}(\tau\theta)^{n+4nq/(p-1)}\varepsilon'$, with $\varepsilon'\equiv \varepsilon'(\textnormal{\texttt{data}}_{\textnormal{c}},M^{q-p})$ defined in \eqref{pe.0}. With these choices \eqref{con.1.1}-\eqref{con.1} immediately imply \eqref{pe.2} and \eqref{pe.6}, therefore \eqref{pe.8} and \eqref{equiv} yield that $\lim_{\sigma\to 0}\ti{\mf{F}}(u;B_{\sigma}(x_{0}))=0$ and Lemmas \ref{l.pe}-\ref{l.pe.1} give \eqref{lp}, so $x_{0}\in \mathcal{W}_{u}\cap \mathcal{L}_{u}$.
\end{proof}
\subsection{Proof of Theorems \ref{t2} and \ref{t5}}
After determining the various smallness parameters at the beginning of Section \ref{pgc}, the result follows by matching the content of Lemmas \ref{l.pe}-\ref{l.pe.1}. We stress that we have worked under condition \eqref{con.1} which, by \eqref{equiv} is equivalent to \eqref{con.6}. The last part of Theorems \ref{t2}-\ref{t5} is contained in Corollary \ref{cccp}.
\subsection{Proof of Theorem \ref{t3}}
Let $x_{0}\in \mathcal{R}_{u}$ be a point and $M\equiv M(x_{0})>0$ be the positive, finite constant in \eqref{ru.0}. Define a small parameter $\ti{\varepsilon}$ as
\begin{eqnarray}\label{e.0.0}
\ti{\varepsilon}:=\frac{\varepsilon'(\tau\theta)^{8npq}}{c_{8}2^{16npq}} \ \stackrel{\eqref{pe.0},\eqref{fpm.1.1.1}}{\Longrightarrow} \ \ti{\varepsilon}\equiv \ti{\varepsilon}(\textnormal{\texttt{data}}_{\textnormal{c}},M^{q-p})
\end{eqnarray}
and, via assumption \eqref{con.5} (that we can always suppose to hold globally on $\Omega$, given that our results are of local nature), we can find a threshold radius $\ti{\rr}\equiv \ti{\rr}(\textnormal{\texttt{data}}_{\textnormal{c}},M^{q-p},f(\cdot))\in (0,\rr']$ ($\rr'$ being determined via \eqref{pe.2}) so that
\begin{eqnarray}\label{e.0}
\mf{K}\left(\mathbf{I}^{f}_{1,m}(x,\ti{\rr})\right)^{1/(p-1)}<\ti{\varepsilon}\qquad \mbox{for all} \ \ x\in B_{d_{x_{0}}}(x_{0}),
\end{eqnarray}
and of course we can always assume that $B(x_{0})\subset B_{d_{x_{0}}}(x_{0})$, see Remark \ref{rx}.
According to the discussion in Section \ref{rs}, it holds that
\begin{flalign}\label{pe.34}
\mathcal{R}_{u} \quad \mbox{is open and}\quad \snr{\Omega\setminus \mathcal{R}_{u}}=0,
\end{flalign}
 therefore if $\bar{\varepsilon}\equiv \ti{\varepsilon}$, $\bar{\rr}\equiv \ti{\rr}$ in \eqref{ru.0} we can find 
an open neighborhood of $x_{0}$, say $B(x_{0})$, on which it is
\begin{eqnarray}\label{e.2}
\snr{(Du)_{B_{\rr_{x_{0}}}(x)}}<M\quad \mbox{and}\quad \mf{F}(u;B_{\rr_{x_{0}}}(x))<\ti{\varepsilon}\qquad \mbox{for all} \ \ x\in B(x_{0}).
\end{eqnarray}
From \eqref{e.0.0}, \eqref{e.0} and \eqref{e.2} we see that Corollary \ref{pvmo} and Theorem \ref{t2} apply and render that the limit in \eqref{lp} exists for all $x\in B(x_{0})$ and defines the almost precise representative of $Du$ for all $x\in B(x_{0})$. Our goal is to show that the limit in \eqref{lp} is uniform, i.e. that the continuous map $B(x_{0})\ni x\mapsto (Du)_{B_{\sigma}(x)}$ with $\sigma\in (0,\rr_{x_{0}})$ uniformly converge to $Du(x)$ as $\sigma\to 0$, thus implying that $Du$ is continuous on $B(x_{0})$. This is a consequence of \eqref{lp.1} as the terms on its right-hand side uniformly converge to zero on $B(x_{0})$ as $\sigma\to 0$ by means of \eqref{con.5}, Theorem \ref{t1}, Corollary \ref{pvmo} and Remark \ref{pvmo.3}. Finally, a standard covering argument yields that $Du$ is continuous on $\mathcal{R}_{u}$. This last fact and \eqref{pe.34} yield that $\mathcal{R}_{u}$ verifies \eqref{pe.35}, so $\mathcal{R}_{u}\equiv \Omega_{u}$ and the proof is complete.
\subsection{Proof of Theorem \ref{t4}}
Implication (\emph{i}.) is a direct consequence of Theorem \ref{t3}, given that $$f\in L(n,1)(\Omega,\mathbb{R}^{N}) \ \Longrightarrow \ \lim_{\sigma\to 0}\mathbf{I}^{f}_{1,m}(x,\sigma)=0$$ uniformly in $x\in \Omega$, cf. \cite[Section 2.3]{kumi1}. Concerning implication (\emph{ii.}), since $f\in L^{d}(\Omega,\mathbb{R}^{N})$ for some $d>n$, a quick application of H\"older inequality shows 
\begin{eqnarray}\label{pe.37}
\left(s^{m}\mint_{B_{s}(x)}\snr{f}^{m} \ \dx\right)^{1/m}\le \frac{s^{1-\frac{n}{d}}}{\omega_{n}^{1/d}}\nr{f}_{L^{d}(B_{s}(x))},
\end{eqnarray}
for all $x\in \Omega$, where we also used that $m<n$ by \eqref{f}, and
\begin{eqnarray}\label{pe.40}
\mathbf{I}^{f}_{1,m}(x,\sigma)&\stackrel{\eqref{pe.37}}{\le}&\frac{1}{\omega_{n}^{1/d}}\int_{0}^{\sigma}s^{-n/d}\nr{f}_{L^{d}(B_{s}(x))} \ \ds\nonumber \\
&\stackrel{d>n}{\le}&\frac{d\sigma^{1-n/d}\nr{f}_{L^{d}(B_{\sigma}(x))}}{(d-n)\omega_{n}^{1/d}}\le \frac{d\sigma^{1-n/d}\nr{f}_{L^{d}(\Omega)}}{(d-n)\omega_{n}^{1/d}}.
\end{eqnarray}
Let $x_{0}\in \mathcal{R}_{u}$ with $M\equiv M(x_{0})$, $\hat{\varepsilon}$, $\hat{\rr}$ be as in \eqref{be}-\eqref{fpm.1} respectively and $\bar{\varepsilon}\equiv \hat{\varepsilon}$, $\bar{\rr}\equiv \hat{\rr}$ in \eqref{ru.0}. Inequality \eqref{pe.40} in turn implies \eqref{con.5} and, via \eqref{d.3} also that
\begin{eqnarray}\label{pe.39}
\sup_{s\le \sigma/4}\mf{K}\left[\left(s^{m}\mint_{B_{s}(x)}\snr{f}^{m} \dx\right)^{1/m}\right]^{1/(p-1)}&\le&\frac{2^{\frac{8nq}{p-1}}\mf{K}\left(\mathbf{I}^{f}_{1,m}(x,\sigma)\right)^{1/(p-1)}}{(\tau\theta)^{\frac{4nq}{p-1}}}\nonumber \\
&\stackrel{\eqref{pe.40}}{\le}&c_{9}\sigma^{\alpha'}\mf{K}\left(\nr{f}_{L^{d}(B_{\sigma}(x))}\right)^{1/(p-1)},
\end{eqnarray}
holding for all $x\in B_{d_{x_{0}}}(x_{0})$, where we set $\alpha':=\frac{d-n}{d(p-1)}$ and it is $c_{9}\equiv c_{9}(\textnormal{\texttt{data}}_{\textnormal{c}},d,M^{q-p})$. With $\hat{\varepsilon}\equiv \hat{\varepsilon}(\textnormal{\texttt{data}}_{\textnormal{c}},M^{q-p})$ as in \eqref{be}, from \eqref{pe.39} we see that we can find $\hat{\rr}\equiv \hat{\rr}(\textnormal{\texttt{data}}_{\textnormal{c}},M^{q-p},f(\cdot),d)$ so small that whenever $\sigma\le \hat{\rr}$ it is
$$
c_{10}\sigma^{\alpha'}\mf{K}(\nr{f}_{L^{d}(B_{\sigma}(x))})^{1/(p-1)}\le c_{10}\hat{\rr}^{\alpha'}\mf{K}(\nr{f}_{L^{d}(B_{\ti{\rr}}(x))})^{1/(p-1)}\le \frac{\hat{\varepsilon}(\varepsilon_{0}\tau\theta)^{8np}}{2^{56np(q+2)}c_{3}H_{2}^{1+2/p}},
$$
where we set $c_{10}:=c_{5}c_{9}$, $c_{5}\equiv c_{5}(\textnormal{\texttt{data}}_{\textnormal{c}},M^{q-p})$ is the same constant defined in \eqref{c4c5} and $H_{2}$ is as in \eqref{hhh}, thus \eqref{pe.39} ultimately reads as
\begin{eqnarray}\label{pe.41}
\sup_{s\le \sigma/4}\mf{K}\left[\left(s^{m}\mint_{B_{s}(x)}\snr{f}^{m} \dx\right)^{1/m}\right]^{1/(p-1)}\le \frac{\hat{\varepsilon}(\varepsilon_{0}\tau\theta)^{8np}}{2^{56np(q+2)}c_{3}H_{2}^{1+2/p}},
\end{eqnarray}
for any $\sigma\le \hat{\rr}$ and all $x\in B_{d_{x_{0}}}(x_{0})$. Notice that, by means of \eqref{pe.40} and \eqref{con.5} we can always take $\hat{\rr}$ smaller or equal than the corresponding one considered in Section \ref{ex} and, given the values of the various constants involved, \eqref{pe.41} implies \eqref{d.3} for all $x\in B_{d_{x_{0}}}(x_{0})$, therefore Proposition \ref{ret} applies and gives that there is an open neighborhood $B(x_{0})\subset \mathcal{R}_{u}$ and a positive radius $\rr_{x_{0}}$ so that \eqref{ret.1.1} and
\begin{eqnarray}\label{pe.45}
\mf{F}(u;B_{s}(x))&\le& c_{6}\left(\frac{s}{\varsigma}\right)^{\alpha_{0}}\mf{F}(u;B_{\varsigma}(x))+c_{5}\sup_{\sigma\le \varsigma/4}\mf{K}\left[\left(\sigma^{m}\mint_{B_{\sigma}(x)}\snr{f}^{m} \ \dx\right)^{1/m}\right]^{1/(p-1)}\nonumber \\
&\stackrel{\eqref{pe.39}}{\le}&c_{6}\left(\frac{s}{\varsigma}\right)^{\alpha_{0}}\mf{F}(u;B_{\varsigma}(x))+c_{10}\varsigma^{\alpha'}\mf{K}\left(\nr{f}_{L^{d}(B_{\varsigma}(x))}\right)^{1/(p-1)}
\end{eqnarray}
hold for any $x\in B(x_{0})$ and all $s,\varsigma$ so that $0<s\le \varsigma\le \rr_{x_{0}}\le \hat{\rr}$. Of course, \eqref{pe.45} holds verbatim if we replace $\alpha'$ with any $0<\alpha''<\min\{\alpha_{0},\alpha'\}$, therefore by Lemma \ref{iter+} we can conclude that
\begin{flalign}\label{e.4}
\mf{F}(u;B_{s}(x))\le c_{11}\left(\frac{s}{\varsigma}\right)^{\alpha''}\mf{F}(u;B_{\varsigma}(x))+c_{11}s^{\alpha''}\mf{K}\left(\nr{f}_{L^{d}(B_{\varsigma}(x))}\right)^{1/(p-1)}\quad \mbox{for all} \ \ x\in B(x_{0}),
\end{flalign}
with $c_{11}\equiv c_{11}(\textnormal{\texttt{data}}_{\textnormal{c}},d,M^{q-p})$. Once estimate \eqref{e.4} is available, the local H\"older continuity of $Du$ over $\mathcal{R}_{u}$ follows by standard means, see \cite{ts2}.

\end{document}